\newtheorem{theorem}{Theorem}
\newtheorem{corollary}[theorem]{Corollary}
\newtheorem{definition}[theorem]{Definition}
\newtheorem{lemma}[theorem]{Lemma}
\newtheorem{proposition}[theorem]{Proposition}
\newtheorem{remark}[theorem]{Remark}
\numberwithin{equation}{section}
\numberwithin{theorem}{section}
\let\oldsqrt\sqrt
\def\sqrt{\mathpalette\DHLhksqrt}
\def\DHLhksqrt#1#2{%
\setbox0=\hbox{$#1\oldsqrt{#2\,}$}\dimen0=\ht0
\advance\dimen0-0.2\ht0
\setbox2=\hbox{\vrule height\ht0 depth -\dimen0}%
{\box0\lower0.4pt\box2}}
\newcommand{\fib}{\;\scalebox{1.5}[1.0]{\( - \)}\;}
\DeclareFontFamily{U}{mathx}{\hyphenchar\font45}
\DeclareFontShape{U}{mathx}{m}{n}{
      <5> <6> <7> <8> <9> <10>
      <10.95> <12> <14.4> <17.28> <20.74> <24.88>
      mathx10
      }{}
\DeclareSymbolFont{mathx}{U}{mathx}{m}{n}
\DeclareMathAccent{\widecheck}{0}{mathx}{"71}
\newcommand{\wc}[1]{\widecheck{#1}}
\newcommand{\df}[1]{\mathfrak{#1}}
\newcommand{\bhs}[1]{\mathfrak B_{#1}}
\newcommand{\bhss}[1]{\mathfrak B^{(1)}_{#1}}
\newcommand{\bhsd}[1]{\mathfrak B^{(2)}_{#1}}
\newcommand{\bhst}[1]{\mathfrak B^{(3)}_{#1}}
\newcommand{\bhsh}[1]{\mathfrak B^{(H)}_{#1}}
\newcommand{\bhsT}[1]{\mathfrak B^{(\mathscr{T})}_{#1}}
\newcommand{\bhsC}[1]{\mathfrak B^{(C)}_{#1}}
\newcommand{\bhsD}[1]{\mathfrak B^{(\Delta)}_{#1}}
\renewcommand{\tilde}{\widetilde}
\renewcommand{\bar}{\overline}
\renewcommand{\Re}{\operatorname{Re}}
\renewcommand{\hat}[1]{\widehat{#1}}
\newcommand{\wt}[1]{\widetilde{#1}}
\newcommand{\rest}[1]{\big\rvert_{#1}} 
\newcommand{\Rint}{\sideset{^R}{}\int}
\newcommand\lra{\longrightarrow}
\newcommand\xlra[1]{\xrightarrow{\phantom{x} #1 \phantom{x}}}
\newcommand\pa{\partial}
\newcommand{\Nop}{{{}^\e \mathcal{N} }}
\newcommand\eps\varepsilon
\renewcommand\epsilon\varepsilon
\newcommand\bh{\mathbf{h}}
\newcommand\bv{\mathbf{v}}
\newcommand{\Cl}{\mathbb{C}l}
\newcommand{\e}{\mathrm{e}}
\newcommand{\w}{\mathrm{w}}
\newcommand{\Tw}{{}^{\mathrm{w}}T}
\newcommand\CI{{\mathcal{C}}^{\infty}}
\newcommand\CIc{{\mathcal{C}}^{\infty}_c}
\newcommand\ang[1]{\langle #1 \rangle}
\newcommand\floor[1]{\lfloor #1 \rfloor}
\newcommand{\lrpar}[1]{\left( #1 \right)}
\newcommand{\lrspar}[1]{\left[ #1 \right]}
\newcommand{\lrbrac}[1]{\left\lbrace #1 \right\rbrace}
\newcommand\Ch{\operatorname{Ch}}
\renewcommand\det{\operatorname{det}}
\newcommand\diag{\operatorname{diag}}
\newcommand\Diff{\operatorname{Diff}}
\newcommand\ef{\operatorname{ef}}
\newcommand\End{\operatorname{End}}
\newcommand\ev{\operatorname{even}}
\newcommand\ff{\operatorname{ff}}
\DeclareMathOperator*{\FP}{\operatorname{FP}}
\newcommand\Hom{\operatorname{Hom}}
\newcommand\id{\operatorname{id}}
\newcommand\Id{\operatorname{Id}}
\renewcommand\Im{\operatorname{Im}}
\newcommand\ind{\operatorname{ind}}
\newcommand\Ind{\operatorname{Ind}}
\newcommand\lf{\operatorname{lf}}
\newcommand\odd{\operatorname{odd}}
\newcommand\phg{\operatorname{phg}}
\newcommand\pt{\operatorname{pt}}
\renewcommand\Re{\operatorname{Re}}
\newcommand{\reg}{ \mathrm{reg} }
\newcommand\rf{\operatorname{rf}}
\newcommand\RTr[1]{{}^R\operatorname{Tr}\left( #1 \right)}
\newcommand\scal{\mathrm{scal}}
\renewcommand\sf{\operatorname{sf}}
\newcommand\Spec{\operatorname{Spec}}
\newcommand\str{\operatorname{str}}
\newcommand\Str{\operatorname{Str}}
\newcommand\Tr{\operatorname{Tr}}
\newcommand\tr{\operatorname{tr}}
\newcommand\Vol{\operatorname{Vol}}
\newcommand\Mand{\text{ and }}
\newcommand\Mas{\text{ as }}
\newcommand\Mat{\text{ at }}
\newcommand\Melse{\text{ else }}
\newcommand\Mev{\operatorname{even}}
\newcommand\Mfor{\text{ for }}
\newcommand\Mforall{\text{ for all }}
\newcommand\Mforeach{\text{ for each }}
\newcommand\Mforsome{\text{ for some }}
\newcommand\Mif{\text{ if }}
\newcommand\Min{\text{ in }}
\newcommand\Mnear{\text{ near }}
\newcommand\Modd{\operatorname{odd}}
\newcommand\Mon{\text{ on }}
\newcommand\Motherwise{\text{ otherwise }}
\newcommand\Mst{\text{ s.t. }}
\newcommand\Mwhere{\text{ where }}
\newcommand\Mwith{\text{ with }}
\newcommand\paperintro%
\newcommand\paperbody%
\newcommand\bbA{\mathbb{A}}
\newcommand\bbB{\mathbb{B}}
\newcommand\bbC{\mathbb{C}}
\newcommand\bbE{\mathbb{E}}
\newcommand\bbN{\mathbb{N}}
\newcommand\bbR{\mathbb{R}}
\newcommand\bbS{\mathbb{S}}
\newcommand\bbT{\mathbb{T}}
\newcommand\bbZ{\mathbb{Z}}
\newcommand\cA{\mathcal{A}}
\newcommand\cB{\mathcal{B}}
\newcommand\cC{\mathcal{C}}
\newcommand\cD{\mathcal{D}}
\newcommand\cE{\mathcal{E}}
\newcommand\cF{\mathcal{F}}
\newcommand\cG{\mathcal{G}}
\newcommand\cH{\mathcal{H}}
\newcommand\cI{\mathcal{I}}
\newcommand\cJ{\mathcal{J}}
\newcommand\cK{\mathcal{K}}
\newcommand\cM{\mathcal{M}}
\newcommand\cN{\mathcal{N}}
\newcommand\cO{\mathcal{O}}
\newcommand\cP{\mathcal{P}}
\newcommand\cR{\mathcal{R}}
\newcommand\cS{\mathcal{S}}
\newcommand\cU{\mathcal{U}}
\newcommand\cV{\mathcal{V}}
\newcommand\mf[1]{\mathfrak{ #1}}
\newcommand\sA{\mathscr{A}}
\newcommand\sB{\mathscr{B}}
\newcommand\sC{\mathscr{C}}
\newcommand\sN{\mathscr{N}}
\newcommand\sT{\mathscr{T}}
\newcommand\tH{\operatorname{H}}
\newcommand\tK{\operatorname{K}}
\newcommand\wT {{}^\w T}
\DeclareMathAlphabet{\mathpzc}{OT1}{pzc}{m}{it}
\newcommand{\cl}{\mathpzc{cl}}
\newcommand{\overbar}[1]{\mkern 20mu\overline{\mkern-20mu#1\mkern-20mu}\mkern 20mu}
\newcommand\VAPS{\mathrm{VAPS}}
\begin{document}

\title{The index formula for families of Dirac type operators on pseudomanifolds}
\author{Pierre Albin}
\address{University of Illinois, Urbana-Champaign}
\email{palbin@illinois.edu}
\author{Jesse Gell-Redman}
\address{University of Melbourne}
\email{j.gell@unimelb.edu.au}

\begin{abstract}
We study families of Dirac-type operators, with compatible perturbations, associated to wedge metrics on stratified spaces. We define a closed domain and, under an assumption of invertible boundary families, prove that the operators are self-adjoint and Fredholm with compact resolvents and trace-class heat kernels. We establish a formula for the Chern character of their index.
\end{abstract}

\maketitle

\paperintro
\section*{Introduction}

In this article we establish an index formula for families of Dirac-type operators on stratified spaces endowed with metrics of iterated conic-type singularities. In the process we construct the resolvent and heat kernel of these operators by extending the b-calculus of Melrose and the edge calculus of Mazzeo to manifolds with corners and fibered boundaries. We establish refined asymptotic expansions of these operators of relevance to the study of analytic and spectral constructions and their connections to topology. In the present paper we use them to carry out the heat equation proof of the index theorem.

Our index theorem seems to be the first example of an index theorem for singular metrics on stratified spaces of arbitrary depth since the 
Gauss-Bonnet and signature theorems for piecewise flat admissible metrics established by Cheeger \cite{C1983}.
Since this seminal paper there has been much interest and progress in index theory on spaces with isolated conic singularities and, more recently, non-isolated conic singularities which we will review below.\\

Most singular spaces arising from smooth objects, such as zero sets of polynomials, orbits of group actions, and many moduli spaces, are Thom-Mather stratified spaces. They can be written as the union of smooth manifolds known as strata. One stratum is dense and referred to as the {\em regular part}, while the others make up the {\em singular part}. Each singular stratum has a tubular neighborhood in the stratified space that fibers over it, the fiber is itself a stratified space and is known as the {\em link} of that stratum. The depth of a stratified space is the length of the longest chain of inclusions among the closures of singular strata. (See, e.g., \cite{Albin:Hodge, Kloeckner} and references therein for more on stratified spaces.)

Every Thom-Mather stratified space $\hat X$ can be `resolved' to a manifold with corners $X$ by a procedure that goes back to Thom \cite{Thom:Ensembles}  and was recently reformulated by Melrose (see \cite{ALMP:Witt, Albin-Melrose:Gps, Albin:Hodge}). 
The boundary hypersurfaces of $X$ are partitioned into collective
(i.e.\ disjoint unions of) boundary hypersurfaces, $\bhs{Y},$ one per singular stratum $Y^{\circ}$ of $\hat X,$ which participate in smooth fiber bundles of manifolds with corners,
\begin{equation*}
	Z \fib \bhs{Y} \xlra{\phi_Y} Y,
\end{equation*}
in which $Y$ is the resolution of the closure of $Y^{\circ}$ in $\hat X$ and $Z$ is the resolution of the link of $Y^{\circ}$ in $\hat X.$
These fiber bundles satisfy compatibility conditions at intersections of collective boundary hypersurfaces, see Definition \ref{def:IFS}.
We refer to this structure on $X$ as an {\em iterated fibration structure} and emphasize that it is equivalent to the Thom-Mather structure on $\hat X.$\\

There is a map $\beta: X \lra \hat X$ relating a stratified space and its resolution that restricts to a diffeomorphism between the interior of $X$ and the regular part of $\hat X,$ $\hat X^{\reg}.$ This relation allows us to define the analogues of smooth objects on $\hat X.$ For example, we can define a smooth function on $\hat X$ to be a continuous function on $\hat X$ whose restriction to $\hat X^{\reg} = X^{\circ}$ extends to a smooth function on $X.$ We can study these functions on $X$ without reference to $\hat X,$
\begin{equation*}
	\CI_{\Phi}(X) = \{ f \in \CI(X): f\rest{\bhs{Y}} \in \phi_Y^*\CI(Y) \Mforall Y\}.
\end{equation*}
The differentials of these functions locally span a vector bundle, known as the {\em wedge cotangent bundle},
\begin{equation*}
	{}^{\w} T^*X \lra X,
\end{equation*}
described in more detail below.

Following the paradigm of \cite{tapsit}, we can think of the present paper as carrying out geometric analysis in the `wedge category' where, e.g., the r\^ole of the contangent bundle is supplanted by the wedge cotangent bundle. For example, our metrics will be Riemannian metrics on the interior that extend to bundle metrics on the wedge (co)tangent bundle, and our Clifford bundles will have Clifford actions over the interior that extend to actions of the Clifford algebra of the wedge cotangent bundle. 

In contrast to the situation in \cite{tapsit}, the dual bundle ${}^{\w}TX,$ known as the wedge tangent bundle, does not have a natural Lie bracket; this leads us to define `wedge differential operators' in terms of `edge differential operators'. 
A related issue is that wedge differential operators generally have multiple extensions from smooth sections of compact support to closed operators on $L^2$-spaces. Indeed, formally self-adjoint operators will often have closed extensions that are not self-adjoint.

Thus our first task in studying Dirac-type wedge operators is to
identify a well-behaved closed extension. We carry this out under the
assumption that certain model operators analogous to the `boundary
families' in \cite{BC1990, BC1990II} are invertible; however as in
\cite{Melrose-Piazza:Even,Melrose-Piazza:Odd} we include appropriate
pseudodifferential perturbations and we will show in a forthcoming
paper that as in {\em loc. cit.} this is a `minimal' assumption in that it corresponds
to the vanishing of a topological obstruction (see Remark \ref{rmk:PertExist}).

\begin{theorem}
Let $X$ be a manifold with corners and an iterated fibration structure endowed with a totally geodesic wedge metric, let $(E, g_E, \nabla^E, \cl)$ be a wedge Clifford module with associated Dirac-type operator $\eth_X$  and let $Q_X$ be a compatible perturbation.
If $\eth_{X,Q} = \eth_X+Q_X$ satisfies the Witt assumption (i.e., has
invertible boundary families, see Section \ref{sec:Wittsec} below, in
particular Definition \ref{def:WittAss}) then $\eth_{X,Q}$ with its vertical APS domain
\begin{equation}
	\cD_{\VAPS}(\eth_{X,Q}) = \text{ graph closure of }
	\{ u \in \rho_X^{1/2}H^1_\e(X;E) : \eth_{X,Q}u \in L^2(X;E)\},
\end{equation}
\nomenclature[D]{$\cD_{\VAPS}(\eth_{X,Q})$}{The vertical APS domain of
a Dirac-type operator}
is a closed operator on $L^2(X;E)$ that is self-adjoint and Fredholm with compact resolvent.

The heat kernel of $\eth_{X,Q}^2$ with the induced domain is trace-class and has a short-time asymptotic expansion of the form
\begin{equation*}
	\Tr(e^{-t\eth_{X,Q}^2}) \sim
	t^{-(\dim X)/2} \sum_{j=0}^{\infty} \sum_{k=0}^{\mathrm{depth}(X)}
	a_{j,k} t^{j/2} (\log t)^{k/2}.
\end{equation*}
\end{theorem}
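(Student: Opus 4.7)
The plan is to run an induction on the depth of $X.$ The base case is the standard closed-manifold result, and the inductive step requires us to develop a pseudodifferential calculus adapted to the iterated wedge/edge geometry that simultaneously handles all the collective boundary hypersurfaces $\bhs{Y}.$ Concretely, I would work in the incomplete edge setting by passing to the conjugated operator $\rho_X^{-1/2} \eth_{X,Q} \rho_X^{1/2},$ which is an edge-type operator, and formulate the VAPS domain via the graph closure of $\rho_X^{1/2} H^1_\e(X;E)\cap \{u:\eth_{X,Q}u\in L^2\}.$ The point of the Witt/invertible-boundary-family assumption is that, after this conjugation, at each $\bhs{Y}$ the normal family acts invertibly on $L^2$ of the link $Z,$ which is needed to close up the operator and to invert indicial roots.

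The first core step is to establish self-adjointness and the Fredholm property with compact resolvent. For this I would build a generalized inverse by a Melrose-style parametrix construction in an edge-$b$ calculus on a blown-up double space $X^2_{\mathrm{wb}},$ iterated over the boundary hypersurfaces in order of increasing depth. At the front face over each $\bhs{Y}$ the normal operator is (the conjugate of) a wedge-Dirac operator on the model cone $C(Z)$ over the link, coupled to an edge operator on $Y;$ here one uses the inductive hypothesis applied to $Z$ to guarantee that this normal operator can be inverted on its VAPS domain, and then Mazzeo's edge theory on $Y$ to convert invertibility of the normal families into a parametrix with residual error that is compact on $L^2(X;E).$ This simultaneously gives $L^2$-boundedness of the resolvent, closedness, symmetry of the VAPS domain (by checking that the pairing formula holds with vanishing boundary term under the weight $\rho_X^{1/2}$), and compactness of the inclusion of the domain into $L^2$ via a Rellich-type result in the wedge Sobolev scale.

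For the heat kernel, I would construct $e^{-t\eth_{X,Q}^2}$ on an appropriate parabolic blow-up of $[0,\infty)_t\times X^2,$ modeled on Melrose's heat calculus but with additional blow-ups corresponding to the $\bhs{Y}$ and all their intersections. The construction is inductive: the model at each boundary face is the heat kernel of the corresponding normal operator, which is polyhomogeneous on a smaller parabolic blow-up by the inductive hypothesis, and the Witt condition ensures that the indicial problems at each step can be solved. The composition formula and error-term arguments then produce $e^{-t\eth_{X,Q}^2}$ as a polyhomogeneous conormal distribution on this blown-up space; that it is trace-class follows from the structure of its diagonal restriction together with the trace being given by integration against $\dvol_g.$

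Finally, the short-time expansion of $\Tr(e^{-t\eth_{X,Q}^2})$ is obtained by applying Melrose's pushforward theorem to the projection from the temporal-spatial blow-up down to $[0,\infty)_t.$ The power $t^{-(\dim X)/2}$ is the standard diagonal contribution from the smallest heat front face, while each collective boundary $\bhs{Y}$ contributes a new face whose pushforward generates a polyhomogeneous expansion in $t^{1/2};$ the half-integer exponents $j/2$ arise because the parabolic blow-up puts $t$ and spatial variables on comparable scales, and the logarithmic terms $(\log t)^{k/2}$ up to $k=\mathrm{depth}(X)$ arise precisely when faces from different strata meet, producing integrals whose pushforwards pick up a $\log t$ for each nested collision. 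The main obstacle in all of this is engineering the multi-step parametrix/heat kernel construction so that composition remains within the calculus and all indicial equations are solvable — in other words, setting up the wedge-$b$ double space and its parabolic heat analog with enough structure for the pushforward theorem to deliver the stated expansion with the sharp log-power bound.
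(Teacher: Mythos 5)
Your proposal follows essentially the same route as the paper: an induction on depth, a parametrix for the conjugated operator built in an edge pseudodifferential calculus on a blown-up double space (the paper's $X^2_\e$), inversion of the normal operators via the Witt hypothesis and the inductive assumption on the links, and a heat kernel constructed on a parabolic blow-up with front faces over each $\bhs{Y}$ whose pushforward to $\bbR^+_\tau$ produces the polyhomogeneous expansion with up to $\mathrm{depth}(X)$ log powers from nested intersections. The one minor divergence is that the paper establishes self-adjointness not by a direct Green's pairing estimate but by using the parametrix mapping properties to show $\cD(\eth_{X,Q}^*)\subseteq\rho_X^{1/2}H^1_\e\cap\cD_{\max}\subseteq\cD_{\VAPS}$; this is a routine substitution, so the two arguments are essentially equivalent.
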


We refer the reader to the text for precise definitions and statements. For example the trace of the heat kernel has fewer powers of $\log t$ for small values of $j,$ see Corollary \ref{cor:ShortTimeTraceExp}. 
The nomenclature `Witt assumption' stems from the case of the signature operator which satisfies this assumption if and only if $X$ is a Witt space in the sense of \cite{Siegel, C1979}, see, e.g., \cite{ALMP:Witt}. As explained in a forthcoming companion paper, allowing for a compatible perturbation in our theorems means in particular that they apply to the signature operator on the more general class of `Cheeger spaces' studied in \cite{ALMP:Hodge, ALMP:Novikov} (originally introduced by Banagl \cite{BanaglShort} and known as L-spaces, see \cite{ABLMP}).
Below we also discuss the `geometric Witt assumption' assumed in most of these references and analyzed in the recent preprint \cite{Hartmann-Lesch-Vertman}.

There are immediate consequences to the spectral theory of such $\eth_{X,Q},$ for example:
\begin{corollary}
The spectrum of $(\eth_{X,Q}^2, \cD_{\VAPS})$ is a discrete subset of $\bbR^+$ that satisfies Weyl's law
\begin{equation*}
	\#\{\text{eigenvalues, with multiplicity, less than }\Lambda\}
	\sim
	\frac{\Vol(X)}{(4\pi)^{\tfrac12\dim X}\Gamma(1+\tfrac12\dim X)}\Lambda^{\tfrac12\dim X}.
\end{equation*}
The zeta function of $\eth_{X,Q}^2,$ $\zeta(s) 
= \Tr((\eth_{X,Q}\rest{\ker(\eth_{X,Q})^{\perp}})^{-2s}),$ is holomorphic on $\{ s \in \bbC: \Re(s)>\tfrac12\dim X\}$ and extends to a meromorphic function on $\bbC$ with poles of order at most $1+ \mathrm{depth}(X).$
\end{corollary}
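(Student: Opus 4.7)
The plan is to derive all three conclusions of the corollary from the preceding theorem by classical arguments: Karamata's Tauberian theorem for Weyl's law, and the Mellin transform of the heat trace for the zeta function.

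First I would dispose of discreteness. The theorem asserts that $\eth_{X,Q}$ with the VAPS domain is self-adjoint and has compact resolvent on $L^2(X;E)$. Hence $\eth_{X,Q}^2$, as a nonnegative self-adjoint operator with the induced domain, also has compact resolvent, so its spectrum is a discrete subset of $[0,\infty)$ consisting of eigenvalues $0 \leq \lambda_1 \leq \lambda_2 \leq \cdots$ of finite multiplicity which accumulate only at $+\infty$. Write $N(\Lambda) = \#\{j : \lambda_j \leq \Lambda\}$ for the counting function.

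For Weyl's law I would apply Karamata's Tauberian theorem to the short-time expansion of the heat trace provided by the theorem. The parenthetical remark in the text (that fewer $\log$ powers appear at small $j$) ensures that only the $(j,k) = (0,0)$ term contributes at leading order, so
\begin{equation*}
  \Tr\bigl( e^{-t\eth_{X,Q}^2} \bigr) \sim a_{0,0} \, t^{-(\dim X)/2} \text{ as } t \to 0^+,
\end{equation*}
and Karamata then yields $N(\Lambda) \sim \frac{a_{0,0}}{\Gamma(1+\tfrac12 \dim X)} \Lambda^{(\dim X)/2}$. The remaining task, which I expect to be the principal obstacle, is to identify $a_{0,0}$ with the Weyl constant $\Vol(X)/(4\pi)^{(\dim X)/2}$. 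This should follow from the structure of the heat kernel constructed in the proof of the preceding theorem: the leading singularity at $t=0$ is concentrated on the interior of the diagonal, where the standard local heat parametrix for a Dirac-type operator produces the classical coefficient, while the collective boundary faces $\bhs{Y}$ contribute only at strictly smaller weights in $t$ (up to logarithmic factors).

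For the zeta function I would begin with the Mellin representation
\begin{equation*}
  \zeta(s) = \frac{1}{\Gamma(s)} \int_0^\infty t^{s-1} \Tr\Bigl( e^{-t\eth_{X,Q}^2} \big|_{\ker(\eth_{X,Q})^\perp} \Bigr)\, dt,
\end{equation*}
whose convergence for $\Re(s) > \tfrac12 \dim X$ is guaranteed by the trace class property and the heat expansion. Split the integral at $t=1$. The tail $\int_1^\infty$ defines an entire function of $s$, since on $\ker(\eth_{X,Q})^\perp$ the heat operator decays like $e^{-t\lambda_+}$ with $\lambda_+>0$ the smallest positive eigenvalue. For $\int_0^1$, substitute the expansion from the theorem and integrate term by term: each summand of the form $t^{-(\dim X)/2 + j/2}(\log t)^m$ paired with $t^{s-1}$ produces
\begin{equation*}
  \frac{(-1)^m m!}{\bigl( s - \tfrac12 \dim X + \tfrac12 j \bigr)^{m+1}}
\end{equation*}
modulo an entire remainder, hence a pole of order $m+1$ at $s = \tfrac12 \dim X - \tfrac12 j$. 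Since $m$ is bounded by $\mathrm{depth}(X)$, the resulting meromorphic extension of $\zeta(s)$ to all of $\bbC$ has poles of order at most $1 + \mathrm{depth}(X)$, all located at $\Re(s) \leq \tfrac12\dim X$, so $\zeta$ is holomorphic on $\{\Re(s) > \tfrac12\dim X\}$ as claimed.
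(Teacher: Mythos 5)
Your proposal is correct and is essentially the only reasonable route, since the paper states this corollary in the introduction as an immediate consequence of the preceding theorem without spelling out the derivation. Discreteness from compact resolvent, Karamata's Tauberian theorem applied to the leading heat coefficient, and the Mellin transform of the heat trace split at $t=1$ are exactly the ingredients needed. Two small comments on the points you flagged as potential obstacles. First, the absence of $\log t$ factors at the leading order $j=0$ is not merely a parenthetical aside: it follows from the structure of the index family in Corollary~\ref{cor:ShortTimeTraceExp}, namely $\cH_\tau = (\bbN_0 - \dim X)\,\bar\cup\,\bar\bigcup_Y(\bbN_0 - \dim Y)$, where the extended unions can only produce logarithms at exponents where two of the constituent index sets collide; since $\dim Y < \dim X$ strictly for every singular stratum $Y$, no collision occurs at the leading exponent $-\dim X$, so $a_{0,k}=0$ for $k>0$ and Karamata applies in its clean form. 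Second, the identification of $a_{0,0}$ with the Weyl constant is not an open step but is read off from Theorem~\ref{thm:WedgeHeatKer'}: the rescaled normal operator of the heat kernel at $\bhsh{dd,1}$ is the fiberwise Euclidean heat kernel $e^{-\Delta_{T(M/B)/M}}$, whose restriction to the diagonal at time one is $(4\pi)^{-(\dim X)/2}\Id_E$, so pushing forward over $X$ gives exactly the stated coefficient (up to the pointwise trace of $\Id_E$, which the paper's statement suppresses). Your Mellin computation and the bound on the pole order by $1+\mathrm{depth}(X)$ are correct as written.
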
 

$ $\\
In light of the theorem above, a family of wedge Dirac-type operators $\eth_{M/B}$ on the fibers of a fiber bundle, $X \fib M \xlra{\psi} B,$ of manifolds with corners and iterated fibration structures determines an index in $\tK^{\dim X}(B).$ Our second main result is a formula for the Chern character of this index.

Recall that Bismut and Cheeger, in their study of the families index theorem for manifolds with boundary \cite{BC1990, BC1990II,BC1991}, established a formula for the Chern character of the index of a family of spin Dirac operators on even-dimensional spaces with isolated conic singularities and invertible boundary families. Namely, if $M \lra B$ is the resolution of these spaces to manifolds with boundary,
\begin{equation*}
	\Ch_{\ev}(\Ind(\eth_{M/B}))
	= \int_{M/B} \hat A(M/B) \Ch'(E) - \cJ(\pa M/B) \; \Min \tH^{\ev}(B),
\end{equation*}
where $\cJ$ is a differential form depending globally on the geometry of the boundary fibration $\pa M \lra B.$
Our theorem involves Bismut-Cheeger $\cJ$-forms and $\eta$-forms extended to allow for Dirac-type operators on singular spaces and compatible smoothing perturbations.

\begin{theorem} \label{thm:FamIndInt}
Let $M \xlra\psi B$ be a fiber bundle of manifolds with corners and iterated fibration structures, $E \lra M$ a wedge Clifford bundle with associated Dirac-type operator $\eth_{M/B}$ and $Q$ an admissible perturbation.
If $\eth_{M/B,Q}$ with its vertical APS domain satisfies the Witt assumption, then
\begin{multline*}
	\Ch_{\dim(M/B)}(\Ind(\eth_{M/B,Q}), \nabla^{\Ind})
	= 
	\int_{M/B} \hat A(M/B) \Ch'(E) 
	- \sum_{N \in \cS_{\psi}(M)} 
	\int_{N/B} \hat A(N/B) \cJ_Q(\bhs{N}/N) \\
	+ d\eta_Q(M/B),
\end{multline*}
where $\Ch_{j}$ denotes the even or odd Chern character in accordance
with the parity of $j,$ and $\dim(M/B) = \dim X.$ The sum is taken
over the set $\cS_\psi(M)$ of boundary hypersurfaces of $M$ transverse to the
map $\psi \colon M \lra B$, i.e. those which also fibre over $B$.  (In
the even dimensional case, the left hand side is the Chern character
obtained after stabilizing the null space of $\eth_{M/B,Q}$ to a
bundle over $B$ by compressing the Bismut superconnection; in the odd
case, the left hand side is the differential form obtained by
suspension and integration.)
\end{theorem}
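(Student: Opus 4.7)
The proof proceeds by the heat equation / Bismut superconnection method, now carried out inside the wedge heat calculus constructed earlier in the paper. First I would build the Bismut superconnection $\bbA_t$ associated with the vertical family $\eth_{M/B,Q}$ using the wedge Clifford structure and the admissibility of $Q$. The families version of the first theorem, together with the Witt assumption (invertibility of all boundary families, including those at positive-codimension corners), guarantees that $e^{-\bbA_t^2}$ is trace class on each fibre and that $\Str(e^{-\bbA_t^2})$ is a smooth differential form on $B$. The standard transgression identity
\begin{equation*}
\frac{d}{dt}\Str(e^{-\bbA_t^2}) = -d\,\Str\Bigl(\tfrac{d\bbA_t}{dt}\,e^{-\bbA_t^2}\Bigr)
\end{equation*}
then shows that the de Rham class of $\Str(e^{-\bbA_t^2})$ on $B$ is independent of $t>0$.

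Next I would identify the large-$t$ limit with $\Ch_{\dim(M/B)}(\Ind(\eth_{M/B,Q}),\nabla^{\Ind})$. In the even case this is the classical Bismut argument: after stabilizing $\ker\eth_{M/B,Q}$ to a bundle over $B$, compression of the superconnection onto this bundle yields its Chern character in the limit, the passage relying on the discrete spectrum and spectral gap provided by the first theorem and its corollary. In the odd case one applies the suspension construction to $\eth_{M/B,Q}+isA$ and extracts the odd Chern character by integration in the external parameter.

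The small-$t$ limit is the analytic heart of the theorem. Over $M^\circ$ the usual Getzler rescaling produces the local index density $\hat A(M/B)\,\Ch'(E)$. Near each boundary hypersurface $\bhs{N}$ with $N\in\cS_\psi(M)$ the rescaled model is governed by the boundary family on $\bhs{N}/N$, which by the Witt assumption is invertible; a Bismut--Cheeger style adiabatic argument, carried out within the wedge heat calculus and using the polyhomogeneous structure from the first theorem, extracts from the short-time asymptotics the correction $-\int_{N/B}\hat A(N/B)\,\cJ_Q(\bhs{N}/N)$. The exact form $d\eta_Q(M/B)$ then arises as the full transgression integral of $\Str\bigl(\tfrac{d\bbA_t}{dt}e^{-\bbA_t^2}\bigr)$ over $(0,\infty)$, convergence at $t\to 0$ being supplied by the subtraction of the boundary contributions and convergence at $t\to\infty$ by the spectral gap of $\eth_{M/B,Q}$.

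The principal obstacle is the iterated nature of the fibration structure: each face $\bhs{N}$ itself carries an iterated fibration structure and its boundary family is again a wedge Dirac-type family, so both the Getzler rescaling at the corners and the definitions of $\cJ_Q$ and $\eta_Q$ must be set up recursively over the depth of $X$. One must further control the $(\log t)^{k/2}$ terms in the short-time trace expansion so that only smooth forms on $B$ survive in the limit; the detailed polyhomogeneous structure of the wedge heat kernel proved earlier is precisely what forces the logarithmic coefficients that would obstruct this construction to vanish under the Witt assumption, so that the $\cJ_Q$- and $\eta_Q$-forms are well defined as smooth forms on $B$ and the sum over $\cS_\psi(M)$ accounts exactly for the boundary defect.
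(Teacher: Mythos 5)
Your proposal follows the same outline as the paper: Bismut superconnection in the wedge setting, the transgression identity, the large-$t$ limit giving $\Ch(\Ind,\nabla^{\Ind})$ (after a compactly supported stabilizing perturbation), the small-$t$ limit computed by Getzler rescaling recursively at each front face, the $\cJ$-form appearing from the rescaled normal operator on the mapping cylinder $C(\phi_N)/N$ (Lemma~\ref{lem:ResNormOpsA} and Proposition~\ref{prop:ShortTimeLimit}), and suspension for the odd case. This is essentially the paper's argument.

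One step, however, is mis-attributed in a way that would leave a genuine gap if implemented as stated. You assert that ``the detailed polyhomogeneous structure of the wedge heat kernel \dots{} forces the logarithmic coefficients \dots{} to vanish under the Witt assumption.'' Neither the Witt assumption nor the polyhomogeneous structure by itself eliminates the constant-order $(\log\tau)^k$ terms; a priori the index family at $\tau=0$ is an extended union of $1+\mathrm{depth}(M)$ copies of $\bbN_0$, so such terms are generically present. What removes them is a separate Patodi-type vanishing argument \emph{at the corners}: in Corollary~\ref{cor:ShortTimeTraceExp} one shows that the pointwise supertrace of the rescaled heat kernel vanishes along each corner $\bhsD{1,1}(N)\cap\bhsD{1,1}(\wt N)$ and $\bhsD{1,1}(N)\cap\bhsD{0,1}$ because no term of full Clifford degree can occur there. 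This uses the corner asymptotics of the wedge curvature established at the end of \S\ref{sec:Metrics} (the vanishing of every curvature term in which $\pa_x$ enters as a horizontal vector) together with the rescaled normal operators of Lemma~\ref{lem:ResNormOpsA}. Without this Clifford-degree counting at the corners, the small-$t$ limit of $\Str\bigl(e^{-(\bbA^t_{M/B,Q})^2}\bigr)$ need not exist, and the $\eta$-integral need not converge at $t\to 0$. So the recursive Getzler rescaling at corners that you mention in passing is not just bookkeeping: it is the mechanism that kills the logarithms, and its role should be made explicit.
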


Bismut and Cheeger obtained the index formula for a family of spin Dirac operators on manifolds with boundary with invertible boundary families by deforming a neighborhood of the conic singularities to a cylindrical end. In the process the $\cJ$-form was shown to converge to the Bismut-Cheeger $\eta$-form, introduced in \cite{BC1989}. Melrose and Piazza \cite{Melrose-Piazza:Even, Melrose-Piazza:Odd} used the b-calculus to establish the families index theorem for arbitrary families of Dirac-type operators by allowing appropriate pseudodifferential perturbations. The boundary contributions correspondingly depend on the perturbation.\\

We discuss the definition of the Bismut-Cheeger $\eta$ and $\cJ$ forms for families of compatibly perturbed Dirac-type operators on stratified spaces in \S\ref{sec:BCJeta}. This extends the definition of $\eta$-invariants for spaces with conic singularities in \cite[\S 8]{C1987} and for spaces with non-isolated conic singularities (i.e., stratified spaces of depth one) in \cite{Piazza-Vertman}. Heuristically, given a fiber bundle $\wc X \fib \wc{M} \xlra{\check\psi} \wc B,$ a vertical family of Dirac-type operators $\eth_{\wc M/\wc B},$ and a connection for $\check{\psi},$ the $\eta$-forms and $\cJ$-forms are both related to the heat kernel of a family of Dirac-type operators on $\wc{M} \times \bbR^+,$ but with different extensions of $\check{\psi}.$ Indeed,
\begin{equation*}
	\eta \longleftrightarrow 
	\left(
	\vcenter{
	\xymatrix{
	\wc X \ar@{-}[r] & \wc M \times \bbR^+ \ar[d] \\ & \wc B \times \bbR^+ }}
	\right),
	\quad\quad
	\cJ \longleftrightarrow 
	\left(
	\vcenter{
	\xymatrix{
	\wc X \times \bbR^+ \ar@{-}[r] & \wc M \times \bbR^+ \ar[d] \\ & \wc B }}
	\right)
\end{equation*}
where in the former case the $\bbR^+$ factor results in an `auxiliary Grassman variable' but does not change the fiber, while in the latter the fiber $\wc X \times \bbR^+$ is endowed with an exact conic metric $dr^2 + r^2g_{\wc X}.$

In order to find the relation between the $\eta$ and $\cJ$ forms associated to $\wc M \lra \wc B,$ we attach the cone over $\wc M$ to the boundary of a half-cylinder over $\wc M.$ 
That is, we form a `b-c suspension' (where b-c refers to b-metric and conic-metric) of $\wc M$ of the form
\begin{equation*}
	(\wc M \times [0,1]_s, g_{(\wc M \times [0,1]_s)/B}), \quad
	g_{(\wc M \times [0,1]_s)/B}
	= \begin{cases}
	ds^2 + s^2g_{\wc M/\wc B} & \Mnear s=0\\
	\frac{ds^2}{(1-s)^2} + g_{\wc M/\wc B} & \Mnear s=1
	\end{cases},
\end{equation*}
we extend $\eth_{\wc M/\wc B}$ to a family of Dirac-type operators
acting on the fibers of $\wc M \times [0,1]_s \lra \wc B$ and then,
using an extended families index formula together with some
characteristic form computations, conclude the following in Theorem
\ref{thm:Jeta}.  See Section \ref{sec:extended} for a detailed
description of the transgression forms herein.

\begin{theorem} \label{thm:JetaInt}
For a fiber bundle $\wc M \lra \wc B$ as above, the relation between the $\eta$ and $\cJ$-forms for wedge Dirac-type operators with a compatible perturbation $Q$ is
\begin{multline*}
	\cJ_Q(\wc M/\wc B) 
	-\bar\eta_Q(\wc M/\wc B) \\
	=\int_{\wc M/\wc B} T\hat A(\wc M/\wc B) \Ch'(E) 
	+ \sum_{ \wc N \in \cS_{\wc \psi}(\wc M) } 
	\int_{\wc N/\wc B} T\hat A( \wc N/\wc B) \cJ_Q(\bhs{\wc N}/\wc N)
	+ d\eta_{b-\w, Q},
\end{multline*}
where,  for any family $\wc L \lra \wc B$ as above, $T\hat A(\wc L/\wc
B)$ is the restriction to $\wc L\times \{0\}$ of a transgression
between the $\hat A$-forms on $\wc L \times \bbR^+$ corresponding to a
cylindrical metric and a conic metric, and $\eta_{b-\w,Q}$ denotes a
`b-wedge $\eta$-form'.
\end{theorem}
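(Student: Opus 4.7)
The plan is to apply the extended families index theorem (Theorem \ref{thm:FamIndInt}) to the b-wedge suspension $(\wc M \times [0,1]_s, g_{(\wc M \times [0,1]_s)/\wc B})$ introduced just above the statement, and to read off the identity by identifying the two boundary contributions at $s=0$ and $s=1$, together with the interior integral arising from a Chern--Weil transgression between the Clifford connections at the two ends.

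First I would construct a natural extension of the wedge Clifford bundle $E \to \wc M$ and the Dirac-type operator $\eth_{\wc M/\wc B}$ to a wedge Clifford bundle and Dirac-type operator on the suspension $\wc M \times [0,1]_s \to \wc B$, together with an extension of the compatible perturbation $Q$; this is possible because both endpoints are modeled on wedge-type geometry ($s=0$ is a new conic singularity with fiber $\wc X \times \{0\}$ and $s=1$ is a b-type cylindrical end). I would check that the Witt assumption on $\eth_{\wc M/\wc B,Q}$ is inherited by the boundary families of the suspension at both $s=0$ and $s=1$: at $s=1$ the boundary family is $\eth_{\wc M/\wc B,Q}$ itself, so its heat supertrace is by definition the integrand for $\bar\eta_Q(\wc M/\wc B)$; at $s=0$, the appropriate rescaling of the heat kernel by the conic factor $s$ gives, on the level of integrands, exactly the supertrace defining $\cJ_Q(\wc M/\wc B)$.

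Next I would apply Theorem \ref{thm:FamIndInt} (in the b-wedge variant appropriate to this mixed geometry) to the suspension. The collective boundary hypersurfaces of $\wc M \times [0,1]_s$ transverse to the projection to $\wc B$ are: $\wc M \times \{0\}$, contributing $\cJ_Q(\wc M/\wc B)$; $\wc M \times \{1\}$, contributing $\bar\eta_Q(\wc M/\wc B)$ (with the opposite sign coming from the orientation of the cylindrical end); and $\bhs{\wc N} \times [0,1]_s$ for each $\wc N \in \cS_{\wc\psi}(\wc M)$, contributing $\int_{\wc N/\wc B} T\hat A(\wc N/\wc B) \cJ_Q(\bhs{\wc N}/\wc N)$ after fibre integration over $[0,1]_s$ identifies the integrated $\hat A$-form with its transgression, as described in Section \ref{sec:extended}. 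The interior integral $\int_{\wc M \times [0,1]/\wc B} \hat A \cdot \Ch'(E)$ similarly collapses, by fibre integration over $[0,1]_s$ of the $\hat A$-form of the $s$-dependent wedge tangent bundle, to $\int_{\wc M/\wc B} T\hat A(\wc M/\wc B) \Ch'(E)$. Because $[0,1]_s$ is contractible and both boundary families are invertible, the $K$-theoretic index of the suspended family vanishes, so the Chern character on the left of the families index formula contributes nothing and the identity reorganizes into exactly the statement of Theorem \ref{thm:JetaInt}, with the residual exact form $d\eta_{b-\w,Q}(\wc M/\wc B)$ arising as the b-wedge $\eta$-form that appears generically in the b-calculus index formula \`a la Melrose--Piazza.

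The main obstacle will be the transgression computation: one must verify rigorously that fibre integration over $[0,1]_s$ of the $\hat A$-form of the suspended wedge tangent bundle yields precisely the transgression form $T\hat A$ defined in Section \ref{sec:extended}, both in the interior and along each collective boundary hypersurface $\bhs{\wc N} \times [0,1]_s$. Equally delicate is the identification, at the conic end $s=0$, of the boundary contribution of the b-wedge index formula with $\cJ_Q(\wc M/\wc B)$ rather than some other heat-kernel functional — this reduces to tracking the rescaling of the heat kernel of $\eth_{\wc M/\wc B,Q}$ along the conic degeneration and matching it, after fibre integration in the cone direction, with the definition of $\cJ_Q$ recalled in \S\ref{sec:BCJeta}. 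Once these two identifications are in place, the theorem follows directly from Theorem \ref{thm:FamIndInt} applied to the suspended family.
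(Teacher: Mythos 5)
Your proposal follows essentially the same route as the paper: apply an extended (b-wedge) families index formula to the b-c suspension of $\wc M/\wc B$, use vanishing of the index, and then simplify the interior and boundary $\hat A$-integrals via fibre integration to produce the transgression forms. The paper carries this out exactly as you outline — Theorem \ref{thm:extended-thm} is the ``b-wedge variant'' of Theorem \ref{thm:FamIndInt} you invoke, applied to the compactified suspension $[0,1]_\sigma \times \wc M$, and the transgression computation you flag as the main obstacle is precisely the content of the Proposition in Section \ref{sec:extended} showing $\pi_*(A_f(\nabla)) = TA_f(\bar\nabla,\nabla^{con})\rest{s=0}$.

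One small inaccuracy in your justification of the index vanishing: contractibility of the $[0,1]_s$ factor plus invertibility of the two boundary families does not by itself kill the $K$-theory class. The paper's actual argument (the Lemma at the end of Section \ref{sec:WarpedProd}) shows directly that each operator $\eth_{\wc X^+,Q}$ on the b-c suspension is invertible with bounded inverse — a separation-of-variables / spectral argument combining the Witt assumption on $\wc\eth_{\wc X,Q}$ with the fact that near the cylindrical end the indicial operator $\cl(ds)\pa_s + \wc\eth_{\wc X,Q}$ has no $L^2$ null space. You will need that pointwise invertibility to conclude vanishing of the family index, not just the homotopy-theoretic observation about the interval. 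The rest of your outline, including the identification of the $s=0$ and $s=1$ boundary contributions with $\cJ_Q$ and $\bar\eta_Q$ and the $s$-independence of $\Ch'(E)$ and of $\cJ_Q(\bhs{\wc N}/\wc N)$ needed to collapse the fibre integrals to transgressions, matches the paper's proof.
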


To prove Theorem \ref{thm:JetaInt} we extend Theorem \ref{thm:FamIndInt} to manifolds with corners with metrics that are of `wedge-type' at all collective boundary hypersurfaces but one, where they are of `b-type' (i.e., asymptotically cylindrical, albeit with a singular cross-section).

The formula for the exterior derivative of the $\eta$-forms is already in Theorem \ref{thm:FamIndInt}. Together with Theorem \ref{thm:JetaInt} this implies a formula for the exterior derivative of the $\cJ$-forms, namely
\begin{equation*}
	d\cJ_Q(\wc M/\wc B)
	=\int_{\wc M/\wc B} \hat A_c(\wc M/\wc B) \Ch'(E) 
	+ \sum_{ \wc N \in \cS(\wc  M)}
	\int_{\wc N/\wc B} \hat A_c( \wc N/ \wc B) \cJ_Q(\bhs{\wc N}/\wc N).
\end{equation*}
Here, for any $\wc L,$ $\hat A_c(\wc L/\wc B)$ is the restriction to $\wc L \times \{0\}$ of the $\hat A$-form on $\wc L \times \bbR^+$ corresponding to a conic metric.

\subsection*{Previous results}

The index theorem of Atiyah-Singer for closed manifolds \cite{Atiyah-Singer:0} was soon generalized to operators on manifolds with boundary admitting local elliptic boundary conditions by Atiyah and Bott \cite{Atiyah-Bott} and later extended to operators admitting global elliptic boundary conditions by Atiyah, Patodi, and Singer \cite{APSI}. The resulting formula for a Dirac-type operator $\eth$ on a Clifford bundle $E \lra M$ takes the form
\begin{equation*}
	\ind(\eth) = \int_M \hat A(M)\Ch'(E) -\eta(\eth_{\pa M})
\end{equation*}
where $\hat A$ is the A-hat genus of $M,$ $\Ch'(E)$ the twisting curvature of the Clifford bundle, and $\eta(\eth_{\pa_M})$ the $\eta$-invariant of the induced Dirac-type operator on the boundary. 
The boundary conditions in this paper involve the projection onto the sum of the eigenspaces corresponding to positive eigenvalues of an induced boundary operator and are now known as Atiyah-Patodi-Singer, or APS, boundary conditions. 

Already in \cite{APSI} it was pointed out that the resulting domain had a natural interpretation as the domain of an operator on a complete non-compact manifold obtained by attaching a cylinder to the boundary. Melrose \cite{tapsit} showed that in fact one can consider the APS index theorem as the index theorem in the `category' of manifolds with asymptotically cylindrical end. The present project is analogous to Melrose's treatment, we obtain the index theorem in the `category' of manifolds with corners and iterated fibration structures endowed with wedge metrics.

Cheeger's proof of the Ray-Singer conjecture connecting analytic and Reidemesiter torsion \cite{C1977, C1979A} (established independently by M\"uller \cite{Muller:AT}), led him to develop analysis on spaces with singularities, particularly what we refer to as stratified spaces with wedge metrics. Cheeger realized that the Atiyah-Patodi-Singer index formula can be obtained as the natural index theorem in the context of manifolds with conic singularities, see \cite{C1979, C1980, C1983} for the signature and Gauss-Bonnet theorem both for isolated singularities and for piecewise flat metrics on stratified spaces. Chou \cite{Ch1985, Ch1989} showed for isolated conic singularities that the same was true for the Dirac operator. 

For a fiber bundle of closed operators $M \xlra{\psi} B,$ Atiyah and Singer \cite{Atiyah-Singer:IV} showed that the index theorem generalizes to families of operators on the fibers of $\psi.$ They showed that such a family has an index in the form of a virtual bundle over $B$ and, among other things, they computed the Chern character of this index bundle. Bismut \cite{Bismut:ASFam} used heat equation methods to establish the formula for the Chern character of the index bundle.

In \cite{W1985}, Witten derived a formula for the $\eta$ invariant of a manifold fibering over a circle. This formula was established rigorously by Bismut and Freed \cite{BFI, BFII} for spin Dirac operators and Cheeger \cite{C1987} for the signature operator. This was generalized by Bismut and Cheeger \cite{BC1989} and Dai \cite{Dai:adiabatic} to fiber bundles of arbitrary closed manifolds. They considered the behavior of the $\eta$ invariant as the metric on the total space of the fiber bundle undergoes an `adiabatic limit' in which the fibers are collapsed to a point. The limit involves a `higher' version of the $\eta$-invariant, known as the Bismut-Cheeger $\eta$-forms. These forms are of even degree if the dimension of the fiber is odd and of odd degree if the dimension of the fiber is even.

The r\^ole played by the $\eta$-invariant in the Atiyah-Patodi-Singer index theorem for manifolds with boundary is played by the $\eta$-forms in the formula for the Chern character of the index bundle of a family of Dirac-type operators on a manifold with boundary. This was established by Bismut and Cheeger for spin Dirac operators on even dimensional manifolds with boundary \cite{BC1990, BC1990II, BC1991} assuming invertibility of an induced boundary family of Dirac operators (referred to as the `Witt assumption' below). Melrose and Piazza \cite{Melrose-Piazza:Even,Melrose-Piazza:Odd} proved a formula conjectured in \cite{BC1991} for the case of odd dimensional manifolds with boundary, extended the Bismut-Cheeger result to Dirac-type operators, and removed the assumption of invertible boundary families. They introduced the notion of a `spectral section' of the boundary family and proved an index theorem for each spectral section.

(In the present paper we make the assumption of invertible boundary families as in the work of Bismut-Cheeger mentioned above, but we allow perturbations as in the approach of Melrose-Piazza. In a subsequent paper we will characterize the existence of these perturbations in terms of spectral sections.)

The approach adopted by Bismut and Cheeger to establish the families index theorem for Dirac operators on manifolds with boundary was to attach a cone to the boundary and consider a family of metrics, parametrized by $\eps \in [0,1],$ interpolating between the conic singularity and an infinite cylindrical end. An intermediate result is an index theorem for families of Dirac operators on spaces with isolated conic singularities. One effect of the $\eps$ degeneration is to `scale away' the small eigenvalues of the boundary family of Dirac operators so that the Dirac operators on the spaces with conical singularities are essentially self-adjoint and there is no need to choose a domain. The Chern character of the index bundle of the family of Dirac operators on spaces with isolated conic singularities involves another differential form invariant, the Bismut-Cheeger $\cJ$-forms. Bismut-Cheeger showed that the $\eps$-limit of the $\cJ$-form in this case is the $\eta$ form.\\

Index theory is now a vast field. Among the many ways in which singular spaces arise in index theory there are spaces arising from foliations (see, e.g., \cite{Connes-Skandalis, Bruning-Kamber-Richardson}) and group actions (see, e.g., \cite{Atiyah:EllOpsCmptGps}). 
Index theorems on complete metrics on manifolds with possibly fibered boundary include
\cite{Carron, Lauter-Moroianu:IndFibCusp, Melrose-Rochon:Cusp, Melrose-Rochon:FCusp, Vaillant, Melrose-Rochon:EtaForms, Albin-Melrose:Fred1, Albin-Melrose:Fred2, Albin-Melrose:RelChern, Albin-Rochon:DFam, Leichtnam-Mazzeo-Piazza, Hunsicker, Piazza:OnInd, Melrose-Nistor:Hom}.
Index theory on manifolds with corners endowed with complete metrics have been studied in, e.g.,  \cite{Melrose-Piazza:K, Melrose-Nistor:K, Lauter-Moroianu:IndCuspCorners, Loya, Bunke:IndThy, Monthubert-Nistor, Stern, Muller:L2Ind, Hassell-Mazzeo-Melrose:Sign}.

There is a powerful groupoid approach to index problems on the interior of  manifolds with corners, see e.g., \cite{AmmanLauterNistor, Debord-Lescure-Rochon, Carvalho-Nistor, CarrilloLescureMonthubert, Bohlen-Schrohe} for complete metrics,  \cite{Debord-Lescure-Nistor} for isolated conic singularities, and \cite{Debord-Skandalis} for a Boutet de Monvel type calculus.

An approach of Nazaikinskii, Yu, Savin, Sternin, and Schulze, see \cite{NSSYS:IndEdges, NSSYS:Book, Savin-Sternin:IndStrat} in the stratified setting proceeds by decomposing the index of a pseudodifferential operator on a stratified space into a sum of contributions from each stratum with the property that each is a homotopy invariant of the symbol. These papers make use of the analytic tools developed by Schulze and his collaborators, see e.g., \cite{Schulze, Schrohe-Schulze:BVPII}. Our treatment benefits from recent advances in parallel analytic tools in, e.g., \cite{Albin-Rochon-Sher:Wedge, Mazzeo-Vertman:Edge, Mazzeo-Vertman:AT, Krainer-Mendoza:BVP, GellRedman-Swoboda, Mazzeo-Witten:KW}.\\

Already in \cite{Atiyah:Global, Singer:Future} Atiyah and Singer called for the development of index theory on stratified spaces such as algebraic varieties. 
Index theory on spaces with isolated conic singularities is now very well understood see, e.g., \cite{Lesch:Fuchs, Fox-Haskell, Bruning-Lesch:Alg, Bruning-Seeley:Index}.
However, there are few explicit index formul\ae{} associated to singular metrics on stratified spaces beyond the case of isolated conic singularities. For a stratified space with a single singular stratum and a wedge metric there are index theorems for the signature operator by Chan, Hunsicker-Mazzeo, Br\"uning, and Cheeger-Dai \cite{Chan,  Hunsciker-Mazzeo, Bruning:Signature, Cheeger-Dai}. The $\eta$ and $\rho$ invariants have been studied by Piazza and Vertman \cite{Piazza-Vertman}. Atiyah and LeBrun \cite{AL2013} obtain an index theorem on a smooth four dimensional manifold endowed with a singular wedge metric. Lock and Viaclovsky \cite{LV2013} prove an index theorem for anti-self-dual orbifold-cone metrics, again in four dimensions. In previous work, the authors \cite{Albin-GellRedman} proved an index theorem for spin Dirac operators satisfying the geometric Witt condition.

$ $\\
{\bf Acknowledgements}
The first author was partially supported by a Simons Foundation grant \#317883 and an NSF grant DMS-1711325, and is grateful to M.I.T. for its hospitality during the completion of this work.
The authors are happy to acknowledge helpful conversations with
Rafe Mazzeo,
Richard Melrose,
Gerardo Mendoza, and
Paolo Piazza.

\tableofcontents

\paperbody

\section{Families of Dirac-type wedge operators}

\subsection{Iterated fibration structures and wedge geometry on manifolds with corners} \label{sec:IFS}

Let $X$ be an $n$ dimensional manifold with corners, by which we mean an $n$ dimensional topological manifold with boundary with a smooth atlas modeled on $(\bbR^+)^n$ whose boundary hypersurfaces are embedded.
We denote the set of boundary hypersurfaces of $X$ by $\cM_1(X).$

A {\textbf{ collective boundary hypersurface}} refers to a finite union of non-intersecting boundary hypersurfaces.

\begin{definition}[{Melrose \cite{Albin-Melrose:Gps, ALMP:Witt, Albin:Hodge}}] \label{def:IFS} An {\em iterated fibration structure} on a manifold with corners $X$ consists of a collection of fiber bundles
\begin{equation*}
	Z_Y \fib \bhs{Y} \xlra{\phi_Y} Y
\end{equation*}
where $\bhs{Y}$ is a collective boundary hypersurface of $X$ with base and fiber manifolds with corners such that:\\
\begin{itemize}
\item[i)] Each boundary hypersurface of $X$ occurs in exactly one collective boundary hypersurface $\bhs{Y}.$
\item[ii)] If $\bhs{Y}$ and $\bhs{\wt Y}$ intersect, then $\dim Y \neq \dim \wt Y,$ and we write
\begin{equation*}
	Y < \wt Y \Mif \dim Y < \dim \wt Y.
\end{equation*}
\item[iii)] If $Y < \wt Y$ then $\wt Y$ has a collective boundary hypersurface $\bhs{Y\wt Y}$ participating in a fiber bundle 
$\phi_{Y\wt Y}:\bhs{Y\wt Y} \lra Y$ such that the diagram
\begin{equation}\label{eq:bfsDiag}
	\xymatrix{
	\bhs{Y} \cap \bhs{\wt Y} \ar[rd]_-{\phi_Y} \ar[rr]^-{\phi_{\wt Y}} & & 
		\bhs{Y\wt Y} \ar[ld]^-{\phi_{Y\wt Y}} \ar@{}[r]|-*[@]{\subseteq} & \wt Y \\
	& Y & }
\end{equation}
commutes.
\end{itemize}
\end{definition}

Unless stated otherwise, we will assume that $X$ is compact and that $\dim  Z_Y >0$ for all $Y.$\\

There is no real loss of generality in assuming that the bases are connected, but the fibers of the boundary fibrations will generally be disconnected.

There is a functorial equivalence between Thom-Mather stratified spaces and manifolds with corners and iterated fibration structures see, e.g., \cite{ALMP:Witt},\cite[Theorem 6.3]{Albin:Hodge}. Under this equivalence, the bases of the boundary fibrations correspond to the different strata. We will denote this set by
\begin{equation*}
	\cS(X) = \{ Y : Y \text{ is the base of a boundary fibration of } X \}.
\end{equation*}
Both the bases and fibers of the boundary fiber bundles themselves are
manifolds with corners and iterated fibration structures, see e.g.,
\cite[Lemma 3.4]{Albin-Melrose:Gps}. The assumption $\dim
Z_Y>0$ corresponds to the category of pseudomanifolds within the
larger category of stratified spaces.

The partial order on $\cS(X)$ gives us a notion of depth
\begin{equation*}
	\mathrm{depth}_X(Y)
	= \max\{ n \in \bbN_0: \exists Y_i \in \cS(X) \Mst Y=Y_0<Y_1 < \ldots <Y_n \}.
\end{equation*}
The depth of $X$ is then the maximum of the integers
$\mathrm{depth}_X(Y)$ over $Y \in \cS(X).$  \\

If $H$ is a boundary hypersurface then, because it is assumed embedded, there is a non-negative function $\rho_H$ such that
\begin{equation*}
	\rho_H^{-1}(0) = H \Mand |d\rho_H| \neq 0 \Mon H,
\end{equation*}
where $| \cdot |$ denote the norm with respect to some smooth
background metric on $M$; we call any such function a {\textbf{ boundary defining function}} for $H.$
It is always possible (see, e.g., \cite[Proposition 1.2]{Albin-Melrose:Gps}) to choose: a boundary defining function $\rho_H$ for each $H \in \cM_1(X),$ an open neighborhood $\cU_H \subseteq X$ of each $H,$ and a smooth vector field $V_H$ defined in  $\cU_H$ such that 
\begin{equation*}
\begin{gathered}
	V_H\rho_K 
	= \begin{cases} 
	1 \Min \cU_H & \Mif K=H\\
	0 \Min \cU_H\cap \cU_K & \Mif K\neq H
	\end{cases} \\
	[V_H, V_K]=0 \Min \cU_H\cap \cU_K
\end{gathered}
\end{equation*}
for all $H,K \in \cM_1(X).$ We refer to these choices as a {\bf boundary product structure,} and will always assume that our boundary defining functions are chosen this way.

For each $Y \in \cS(X)$ we denote a collective boundary defining function by
\begin{equation*}
	\rho_Y = \prod_{H \in \bhs{Y}} \rho_H,
\end{equation*}
we also use the notation
\begin{equation*}
	\rho_X = \prod_{H \in \cM_1(X)} \rho_H
\end{equation*}
for a {\em total boundary defining function.}

A boundary product structure allows us to extend an iterated fibration structure to a {\bf collared iterated fibration structure.} 
Indeed, let us assume for simplicity of notation that the neighborhoods $\cU_H$ coincide with $\rho_H^{-1}([0,1)),$ so that $\cU_H \cong [0,1)_{\rho_H}\times H.$ For each $Y \in \cS(X),$ we write
\begin{equation*}
	\sC(\bhs{Y}) 
	= \bigcup_{H \in \bhs{Y}} \cU_H \cong [0,1)_{\rho_Y} \times \bhs{Y}, \quad
	Y^+ = [0,1)_{\rho_Y} \times Y
\end{equation*}
and we denote the natural extension of $\phi_Y$ by
\begin{equation*}
	\phi_{Y^+}: \sC(\bhs{Y}) \lra Y^+.
\end{equation*}
Choosing compatible boundary product structures on each $Y \in \cS(X)$ (existence is checked by a simple induction on the depth of $X$), the extended boundary fibrations participate in commutative diagrams,
\begin{equation}\label{eq:bfsDiagExt}
	\xymatrix{
	\sC(\bhs{Y}) \cap \sC(\bhs{\wt Y}) 
		\ar[rd]_-{\phi_{Y^+}} \ar[rr]^-{\phi_{\wt Y^+}} & & 
	\sC(\bhs{Y\wt Y}) 
		\ar[ld]^-{\phi_{(Y\wt Y)^+}} \ar@{}[r]|-*[@]{\subseteq} & \wt Y^+ \\
	& Y^+ & }
\end{equation}
whenever $Y<\wt Y.$

This structure will be useful when we discuss Getzler rescaling below
(\S \ref{sec:Getzler}). In that setting, we will have a filtration of
a vector bundle defined on collective boundary hypersurfaces and we
will need to extend it into a neighborhood of the boundary
consistently; a collared iterated fibration structure makes this easy
to do. 
\\

Various differential geometric objects have natural analogues that take the iterated fibration structure into account.
For example, we define 
\begin{equation*}
	\CI_{\Phi}(X) = \{ f \in \CI(X) : f\rest{\bhs{Y}} \in \phi_Y^*\CI(Y) \Mforall Y \in \cS(X) \}.
\end{equation*}
(This corresponds to the smooth functions on $X$ that are continuous on the underlying stratified space. If an open cover of $X$ is the lift of a cover of the underlying stratified space, then there is a compatible partition of unity in $\CI_{\Phi}(X)$ see, e.g., \cite[Lemma 5.2]{ABLMP}.)

The {\em edge vector fields} on $X$ \cite{Mazzeo:Edge} are
\begin{equation*}
	\cV_\e = \{ V \in \CI(X;TX) : 
	V\rest{\bhs{Y}} \text{ is tangent to the fibers of } \phi_Y \Mforall Y \in \cS(X) \},
\end{equation*}
or, equivalently, they are the b-vector fields (vector fields tangent to the boundary) that when applied to $\CI_{\Phi}(X)$ yield functions that vanish at the boundary of $X.$

There is a vector bundle, the {\em edge tangent bundle}, ${}^\e TX,$ together with a natural vector bundle map $i_\e: {}^\e TX \lra TX$ that is an isomorphism over the interior and satisfies
\begin{equation*}
	(i_\e)_*\CI(X;{}^\e TX) = \cV_\e.
\end{equation*}
In local coordinates near a point in $\bhs{Y},$ $(x, y, z),$ where $x$ is a bdf, $y$ denotes coordinates along $Y,$ and $z$ denotes coordinates along $Z,$ a local frame for ${}^\e TX$ is given by
\begin{equation*}
	x\pa_x, \quad x\pa_y, \quad \pa_z.
\end{equation*}
Note that the vector fields $x\pa_x$ and $x\pa_y$ are degenerate as sections of $TX$ but not as sections of ${}^\e TX.$
If $Y= \{\pt\}$ then the edge tangent bundle coincides with the b-tangent bundle discussed in \S\ref{sec:Conormal}.

The universal enveloping algebra of $\cV_{\e}$ is the ring $\Diff^*_\e(X)$ of edge differential operators \cite[\S2]{Mazzeo:Edge}. Thus these are the differential operators on $X$ that can be expressed locally as finite sums of products of elements of $\cV_{\e}.$ They have the usual notion of degree and extension to sections of vector bundles, as well as an edge symbol map defined on the edge cotangent bundle, see \cite{Mazzeo:Edge,ALMP:Witt, ALMP:Hodge}.

\begin{remark}
In \cite{ALMP:Witt, ALMP:Hodge, ALMP:Novikov} the edge tangent bundle was referred to as the `iterated edge tangent bundle'. We prefer to think of it as the edge tangent bundle of the iterated fibration structure. Similarly, the wedge tangent bundle, defined below, was referred to in {\em loc. cit.} as the `iterated incomplete edge tangent bundle'.
\end{remark}

Among the metrics most closely associated to these spaces are metrics
that degenerate conically reflecting the conic degeneration of the
space.  We will define these formally in Section \ref{sec:Metrics},
but roughly speaking their are the geometric object arising from
iterating the process of taking the geometric cones of spaces and
local bundles of such cones.  Metrics of this form, {\em wedge
  metrics}, are singular at the boundary of $X.$ However, they can be
seen as smooth (or more generally $\cI$-smooth or polyhomogeneous) sections of a
{\em rescaled} bundle of symmetric tensors.

Formally, we proceed as follows. Let $X$ be a manifold with corners and iterated fibration structure. Consider the `wedge one-forms'
\begin{equation*}
	\cV^*_\w = \{ 
	\omega \in \CI(X;T^*X): \Mforeach Y \in \cS(X), \;
	i^*_{\bhs{Y}}\omega(V) =0 \Mforall V \in \ker D\phi_Y \}.
\end{equation*}
Just as we have done with the edge tangent bundle, we can identify $\cV_{\w}^*$ with the space of sections of a vector bundle.
That is, there exists a vector bundle ${}^\w T^*X,$ the wedge cotangent bundle of $X,$ together with a bundle map
\begin{equation}\label{eq:wedge-cot}
	i_\w: {}^\w T^*X \lra T^*X
\end{equation}
\nomenclature[T]{${}^\w T^*X$}{The wedge cotangent bundles of $X$, a
  manifold with corners with iterated fibration structure.  Similarly
  ${}^\w TX$ denotes the wedge tangent bundle}
that is an isomorphism over the interior of $X$ and such that 
\begin{equation*}
	(i_\w)_*\CI(X;{}^\w T^*X) = \cV_{\w}^* \subseteq \CI(X;T^*X).
\end{equation*}
In particular, in local coordinates near the collective boundary hypersurface $\bhs{Y}$ the wedge cotangent bundle is spanned by
\begin{equation*}
	dx, \quad x dz, \quad dy
\end{equation*}
where $x$ is a boundary defining function for $\bhs{Y},$ $dz$ represents covectors along the fibers and $dy$ covectors along the base.

The dual bundle to the wedge cotangent bundle is the {\em wedge tangent bundle}, ${}^\w TX.$ It is locally spanned by 
\begin{equation*}
	\pa_x, \quad \tfrac1x\pa_z, \quad \pa_y.
\end{equation*}
A \textbf{wedge metric} is simply a bundle metric on the wedge tangent
bundle.  Below we will make further assumptions on the metric, see
Section \ref{sec:Metrics}.

Wedge differential operators are defined in terms of edge differential operators: $P$ is a wedge differential operator of order $k$ acting on sections of a vector bundle $E$ if $\rho_X^kP$ is an edge differential operator of order $k$ acting on sections of $E,$
\begin{equation}\label{eq:DefWedgeDiff}
	\Diff_\w^k(X;E) = \rho_X^{-k}\Diff_{\e}^k(X;E).
\end{equation}
See, e.g., \cite{Mazzeo-Vertman:Edge, Gil-Krainer-Mendoza:Closure, ALMP:Witt, ALMP:Hodge}.\\

By a smooth family of manifolds with corners and iterated fibration structures we will mean first of all a fiber bundle
\begin{equation*}
	X \fib M \xlra{\psi} B
\end{equation*}
where $X,$ $M,$ and $B$ are manifolds with corners. 
Since $M$ is locally diffeomorphic to $X \times \cU,$ $\cU\subseteq B,$ every boundary hypersurface of $M$ corresponds to either a boundary hypersurface of $B$ or a boundary hypersurface of $X.$ The latter are the boundary hypersurfaces that are transverse to $\psi.$
We want the fibers of $\psi$ to have iterated fibration structures that themselves vary smoothly. 
We formalize this as follows.

\begin{definition}\label{def:FibIFS}
A {\bf locally trivial family of manifolds with corners and iterated fibration structures over $B$} is:\\
\begin{itemize}
\item a fiber bundle of manifolds with corners $X \fib M \xlra{\psi}
  B,$
\item a partition of the boundary hypersurfaces of $M$ transverse to
  $\psi$, which we denote by $\cS_{\psi}(M) \subset \cS(M)$, into collective boundary hypersurfaces $\{ \bhs{N}: N \in
  \cS_{\psi}(M) \},$ where each $N$ is a manifold with corners endowed
  with a fiber bundle map $N \xlra{\psi_N} B,$
\item a collection of fiber bundles $Z_N \fib \bhs{N} \xlra{\phi_N} N$
  satisfying Definition \ref{def:IFS}(ii-iii),
\end{itemize}
satisfying that, for all $N \in \cS_{\psi}(M),$ the diagram
\begin{equation}\label{eq:collective-bhss}
	\xymatrix{
	\bhs{N} \ar@{^(->}[r] \ar[d]_-{\phi_N}  & M \ar[dd]^-{\psi} \\
	N \ar[rd]_-{\psi_N} & \\
	& B  }
\end{equation}
commutes.
\nomenclature[S]{$\cS_\psi(M)$}{The boundary hyperfaces of a manifold
  with corners $M$ that are transverse to a fibration $\psi \colon M
  \lra B$}
\end{definition}

For each $b \in B$ the fiber of $\psi:M \lra B,$ $X = \psi^{-1}(b)$ has a iterated fibration structure with
\begin{equation*}
	\cS(X) = \{ Y_b = \psi_N^{-1}(b) : N \in \cS_{\psi}(M) \}
\end{equation*}
and boundary fiber bundles determined by the diagrams, one for each $N \in \cS_{\psi}(M),$
\begin{equation*}
	\xymatrix{ & & X \ar@{-}[r] & M \ar[d]^-{\psi} \\
	Z \ar@{-}[r] & \bhs{Y} \ar[d]_-{\phi_Y} \ar@{-}[r] \ar@{^(->}[ru] 
		& \bhs{N} \ar[d]_-{\phi_N} \ar@{^(->}[ru] & B\\
	& Y \ar@{-}[r] & N \ar[ru]_-{\psi_N} & }
\end{equation*}
In particular, $\bhs{Y}$ is the typical fiber of $\phi_N \circ
\psi_N$.  We will always use $\bhs{N}$ to denote collective boundary hypersurfaces of $M$ and $\bhs{Y}$ to denote collective boundary hypersurfaces of $X$ and hope the similar notation does not cause confusion. \\

Analogously to what we have done before, the $\psi$-wedge one forms are the covectors on $M$ that vanish on vertical vectors at all boundary hypersurfaces transverse to $\psi,$
\begin{equation*}
	\cV_{\w(\psi)}^* = 
	\{\omega \in \CI(M;T^*M): \Mforeach N \in \cS_{\psi}(M),
	i_{\bhs{N}}^*\omega(V) = 0 \Mforall V \in \ker D\phi_N \},
\end{equation*}
and can be identified with the sections of a vector bundle, ${}^{\w(\psi)}T^*M.$
We refer to this as the `$\psi$-wedge cotangent bundle' and to the dual bundle ${}^{\w(\psi)}TM,$ as the `$\psi$-wedge tangent bundle'.
The latter has a sub-bundle determined by its sections,
\begin{equation*}
	\CI(M; {}^{\w}TM/B)
	 = \{ V \in \CI(M;{}^{\w(\psi)}TM) : (\rho_XV)(\psi^*_Bf)=0 \Mforall f \in \CI(B)\},
\end{equation*}
where $\rho_X V \in \CI(M;TM)$ acts by differentiation,
which we will call the {\em vertical wedge tangent bundle.} The {\em vertical wedge cotangent bundle} is the dual bundle ${}^{\w}T^*M/B \lra M.$ A choice of connection for $M \xlra{\psi} B$ induces splittings
\begin{equation}\label{eq:Splittings}
	{}^{\w(\psi)} TM = {}^{\w} TM/B \oplus \psi^*TB, \quad
	{}^{\w(\psi)} T^*M = {}^{\w} T^*M/B \oplus \psi^* T^*B.
\end{equation}
%

\subsection{Totally geodesic wedge metrics}\label{sec:Metrics}

Let $M \xlra{\psi} B$ be a family of manifolds with corners and iterated fibration structures. A {\em vertical wedge metric} on $M$ will refer to a bundle metric on ${}^{\w}TM/B.$ For simplicity we will work with a subset of these metrics which we call `totally geodesic vertical wedge metrics'. For simplicity of notation, in this section we discuss these metrics on a fixed fiber $X = \psi^{-1}(b)$ of $\psi.$\\

We define totally geodesic wedge metrics inductively by the depth of the space.
If $X$ has depth zero, so is a smooth manifold, a wedge metric is simply a Riemannian metric.
Assuming we have defined totally geodesic wedge metrics at spaces of depth less than $k,$ let $X$ have depth $k.$
A wedge metric $g_{\w}$ on $X$ is a {\em totally geodesic wedge metric} if, for every $Y \in \cS(X)$ of depth $k$  there is a collar neighborhood $\sC(\bhs{Y}) \cong [0,1)_x \times \bhs{Y}$ of $\bhs{Y}$ in $X,$ a metric $g_{\w,\pt}$ of the form
\begin{equation}\label{eq:pt-canonical-form}
	g_{\w,\pt} = dx^2 + x^2 g_{\bhs{Y}/Y} + \phi_Y^*g_Y
\end{equation}
where $g_Y$ is a totally geodesic wedge metric on $Y,$ $g_{Z_Y} + \phi^*g_Y$ is a submersion metric for $\bhs{Y} \xlra{\phi_Y} Y,$ and $g_{Z_Y}$ restricts to each fiber of $\phi_Y$ to be a totally geodesic wedge metric on $Z_Y,$ and
\begin{equation}\label{eq:tg-canonical-form}
	g_{\w} - g_{\w, \pt} \in x^2 \CI(\cC(\bhs{Y}; S^2({}^{\w} T^*X)).
\end{equation}
Off of these collar neighborhoods the form of the metric is fixed by the induction.

If at every step $g_{\w} = g_{\w, \pt}$ we say that $g_{\w}$ is a {\em rigid} or {\em product-type} wedge metric.
If at every step $g_{\w} - g_{\w, \pt} = \cO(x)$ as a symmetric two-tensor on the wedge tangent bundle, we say that $g_{\w}$ is an {\em exact} wedge metric.
We will always work with totally geodesic wedge metrics.\\

It is worthwhile describing a product-type metric near boundary faces of arbitrary depth. As a simple example, the metric on the cone over a cone has the form
\begin{equation*}
	ds^2 + s^2(dr^2 + r^2g_Z).
\end{equation*}
This has the form above near $\{s=0\},$ but not near $\{r=0\}.$
However, from \eqref{eq:bfsDiag}, every point $\zeta \in \pa X$ lies over the interior of a unique $Y \in \cS(X).$ 
We can choose a boundary defining function $x$ for $\bhs{Y},$ and identify a neighborhood of $\zeta$ in $X$ with the form $[0,1)_x \times Z \times \cU$, where $\cU$ is a subset of $Y$ with closure contained in the interior of $Y,$ and then the metric takes the form
\begin{equation*}
	dx^2 + x^2g_Z + \phi_Y^*g_Y
\end{equation*}
in this neighborhood.
This is consistent with the description above since the boundary hypersurface of greatest depth in this neighborhood is $\bhs{Y}.$
A common theme throughout this work is to work over the interior of each $Y \in \cS(X),$ and make use of the fact that this exhausts all of $\pa X.$\\

Let us describe the asymptotics of the Levi-Civita connection of a totally geodesic wedge metric at $\bhs{Y}$ for $Y \in \cS(X).$
First let us start by recalling the behavior of the Levi-Civita connection of a submersion metric.
Endow $\bhs{Y}$ with a submersion metric of the form $g_{\bhs{Y}}= g_{\bhs{Y}/Y} + \phi^*g_Y.$
We denote the associated splitting of the tangent bundle $T\pa X$ by
\begin{equation*}
	T\bhs{Y} = T\bhs{Y}/Y \oplus \phi_Y^*TY
\end{equation*}
and the orthogonal projections onto each summand by
\begin{equation*}
	\bh:T\bhs{Y} \lra \phi^*TY, \quad \bv:T\bhs{Y} \lra T\bhs{Y}/Y.
\end{equation*}
Given a vector field $U$ on $Y,$ we denote its horizontal lift to $\bhs{Y}$ by $\wt U.$ 
The Levi-Civita connection of $(\bhs{Y}, g_{\bhs{Y}}),$
$\nabla^{\bhs{Y}},$ can be written in terms of the Levi-Civita
connections $\nabla^Y$ on the base and the connections
$\nabla^{\bhs{Y}/Y}$ on the fibers using two tensors: 1) the second fundamental form of the fibers, defined by
\begin{equation*}
	\cS^{\phi_Y}:T\bhs{Y}/Y \times T\bhs{Y}/Y \lra \phi^*TY, 
	\quad \cS^{\phi_Y}(V_1, V_2) = \bh(\nabla^{\bhs{Y}/Y}_{V_1}V_2)
\end{equation*}
and, 2) the curvature of the fibration, defined by
\begin{equation*}
	\cR^{\phi_Y}:\phi^*TY \times \phi^*TY \lra T\bhs{Y}/Y, 
	\quad \cR^{\phi_Y}(\wt U_1, \wt U_2) = \bv([\wt U_1, \wt U_2]).
\end{equation*}
The behavior of the Levi-Civita connection (cf. \cite[Proposition 13]{HHM2004}) is then summed up in the table:
\begin{equation*}
\begin{tabular}{|c||c|c|} \hline 
$g_{\bhs{Y}}\lrpar{\nabla^{\bhs{Y}}_{W_1} W_2, W_3}$ & $V_0$ & $\wt U_0$ \\ \hline\hline
$\nabla^{\bhs{Y}}_{V_1}V_2$ & 
	$g_{\pa X/Y}\lrpar{\nabla^{\bhs{Y}/Y}_{ V_1} V_2, V_0}$ & 
	$\phi^*g_Y(\cS^{\phi_Y}( V_1, V_2), \wt U_0)$ \\ \hline
$\nabla^{\bhs{Y}}_{\wt U} V$ & 
	$g_{\bhs{Y}/Y}\lrpar{[\wt U, V], V_0} - \phi_Y^*g_Y(\cS^{\phi_Y}( V,  V_0), \wt U)$ &
	$-\frac12g_{\bhs{Y}/Y}(\cR^{\phi_Y}(\wt U, \wt U_0), V)$ \\ \hline
$\nabla^{\bhs{Y}}_{ V} \wt U$ &
	$-\phi_Y^*g_Y(\cS^{\phi_Y}(V, V_0),\wt U)$ &
	$\frac12g_{\bhs{Y}/Y}(\cR^{\phi_Y}(\wt U, \wt U_0), V)$ \\ \hline
$\nabla^{\bhs{Y}}_{\wt U_1}\wt U_2$ &
	$\frac12g_{\bhs{Y}/Y}(\cR^{\phi_Y}(\wt U_1, \wt U_2), V_0)$ &
	$g_Y( \nabla^Y_{U_1} U_2, U_0)$ \\ \hline
\end{tabular}
\end{equation*}
$ $

We want a similar description of the Levi-Civita connection of a totally geodesic wedge metric.
We define an operator $\nabla$ on sections of the wedge tangent bundle through the usual Koszul formula
\begin{multline*}
	2g_{\w}(\nabla_{W_0}W_1, W_2) = W_0 g_{\w}(W_1, W_2) + W_1 g_{\w}(W_0, W_2) - W_2 g_{\w}(W_0, W_1) \\
	+ g_{\w}(\lrspar{W_0, W_1}, W_2) - g_{\w}(\lrspar{W_0, W_2}, W_1) - g_{\w}(\lrspar{ W_1, W_2}, W_0).
\end{multline*}
We will consider wedge metrics $g_{\w}$ and $g'_{\w}$ that differ by
$g_{\w}-g_{\w}' = \cO_{\w}(x^2)$, where the $\cO_{\w}(x^p)$ notation
reminds the reader that these are tensors on wedge vector fields.  In particular, for any such pair, if $W_i \in
C^\infty(X;\wT X)$, $i = 0, 1, 2$, then $g_{\w}(W_i, W_j) -
g_{\w}'(W_i, W_j) = \cO(x^2)$, and by the Koszul formula and the fact,
seen below, that $x [W_i, W_j]$ is a smooth wedge vector field,
\begin{equation}\label{eq:metric-diff}
	g_{\w}(\nabla_{W_0}W_1, W_2) - g_{\w}'(\nabla'_{W_0}W_1, W_2) = \cO(x).
\end{equation}
Thus to understand the leading asymptotics of the Levi-Civita connection
of a totally geodesic wedge metric, it suffices to understand the
leading asymptotics of a product-type wedge metric. 

Fix a product-type wedge metric $g_{\w,\pt}=dx^2 + x^2g_{\bhs Y/Y}
+\phi^*g_Y$ such that $g_{\w} - g_{\w,\pt} =x^2\wt g,$ the splitting
of the tangent bundle of $X$ associated to $g_{\bhs Y/Y} + \phi^*g_Y$
extends to a splitting of the wedge tangent bundle of $\sC(\bhs{Y})$
and hence induces a splitting
\begin{equation}\label{eq:SplittingIE}
	{}^\w T\sC(\bhs{Y}) = \ang{\pa_x} \oplus \tfrac1x  \wT\bhs{Y}/Y \oplus \phi_Y^*  TY.
\end{equation}
In terms of which a convenient choice of vector fields is
\begin{equation*}
	\pa_x, \quad \tfrac1xV, \quad \wt U
\end{equation*}
where $V$ denotes a $\phi_Y$-vertical wedge vector field at $\{x=0\}$
extended trivially to $\sC(\bhs{Y})$ and $\wt U$ denotes a wedge
vector field on $Y,$ lifted horizontally to $\bhs{Y}$ and then extended trivially to $\sC(\bhs{Y}).$
Note that, with respect to $g_{\w,\pt},$ these three types of vector fields are orthogonal, and that their commutators satisfy
\begin{equation*}
\begin{gathered}
	\lrspar{ \pa_x, \tfrac1xV } = -\tfrac1{x^2}V 
		\in x^{-1}\CI(\sC(\bhs{Y}), \tfrac1x\wT\bhs Y/Y), \quad
	\lrspar{ \pa_x, \wt U } = 0, \\
	\lrspar{ \tfrac1xV_1, \tfrac1xV_2} = \tfrac1{x^2} \lrspar{V_1, V_2} 
		\in x^{-1}\CI(\sC(\bhs{Y}), \tfrac1x\wT\bhs{Y}/Y), \\
	\lrspar{ \tfrac1xV, \wt U} = \tfrac1x\lrspar{ V, \wt U} \in 
		\CI(\sC(\bhs{Y}), \tfrac1x\wT\bhs{Y}/Y), \\
	\lrspar{ \wt U_1, \wt U_2} 
		\in x\CI(\sC(\bhs{Y}), \tfrac1x\wT\bhs{Y}/Y) 
		+ \CI(\sC(\bhs{Y}), \phi_Y^*TY).
\end{gathered}
\end{equation*}

\begin{remark}
It is important to understand that the inductive definition of
  product type and totally geodesic wedge vector fields does
  \emph{not} imply that for a fixed $g_{\w}$, every $\bhs{Y}$ has a
  collar neighborhood with a metric that of the canonical form above.
  Indeed, such a decomposition can only be assumed on the complement
  of those $\bhs{Y'}$ with $Y' < Y$.  Here we compute the asymptotics
  of the connection and curvature on a neighborhood where the
  decomposition holds, and when we work at a corner, i.e.\ an
  intersection of $\bhs{Y} \cap \bhs{Y'}$, we assume the decomposition
  holds only on a neighborhood of the deeper one.
\end{remark}

Below we will work with a local frame of wedge vector fields, orthogonal with respect to $g_{\w,\pt},$
\begin{equation}\label{eq:local-frame}
	\pa_x , \quad 
	\frac 1x V_\alpha, \quad 
	\wt{U}_i, \quad
	\alpha = 1, \dots, v_Y = \dim Z_Y, \quad 
	i = 1, \dots, h_Y = \dim Y,
\end{equation}
where the $V_\alpha$ are a a local frame of ${}^\w \wT\bhs{Y}/Y$, the $\wt{U}_i$ are the
horizontal lifts of a local frame $U_i$ of ${}^\w TY.$

If $W_1 \in \{ \pa_x, V_\alpha, \wt U_i\}$ and $W_2, W_3 \in \{\pa_x, \tfrac1x V_\alpha, \wt U_i \}$ then we find
\begin{multline*}
	g_{\w, \pt}(\nabla_{W_1}W_2, W_3) =0 \Mif \pa_x \in \{  W_1, W_2, W_3 \} \\
	\text{ except for }
	g_{\w,\pt}(\nabla_{V_1}\pa_x, \tfrac1x V_2) 
		= - g_{\w,\pt}(\nabla_{V_1}\tfrac1xV_2, \pa_x) =  g_{\bhs{Y}/Y}(V_1, V_2)
\end{multline*}
and otherwise 
\begin{equation*}
\begin{tabular}{|c||c|c|} \hline 
$g_{\w,\pt}\lrpar{\nabla_{W_1} W_2, W_3}$ & $\tfrac1xV_3$ & $\wt U_3$ \\ \hline\hline
$\nabla_{V_1}\tfrac1x V_2$ & 
	$g_{\bhs{Y}/Y}\lrpar{\nabla^{\bhs{Y}/Y}_{ V_1} V_2, V_3}$ & 
	$x\phi_Y^*g_Y(\cS^{\phi_Y}( V_1, V_2), \wt U_3)$ \\ \hline
$\nabla_{\wt U} \tfrac1xV$ & 
	$g_{\bhs{Y}/Y}\lrpar{[\wt U, V], V_3} - \phi_Y^*g_Y(\cS^{\phi_Y}( V,  V_3), \wt U)$ &
	$-\frac x2 g_{\bhs{Y}/Y}(\cR^{\phi_Y}(\wt U, \wt U_3), V)$ \\ \hline
$\nabla_{ V} \wt U$ &
	$-x\phi_Y^*g_Y(\cS^{\phi_Y}(V, V_3),\wt U)$ &
	$\frac{x^2}2g_{\bhs{Y}/Y}(\cR^{\phi_Y}(\wt U, \wt U_3), V)$ \\ \hline
$\nabla_{\wt U_1}\wt U_2$ &
	$\frac x2g_{\bhs{Y}/Y}(\cR^{\phi_Y}(\wt U_1, \wt U_2), V_3)$ &
	$g_Y( \nabla^Y_{U_1} U_2, U_3)$ \\ \hline
\end{tabular}
\end{equation*}

We point out a few consequences of these computations, valid for an arbitrary totally geodesic wedge metric.
First note that the operator
\begin{equation*}
	\nabla: \CI(X;\Tw X) \lra \CI(X; T^*X \otimes \Tw X)
\end{equation*}
defines a connection on the wedge tangent bundle. Thus, in particular,
the curvature tensor $R_{\w}$ of $\nabla$ is a well-defined 2-form on
all of $X$ with values in endomorphisms of $\wT X$.
Also note that this connection asymptotically preserves the splitting of $\Tw\sC(\bhs{Y})$ into two bundles
\begin{equation}\label{eq:SecondSplittingIE}
	\Tw\sC(\bhs{Y}) 
		= \lrspar{ \ang{\pa_x} \oplus \tfrac1x \wT\bhs{Y}/Y } \oplus \phi_Y^*TY
\end{equation}
in that if $W_1, W_2 \in \cV_{\w}$ are sections of the two different summands then
\begin{equation}\label{eq:moresplitting}
	g_{\w}(\nabla_{W_0} W_1, W_2) = \cO(x) \Mforall W_0 \in \CI(X;TX).
\end{equation}
Let us denote by
\begin{equation*}
	 \bv_+: \Tw\sC(\bhs{Y}) \lra \ang{\pa_x} \oplus \tfrac1x \wT\bhs{Y}/Y, \quad
         \bh_+ :\Tw\sC(\bhs{Y}) \lra \ang{\pa_x} \oplus \phi^*TY,
\end{equation*}
the orthogonal projections onto their images, while $\bv$ and $\bh$ continue to denote projection onto ${}^\w T\bhs{Y}/Y$ and
$\phi^*{}^\w TY$, and define connections
\begin{multline*}
	\nabla^{v_+} =  \bv_+ \circ \nabla \circ \bv_+: \CI(\sC(\bhs Y); \ang{\pa_x} \oplus \tfrac1x\wT\bhs Y/Y) \\
		\lra \CI(\sC(\bhs Y); T^*\sC(\bhs{Y}) \otimes \lrpar{ \ang{\pa_x} \oplus \tfrac1x\wT\bhs{Y}/Y } ), 
\end{multline*}
\begin{equation*}
	\nabla^h = \phi^*\nabla^Y : \CI(\sC(\bhs{Y}); \phi_Y^*TY) 
		\lra \CI(\sC; T^*\sC(\bhs{Y}) \otimes \phi_Y^*TY ) .
\end{equation*}
Denote by $j_{\eps}:\{x = \eps\} \hookrightarrow \sC(\bhs{Y})$ the inclusion, and identify $\{ x=\eps\}$ with $\bhs{Y}=\{x=0\},$
note that the pull-back connections $j_{\eps}^*\nabla^{v_+}$ and $j_{\eps}^*\nabla^h$ are independent of $\eps$ and  
\begin{equation}\label{eq:AsympSplittingConn}
	j_0^*\nabla = j_0^*\nabla^{v_+} \oplus j_0^*\nabla^h.
\end{equation}

Let us describe the asymptotics of the curvature.

\begin{proposition}\label{prop:curvature} $ $
Let $(X, g_{\w})$ be a manifold with corners and an iterated fibration structure endowed with a totally geodesic wedge metric.
Let $Y \in \cS(X)$ and let $x$ be a bdf for $\bhs{Y}$ in which the
canonical metric form 
decomposition
\eqref{eq:pt-canonical-form}--\eqref{eq:tg-canonical-form} holds.
\begin{enumerate}
\item If $W_1$, $W_2$ are vector fields tangent to $\bhs{Y}$ then
$R_{\w}(W_1, W_2)$ asymptotically preserves the splitting
\begin{equation*}
	\Tw\sC(\bhs{Y}) = \lrspar{ \ang{\pa_x} \oplus \tfrac1x\wT\bhs{Y}/Y } \oplus \phi_Y^*TY 
\end{equation*}
\item For $N = N(x) \partial_x, W_0 \in \CI(X;TX),$ with $W_0$ tangent to the fibers of $\phi_Y$, $W_1, W_2 \in \CI(X; {}^\w TX),$ we have
\begin{equation*}
\begin{multlined}
	g_{\w}(R_{\w}(N, W_0)W_1, W_2)
	= g_{\w}(R_{\w}(N, \bv W_0)\bv W_1, \bv W_2) \\
	-N(x)\lrpar{ \phi_Y^*g_Y(\cS^{\phi_Y}(\bv W_0, \bv W_2), \bh W_1) 
	- \phi_Y^*g_Y(\cS^{\phi_Y}(\bv W_0, \bv W_1), \bh W_2) }
	+ \cO(x).
\end{multlined}
\end{equation*}

\item
For $N = N(x) \partial_x$ and $W_i$ as in part (2),
\begin{equation*}
\begin{multlined}
	g_{\w}( \nabla_N(R_{\w}(N,W_0)) \bh_+ W_1, \bh_+ W_2)
	=   (N(x))^2 g_{\bhs{Y}/Y}(\cR^{\phi_Y}(\bh W_2, \bh W_1), \bv W_0) + \cO(x).
\end{multlined}
\end{equation*}
\end{enumerate}
\end{proposition}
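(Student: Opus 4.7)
The plan is to compute the asymptotics of the wedge curvature $R_{\w}$ at $\bhs{Y}$ in the local frame \eqref{eq:local-frame} by first reducing, via \eqref{eq:metric-diff}, to the product-type model $g_{\w,\pt}$ of \eqref{eq:pt-canonical-form}, and then extracting the claimed expressions from the Christoffel table displayed above together with the commutator relations listed for the frame. Throughout, $\cO(x)$ denotes a tensor on wedge vector fields whose components in such a frame vanish at $\bhs{Y}$. Writing $A := \nabla - \nabla^{\pt}$, so that $A = \cO(x)$ by \eqref{eq:metric-diff}, the standard identity
\[
R_{\w}(W_1, W_2)W_3 - R_{\w,\pt}(W_1, W_2)W_3 = (\nabla^{\pt}_{W_1}A)(W_2, W_3) - (\nabla^{\pt}_{W_2}A)(W_1, W_3) + [A_{W_1}, A_{W_2}]W_3
\]
bounds the curvature difference in terms of $\nabla^{\pt} A$. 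Since $W(x) = 0$ for any wedge vector field $W$ tangent to $\bhs{Y}$, differentiation in such a direction preserves the order $\cO(x)$; consequently $R_{\w} - R_{\w,\pt} = \cO(x)$ whenever both slots of $R_{\w}$ are occupied by vector fields tangent to $\bhs{Y}$, which is exactly what is required for part (1). In parts (2) and (3) the first slot is $N = N(x)\pa_x$, and the reduction needs a finer accounting since $\nabla_N$ can upgrade $\cO(x)$ to $\cO(1)$.

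For part (1), after the reduction it suffices to show that $R_{\w,\pt}(W_1,W_2)$ preserves \eqref{eq:SecondSplittingIE} modulo $\cO(x)$ for $W_1, W_2$ tangent to $\bhs{Y}$. By \eqref{eq:moresplitting}, each $\nabla^{\pt}_{W_i}$ preserves this splitting modulo $\cO(x)$ in every direction, and the commutator relations listed before the proposition show that $[W_1, W_2]$ is again a wedge vector field tangent to $\bhs{Y}$. Composing two such connections and subtracting $\nabla^{\pt}_{[W_1, W_2]}$ therefore yields an operator that preserves the splitting modulo $\cO(x)$.

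For part (2), decompose $W_i = \bv_+ W_i + \bh W_i$ and expand $R_{\w,\pt}(N, W_0)W_1$ bilinearly. The component with $W_0$, $W_1$, $W_2$ all vertical appears verbatim on the right-hand side (the reduction returning this term to $R_{\w}$ up to $\cO(x)$). The mixed cross-terms with exactly one of $W_1, W_2$ horizontal are generated by the Christoffel entries $g_{\w,\pt}(\nabla_{V_1}\pa_x, \tfrac1x V_2) = g_{\bhs{Y}/Y}(V_1, V_2)$ and $g_{\w,\pt}(\nabla_V \wt U, \tfrac1x V_3) = -x\phi_Y^*g_Y(\cS^{\phi_Y}(V, V_3), \wt U)$, together with the commutator $[N, \tfrac1x V] = -\tfrac{N(x)}{x} \cdot \tfrac1x V$; after expanding $R_{\w,\pt}(N, W_0) = \nabla_N\nabla_{W_0} - \nabla_{W_0}\nabla_N - \nabla_{[N, W_0]}$ via the Koszul formula, these combine to produce precisely the antisymmetric $\cS^{\phi_Y}$-term with coefficient $-N(x)$. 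The remaining horizontal-horizontal component is $\cO(x)$ by \eqref{eq:moresplitting}.

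For part (3), after projecting the last two arguments onto $\bh_+ W_1, \bh_+ W_2$, the $\cS^{\phi_Y}$-term from part (2) vanishes identically (since $\cS^{\phi_Y}$ requires vertical arguments in its first two slots), and the pure vertical-vertical curvature term also vanishes on horizontal arguments. Hence the surviving contribution must come from the remaining $x$-dependent Christoffel entry $g_{\w,\pt}(\nabla_{\wt U_1}\wt U_2, \tfrac1x V_3) = \tfrac{x}{2} g_{\bhs{Y}/Y}(\cR^{\phi_Y}(\wt U_1, \wt U_2), V_3)$; tracking this through the Koszul computation of $R_{\w,\pt}(N, W_0)$ and then applying $\nabla_N = N(x) \nabla_{\pa_x}$ produces the coefficient $(N(x))^2 g_{\bhs{Y}/Y}(\cR^{\phi_Y}(\bh W_2, \bh W_1), \bv W_0)$. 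The principal obstacle is controlling the $\cO(1)$ part of $\nabla_N A$: one must verify that it contributes only $\cO(x)$ after contraction against $\bh_+ W_1$, $\bh_+ W_2$ and $\bv W_0$. This follows from the fact that the canonical decomposition \eqref{eq:pt-canonical-form}--\eqref{eq:tg-canonical-form} of a totally geodesic wedge metric contains no $\cO(x)$ correction (only $\cO(x^2)$), forcing the potentially dangerous leading terms in $\nabla_N A$ to vanish on the relevant triples.
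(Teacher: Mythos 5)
Your outline for part (1) is correct and is essentially the paper's argument (a connection that asymptotically preserves a splitting has curvature that preserves it when evaluated on vectors tangent to the boundary), though you should note that $W_1, W_2$ are ordinary vector fields tangent to $\bhs{Y}$, so the relevant fact is simply that such vector fields form a Lie subalgebra — the commutator relations you cite from the text are about wedge vector fields.

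For part (2), the structure is the same (reduce via \eqref{eq:metric-diff} to the product model and read the Christoffel table), but you identify the wrong generating entries. The cross-terms come from
\[
g_{\w,\pt}\bigl(\nabla_{V_0}\tfrac1xV_1,\wt U_2\bigr)=x\,\phi_Y^*g_Y(\cS^{\phi_Y}(V_0,V_1),\wt U_2)
\qquad\text{and}\qquad
g_{\w,\pt}\bigl(\nabla_{V_0}\wt U_1,\tfrac1xV_2\bigr)=-x\,\phi_Y^*g_Y(\cS^{\phi_Y}(V_0,V_2),\wt U_1),
\]
both of which are $\cO(x)$ and so survive one application of $N(x)\pa_x$. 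The entry you cite, $g_{\w,\pt}(\nabla_{V_1}\pa_x,\tfrac1xV_2)=g_{\bhs{Y}/Y}(V_1,V_2)$, is $x$-\emph{independent} in the product model and therefore contributes only to the $\cO(x)$ error; it is not what produces the $\cS^{\phi_Y}$-terms. The paper's computation is organized around the identity $g_\w(R_\w(N,\bv W_0)W_1,W_2)=N(x)\pa_x\bigl(g_\w(\nabla_{\bv W_0}W_1,W_2)\bigr)+\cO(x)$, obtained from $[\pa_x,W_0]=0$ and $\nabla_{\pa_x}W_i\in x\CI(X;\wT X)$, and then reads off the cases; your proposed route will reach the same answer but only if you correct the bookkeeping above.

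The real gap is in part (3). You say the surviving contribution "must come from the remaining $x$-dependent Christoffel entry" and that one just "tracks this through the Koszul computation," but that is not a proof: after projecting onto $\bh_+W_1,\bh_+W_2$, part (2) gives $g_\w(R_\w(N,W_0)\bh_+W_1,\bh_+W_2)=\cO(x)$ with \emph{no} explicit leading term, so applying $N(x)\pa_x$ once more requires second $\pa_x$-derivative information about Christoffel symbols that vanish at leading order, and hence finer control on the $\cO(x)$ errors in \eqref{eq:metric-diff} than your outline supplies. The paper sidesteps this entirely by invoking the first Bianchi identity together with the pair symmetry $g(R(A,B)C,D)=g(R(C,D)A,B)$ to rewrite
\[
g_\w(R_\w(N,\bv W_0)\bh_+W_1,\bh_+W_2)
= -g_\w(R_\w(N,\bh_+W_2)\bv W_0,\bh_+W_1) + g_\w(R_\w(N,\bh_+W_1)\bv W_0,\bh_+W_2),
\]
extract the factor $\bv W_0 = x\cdot\tfrac1x\bv W_0$, and then apply the \emph{first}-order formula from part (2) to $g_\w(R_\w(N,\bh_+W_i)\tfrac1x\bv W_0,\bh_+W_j)$, which has an $\cO(x)$ leading term given explicitly by the $\cR^{\phi_Y}$-entry. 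This Bianchi step is not an optional shortcut; without it your plan degenerates into a second-order Taylor computation whose error control you have not established. Your final paragraph correctly senses that $\nabla_N$ applied to the $\cO(x)$ correction $A=\nabla-\nabla^{\pt}$ is the danger point, but the claimed resolution ("the canonical decomposition contains no $\cO(x)$ correction, only $\cO(x^2)$, forcing the dangerous terms to vanish on the relevant triples") is a hope, not an argument — you would need to exhibit the vanishing, precisely what the Bianchi reduction renders unnecessary.
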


\begin{proof}
If a connection asymptotically preserves a splitting of the bundle,
then its curvature evaluated in vector fields tangent to the boundary hypersurface will also
asymptotically preserve that splitting.  By \eqref{eq:metric-diff} and
\eqref{eq:SecondSplittingIE}, the $g_{\w}$ connection preserves
the splitting, so statement (1) above is correct.
	
Now consider $W_0$ tangent to the fibers of $\phi_Y$ and $W_1, W_2 \in \CI(X;{}^\w TX).$
Since $R_{\w}$ is a tensor, its value at $\pa X$ only depends on the values of the vector fields at the boundary, so  if $\bar W_0$ agrees with $W_0$ at $x=0$ as elements of $\CI(X;TX),$ and $\bar W_1, \bar W_2$ agree with $W_1, W_2$ at $x=0$ as elements of $\CI(X; {}^\w TX),$ then 
\begin{equation*}
	g_{\w}(R_{\w}(N, W_0)W_1, W_2)
	= g_{\w}(R_{\w}(N, \bar W_0)\bar W_1, \bar W_2) + \cO(x).
\end{equation*}
Thus we may assume that $W_0 = V_0 \in \{ V_\alpha \}$ (since $R_{\w}(N, x\pa_x)$ and $R_{\w}(N, x\wt U)$ are $\cO(x)$) and $W_1, W_2 \in \{ \pa_x, \tfrac1x V_\alpha, \wt U_i\}$ extended to $\sC(\bhs{Y})$ as above.
The advantage being that $\nabla_{\pa_x}W_i \in x C^\infty(X; \wT X)$.

Thus consider $N \in \ang{\pa_x},$ $N = N(x)\pa_x,$ and $W_0=\bv W_0$ vertical.
Using $[\pa_x, W_0]=0$, 
\begin{equation*}
	g_{\w}(R_{\w}(N,\bv W_0)W_1, W_2) = N(x)
        \pa_x(g_{\w}(\nabla_{\bv W_0}W_1, W_2)) + \cO(x).
\end{equation*}
If $W_1 = \bv W_1, W_2 = \bv W_2$, then we get the first term in part
(2).  If either $W_1, W_2 = \pa_x$, or if $W_1 = \bh W_1$ and $W_2 =
\bh W_2$, then \eqref{eq:metric-diff} and the asymptotics of the
connection above gives $\cO(x)$. The remaining possibilities are
\begin{equation*}
\begin{gathered}
	N(x)\pa_x(\nabla_{\bv W_0} \bv W_1, \bh W_2) = N(x) \phi_Y^*g_Y(\cS^{\phi_Y}(\bv W_0, V_1), \wt U_2), \Mwhere 
	\bv W_1 = \tfrac1x V_1, \; \bh W_2 = \wt U_2, \\
	N(x)\pa_x(\nabla_{\bv W_0} \bh W_1, \bv W_2) = -N(x) \phi_Y^*g_Y(\cS^{\phi_Y}(\bv W_0, V_2), \wt U_1), \Mwhere 
	\bh W_1 = \wt U_2, \; \bv W_2 = \tfrac1x V_2.
\end{gathered}
\end{equation*}
which establishes (2).\\

Next consider $g_{\w}( \nabla_N(R_{\w}(N,W_0)) \bh_+ W_1, \bh_+ W_2).$ 
Here $R_{\w}(N, W_0)$ is a section of $\hom(\Tw X)$ and correspondingly
\begin{equation*}
	\nabla_N(R_{\w}(N,W_0))\bh_+ W_1
	= \nabla_N(R_{\w}(N,W_0)\bh_+ W_1) 
	- R_{\w}(N,W_0)\nabla_N(\bh_+ W_1).
\end{equation*}
The second term in $\cO(x)$, so we have
\begin{equation*}
	g_{\w}( \nabla_N(R_{\w}(N,W_0)) \bh_+ W_1, \bh_+ W_2)
	= N(g_{\w}( R_{\w}(N,W_0) \bh_+ W_1, \bh_+ W_2) ) + \cO(x). 
\end{equation*}

Since $W_0$ is an edge vector field we may write it as $x(\bh_+ W_0) + \bv W_0.$ For the first summand we have
\begin{equation*}
	N(g_{\w}( R_{\w}(N,x \bh_+ W_0) \bh_+ W_1, \bh_+ W_2) )
	= N(x) g_{\w}(R_{\w}( N, \bh_+ W_0) \bh W_1, \bh W_2) = \cO(x).
\end{equation*}
For the second summand we have
\begin{equation*}
\begin{multlined}
	N(g_{\w}( R_{\w}(N,\bv W_0) \bh_+ W_1, \bh_+ W_2) )
	= N  g_{\w}( -R_{\w}(\bv W_0,  \bh_+ W_1)N 
		- R_{\w}( \bh_+ W_1, N)\bv W_0, \bh_+ W_2 ) \\
	= N \lrpar{ - g_{\w}(R_{\w}(N, \bh_+ W_2)\bv W_0, \bh_+ W_1) 
		+ g_{\w}(R_{\w}(N, \bh_+ W_1)\bv W_0, \bh_+ W_2) } \\
	= (Nx) \lrpar{ - g_{\w}(R_{\w}(N, \bh_+ W_2)\tfrac1x \bv W_0, \bh_+ W_1) 
		+ g_{\w}(R_{\w}(N, \bh_+ W_1)\tfrac1x \bv W_0, \bh_+ W_2) }
\end{multlined}
\end{equation*}
Now
\begin{equation*}
\begin{gathered}
	g_{\w}(R_{\w}(N, \bh_+ W_2)\tfrac1x \bv W_0, \bh_+ W_1) 
	= N( g_{\w}(\nabla_{\bh_+ W_2} \tfrac1x \bv W_0, \bh_+ W_1) ) \\
	= -\tfrac12(Nx)g_{\bhs{Y}/Y}(\cR^{\phi_Y}(\bh W_2, \bh W_1), \bv W_0) +\cO(x)
\end{gathered}
\end{equation*}
and so altogether
\begin{equation*}
\begin{multlined}
	g_{\w}( (\nabla_NR_{\w})(N,\bv W_0)  \bh_+ W_1,  \bh_+ W_2) \\
	= \tfrac12(Nx)^2
	\lrpar{
	g_{\bhs{Y}/Y}(\cR^{\phi}(\bh W_2, \bh W_1), \bv W_0)
	- g_{\bhs{Y}/Y}(\cR^{\phi}(\bh W_1, \bh W_2), \bv W_0) } + \cO(x) \\
	= (Nx)^2 g_{\bhs{Y}/Y}(\cR^{\phi}(\bh W_2, \bh W_1), \bv W_0) + \cO(x).
\end{multlined}
\end{equation*}
\end{proof}

This establishes the asymptotics of the curvature at each boundary hypersurface. Let us consider the implications at a corner. Suppose $Y, \wt Y \in \cS(X)$ and $Y<\wt Y$ so that the boundary fiber bundles participate in \eqref{eq:bfsDiag}.
Near $\bhs{Y}\cap \bhs{\wt Y}$ a `boundary product structure' as in \S\ref{sec:IFS} yields a collar neighborhood of the form
\begin{equation*}
	[0,1)_x \times [0,1)_r \times (\bhs{Y} \cap \bhs{\wt Y})
\end{equation*}
with $x$ a boundary defining function for $\bhs{Y}$ and $r$ a boundary defining function for $\bhs{\wt Y}.$ 
In this collar, the vector field $\pa_x$ is $\phi_{\wt Y}$-horizontal, the vector field $\tfrac1x\pa_r$ is $\phi_Y$-vertical, and any wedge vector field that is vertical with respect to $\phi_{\wt Y}$ is also vertical with respect to $\phi_Y.$
We will eventually carry out a Getzler rescaling argument where we rescale in the horizontal directions at each boundary hypersurface, so the interesting expressions at the corner are the ones of the form
\begin{equation*}
	g_\w(R_{\w}(\pa_r, W_0)\bv_{\wt Y}W_1, \pa_x), \quad
	g_\w(\nabla_{\pa_r}R_{\w}(\pa_r, W_0)\bh_{\wt Y}W_1, \pa_x),
\end{equation*}
with $W_0$ a vector field tangent to the fibers of $\phi_{\wt Y}$ and (hence) $\phi_Y$ and $W_1$ a wedge vector field.
The first expression is equal to
\begin{equation*}
	g_\w(R_{\w}(\bv_{\wt Y}W_1, \pa_x)\pa_r, W_0)
	= xg_\w(R_{\w}(xr\bv_{\wt Y}W_1, \pa_x)\tfrac1x \pa_r, \tfrac1{xr}W_0) = \cO(x),
\end{equation*}
while the second expression has leading term at the corner involving 
\begin{equation*}
	\cR^{\phi_{\wt Y}}(\bh_{\wt Y}W_1, \pa_x) = \bv_{\wt Y}([\bh_{\wt Y}W_1, \pa_x]) = 0.
\end{equation*}
The upshot is the vanishing at the corner of every term in these asymptotics in which $\pa_x$ occurs as a horizontal vector field.

\subsection{Dirac-type wedge operators}

Let $X \fib M \xlra{\psi} B$ be a fiber bundle of manifolds with corners and iterated fibration structures in the sense of Definition \ref{def:FibIFS} and fix a choice of splitting as in \eqref{eq:Splittings}.

\begin{definition} \label{def:WedgeCliffMod}
Let $g_{M/B}$ be a totally geodesic vertical wedge metric on $M.$
A {\bf wedge Clifford module} along the fibers of $\psi$ consists of 
\begin{enumerate}
\item a complex vector bundle $E\lra M$
\item a Hermitian bundle metric $g_E$
\item a connection $\nabla^E$ on $E$ compatible with $g_E$
\item a bundle homomorphism from the `vertical wedge Clifford algebra' into the endomorphism bundle of $E,$
\begin{equation*}
	\cl: \Cl_{\w}(M/B) = \bbC \otimes \mathrm{Cl}\lrpar{ {}^\w T^*M/B, g_{M/B} } \lra \End(E)
\end{equation*}
compatible with the metric and connection in that, for all  $\theta \in \CI(M;{}^{\w}T^*M/B),$
\begin{itemize}
	\item $g_{E}(\cl(\theta)\cdot, \cdot) = -g_{E}(\cdot, \cl(\theta)\cdot)$ 
	\item $\nabla^E_W\cl(\theta) = \cl(\theta)\nabla^E_W +\cl(\nabla_W^{M/B}\theta)$ 
		as endomorphisms of $E,$ for all $W \in TM.$
\end{itemize}
\end{enumerate}
This information determines a smooth {\bf family of wedge Dirac-type operators} by
\begin{equation*}
	D_{M/B}:\CIc(M^{\circ};E) \xlra{\nabla^E} \CIc(M^{\circ}; T^*M \otimes E) \xlra{\cl} \CIc(M^{\circ};E)
\end{equation*}
where we have used that $T^*M$ and ${}^{\w} T^*M$ are canonically isomorphic over the interior of $M.$
\end{definition}

If the fibers of $\psi$ are even dimensional we will want for $E$ to admit a $\bbZ_2$ grading
\begin{equation*}
	E = E^+\oplus E^-
\end{equation*}
compatible in that it is orthogonal with respect to $g_E,$ parallel with respect to $\nabla^E,$ and odd with respect to $\cl.$ 

In local coordinates, we can write
\begin{equation*}
	D_{M/B} = \sum_{i=1}^n \cl(\theta^i)\nabla^E_{(\theta^i)^{\sharp}}
\end{equation*}
where $\theta^i$ runs over a $g_{\w}$-orthonormal frame of $T^*M/B.$
If we restrict to a fiber $X$ of $\psi$ and then further to a collar neighborhood of $\bhs{Y},$ $Y \in \cS(X),$ this takes the form
\begin{equation}\label{eq:CollarDirac}
	\cl(dx) \nabla^E_{\pa_x} + \cl(x \; dz^i)\nabla^E_{\tfrac1x\pa_{z_i}} 
	+ \cl(dy^j)\nabla^E_{\pa_{y_j}}
	=
	\cl(dx) \nabla^E_{\pa_x} + \cl(x \; dz^i)\tfrac1x\nabla^E_{\pa_{z_i}} 
	+ \cl(dy^j)\nabla^E_{\pa_{y_j}}
\end{equation}
plus an error in $\Diff^1_{\e}(X;E)$.  Here $x$ is a boundary defining
function for $\bhs{Y},$ and we recognize \eqref{eq:CollarDirac} as a
wedge differential operator of order one.

We are interested in this operator acting on the natural family of vertical $L^2$ spaces associated to the wedge metric $g_{\w}$ and the Hermitian metric $g_E,$ which we denote $L^2_{\w}(M/B;E).$ However it is convenient to work with $L^2$ spaces with respect to a non-degenerate density, so let us define a multiweight on $M,$
\begin{equation}\label{eq:DefMfB}
	\mf b(H) = \tfrac12(\dim \bhs{N}/N) \Mforall H \subseteq \bhs{N} \Mand N \in \cS_{\psi}(M)
\end{equation}
so that 
\begin{equation}\label{eq:wedge-v-reg}
	L^2_{\w}(M/B;E) = \rho_M^{-\mf b}L^2(M/B;E).
\end{equation}
(On each fiber $X$ of $\psi$ we have $\mf b(H) = \tfrac12(\dim \bhs{Y}/Y)$ for all $H\subseteq \bhs{Y}$ and $Y \in \cS(X).$)
Define the unitarily equivalent family of operators $\eth_{M/B}$ by
\begin{equation}\label{eq:ShiftedL2Dirac}
	\eth_{M/B} = \rho_M^{\mf b}D_{M/B}\rho_M^{-\mf b} 
	= D_{M/B} - \sum_{N \in \cS_{\psi}(M)} \frac{\dim \bhs{N}/N}{2\rho_N}\cl(d\rho_N)
\end{equation}
Then $\eth_{M/B}$ is also a vertical family of wedge differential operators of order one, and studying $\eth_{M/B}$ as operators on $L^2(M/B;E)$ is equivalent to studying $D_{M/B}$ as a family of operators on $L^2_{\w}(M/B;E).$\\


\subsection{Bismut superconnection}\label{sec:BismutSup}

We briefly recall the construction of the Bismut superconnection and refer to \cite[Chapter 9-10]{BGV2004}, \cite{Melrose-Piazza:Even}, \cite{Albin-Rochon:DFam} for more details.\\

Let $M \xlra\psi B$ be a family of manifolds with corners and iterated fibration structures as in Definition \ref{def:FibIFS}, endowed with a splitting 
\begin{equation*}
	{}^{\w(\psi)} TM = {}^\w TM/B \oplus \psi^*TB
\end{equation*}
as in \eqref{eq:Splittings}
and a vertical wedge metric $g_{M/B}.$
Denote the projections onto the summand of the splitting by
$\bv_{\psi}$ (left) and $\bh_{\psi}$ (right).
These data determine a connection on the bundle ${}^\w TM/B,$ $\nabla^{M/B},$ as follows.
We choose a Riemannian metric $g_B$ on $B,$ and obtain
\begin{equation*}
	g_M = g_{M/B} \oplus \psi^*g_B,
\end{equation*}
a wedge metric on $M.$ As in \S\ref{sec:Metrics} the Koszul formula defines a Levi-Civita connection on ${}^{\w(\psi)} TM$ which we denote $\nabla^M$ and use to define
\begin{equation*}
	\nabla^{M/B} = \bv_{\psi} \nabla^M \bv_{\psi}.
\end{equation*}
Just as for families of closed manifolds, this defines a connection on ${}^{\w}TM/B$ that is independent of the choice of the metric $g_B.$

We embed $g_M$ in a one-parameter family of wedge metrics on $M,$
\begin{equation*}
	g_{M,\eps}= g_{M/B} + \tfrac1\eps \psi^*g_B,
\end{equation*}
limiting to the degenerate metric on ${}^{\w(\psi)} TM,$
\begin{equation*}
	g_{M,0}(V, W) = g_{M/B}(\bv_{\psi}V, \bv_{\psi}W),
\end{equation*}
in that the dual metrics on the wedge cotangent bundle converge.

Consider the connection 
\begin{equation*}
	\nabla^{\oplus} = \bv_{\psi} \nabla^M \bv_{\psi} \oplus \bh_{\psi} \nabla^M \bh_{\psi}.
\end{equation*}
Following Bismut, we can describe the difference between this connection and $\nabla^M$ in terms of the fundamental tensors of $\psi.$

Define 
\begin{equation}\label{eq:FunTensors}
\begin{gathered}
	\cS^{\psi} \in \CI(M; {}^\w T^*M/B \otimes {}^\w T^*M/B \otimes \psi^*TB),\\
	\hat \cR^{\psi} \in \CI(M; \psi^*TB \otimes \psi^*TB \otimes {}^\w T^*M/B)
\end{gathered}
\end{equation}
by the equations
\begin{equation*}
\begin{gathered}
	\cS^{\psi}(W_1, W_2)(A) = g_{M/B}(\nabla^{M/B}_A W_1 - [A, W_1], W_2) \\
	\hat \cR^{\psi}(A_1, A_2)(W) = g_{M/B}([A_1, A_2], W).
\end{gathered}
\end{equation*}
Using the splitting, we extend these trivially to $\otimes^3({}^{\w(\psi)} TM),$ and then define
\begin{equation*}
	\omega^{\psi} \in \CI(M; {}^{\w(\psi)} T^*M \otimes \Lambda^2({}^{\w(\psi)} T^*M)), 
\end{equation*}
\begin{multline*}
	\omega^{\psi}(X)(Y,Z) = \cS^{\psi}(X,Z)(Y) - \cS^{\psi}(X,Y)(Z)\\
	+ \tfrac12\hat\cR^{\psi}(X,Z)(Y) -\tfrac12\hat\cR^{\psi}(X,Y)(Z) +\tfrac12\hat\cR^{\psi}(Y,Z)(X).
\end{multline*}
This tensor is isomorphic to $\nabla^M-\nabla^{\oplus}$ via $g_M$ 
by \cite[Prop 10.6]{BGV2004} and allows us to define
\begin{equation*}
	\nabla^0 = \nabla^{\oplus} + \tfrac12\tau(\omega^{\psi})
\end{equation*}
where
\begin{equation*}
	\tau:\Lambda^2({}^{\w(\psi)} T^*M) \lra \hom({}^{\w(\psi)} T^*M), \quad
	\tau(\alpha\wedge\beta)\delta = 2(g_{M,0}(\alpha, \delta)\beta - g_{M,0}(\beta, \delta)\alpha).
\end{equation*}
As the notation is meant to indicate, this is the limiting connection of the Levi-Civita connections of the metrics $g_{M,\eps}$ as $\eps\to 0.$

Given a wedge Clifford module $E \lra M,$ we can extend it to the bundle
\begin{equation*}
	\bbE = \psi^*\Lambda^*B \otimes E.
\end{equation*}
This has a natural Clifford action
\begin{equation*}
\begin{gathered}
	\cl_0:\Cl_0({}^{\w(\psi)}T^*M)
	 = \bbC \otimes \mathrm{Cl}\lrpar{ \psi^*T^*B \oplus {}^\w T^*M/B, g_{M,0}} 
	 	\lra \End(\bbE)\\
	\cl_0(\alpha) = \df e(\bh_{\psi} \alpha) + \cl(\bv_{\psi}\alpha) \Mfor \alpha \in \CI(M; {}^{\w(\psi)} T^*M)
\end{gathered}
\end{equation*}
where $\df e$ is exterior multiplication, with connection
\begin{equation*}
	\nabla^{\bbE,0} = \psi^*\nabla^B \oplus \nabla^E + \tfrac12\cl_0(\omega^{\psi})
\end{equation*}
that is compatible with $\nabla^0$ in that
\begin{equation*}
	\nabla^{\bbE,0}_W\cl_0(\theta) = \cl_0(\theta)\nabla^{\bbE,0}_W 
	+\cl_0(\nabla^0_W\theta), \Mforall W \in \CI(M;TM).
\end{equation*}
We will need to know about the curvature of $\nabla^{\bbE,0}.$ It is easy to see that, for any $U, V \in \CI(M;TM),$
\begin{equation*}
	\bv_{\psi}R^{M,\eps}(U,V)\bv_{\psi} = R^{M/B}(U,V), \quad
	\bh_{\psi}R^{M,\eps}(U,V)\bh_{\psi} = \psi^*R^{B}(U,V).
\end{equation*}
It follows from Proposition 10.9 of \cite{BGV2004} that
\begin{equation*}
	R^{M,0}(T_1, T_2)(T_3, T_4) 
	= \lim_{\eps\to0}
	R^{M,\eps}(T_1, T_2)(T_3, T_4) 
\end{equation*}
as long as $\bh_{\psi}T_i=0$ for some $i$ (which will hold in all cases we need to consider). In particular, applying Proposition \ref{prop:curvature} to $R^{M/B,\eps}$ lets us conclude that the corresponding asymptotics hold for $R^{M,0}.$

The Dirac-type operator corresponding to $\bbE$ with Clifford action $\cl_0$ and connection $\nabla^{\bbE,0}$ is known as the {\bf Bismut superconnection} and denoted $\bbA_{M/B}.$ As a map $\CI(M;E) \lra \CI(M;\bbE)$ it is given by
\begin{equation*}
	\bbA_{M/B} = \bbA_{M/B,[0]} + \bbA_{M/B, [1]} + \bbA_{M/B, [2]}
\end{equation*}
where $\bbA_{\psi,[j]}: \CI(M;E) \lra \CI(M; \psi^*\Lambda^jB \otimes E).$
Explicitly, in terms of local orthonormal frames $\{f_\alpha\}$ for $TB$ and $\{e_i\}$ for ${}^\w TM/B,$ with dual coframes $\{f^{\alpha}\},$ $\{e^i\},$ we have
\begin{equation}\label{eq:BSconnection}
	\bbA_{M/B} = \cl(e^i)\nabla^E_{e_i}
	+ \df e^{\alpha}\lrpar{ \nabla^E_{f_{\alpha}} + \tfrac12k_{\psi}(f_{\alpha})}
	- \frac14 \sum_{\alpha<\beta} \hat\cR^{\psi}(f_{\alpha}, f_{\beta})(e_i)\df e^\alpha \df e^{\beta} \cl(e^i),
\end{equation}
where $k_{\psi}$ is the trace of $\cS^{\psi}.$
Note that $\bbA_{M/B,[0]}$ is $D_{M/B},$ the Dirac-type operator associated to $E.$

As $E$ is a wedge Clifford bundle, its homomorphism bundle has a decomposition
\begin{equation}\label{eq:HomClif}
	\hom(E) \cong \Cl({}^{\w}T^*M/B) \otimes \hom'_{\Cl({}^{\w}T^*M/B)}(E)
\end{equation}
where $\hom'_{\Cl({}^{\w}T^*M/B)}(E)$ denotes homomorphisms that commute with $\Cl({}^{\w}T^*M/B),$ see e.g., \cite[Lemma 5.1]{Vaillant}.
The curvature of $\nabla^E$ decomposes as $\tfrac14\cl(R^M) + K_E',$
where $K_E'$ commutes with the Clifford action and is known as the
`twisting curvature' of $\nabla^E,$ see e.g., \cite[Lemma
8.33]{tapsit}.
The square of the wedge Bismut superconnection satisfies a Lichnerowicz formula \cite[Theorem 3.52]{BGV2004},
\begin{equation}\label{eq:SupLichnerowicz}
	\bbA^2_{M/B} = \Delta^{M/B,0} + \tfrac14\scal(g_{\w}) - \tfrac12\sum_{a,b}K'_E(e_a, e_b)\cl_0(e^a)\cl_0(e^b)
\end{equation}
where the sum is over both horizontal and vertical tangent vectors and
$\Delta^{M/B,0}$ is the vertical family of operators which at $M_b$ is
the Bochner Laplacian corresponding to $\nabla^{\bbE,0}\rest{M_b}.$\\

For each $N \in \cS_{\psi}(M),$ we have three related fiber bundles at the corresponding collective boundary hypersurface
\begin{equation*}
	\xymatrix{
	Z_Y \ar@{-}[r] & \bhs{N}\ar[d]_-{\phi_N} \ar@/^20pt/[dd]^-{\psi|_{\bhs{N}}} \\
	Y \ar@{-}[r] & N \ar[d]_-{\psi_N} \\
	& B}
\end{equation*}
and from the asymptotics of wedge connections we see that
\begin{equation}\label{eq:FibAsymp}
	\cS^{\psi}\rest{\bhs{N}} = \cS^{\psi_N}, \quad \hat\cR^{\psi}\rest{\bhs{N}} = \hat\cR^{\psi_N}.
\end{equation}
We will see that the contribution of $\phi_N$ to these tensors can be recovered by passing to `rescaled normal operators'.

\section{Witt condition}\label{sec:Wittsec}

\subsection{Boundary families} \label{sec:VertOp}

Let $M \xlra{\psi} B$ be a fiber bundle of manifolds with corners and
iterated fibration structures and $g_{M/B}$ a totally geodesic
vertical wedge metric on $M$. Given a $\psi$-vertical wedge Clifford
module over $M,$ $E \lra M,$ we will explain how at each $N \in
\cS_{\psi}(M)$ there is an induced $\phi_N$-vertical wedge Clifford module on
$\bhs{N}.$ This has a Clifford action not just by ${}^\w T^*\bhs{N}/N$
but by all of ${}^\w T^*M/B\rest{\bhs{N}},$ we encode this as a
$\Cl(F)$-wedge Clifford bundle for an appropriate bundle $F \lra B.$
We refer to the corresponding family of vertical Dirac-type operators
as the boundary family of $D_{M/B}$ at $N$ and denote it
$D_{\bhs{N}/N}.$\\

Let $N \in \cS_{\psi}(M)$ with corresponding collective boundary hypersurface $\bhs{N}$ and fix a choice of boundary defining function $x.$ Choose a collar neighborhood $\sC(\bhs{N}) \cong [0,1)_x \times \bhs{N}$ and identify
\begin{equation*}
	{}^\w T^*M/B\rest{\sC(\bhs{N})} 
		= N^*_M\bhs{N} \oplus x{}^{\w}T^*\bhs{N}/N \oplus \phi_N^*T^*N,
\end{equation*}
where $N^*_M\bhs{N}$ is the (rank one) conormal bundle of $\bhs{N}$, and then further identify
\begin{equation*}
	N^*_M\bhs{N} = \ang{dx}, \quad
	x{}^{\w}T^*\bhs{N}/N \cong {}^{\w}T^*\bhs{N}/N.
\end{equation*}

The $\psi$-vertical wedge metrics $g_{M/B}$ on ${}^{\w}T^*M/B$ induce a $\phi_N$-vertical wedge metric $g_{\bhs{N}/N}$ on ${}^{\w}T^*\bhs{N}/N.$
Choose a metric $g_B$ on $B$ and let $g_M = g_{M/B} \oplus \psi^*g_B,$
so that $\nabla^{M/B}$ is given, as above, in terms of the Levi-Civita
connection $\nabla^M$ of $g_M$ by
\begin{equation*}
	\nabla^{M/B} = \bv_{\psi} \circ \nabla^{M} \circ \bv_{\psi}.
\end{equation*}
The metric $g_M$ is a totally geodesic wedge metric on $M$ 
and thus in particular there is a corresponding wedge
metric on $\bhs{N}$, with vertical connection
\begin{equation*}
	\nabla^{\bhs{N}/N} = \bv_{\phi_N} \circ \nabla^{\bhs{N}} \circ \bv_{\phi_N}.
\end{equation*}

In order to relate $\nabla^{\bhs{N}/N}$ with the restriction of $\nabla^{M/B}$ to $\bhs{N},$ recall from \S\ref{sec:Metrics} that the restriction of $\nabla^M$ to $\bhs{N}$ will respect the splitting
\begin{equation}\label{eq:NSplitting}
	\Tw^*\sC(\bhs{N}) 
	= \lrspar{ \ang{\pa_x} \oplus \tfrac1x  {}^\w T^*\bhs{N}/N } \oplus \phi_N^*T^*N
\end{equation}
so that $j_0^*\nabla^M 
= \bv_{\phi_N}^+ \circ\nabla^M \circ\bv_{\phi_N}^
+ \oplus \bh_{\phi_N} \circ \nabla^M \circ \bh_{\phi_N}.$
Let us denote the fully diagonal connection by
\begin{equation*}
	\nabla^{\oplus} = 
	\frac{\pa}{\pa x} \; dx \oplus \nabla^{\bhs{N}/N} \oplus \bh_{\phi_N} \nabla^M \bh_{\phi_N}.
\end{equation*}

The difference between this and a direct sum connection with respect
to the splitting \eqref{eq:NSplitting} comes from the fact that in a
frame like \eqref{eq:local-frame}, letting the connection act on
differential forms,
\begin{equation*}
	\nabla_{V_1}^M(\tfrac1x V_2)^\flat(\pa_x) = -g_{\bhs{N}/N}(V_1,V_2), \quad
	\nabla_{V_1}^M dx = (\tfrac1x V_1)^{\flat}.
\end{equation*}
Thus  we have
\begin{equation}\label{eq:LCConCyl}
	\nabla^{\oplus}_{W}\theta = j_0^*\nabla^M_W\theta 
	- \lrpar{ g(dx,\theta)(\tfrac1x\bv W)^{\flat} -g((\tfrac1x\bv W)^{\flat},\theta)dx}.
\end{equation}
The Clifford connection $\nabla^E$ which is by definition compatible
with $\nabla^M$ can be modified in a standard way to obtain a Clifford
connection compatible with $\nabla^\oplus$, namely, writing $S =
\nabla^M - \nabla^\oplus$, following \cite[page 375]{BC1990},
$$
\nabla^{E, \oplus}_W := \nabla^E_W - \frac 14 \langle S(W) e_i, e_j
\rangle_{g_M} \cl(e^i) \cl(e^j)
$$
for orthonormal frame $e_i$ and dual frame $e^i$.  (Indeed, from the
fact that $S(W)$ is anti-symmetric, it follows that $\frac 14 \langle S(W) e_i, e_j
\rangle_{g_M} \cl(e^i) \cl(e^j)X = \cl(S(X))$.)
Restricting to $\bhs{N}$ via $j_0$ from \eqref{eq:AsympSplittingConn}, 
we let $\nabla^{E|_N}$ be given by
\begin{equation}\label{eq:EConCyl}
	\nabla^{E|_N}_W = j_0^*\nabla^E_W
	- \tfrac12 \cl(dx)\cl((\tfrac1x\bv W)^{\flat})
\end{equation}
and this is compatible with the restriction of  $\nabla^\oplus$ to
$\bhs{N}$.

After identifying $x{}^{\w}T^*\bhs{N}/N\rest{x=0}$ with
${}^{\w}T^*\bhs{N}/N$ we see that $\nabla^{E|_N}$ is a metric
connection with respect to the restriction of $g_E$ which restricts to
the fibers of $\bhs{N} / N$ to be compatible with  $g_{\bhs{N}/N}.$
Thus altogether we obtain a wedge Clifford module for the induced
vertical wedge metric $g_{\bhs{N}/N}$ on $\bhs{N} \lra N$ that
moreover \emph{is compatible with the Clifford action of the base and
  normal covectors} i.e.\ the Clifford action by sections of the
bundle $\ang{dx} \oplus \phi_N^* {}^\w T^*N/B.$

As the bundle $N^*_M\bhs{N} = \ang{dx} \lra \bhs{N}$ is trivial there is no loss, and some convenience, in treating it as the pull-back of a trivial bundle over $N.$ We introduce the notation $T^*N^+/B$ for the direct sum of $T^*N/B$ and a trivial bundle over $N$ formally generated by $dx,$ so that $N^*_M\bhs{N} \oplus \phi_N^*T^*N/B = \phi_N^*T^*N^+/B.$

\begin{definition}\label{def:Cl(F)Bdle}
Let $X \fib M \xlra{\psi} B$ be a fiber bundle of manifolds with corners
and iterated fibration structures in the sense of Definition
\ref{def:FibIFS}, with a vertical wedge metric $g_{M/B}$. If $F \lra B$ is a real vector bundle with bundle
metric $g_F,$ a wedge Clifford module $(E, g_E, \nabla^E, \cl)$ is
{\bf a $\Cl(F)$-wedge Clifford module} if there is a bundle
homomorphism (also denoted $\cl$)
\begin{equation*}
	\cl: \Cl(\psi^*F)= \bbC \otimes \mathrm{Cl}\lrpar{ F, g_{F} } \lra \End(E)
\end{equation*}
satisfying, for all $\eta \in \CI(M;\psi^*F),$
\begin{equation*}
\begin{gathered}
	g_E(\cl(\eta)s_1, s_2) = -g_E(s_1, \cl(\eta)s_2) 
		\Mforall s_i \in \CI(M;E) \\
	\nabla^E(\cl(\eta)s) = \cl(\eta)\nabla^Es, \Mforall s \in \CI(M;E)\\
	\cl(\eta)\cl(\theta) + \cl(\theta)\cl(\eta) = 0 \Mforall \theta \in \CI(M; {}^{\w} T^*M/B).
\end{gathered}
\end{equation*}
\end{definition}

Note that if $F$ has rank zero, a $\Cl(F)$-wedge Clifford module is just a wedge Clifford module.

Clearly if $D$ is the Dirac-type operator corresponding to (the
underlying wedge Clifford module $(E, g_E, \nabla^E, \cl)$ of) a $\Cl(F)$-wedge module, and $\theta \in \CI(M;F)$ then $D \cl(\theta) = - \cl(\theta)D.$\\

We have seen that a wedge Clifford bundle $(E, g_E, \nabla^E, \cl)$ along the fibers of $M \xlra{\psi} B$ induces a $\Cl(T^*N^+/B)$-wedge Clifford module, $(E|_N, g_E|_N, \nabla^{E|N}, \cl)$
along the fibers of $\bhs{N} \xlra{\phi_N} N.$
Let us finally consider the relation between the corresponding Dirac-type operators.

To each $N \in \cS_{\psi}(M)$ we can associate a $\phi_N$-vertical family of operators
\begin{equation*}
	\rho_N D_{M/B}\rest{\bhs{N}}.
\end{equation*}
From the local expression \eqref{eq:CollarDirac} we see, letting
$V_i$ be an orthonormal frame of vertical vectors and $V^i$ the dual
frame, that 
\begin{equation*}
	\rho_N D_{M/B}\rest{\bhs{N}} = \cl(x \; V^i) j_0^*\nabla^E_{V_i}
\end{equation*}
where $V_i$ runs over a local frame for the vertical wedge tangent bundle associated to $\bhs{N} \xlra{\phi_N} N.$
Replacing the connection $j_0^*\nabla^E$ by $\nabla^{E|_N}$ yields
\begin{equation*}
	\cl(x \; dz^i) (\nabla^{E|_N}_{\pa_{z_i}} + \tfrac12 \cl(dx)\cl(x\; dz^i) )
	= D_{\bhs{N}/N} + \tfrac v2\cl(dx)
\end{equation*}
where we recall that $v = \dim \bhs{N}/N.$
Thus we can conclude:

\begin{lemma}\label{lem:ModelDirac}
A wedge Clifford module along the fibers of $\psi:M \lra B$ induces,
for each $N \in \cS(N),$ a $\Cl(T^*N^+/B)$-wedge 
Clifford module along the fibers of $\phi_N: \bhs{N} \lra N.$
The vertical operator of a family of wedge Dirac-type operators $D_{M/B}$ defined by the former is equal to the family of wedge Dirac-type operators defined by the latter, which we denote $D_{\bhs{N}/N},$ plus a zero-th order term
\begin{equation*}
	\rho_ND_{M/B}\rest{\bhs{N}} = D_{\bhs{N}/N} + \tfrac v2\cl(dx).
\end{equation*}
\end{lemma}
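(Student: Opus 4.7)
The lemma has two parts: constructing the induced $\Cl(T^*N^+/B)$-wedge Clifford module on $\bhs{N} \to N$, and establishing the operator identity. Much of the setup for both is already in place in the preceding discussion, so the proof amounts to assembling these pieces carefully.

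For the first part, I take the underlying bundle $E|_{\bhs{N}}$ with the restricted Hermitian metric $g_E|_{\bhs{N}}$. The Clifford action is inherited by restriction, using the identification $x\,{}^\w T^*\bhs{N}/N \cong {}^\w T^*\bhs{N}/N$ at $x=0$ so that the action of $xV^i$ (for $V^i$ an orthonormal coframe of ${}^\w T^*\bhs{N}/N$) provides the wedge Clifford action on the fibers of $\phi_N$. The natural restriction $j_0^*\nabla^E$ is compatible with $j_0^*\nabla^M$, but not with the diagonal connection $\nabla^\oplus$ that respects the splitting \eqref{eq:NSplitting}; the modification defined in \eqref{eq:EConCyl}, namely $\nabla^{E|_N}_W = j_0^*\nabla^E_W - \tfrac12\cl(dx)\cl((\tfrac1x\bv W)^\flat)$, is the unique Clifford connection compatible with $\nabla^\oplus$, and restricts to a Clifford connection on the fibers of $\bhs{N}/N$ with respect to $g_{\bhs{N}/N}$. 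Finally I must verify the three compatibility conditions of Definition \ref{def:Cl(F)Bdle} for $F = T^*N^+/B$: skew-adjointness and anti-commutation with Clifford multiplication by ${}^\w T^*\bhs{N}/N$-sections follow from the analogous properties of $\cl$ upstairs on $M$, while parallelism $\nabla^{E|_N}\cl(\eta)s = \cl(\eta)\nabla^{E|_N}s$ for $\eta \in \CI(N; T^*N^+/B)$ reduces, using the fact that such $\eta$ are $\phi_N^*$-horizontal and $\nabla^0$-parallel on the normal $\ang{dx}$-summand, to the compatibility of $j_0^*\nabla^E$ with $\cl$, together with the observation that $\cl(dx)\cl((\tfrac1x\bv W)^\flat)$ commutes with Clifford multiplication by such $\eta$ (since both $\cl(dx)$ and $\cl((\tfrac1x\bv W)^\flat)$ anticommute with $\cl(\eta)$).

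For the operator identity, I work in the local collar coordinates of \eqref{eq:CollarDirac}. Multiplying the three summands by $\rho_N = x$ gives $x\cl(dx)\nabla^E_{\pa_x}$, $\cl(xV^i)\nabla^E_{V_i}$, and $x\cl(dy^j)\nabla^E_{\pa_{y^j}}$, plus a term coming from the $\e$-error which is lower order. Restricting to $\{x=0\}$ kills the first and third summands, leaving $\rho_N D_{M/B}|_{\bhs{N}} = \cl(xV^i)j_0^*\nabla^E_{V_i}$, where $V_i$ runs over a local $g_{\bhs{N}/N}$-orthonormal frame of vertical wedge vectors.

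Substituting \eqref{eq:EConCyl} and noting $\bv V_i = V_i$, $(\tfrac1x V_i)^\flat = xV^i$, yields
\begin{equation*}
\rho_N D_{M/B}|_{\bhs{N}}
= \cl(xV^i)\nabla^{E|_N}_{V_i} + \tfrac12\cl(xV^i)\cl(dx)\cl(xV^i).
\end{equation*}
The first term is by definition $D_{\bhs{N}/N}$. For the zero-th order correction I use the Clifford relations $\cl(xV^i)\cl(dx) = -\cl(dx)\cl(xV^i)$ and $\cl(xV^i)^2 = -g_{\bhs{N}/N}(xV^i, xV^i) = -1$, so that summing over $i$ gives $\tfrac12\sum_i \cl(xV^i)\cl(dx)\cl(xV^i) = \tfrac v2\cl(dx)$, where $v = \dim \bhs{N}/N$ is the number of indices. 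The main obstacle is less a deep issue than careful bookkeeping: keeping straight the rescaling identification $x\,{}^\w T^*\bhs{N}/N \cong {}^\w T^*\bhs{N}/N$, the distinction between $j_0^*\nabla^E$ and $\nabla^{E|_N}$, and verifying that the cross-term arising from the connection modification is exactly the claimed multiple of $\cl(dx)$ rather than a more complicated endomorphism.
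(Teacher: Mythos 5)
Your proposal is correct and follows essentially the same route as the paper: restrict to the collar and read off $\rho_N D_{M/B}|_{\bhs{N}} = \cl(xV^i)\,j_0^*\nabla^E_{V_i}$, substitute the modified connection $\nabla^{E|_N}$ from \eqref{eq:EConCyl}, and evaluate the Clifford cross-term $\tfrac12\sum_i\cl(xV^i)\cl(dx)\cl(xV^i) = \tfrac v2\cl(dx)$. Your spelling-out of the $\Cl(T^*N^+/B)$-module compatibility checks and the Clifford algebra bookkeeping is a more explicit rendering of what the paper delegates to the surrounding discussion in \S\ref{sec:VertOp}.
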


We will denote the restriction of $D_{\bhs{N}/N}$ to the fiber over $y \in N$ by
\begin{equation*}
	D_{\bhs{N}/N}\rest{Z_y} = D_{Z_y}
\end{equation*}
when it is clear from context.
Note that Clifford multiplication by the global section $dx$ of $T^*N^+$ satisfies 
\begin{equation*}
	D_{\bhs{N}/N} \circ \cl(dx) = -\cl(dx)\circ D_{\bhs{N}/N}.
\end{equation*}

Hence for any choice of closed domain for $D_{Z_y},$ invariant under multiplication by $\cl(dx),$ we have
\begin{multline*}
	s \in \text{ $\lambda$-eigenspace of $D_{Z_y}$}
	\iff
	\cl(dx)s \in \text{ $(-\lambda)$-eigenspace of $D_{Z_y}$}.
	 \\
	\iff
	(\Id + \cl(dx))s \in \text{ $\lambda$-eigenspace of $\cl(dx)D_{Z_y}$}
\end{multline*}
and so the eigenvalues of $D_{Z_y},$ $-D_{Z_y},$ and $\cl(dx)D_{Z_y}$ coincide, including multiplicity.\\

Lemma \ref{lem:ModelDirac} highlights one advantage of working with $\eth_{M/B}$ from \eqref{eq:ShiftedL2Dirac} since the induced vertical family is precisely the boundary family of Dirac operators,
\begin{equation*}
	\rho_N \eth_{M/B}\rest{\bhs{N}} = D_{\bhs{N}/N}.
\end{equation*}
Note that since we are interested in $\eth_{M/B}$ on the space $L^2(M/B;E),$ we are interested in the boundary family $D_{\bhs{N}/N}$ as an operator on $L^2(\bhs{N}/N;E|_N).$

\subsection{Witt assumption and vertical APS domain} \label{sec:Witt}

Let $\psi \colon M \lra B$ be a fibration of manifolds with corners
with iterated fibration structure as in Definition \ref{def:FibIFS}
(so $B$ is closed) with typical fiber $X$, let $g_{M/B}$ be a vertical
wedge metric, and $E \lra M$ a wedge Clifford module as in Definition
\ref{def:WedgeCliffMod}.  Let $D_{M/B}$ be the corresponding family of
vertical wedge Dirac-type operators and $D_X$ the restriction of
$D_{M/B}$ to a fixed fiber $X$.
As an unbounded operator on $L^2_{\w}(X;E),$ for an arbitrary totally geodesic wedge
metric $g_{M/B} \rvert_X$, $D_X$ generally has many closed extensions.
As discussed in, e.g., \cite{ALMP:Hodge}, the two canonical closed domains,
\begin{equation*}
\begin{gathered}
	\cD_{\min}(D_X) = \{ u \in L^2_{\w}(X;E) : \exists (u_n) \subseteq \CIc(X^{\circ};E) \Mst
	u_n \to u \Mand (D_Xu_n) \text{ is $L^2_{\w}$-Cauchy} \}, \\
	\cD_{\max}(D_X) = \{ u \in L^2_{\w}(X;E) : D_X u \in L^2_{\w}(X;E)\},
\end{gathered}
\end{equation*}
where in the latter $D_Xu$ is computed distributionally, satisfy
\begin{equation*}
	\rho_XH^1_\e(X;E) \subseteq \cD_{\min}(D_X) \subseteq \cD_{\max}(D_X) \subseteq H^1_\e(X;E).
\end{equation*}
Here
\begin{equation*}
	H^1_\e(X;E) = \{ u \in L^2(X;E) : Vu \in L^2(X;E) \Mforall V \in \CI(X;{}^\e TX) \}
\end{equation*}
\nomenclature[H]{$H^1_\e(X;E)$}{The $L^2$-based edge Sobolev space}
is the edge Sobolev space introduced in \cite{Mazzeo:Edge}.  We
consider the following domain:

\begin{definition}
The {\bf vertical APS domain} of $D_X$ is the graph closure of $\rho_X^{1/2}H^1_\e(X;E)\cap \cD_{\max}(D_X),$
\begin{multline*}
	\cD_{\VAPS}(D_X) = \{ u \in L^2_{\w}(X;E) : \exists (u_n) \subseteq \rho_X^{1/2}H^1_e(X;E)\cap \cD_{\max}(D_X) \\
	\Mst
	u_n \to u \Mand (D_Xu_n) \text{ is $L^2_{\w}$-Cauchy} \}.
\end{multline*}
\end{definition}

As in \cite{ALMP:Hodge}, this domain induces a domain for each vertical family $D_{\bhs{Y}/Y},$ namely the corresponding vertical APS domain. Note that this domain is invariant under multiplication by $\cl(dx)$ so that the spectrum of each $D_{Z_y}$ has the symmetries mentioned at the end of \S\ref{sec:VertOp}.

\begin{definition} \label{def:WittAss}
The operator $(D_X,\cD_{\VAPS})$ satisfies the {\bf geometric Witt condition} if
\begin{equation*}
	Y \in \cS(X), y \in Y \implies 
	\Spec(D_{Z_y}) \cap (-\tfrac12, \tfrac12) = \emptyset.
\end{equation*}
If, instead, we only require
\begin{equation*}
	Y \in \cS(X), y \in Y \implies 
	\Spec(D_{Z_y}) \cap \{0\} = \emptyset
\end{equation*}
then we say that $(D_X, \cD_{\VAPS})$ satisfies the {\bf Witt
  condition}.  (Tacitly, we take the domain of the links to be the
VAPS domains.  Thus the definition could be stated without reference
to a domain on the whole of $X$, specifically only with reference to
the spectrum of $\Spec(D_{Z_y}, \cD_{VAPS}(D_{Z_y})).$  Since this
notation is cumbersome we speak only of the VAPS domain on $X$ and
think of the domains on the fibres as induced.)
\end{definition}

The analysis in \cite{ALMP:Hodge} can be used to show that the  geometric Witt condition
\begin{equation*}
	Y \in \cS(X), y \in Y \implies 
	\Spec(D_{Z_y}) \cap (-\tfrac12, \tfrac12) = \emptyset
\end{equation*}
implies $\cD_{\min}(D_X) = \cD_{\max}(D_X)$ so that $D_X$ is
essentially self-adjoint. 

\begin{remark}
We use the nomenclature `vertical APS domain' because the different local ideal boundary conditions for $D_X$ involve the spectrum of 
$D_{Z_y}$
in the interval $(-\tfrac 12, \tfrac 12).$ The vertical APS domain corresponds to projecting off of the negative half of this interval, analogous to the Atiyah-Patodi-Singer boundary conditions \cite{APSI}.
\end{remark}

\subsection{Normal operator} \label{sec:NormOp}

As in \S\ref{sec:Witt}, let $D_X$ be the restriction of $D_{M/B}$ to a fiber of $\psi:M \lra B.$
At every $Y \in \cS(X),$ $y \in Y^{\circ},$ there is a {\em normal operator} of $D_X,$ modeling its behavior on a model wedge,
\begin{equation*}
	\bbR^+_s \times \bbR^h_u \times Z_y,
\end{equation*}
where $h = \dim Y,$ acting on sections of the bundle $E\rest{Z_y}$ pulled back along the natural projection.
This operator is given by
\begin{equation*}
	\cN_y(D_X) = \cl(dx)\pa_s + \tfrac1s (D_{Z_y} + \tfrac v2\cl(dx)) + \sum \cl(H_j^{\flat}) H_j
	= \cl(dx)\pa_s + \tfrac1s (D_{Z_y} + \tfrac v2\cl(dx)) + D_{\bbR^h}
\end{equation*}
where the sum ranges over an orthonormal frame for $T_yY,$ and throughout this section we use the notation
\begin{equation*}
	v = \dim Z_y.
\end{equation*}
The normal operator of $\eth_X$ is then
\begin{equation*}
	\cN_y(\eth_X) = \cN_y(D_X) - s^{-1} \tfrac v2 \cl(dx).
\end{equation*}

The vertical APS domain for $D_X$ induces domains for both $D_{Z_y}$ and $\cN_y(D_X),$ which are easily identified as the corresponding vertical APS domains.
The induced domain for the normal operator can be described in terms of $\cI$-smooth (i.e., polyhomogeneous) asymptotic expansions.

Indeed, the operator $D_{Z_y}$ is independent of $s,$ and so we are in
the `constant indicial root' situation studied in, e.g.,
\cite{ALMP:Hodge}. Note that if $f$ is a section of $E\rest{Z_y}$ over
the model wedge and is $\cO(s^0)$ as $s\to0$ then for almost all $\sigma \in \bbC,$ $s\cN_y(D_{X})(s^{\sigma}f) = \cO(s^{\sigma}).$ We say that $\sigma$ is {\bf an indicial root} if there is an $f$ such that $s\cN_y(D_X)(s^{\sigma}f) = \cO(s^{\sigma-1}).$

Let us describe the induced domain for the normal operator in more detail.
Analogously to \cite[\S1]{ALMP:Hodge}, we can consider an intermediate domain where we have imposed the vertical APS `boundary condition' at all of the $\bhs{Y'}$ with $Y<Y',$
\begin{equation*}
	\cD_{\max, Y<}(\eth_X)
	= \text{ graph closure of }
	\cD_{\max}(\eth_X) \cap \lrpar{\prod_{Y<Y'} \rho_{Y'}^{1/2}}L^2(X;E).
\end{equation*}
As explained in {\em loc. cit.}, elements in the induced domain for the normal operator have a partial asymptotic expansion.
(In the setting of \cite{ALMP:Hodge}, under an assumption of constant indicial roots, domains were defined by an inductive process. Elements of the maximal domain have partial asymptotic expansions with distributional coefficients at strata of depth one on which we can impose ideal boundary conditions. Elements of the resulting domain have partial asymptotic expansions with distributional coefficients at strata of depth two and so on.)
\begin{lemma}\label{lem:IndicialRoots}
The indicial roots of $\cN_y(D_X)$ are the eigenvalues of $D_{Z_y}$ shifted by $-\tfrac v2.$ 
Every $f$ in 
\begin{equation*}
	\cD_{\max, Y<}(\cN_y(D_X)) 
	= \text{graph closure of }
	\cD_{\max}(\cN_y(D_X)) \cap \lrpar{\prod_{Y<Y'} \rho_{Y'}^{1/2}}
	L^2_{\w}(\bbR^+_s \times \bbR^h_u \times Z_y;E\rest{Z_y}),
\end{equation*}
has a partial asymptotic expansion as $s\to0,$
\begin{equation*}
	f \sim \sum_{\substack{\lambda \in \Spec(D_{Z_y})\\ \lambda \in (-\tfrac12, \tfrac12) }} 
		f_{\lambda} s^{-\tfrac v2+\lambda} + \wt f
\end{equation*}
in which each $f_\lambda$ is a distributional element of the $\lambda$ eigenspace of $\cl(dx)D_{Z_y}$ and $\wt f \in x^{1-}H^{-1}_e(\bbR^+_s \times \bbR^h_u \times Z_y).$
The vertical APS domain of $\cN_y(D_X)$ consists of those $f$ such that $f_{\lambda}=0$ whenever $\lambda\leq 0.$
\end{lemma}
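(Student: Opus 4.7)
The plan is to treat the three assertions of Lemma \ref{lem:IndicialRoots} in sequence: the identification of the indicial roots, the partial asymptotic expansion, and the VAPS characterization.

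First, to identify the indicial roots, I would apply $s\cN_y(D_X)$ to a product $s^\sigma f(u,z)$ with $f$ independent of $s$ and extract the leading behavior:
\begin{equation*}
  s\cN_y(D_X)(s^\sigma f)
  = s^\sigma\bigl((\sigma+\tfrac{v}{2})\cl(dx) + D_{Z_y}\bigr)f
  + s^{\sigma+1}D_{\bbR^h} f.
\end{equation*}
Hence the indicial family is $I_\sigma=(\sigma+\tfrac{v}{2})\cl(dx)+D_{Z_y}.$ Using $\cl(dx)^2=-1$ and $\cl(dx)D_{Z_y}=-D_{Z_y}\cl(dx)$, multiplying $I_\sigma f=0$ on the left by $\cl(dx)$ converts the equation into $\cl(dx)D_{Z_y}f=(\sigma+\tfrac{v}{2})f.$ Thus $\sigma$ is an indicial root iff $\sigma+\tfrac{v}{2}$ lies in $\Spec(\cl(dx)D_{Z_y})$, which by the symmetry recorded just before Lemma \ref{lem:ModelDirac} coincides with $\Spec(D_{Z_y}).$ This simultaneously identifies the formal leading coefficients as sections of the $\lambda$-eigenspace of $\cl(dx)D_{Z_y}$, where $\lambda=\sigma+\tfrac{v}{2}.$

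Second, to produce the partial expansion, I would Mellin transform in $s$. The operator $s\cN_y(D_X)$ is of the form $I_\sigma+sD_{\bbR^h}$ under the substitution $\sigma\leftrightarrow s\partial_s$; treating the second summand as an edge-perturbation, the model can be inverted by applying $I_\sigma^{-1}$ along a vertical contour in the resolvent set. For $f\in\cD_{\max,Y<}(\cN_y(D_X))$ the $L^2_\w$ weight corresponds to placing the contour with $\Re\sigma>-\tfrac{v}{2}-\tfrac12,$ and pushing it up to $\Re\sigma=1-\eps$ crosses precisely the poles with $\lambda\in(-\tfrac12,\tfrac12).$ The residue at each such pole produces a term of the claimed form $f_\lambda s^{-v/2+\lambda}$ with $f_\lambda$ only distributional in $u$ (since $f$ itself is only $L^2$ in $u$), and the shifted contour integral yields the remainder $\wt f\in s^{1-}H^{-1}_\e$. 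The control of the $sD_{\bbR^h}$ perturbation in this argument uses the edge-calculus mapping properties in \cite{Mazzeo:Edge,ALMP:Hodge}.

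Third, for the VAPS identification, I would verify directly that $s^{-v/2+\lambda}f_\lambda(u)$ lies in $\rho_X^{1/2}H^1_\e$ iff $\lambda>0.$ On the model, the edge vector fields $s\partial_s,$ $s\partial_u,$ $\partial_z$ leave a power $s^\kappa$ of order unchanged, so $s^\kappa\in H^1_\e$ iff $s^\kappa\in L^2_\w$, i.e.\ $\kappa>-\tfrac{v}{2}-\tfrac12;$ inserting the prefactor $\rho_X^{1/2}=s^{1/2}$ raises this threshold to $\kappa>-\tfrac{v}{2},$ which for $\kappa=-\tfrac{v}{2}+\lambda$ is exactly $\lambda>0.$ Hence $\cD_{\VAPS}\subseteq\cD_{\max,Y<}$ is the subspace on which $f_\lambda=0$ for every $\lambda\le 0,$ while the terms with $\lambda>0$ automatically belong to $\rho_X^{1/2}H^1_\e$ and impose no extra condition. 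The main obstacle in the argument is the rigorous Mellin-inversion step, in particular the uniform control of $I_\sigma^{-1}$ in vertical strips away from its poles and the distributional regularity in $u$; once these edge-calculus estimates are in place, the three steps fit together cleanly.
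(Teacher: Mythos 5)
Your identification of the indicial family $I_\sigma=(\sigma+\tfrac v2)\cl(dx)+D_{Z_y}$ and the reduction via $\cl(dx)^2=-1$ to non-invertibility of $\sigma+\tfrac v2-\cl(dx)D_{Z_y}$ is exactly the computation in the paper, and the identification $\Spec(\cl(dx)D_{Z_y})=\Spec(D_{Z_y})$ correctly invokes the spectral symmetry established in the text (note: that symmetry appears just \emph{after} Lemma~\ref{lem:ModelDirac}, not before). For the asymptotic expansion, the paper does not run a Mellin argument at all — it simply cites \cite[Lemma 2.2]{ALMP:Hodge}; your contour-shifting sketch is a plausible reconstruction of what lies behind that reference, but it remains a sketch, and you should be aware that the precise interval of $\lambda$ you must cross and the placement of the starting contour are dictated by the $L^2_\w$ weight $-\tfrac v2-\tfrac12$ rather than by an absolute threshold like $\Re\sigma=1-\eps$. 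Two points in the VAPS step deserve more care: first, $f_\lambda$ is only a distribution in $u$ and $z$, so ``$s^\kappa f_\lambda\in\rho_X^{1/2}H^1_\e$'' is not a pointwise power-counting statement but needs the edge-Sobolev regularity of the coefficients established as part of the expansion; second, $\cD_{\VAPS}$ is a \emph{graph closure}, so you must also argue that the condition $f_\lambda=0$ for $\lambda\le 0$ is preserved under taking limits in graph norm (continuity of $f\mapsto f_\lambda$), which your proposal does not mention. Neither gap is fatal — both are standard in the edge/b-calculus — but the paper sidesteps them entirely by deferring to the cited lemma.
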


\begin{proof}
Applying $\cN_y(D_X)$ to $s^{\sigma}f$ yields
\begin{equation*}
	s\cN_y(D_X)(s^{\sigma}f) = 
	s^{\sigma} ((\sigma+\tfrac v2) \cl(dx)+ D_{Z_y})f + \cO(s^{\sigma+1})
\end{equation*}
so $\sigma$ will be an indicial root when $(\sigma+\tfrac v2 - \cl(dx) D_{Z_y})$ is not invertible, i.e.,
\begin{equation*}
	\sigma \in -\tfrac v2+ \Spec(\cl(dx)D_{Z_y}) = -\tfrac v2+ \Spec(D_{Z_y}).
\end{equation*}
The existence of the asymptotic expansion is established in \cite[Lemma 2.2]{ALMP:Hodge}.
\end{proof}

Note that the translation invariance of $\cN_y(D_X)$ in $\bbR^h$ allows us to Fourier transform in $u$ and obtain the family of model operators,
\begin{equation*}
	Y \times \bbR^h \ni (y,\eta) \mapsto \cN_{(y,\eta)}(D_X) = \cl(dx)(\pa_s+\tfrac v{2s}) + \tfrac1s D_{Z_y} + i \cl(\eta).
\end{equation*}
By a standard computation (cf. \cite[Lemma 2.10]{Albin-GellRedman}, \cite[Lemma 5.5]{ALMP:Witt}, \cite[Proposition 4.1]{Lesch:Def}, \cite[Proposition 2.25]{Ch1985}) we can establish injectivity and self-adjointness as follows.

\begin{proposition}\label{prop:NyInjSA}
For each $(y,\eta) \in Y \times \bbR^h,$ the operator
$N_{(y,\eta)}(D_X),$ together with the corresponding domain $\cD_{\max, Y<}(N_{(y,\eta)}(D_X)),$
is injective if $\eta =0$ and
otherwise has null space spanned by
\begin{equation*}
	\bigcup_{\lambda \in \Spec(D_{Z_y})\cap [0,1/2)}
	\lrbrac{
	s^{1/2} K_{\lambda-\tfrac12}(s|\eta|) \phi_{\lambda}
	- \cl(\tfrac{\eta}{|\eta|}) s^{1/2}K_{\lambda+\tfrac12}(s|\eta|) \cl(dx)\phi_{\lambda}
	: \cl(dx)D_{Z_y}\phi_{\lambda} = \lambda \phi_{\lambda} }.
\end{equation*}
It follows that, if $D_X$ satisfies the Witt condition at $Y,$ $\cN_y$ is injective and self-adjoint with its vertical APS domain.
\end{proposition}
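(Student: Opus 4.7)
The plan is to separate variables, reducing $\cN_{(y,\eta)}(D_X)u = 0$ to a family of ordinary differential equations in $s$ via spectral decomposition on $Z_y$, and then analyze the resulting ODEs as modified Bessel equations.

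The operator $\cl(dx)D_{Z_y}$ is self-adjoint on $L^2(Z_y;E|_{Z_y})$ and satisfies $(\cl(dx)D_{Z_y})^2 = D_{Z_y}^2$, so it has discrete spectrum. Fix an eigenvalue $\lambda$ with eigenvector $\phi_\lambda$. Using that $\cl(dx)$, $\cl(\eta)$, and $D_{Z_y}$ pairwise anticommute, the four vectors $\phi_\lambda,\ \cl(dx)\phi_\lambda,\ \cl(e)\phi_\lambda,\ \cl(e)\cl(dx)\phi_\lambda$, with $e = \eta/|\eta|$, span a finite-dimensional subspace left invariant under the spatial (fiberwise) action of $\cN_{(y,\eta)}$. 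Making the ansatz
\[
u(s) = a(s)\phi_\lambda + b(s)\cl(dx)\phi_\lambda + c(s)\cl(e)\phi_\lambda + d(s)\cl(e)\cl(dx)\phi_\lambda
\]
and setting $\cN_{(y,\eta)}u = 0$ yields two decoupled first-order $2\times 2$ systems for $(a,d)$ and $(b,c)$, with coefficients that are rational in $s$. Eliminating one unknown in each system and substituting $r = s|\eta|$ together with an overall power-of-$s$ prefactor converts them into modified Bessel equations of orders $\mu = \lambda - 1/2$ (for the $(a,d)$ block) and $\mu = \lambda + 1/2$ (for the $(b,c)$ block). Of the two solutions $I_\mu(r)$ and $K_\mu(r)$, only $K_\mu$ decays as $r \to \infty$, so $L^2_{\w}$-membership away from $s = 0$ excludes the $I_\mu$ branch.

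Next I would determine which $\lambda$ produce solutions in $\cD_{\max,Y<}$ by comparing the asymptotic $K_\mu(r) \sim r^{-|\mu|}$ as $r \to 0$ against the indicial root description in Lemma \ref{lem:IndicialRoots}; only $\lambda \in \Spec(D_{Z_y}) \cap [0,1/2)$ gives asymptotic behavior admissible in $\cD_{\max,Y<}$. The first-order coupling between $(a,d)$ and $(b,c)$ then forces $b = c = 0$ and a specific proportionality between $a$ and $d$, produced by combining the identity $K_\mu'(r) = -\tfrac12(K_{\mu-1}(r)+K_{\mu+1}(r))$ with the recurrence $K_{\mu+1}(r) - K_{\mu-1}(r) = (2\mu/r)K_\mu(r)$; the resulting null vector is (up to overall scalar and the unitary rescaling relating $\cN_y(D_X)$ and $\cN_y(\eth_X)$ implicit in the statement) exactly
\[
s^{1/2}K_{\lambda-1/2}(s|\eta|)\phi_\lambda - \cl(e)s^{1/2}K_{\lambda+1/2}(s|\eta|)\cl(dx)\phi_\lambda.
\]
When $\eta = 0$ the Bessel equations degenerate to Euler equations with pure power solutions $s^{\pm\lambda - v/2}$, and a direct integrability check shows neither is in $L^2_{\w}(\bbR^+_s\times \bbR^h \times Z_y)$, so the kernel is trivial.

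For the final assertion, the Witt condition excludes $\lambda = 0$ from $\Spec(D_{Z_y})$; for each remaining $\lambda \in (0, 1/2)$ the null vector above has leading term $\propto s^{-\lambda}\cl(e)\cl(dx)\phi_\lambda$ at $s = 0$, corresponding to the indicial root $-\lambda - v/2$ which sits in the negative half of the critical window and is therefore forbidden by the vertical APS boundary condition. Hence every $\cN_{(y,\eta)}$ has trivial VAPS kernel, and Fourier inversion in $\bbR^h_u$ yields triviality of the VAPS kernel of $\cN_y$. Self-adjointness of $(\cN_y,\cD_{\VAPS})$ reduces to checking that the adjoint pairing preserves the VAPS condition, which under the Witt condition follows because the defining spectral projection (onto strictly positive eigenvalues of $\cl(dx)D_{Z_y}$ in the critical window) is symmetric in the absence of a zero eigenvalue; cf.\ the analogous argument in \cite{ALMP:Hodge}. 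The main obstacle will be the careful bookkeeping in the $2 \times 2$ first-order systems, specifically verifying that the two Bessel orders $\lambda \pm 1/2$ combine into a single null vector rather than two independent ones, and the consistent tracking of the $s^{v/2}$ unitary rescaling so that the Bessel asymptotics align with the indicial data of Lemma \ref{lem:IndicialRoots}.
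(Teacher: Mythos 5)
Your approach follows the paper's in its essentials: conjugate away the volume weight, decompose via the spectral theory of $\cl(dx)D_{Z_y}$, reduce to an ODE system whose square is a modified Bessel operator, and retain only the $K_\mu$ branch so that $L^2$-membership near $s=0$ forces $|\lambda\pm\tfrac12|<1$, i.e.\ $|\lambda|<\tfrac12$. The paper packages this as a $2\times 2$ system with coefficients valued in the eigenspace $E_\lambda(\cl(dx)D_{Z_y})$, while you work with a $4$-dimensional scalar block spanned by $\phi_\lambda,\ \cl(dx)\phi_\lambda,\ \cl(e)\phi_\lambda,\ \cl(e)\cl(dx)\phi_\lambda$. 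These are equivalent bookkeeping choices.

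There is, however, a genuine error in your handling of the $(b,c)$ block. You first say the $4\times 4$ system decouples into independent $2\times 2$ systems for $(a,d)$ and $(b,c)$, and then invoke a ``first-order coupling between $(a,d)$ and $(b,c)$'' to force $b=c=0$. These two statements are mutually contradictory, and the second is false: the $(b,c)$ system is exactly the $(a,d)$ system with $\lambda$ replaced by $-\lambda$, so it produces a second, independent $L^2$-solution $b\propto s^{1/2}K_{\lambda+1/2}(s|\eta|),\ c\propto s^{1/2}K_{\lambda-1/2}(s|\eta|)$ (nonzero under the same constraint $|\lambda|<\tfrac12$). For a fixed choice of $\phi_\lambda$ with $\cl(e)\phi_\lambda$ independent of $\phi_\lambda$, your $4$-dimensional block therefore contains a two-dimensional null space, not one-dimensional. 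The proposition's statement remains correct because its union is taken over \emph{all} eigenvectors $\phi_\lambda$: since $\cl(e)$ commutes with $\cl(dx)D_{Z_y}$, the vector $\cl(e)\phi_\lambda$ is itself a $\lambda$-eigenvector, and evaluating the displayed formula at $\cl(e)\phi_\lambda$ reproduces precisely the $(b,c)$-solution you discarded. You should correct the argument by noting this, or by adopting the paper's cleaner device of letting the coefficient $a$ range over the whole eigenspace $E_\lambda(\cl(dx)D_{Z_y})$ and deriving the relation $\cl(\eta)a=|\eta|b$, which automatically accounts for both branches. Separately, your treatment of self-adjointness is only a pointer; the paper actually computes that the null space of $\cN_y\pm i$ has the same structure with $|\eta|$ replaced by $\sqrt{|\eta|^2+1}$, hence vanishes on the VAPS domain, giving zero deficiency indices. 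You should carry out that step rather than citing an analogy.
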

 
\begin{proof}
Since $L^2(s^v ds) = s^{-v/2}L^2(ds),$ the operator $\cN_{(y,\eta)}(D_X)$ acting on $L^2_{\w}$ is equivalent to the operator
\begin{equation*}
	\cN_{(y,\eta)}(\eth_X) = s^{v/2} \cN_{(y,\eta)}(D_X) s^{-v/2} 
	=  \cl(dx)\pa_s + \tfrac1s D_{Z_y} + i \cl(\eta) 
\end{equation*}
acting on $L^2(ds).$

First note that, with $A_y = \cl(dx)D_{Z_y},$ we have
\begin{equation*}
	N_{(y,\eta)}(\eth_X)^2 
	= -\pa_s^2 + \tfrac1{s^2}(D_{Z_y}^2 - \cl(dx)D_{Z_y}) + |\eta|^2
	= -\pa_s^2 + \tfrac1{s^2}(A_y^2 - A_y) + |\eta|^2.
\end{equation*}
It will be useful to recall that the null space of 
\begin{equation*}
\begin{gathered}
	-\pa_s^2+ \tfrac1{s^2}(\lambda^2-\lambda) + |\eta|^2
	= s^{1/2}(-\pa_s^2-\tfrac1s\pa_s+\tfrac1{s^2}(\lambda-\tfrac12)^2+|\eta|^2)s^{-1/2} \\
	= -s^{1/2-2}\lrpar{s^2\pa_s^2+s\pa_s-((\lambda-\tfrac12)^2+s^2|\eta|^2)}s^{-1/2}
\end{gathered}
\end{equation*}
is spanned by $s^{\lambda}$ and $s^{1-\lambda}$ if $\eta=0,$ and otherwise by $s^{1/2}I_{\lambda-\tfrac12}(s|\eta|)$ and $s^{1/2}K_{\lambda-\tfrac12}(s|\eta|).$
In view of the asymptotics
\begin{equation*}
\begin{gathered}
	I_{\alpha}(z) = \cO(z^{|\alpha|}) \Mas z\to 0, \quad 
	I_{\alpha}(z) = \cO(\tfrac1z e^{z})  \Mas z \to \infty \\
	K_{\alpha}(z) = \cO(z^{-|\alpha|}) \Mas z\to 0 \; (\alpha\neq 0), \quad 
	K_{\alpha}(z) = \cO(\tfrac1z e^{-z})  \Mas z \to \infty,
\end{gathered}
\end{equation*}
none of these are in $L^2(ds)$ except for $s^{1/2}K_{\lambda-\tfrac12}(s|\eta|)$ when $|\lambda-\tfrac12|<1.$\\

Now since
\begin{multline*}
	\cl(dx)D_{Z_y} = -D_{Z_y} \cl(dx), \quad
	\cl(\eta) D_{Z_y} =- D_{Z_y} \cl(\eta) \\
	\implies
	\cl(dx) A_y = -\cl(dx) D_{Z_y} \cl(dx) = -A_y \cl(dx), \quad
	\cl(\eta) A_y 
	= A_y \cl(\eta),
\end{multline*}
the operator $\cN_{(y,\eta)}(\eth_X) = \cl(dx)(\pa_s - \tfrac1s A_y)+i \cl(\eta)$ preserves the space $F_{\lambda} = E_{\lambda}(A_y) \oplus E_{-\lambda}(A_y),$ on which it acts by
\begin{equation*}
	\cN_{(y,\eta,\lambda)} = 
	\begin{pmatrix}
	i\cl(\eta) & \cl(dx)(\pa_s + \tfrac1s \lambda) \\
	\cl(dx)(\pa_s - \tfrac1s\lambda) & i\cl(\eta)
	\end{pmatrix}.
\end{equation*}
If we further identify
\begin{equation*}
	\xymatrix @R=1pt
	{ F_{\lambda} \ar[r] & E_{\lambda}^2 = E_{\lambda}(A_y)^2 \\
	(a,b) \ar@{|->}[r] & (a, \cl(dx)b) }
\end{equation*}
we end up with the map
\begin{equation*}
\begin{gathered}
	\wt \cN_{y,\eta,\lambda} = 
	\begin{pmatrix}
	i\cl(\eta) & -\cl(dx)(\pa_s + \tfrac1s \lambda)\cl(dx) \\
	\cl(dx)^2(\pa_s - \tfrac1s\lambda) & -\cl(dx)i\cl(\eta)\cl(dx)
	\end{pmatrix}
	=\begin{pmatrix}
	i\cl(\eta) & \pa_s + \tfrac1s \lambda \\
	-(\pa_s - \tfrac1s\lambda) & -i\cl(\eta)
	\end{pmatrix} \\
	\text{ satisfying }
	\wt \cN_{y,\eta,\lambda}^2 = 
	\begin{pmatrix}
	|\eta|^2 -\pa_s^2 + \tfrac1{s^2}(\lambda^2-\lambda) & 0 \\
	0 & |\eta|^2 -\pa_s^2 + \tfrac1{s^2}(\lambda^2+\lambda) 
	\end{pmatrix}.
\end{gathered}
\end{equation*}
Thus any element of the $L^2$-null space of $\wt \cN_{y,\eta,\lambda}^2$ has the form
\begin{equation*}
	(as^{1/2}K_{\lambda-\tfrac12}(s|\eta|), bs^{1/2}K_{\lambda+\tfrac12}(s|\eta|)).
\end{equation*}
Recall that \cite[(9.6.26)]{AS1964}
\begin{equation*}
	\pa_{z}K_{\nu}(z) 
	= -K_{\nu-1}(z) - \tfrac\nu{z}K_{\nu}(z) 
	= -K_{\nu+1}(z)+\tfrac\nu{z}K_{\nu}(z)
\end{equation*}
hence applying $\wt \cN_{y,\eta,\lambda}$ to the putative element of the null space results in
\begin{equation*}
	\lrpar{ (\cl(\eta)a - b|\eta|)s^{1/2}K_{\lambda-\tfrac12}(s|\eta|),
	(-a|\eta| - \cl(\eta)b)s^{1/2}K_{\lambda+\tfrac12}(s|\eta|) } = 0 \iff
	\cl(\eta) a = |\eta| b,
\end{equation*}
and significantly $a=0\iff b=0.$ This means that to get an element in
$L^2$ we need both $|\lambda-\tfrac12|<1$ and
$|\lambda+\tfrac12|<1,$ i.e., $|\lambda|<\tfrac12.$
This establishes the first part of the proposition.

Recall that, for $\nu\in\bbR^*,$ $K_{\nu}(z) \sim C_{\nu} z^{-|\nu|}$ as $z\to 0.$
Thus the elements of the null space of $\cN_y$ are spanned by elements with non-trivial asymptotics at both exponents $\lambda$ and $-\lambda,$ for $\lambda \in \Spec(A_y)\cap[0,1/2).$ There are no such elements in the vertical APS domain. 

Finally we show that the vertical APS domain is self-adjoint by showing the vanishing of its deficiency indices by a similar argument. Indeed, the null space of $\cN_y\pm i$ is contained in the null space of $\cN_y^2+1.$ Analyzing this as above shows that solutions are built up from elements of the form
\begin{equation*}
	(as^{1/2}K_{\lambda-\tfrac12}(s\ang{\eta}), bs^{1/2}K_{\lambda+\tfrac12}(s\ang{\eta})).
\end{equation*}
where $\ang{\eta} = \sqrt{|\eta|^2+1}$ and solutions coming from the null space of $\cN_y\pm i$ further satisfy that $a=0\iff b=0.$ Since this requires non-trivial asymptotics at exponents $\lambda$ and $-\lambda,$ there are no such solutions in the vertical APS domain.

\end{proof}

\section{Edge calculus with bounds and wedge heat calculus}

\subsection{Conormal distributions on manifolds with corners}\label{sec:Conormal}

We briefly recall some of the results of \cite{Melrose:Conormal} that we will use in our constructions and refer the reader to {\em loc. cit.} for details. (See also, e.g., \cite{Grieser:Basics}, \cite[\S2A]{Mazzeo:Edge}.)

Recall that we use the notation 
\begin{equation*}
	\rho_X = \prod_{H \in \cM_1(X)} \rho_H
\end{equation*}
for a `total boundary defining function'. A \textbf{multiweight} for $X$ is a map
\begin{equation*}
	\mf s: \cM_1(X) \lra \bbR \cup \{\infty\}
\end{equation*}
and we denote the corresponding product of boundary defining functions by
\begin{equation*}
	\rho_X^{\mf s} = \prod_{H \in \cM_1(X)} \rho_H^{\mf s(H)}.
\end{equation*}
We write $\mf s \leq \mf s'$ if $\mf s(H) \leq \mf s'(H)$ for all $H \in \cM_1(X).$

$ $\\
A smooth map between manifolds with corners $f:X \lra Y$ is a \textbf{b-map} if, for each $H \in \cM_1(Y),$ and some choice of boundary defining functions, we have
\begin{equation*}
	f^*\rho_H = \prod_{H \in \cM_1(X)} \rho_G^{e_f(H,G)}
\end{equation*}
where $e_f(H,G)$ is a non-negative integer. (These are called
`interior b-maps' in \cite{Melrose:Conormal} because they map the
interior of the domain into the interior of the target.)
The map
\begin{equation*}
	e_f:\cM_1(X) \times \cM_1(Y) \lra \bbN_0
\end{equation*}
is known as the {\em exponent matrix} of the b-map $f$ and we write
\begin{equation*}
	\ker(e_f) = \{ H \in \cM_1(X): e_f(H,G)=0 \Mforall G \in \cM_1(Y) \}.
\end{equation*}

The vector fields tangent to the boundary hypersurfaces of $X$ are known as the {\em b-vector fields} and are denoted
\begin{equation*}
	\cV_b = \{ V \in \CI(X;TX) : V \text{ is tangent to each } H \in \cM_1(X) \}.
\end{equation*}
There is a vector bundle, the {\em b-vector bundle}, ${}^bTX,$ together with a natural vector bundle map $i_b:{}^bTX \lra TX$ that is an isomorphism over the interior of $X$ and satisfies
\begin{equation*}
	(i_b)_*\CI(X;{}^bTX) = \cV_b.
\end{equation*}
Thus, for example, if $x$ is a boundary defining function for a
boundary hypersurface $H$ of $X$ then, near $H,$ the vector field
$x\pa_x$ is non-degenerate at $H$ as a section of ${}^bTX.$ Indeed, it
does not vanish at $H$ because it is not an element of $x\cV_b.$ We
refer to any such vector field as a {\em radial vector field} for $H.$
It is determined up to an element of $x\cV_b,$ and its restriction to
the boundary generates a canonical trivialization of the null space of
$i_b$ over $H,$ known as the {\em b-normal bundle}, ${}^bNH.$

The differential of a b-map $f: X \lra Y$ extends to a bundle map
between b-tangent bundles and b-normal bundles. If both of these
induced maps are surjective, $f$ is a {\em b-fibration}.

$ $\\
{\bf Conormal functions.}
Let $\mu$ denote a positive section of the density bundle $\Omega(X).$
Denote
\begin{equation*}
	L^2(X) = L^2(X,\mu) = \{ u:X\lra \bbC \text{ measurable }: \int_X |u|^2 \; \mu <\infty \}
\end{equation*}
and, for $n \in \bbN_0$ and $\mf s$ a multiweight, the {\em weighted b-Sobolev spaces} corresponding to $\mu$ are
\begin{equation*}
	\rho_X^{\mf s}H^n_b(X) = \rho_X^{\mf s}H^n_b(X,\mu) 
	= \{ u: X \lra \bbC \text{ measurable }: \cV_b(X)^n(\rho_X^{-\mf s}u) \subseteq L^2(X)\}.
\end{equation*}
The $L^2$-based {\em conormal spaces} are
\begin{equation*}
	\rho_X^{\mf s}H_b^{\infty}(X) = \bigcap_{n \in \bbN_0} \rho_X^{\mf s}H^n_b(X)
\end{equation*}
though we shall usually use
\begin{equation*}
	\sA^{\mf s}_-(X) = \bigcap_{\mf s' < \mf s}\rho_X^{\mf s'}H_b^{\infty}(X).
\end{equation*}
We refer to these as conormal functions with multiweight $\mf s-.$
We denote the union over all multiweights by
\begin{equation*}
	\sA^*(X) = \bigcup_{\mf s} \rho_X^{\mf s}H_b^{\infty}(X).
\end{equation*}

By Sobolev embedding, any function in $\sA^{*}(X)$ is smooth in the interior of $X,$ and indeed the individual $\rho_X^{\mf s}H_b^{\infty}(X)$ are preserved by the action of $\cV_b(X).$ They are also $\CI(X)$-modules, so it makes sense to talk about conormal sections of a vector bundle $E \lra X,$ e.g.,
\begin{equation*}
	\sA^{\mf s}_-(X;E) = \sA^{\mf s}_-(X) \otimes_{\CI(X)} \CI(X;E).
\end{equation*}

$ $\\
{\bf $\cI$-smooth (or polyhomogeneous) expansions.}
Regularity at the boundary hypersurfaces is often manifest in an asymptotic expansion reminiscent of the Taylor series but with exponents that are not necessarily integers and with the presence of powers of logarithms,
\begin{equation*}
	u \sim \sum u_{s_j,p} x^{s_j}(\log x)^p
\end{equation*}
with coefficients $u_{s_j,p}$ themselves conormal functions.
We keep track of the allowed exponents in index sets and refer to this class of functions as $\cI$-smooth ($\cI$ for index set) or as polyhomogeneous.

An {\em index set} $E$ is a discrete subset of $\bbC \times \bbN_0$ such that 
\begin{equation*}
	\{ (s_j,p_j) \}\subseteq E, 
	|(s_j,p_j)| \to \infty
	\implies
	\Re s_j \to \infty.
\end{equation*}
To ensure independence from the choice of bdf $x$ we also require
\begin{equation*}
\begin{gathered}
	(z,p) \in E, \; p \geq 1 \implies (z,p-1) \in E \\
	(z,p) \in E \implies (z+k,p) \in E \Mforall k \in \bbN.
\end{gathered}
\end{equation*}

We often denote the index set $\{ (\alpha+n, 0) \in \bbC \times \bbN_0 : n \in \bbN_0 \}$ simply as $\alpha.$
The {\em extended union} of two index sets is
\begin{equation*}
	E \bar \cup F = E \cup F \cup \{ (z,p) \in \bbC \times \bbN_0 : (z,q) \in E \Mand (z,p-q-1) \in F \Mforsome q \in \bbN_0 \}.
\end{equation*}
Given an index set $E$ we define
\begin{equation*}
	\Re E = \{ \Re(z) : (z,0) \in E \}, \quad \inf E = \min \Re E.
\end{equation*}
We allow the empty set as an index set and define $\inf \emptyset = \infty.$

To each index set $E$ and $w \in \bbR$ we assign the polynomial
\begin{equation*}
	b(E,w;\lambda) = \prod_{ \substack{ (z,p)\in E \\ \Re z < w } }(\lambda - z).
\end{equation*}
Note that if $r_H$ is a radial vector field for $H$ then the null space of the differential operator $b(E,s;r_H)$ is spanned by 
\begin{equation*}
	\{ x^{z}(\log x)^p : (z,p) \in E, \Re z <w \}.
\end{equation*}

An {\em index family} $\cE$ on a manifold with corners is an assignment of an index set $\cE(H)$ to each boundary hypersurface $H.$
To each index family $\cE$ we associate a multiweight $\inf \cE.$
Given an index family, a multiweight $\mf w,$ a choice of radial vector field $r_H$ for each boundary hypersurface, and an ordering of the boundary hypersurfaces we define the differential operator
\begin{equation*}
	b(\cE,\mf w) = \prod_{H \in \cM_1(X)} b(\cE(H), \mf w(H); r_H)
\end{equation*}
and the spaces of {\em partially $\cI$-smooth conormal functions} by
\begin{equation}\label{eq:pphg}
	\sB_{phg}^{\cE/\mf w}\sA^{\mf s}_- (X)
	= \{ u \in \sA^{\mf s}_-(X): b(\cE,\mf r)u \in \sA^{\mf r}_-(X) \Mforall \mf s\leq \mf r\leq \mf w\}.
\end{equation}
\nomenclature[B]{$\sB_{phg}^{\cE/\mf w}\sA^{\mf s}_- (X)$}{The partially $\cI$-smooth conormal functions}
Thus these are conormal functions with multiweight $\mf s-$ that have an asymptotic expansion at each boundary hypersurface $H$ with exponents in $\cE(H)$ with real part less than $\mf w(H)$ and with remainder a conormal function with multiweight $\mf w-$.
The space of (totally) $\cI$-smooth conormal functions with index
family $\cE$ is 
\begin{equation*}
	\sA_{phg}^{\cE}(X) = \bigcap_{\mf w} \sB_{phg}^{\cE/\mf w}\sA^{\mf s}_-(X)
\end{equation*}
\nomenclature[A]{$\sA_{phg}^{\cE}(X)$}{The space of (totally)
  $\cI$-smooth conormal functions with index family $\cE$}
where $\mf s$ is any multiweight satisfying $\mf s < \inf \cE.$

When the empty set is used as an index set we interpret $\sB_{phg}^{\emptyset/\mf w}\sA^{\mf s}_-(X) = \sA^{\mf w}_-(X)$ whenever $\mf s\leq \mf w.$

$ $\\
{\bf Pull back and push forward.}
If $f:X \lra Y$ is a b-map, then to each multiweight $\mf r$ on $Y$ we associate a multiweight on $X,$
\begin{equation*}
	\cM_1(X) \ni H \mapsto
	f^{\sharp}\mf r(H) = \sum_{G \in \cM_1(Y)}e_f(H,G)\mf r(G).
\end{equation*}
Note that $f^{\sharp}\mf r(H) =0$ for any $H \in \ker(e_f).$
Let $\mf n_f$ be the multiweight on $X,$
\begin{equation*}
	\mf n_f(H) = \begin{cases}
	\infty & \Mif H \in \ker(e_f) \\
	0 & \Melse
	\end{cases}
\end{equation*}
To an index family $\cF$ on $Y$ we associate an index family on $X,$
\begin{multline*}
	\cM_1(X) \setminus \ker(e_f) \ni H \mapsto
	f^{\sharp}\cF(H) = \Big\lbrace (S,P) : \exists \{(s_G, p_G) \in \cF(G) : e_f(H,G) \neq 0\} \\ \Mst
	S = \sum e_f(H,G) s_G, P = \sum p_G \Big\rbrace
\end{multline*}
and $f^{\sharp}\cF(H) = 0$ for all $H \in \ker(e_f).$
For any multiweights $\mf r,$ $\mf r'$ and index family $\cF$ on $Y,$ pull-back along $f$ gives a map \cite[Theorem 3]{Melrose:Conormal}
\begin{equation*}
	f^*: \sB^{\cF/\mf r'}\sA^{\mf r}_-(Y) \lra
	\sB^{f^{\sharp}\cF/(f^{\sharp}\mf r' + \mf n_f)}\sA^{f^{\sharp} r}_-(X).
\end{equation*}

Similarly, if $f:X \lra Y$ is a b-fibration (defined above) we can associate to each multiweight $\mf s$ on $X$ a multiweight on $Y,$
\begin{equation*}
	\cM_1(Y) \ni G \mapsto f_{\sharp}\mf s(G) = \min \{ \mf s(H)/e_f(H,G) : H \in \cM_1(X), e_f(H,G) \neq 0 \},
\end{equation*}
and to each index family $\cE$ on $X$ an index family on $Y,$
\begin{equation*}
	\cM_1(Y) \ni G \mapsto f_{\sharp}\cE(G) = \overbar{\bigcup_{ \substack{ H \in \cM_1(X) \\ e_f(H,G)\neq 0 } } }
	\{ (z/e_f(H,G), p) : (z,p) \in \cE(H) \}.
\end{equation*}
For any multiweights $\mf s,$ $\mf s'$ and index family $\cE$ on $X$ satisfying
\begin{equation*}
	H \in \ker(e_f) \implies \inf \cE(H) >0,
\end{equation*}
push-forward along $f$ gives a map \cite[Theorem 5]{Melrose:Conormal}
\begin{equation*}
	f_*: \sB^{\cE/\mf s'}\sA^{\mf s}_-(X;\rho_X^{-1}\Omega) \lra 
	\sB^{f_{\sharp}\cE/f_{\sharp}\mf s'}\sA^{f_{\sharp}\mf s}(Y; \rho_Y^{-1}\Omega).
\end{equation*}

$ $\\
These theorems hold with functions replaced by sections of a vector bundle with only notational differences. 
Another useful extension is to sections that are also conormal with respect to an interior \textbf{p-submanifold}. A submanifold $W \subseteq X$ is a p-submanifold if every point in $W$ has a neighborhood $\cU$ in $X$ such that 
\begin{equation}\label{eq:DefPSub}
\begin{gathered}
	X \cap \cU = X' \times X'', \Mwhere \pa X'' = \emptyset,\\
	W \cap \cU = X' \times \{ p'' \} \Mforsome p''\in X''.
\end{gathered}
\end{equation}
We will not detail this extension but refer the reader to e.g., \cite[Appendix B]{Epstein-Melrose-Mendoza}.

\subsection{Edge double space}\label{sec:DoubleSpace}

Given a manifold with corners and an iterated fibration structure $X,$ we follow \cite{Mazzeo:Edge, Mazzeo-Witten:KW} and define edge pseudodifferential operators by describing their integral kernels on a replacement of $X^2$ that takes the iterated fibration structure into account.

Recall that the radial blow-up of a manifold with corners $X$ along a p-submanifold $W$ (as in \eqref{eq:DefPSub}) is the manifold with corners $[X;W]$ obtained by replacing $W$ with the inward-pointing part of its spherical normal bundle, see e.g., \cite[\S4.2]{tapsit}, \cite[\S2.2]{Mazzeo-Melrose:Surgery}, \cite[Chapter 5]{damwc}. \\

Recall that there is a partial order on $\cS(X),$ $Y < Y'$ iff
$\bhs{Y}\cap \bhs{Y'}\neq\emptyset$ and $\dim Y < \dim Y'.$
The edge double space associated to $X$ is obtained from $X^2$ by
blowing-up certain p-submanifolds.
 For each $Y \in \cS(X)$ we denote the the fiber diagonal of $\phi_Y$ in $X^2$ by
\begin{equation*}
	\bhs{Y} \times_{\phi_Y} \bhs{Y} 
	= \{ (\zeta, \zeta') \in (\bhs{Y})^2 : \phi_Y(\zeta) = \phi_Y(\zeta') \}.
\end{equation*}

\begin{definition}\label{def:EdgeDouble}
Let $X$ be a manifold with corners and an iterated fibration structure. Let $\cS(X) = \{Y_1, Y_2, \ldots, Y_\ell\}$ be a listing of $\cS(X)$ such that $Y_i < Y_j \implies i < j,$ i.e., such that the list is non-decreasing in depth. The {\bf edge double space of $X$} is 
\begin{equation}\label{eq:EdgeDoubleSpace}
	X^2_\e = [X^2; 
	\bhs{Y_1} \times_{\phi_{Y_1}} \bhs{Y_1};
	\bhs{Y_2} \times_{\phi_{Y_2}} \bhs{Y_2};
	\ldots ;
	\bhs{Y_{\ell}} \times_{\phi_{Y_\ell}} \bhs{Y_\ell}].
\end{equation}
\end{definition}

As in, e.g.,  \cite{Dai-Melrose, Melrose-Piazza:Even}, there is an analogous construction for families of manifolds with corners.

\begin{definition}
Given a fiber bundle $M \xlra\psi B$ of manifolds with corners and iterated fibration structures as in Definition \ref{def:FibIFS}, fix a non-decreasing list of $\cS_{\psi}(M),$ $\{N_1, N_2, \ldots, N_{\ell}\},$ let the {\bf families edge double space} be
\begin{equation*}
	(M/B)^2_\e =
	\Big[
	M \times_{\psi} M; 
	\bhs{N_1} \times_{\phi_{N_1}} \bhs{N_1}; \ldots;
	\bhs{N_\ell} \times_{\phi_{N_\ell}} \bhs{N_\ell} \Big].
\end{equation*}
The map $\psi$ induces a fiber bundle 
\begin{equation*}
	X^2_\e \fib (M/B)^2_\e \xlra{\psi_{(2)}} B.
\end{equation*}
\end{definition}

Let us check, as is implicit in Definition \ref{def:EdgeDouble}, that after performing the first $k-1$ blow-ups, the lift of $\bhs{Y_k} \times_{\phi_{Y_k}} \bhs{Y_k}$ to the blown-up space is a p-submanifold.
Local coordinates near $\bhs{Y},$ say $x, y, z$ where $x$ is a bdf for $\bhs{Y},$ $y$ are coordinates along $Y$ and $z$ coordinates along the fiber $Z$ of $\phi_Y,$  induce coordinates $x,y,z, x', y', z'$ near $\bhs{Y} \times \bhs{Y},$ in which 
\begin{equation*}
	\bhs{Y} \times_{\phi_Y} \bhs{Y} = \{x=x'=0, y=y'\}
\end{equation*}
and so this is a p-submanifold whenever $Y$ is a closed manifold, e.g., for $Y_1.$

If $Y<\wt Y,$ so that $\wt Y$ has a collective boundary hypersurface $\bhs{Y\wt Y}$ as in \eqref{eq:bfsDiag}, let us label the fibers of these fiber bundles,
\begin{equation}\label{eq:IFSDetail'}
	\xymatrix{
	Z \ar@{}[r]|-*[@]{\supseteq} & \bhs{\wt Y Z} \ar@{-}[ld] \ar@{-}[rd] \ar[rrrr]^{\phi_{\wt Y Z}} & & & &  W \ar@{-}[ld] \\
	\wt Z \ar@{-}[rr] & & \bhs{Y} \cap \bhs{\wt Y} \ar[rr]^{\phi_{\wt Y}} \ar[rd]_{\phi_{\wt Y}} & & 
		\bhs{Y\wt Y} \ar[ld]^{\phi_{Y\wt Y}} \ar@{}[r]|-*[@]{\subseteq} & \wt Y \\
	&&& Y }
\end{equation}
and choose coordinates near $\bhs{Y} \cap \bhs{\wt Y}$ of the form
\begin{equation}\label{eq:IFSDetail''}
	x,\quad y, \quad w, \quad r, \quad \wt z,
\end{equation}
in which $x$ is a bdf for $Y$ and $r$ is a bdf for $\wt Y,$ $y$ are coordinates along $Y,$ $w$ coordinates along $W$ and $\wt z$ coordinates along $\wt Z,$ so that $(x,y,w)$ are coordinates along $\wt Y$ and $(w,r, \wt z)$ are coordinates along $Z.$
In the induced coordinates $x, y,  w,  r,  \wt z, x', y',  w',  r',  \wt z',$ we have
\begin{equation*}
	\bhs{Y} \times_{\phi_Y} \bhs{Y} = \{x=x'=0, y=y'\}, \quad
	\bhs{\wt Y} \times_{\phi_{\wt{Y}}} \bhs{\wt Y} = \{r=r'=0, (x,y,w) = (x',y',w') \}
\end{equation*}
which shows that the latter is not a p-submanifold.
After blowing-up the former, projective coordinates with respect to $x'$ are given by
\begin{equation}\label{eq:ProjCoords}
	s = \frac x{x'}, \quad
	u = \frac{y-y'}{x'}, \quad
	w, \quad 
	r, \quad
	\wt z, \quad
	x', \quad
	y', \quad
	w', \quad
	r', \quad
	\wt z'
\end{equation}
and the interior lift of $\bhs{\wt Y} \times_{\phi} \bhs{\wt Y}$ is given by 
\begin{equation}\label{eq:PSubMfd}
	\{ r=r'=0, \;  s=1, \; u=0,\;  w=w'\},
\end{equation}
which is a p-submanifold.

Thus the manifolds blown-up in \eqref{eq:EdgeDoubleSpace} are p-submanifolds.
We denote the blow-down map by 
\begin{equation*}
	\beta_{(2)}: X^2_\e \lra X^2
\end{equation*}
and the compositions with the projections onto the left and right factors by
\begin{equation*}
	\xymatrix{
	& X^2_\e  \ar@/_1.5pc/[ldd]_-{\beta_{(2),L}} \ar[d]^-{\beta_{(2)}} \ar@/^1.5pc/[ddr]^-{\beta_{(2),R}} & \\
	& X^2
	\ar[ld]^-{\pi_L} \ar[rd]_-{\pi_R} & \\
	X & & X } 
\end{equation*}
Since the collective boundary hypersurfaces may contain more that one connected component, this is an `overblown' version of the double space in \cite[\S2]{Mazzeo:Edge} and an edge version of the double space in \cite[Appendix]{Melrose-Piazza:K}.

The edge double space has collective boundary hypersurfaces, for each $Y \in \cS(X),$
\begin{equation*}
	\bhss{Y} \times X \leftrightarrow \bhsd{10}(Y), \quad
	X \times \bhss{Y} \leftrightarrow \bhsd{01}(Y), \quad
	\bhss{Y} \times_{\phi_Y} \bhss{Y} \leftrightarrow \bhsd{\phi\phi}(Y),
\end{equation*}
where the notation indicates that, e.g., the interior lift of $\bhss{Y} \times X$ is the boundary hypersurface $\bhsd{10}(Y)$ of $X^2_\e.$ We denote the family of collective boundary hypersurfaces produced by the blow-ups by $\ff(X^2_\e)$ (the `front faces') and the other collective boundary hypersurfaces by $\sf(X^2_\e)$ (the `side faces'), thus
\begin{equation}\label{eq:side-and-front}
\begin{gathered}
	\ff(X^2_\e) = \{ \bhsd{\phi\phi}(Y) : Y \in \cS(X) \}, \\
	\sf(X^2_\e) = \{ \bhsd{10}(Y) , \bhsd{01}(Y) : Y \in \cS(X) \}.
\end{gathered}
\end{equation}
We use similar notations for the family edge double space $(M/B)^2_{\e},$ e.g., $\bhsd{\phi\phi}(N).$\\

It will be useful to describe the structure of these collective hypersurfaces in more detail.
If $\cS(X) = \{Y \},$ the case treated in \cite{Mazzeo:Edge}, then the restriction of the blow-down map 
\begin{equation*}
	\bhsd{\phi\phi}(Y) \lra \bhs{Y} \times_{\phi_Y} \bhs{Y}
\end{equation*}
is the fiber bundle map of the inward pointing spherical normal bundle of $\bhs{Y} \times_{\phi_Y} \bhs{Y}$ in $X^2.$ The fiber is a quarter sphere $\bbS^{h+1}_{++},$ where $h = \dim Y.$
Invariantly the spherical normal bundle at a point $q \in \bhs{Y} \times_{\phi_Y} \bhs{Y}$ is obtained from $T_qX^2$ by moding out by tangent vectors to $\bhs{Y} \times_{\phi_Y} \bhs{Y},$ removing zero, and taking the $\bbR^+$-orbit space of the dilation action,
\begin{equation*}
	\bbS(N_{X^2}(\bhs{Y} \times_{\phi_Y} \bhs{Y})_q)
	= \bbR^+ \diagdown 
	[(T_qX^2 / T_q(\bhs{Y} \times_{\phi_Y} \bhs{Y})) \setminus \{0\}].
\end{equation*}
and so every vector field on $X^2$ transverse to $\bhs{Y}
\times_{\phi_Y}\bhs{Y}$ (i.e.\ not in the image of its tangent space
via the inclusion) defines a section of the spherical normal bundle,
and every inward-pointing vector field defines a section of the fiber
bundle $\bhsd{\phi\phi}(Y) \lra \bhs{Y} \times_{\phi_Y} \bhs{Y}.$ 
As pointed out in \cite[(3.10)]{Mazzeo-Melrose:Zero}, there is a canonical section: 
let $\nu'$ be any vector field on $X$ that is inward pointing at
$\bhs{Y},$ denote the corresponding vector fields acting on the left,
respectively right, factor of $X$ in $X^2$ by $\nu_L'$, respectively
$\nu_R',$ set $\nu = \nu_L' + \nu_R'$ and let $[\nu_{\phi_Y}]$ be the  induced section of $\bhsd{\phi\phi}(Y) \lra \bhs{Y} \times_{\phi_Y} \bhs{Y},$
\begin{equation*}
	[\nu_{\phi_Y}]: \bhs{Y} \times_{\phi_Y} \bhs{Y} \lra
	\bhsd{\phi\phi}(Y).
\end{equation*}
A different choice of $\nu'$ would change the value $\nu'(y,z)$ at a point $(y,z) \in \bhs{Y}$ by multiplication by a positive constant and addition of a vector tangent to $T_{(y,z)}\bhs{Y},$ and correspondingly change $\nu(y,z,z')$ by multiplication by a positive constant and addition of a vector in $T_{(y,z,z')}(\bhs{Y} \times_{\phi_Y} \bhs{Y}),$ and hence would not change $[\nu](y,z,z').$ We denote the image of $[\nu]$ by
\begin{equation*}
	\nu_{\phi_Y}(\bhs{Y}) 
	= [\nu_{\phi_Y}](\bhs{Y} \times_{\phi_Y} \bhs{Y}) \subseteq \bhsd{\phi\phi}(Y).
\end{equation*}
For reasons described below, this will be referred to as the {\bf identity section} of the $\bbS^{h+1}_{++}$-bundle, in analogy to the zero section of a vector bundle. 

A choice of connection for $\bhs{Y} \times_{\phi_Y} \bhs{Y} \lra Y$
lets us identify the normal bundle to $\bhs{Y} \times_{\phi_Y}
\bhs{Y}$ in $X^2$
with the pull-back of $TY$ (as the normal bundle to the diagonal of $Y$ in $Y^2$), times two copies of $N^+_X\bhs{Y},$ the inward-pointing normal bundle to $\bhs{Y}$ in $X,$ one for each factor of $X^2,$
\begin{equation*}
	\bbS(TY \times (N^+_X\bhs{Y})^2) 
	=  \bhsd{\phi\phi}(Y) \lra \bhs{Y} \times_{\phi_Y} \bhs{Y}.
\end{equation*}
With this identification, the identity section $\nu_{\phi_Y}(\bhs{Y})$ is the subset $\pi(\{0\}\times \{N_L\} \times \{N_R\})$ where $\pi$ is
the projection from $TY \times (N^+_X\bhs{Y})^2$ minus its zero section onto its sphere bundle, and $N_L,$ $N_R,$ denote the pull-back along the left and right of an inward-pointing vector field transverse to $\bhs{Y}$ in $X.$
We can compose the blow-down map with the fiber bundle map $\bhs{Y} \times_{\phi_Y} \bhs{Y} \lra Y$ to obtain a fiber bundle
\begin{equation*}
	\bbS^{h+1}_{++} \times Z^2 \fib \bhsd{\phi\phi}(Y) \lra Y.
\end{equation*}
Thus near its front face, $X^2_e$ is locally diffeomorphic to $\bbR^+ \times \bbS^{h+1}_{++} \times Z^2 \times \cU_Y$ with $\cU_Y$ an open set in $Y$ over which $\phi_Y$ is trivial.

Finally, in local projective coordinates analogous to \eqref{eq:ProjCoords}, 
\begin{equation*}
	s = \frac x{x'}, \quad u = \frac{y-y'}{x'}, \quad z, \quad x', \quad y', \quad z',
\end{equation*}
$X^2_e$ is locally diffeomorphic to 
\begin{equation*}
	\bbR^+_s \times \bbR^h_u \times [0,1)_{x'} \times \cU_Y \times Z^2.
\end{equation*}
The submanifold $\nu_{\phi_Y}(\bhs{Y})$ in these coordinates is $\{s=1, u=0\}.$
Notice that if we view $\bbR^+_s \times \bbR^h_u$ as the `ax+b' group, $\bbR^+\ltimes \bbR^h,$ with product
\begin{equation*}
	(s,u) \cdot (s',u') = (ss', s'u + u'),
\end{equation*}
then $(1,0)$ is the identity element of the group.
This is the reason why we refer to  $\nu_{\phi_Y}(\bhs{Y})$ as the identity section.
The action of edge pseudodifferential operators is by convolution with respect to this group (see, e.g., \cite[(3.5)]{Mazzeo:Edge}).\\

As we discuss now, this structure persists in a modified way in the setting of manifolds with iterated fibration structures.

\begin{remark}
It may be useful to consider a 
`toy case' with underlying stratified space
\begin{equation*}
	\hat X = Y \times C_{[0,1)}(W \times C_{[0,1)}(\wt Z)),
\end{equation*}
where $C_{[0,1)}$ denotes the truncated cone, so that
\begin{equation*}
	X = Y \times [0,1)_x \times W \times [0,1)_r \times \wt Z, \qquad
	\cS(X) = \{Y, \quad \wt Y = Y \times [0,1)_x \times W \} 
\end{equation*}
with collective boundary hypersurfaces
\begin{equation*}
	\bhs{Y} = \{x=0\} = Y \times W \times [0,1)_r \times \wt Z, \qquad
	\bhs{\wt Y} = \{r=0\} = Y \times [0,1)_x \times W \times \wt Z 
\end{equation*}
participating in the (trivial) fiber bundles
\begin{equation*}
	Z = W \times [0,1)_r \times \wt Z \fib \bhs{Y} \xlra{\phi_Y} Y, \qquad
	\wt Z \fib \bhs{\wt Y} \xlra{\phi_{\wt Y}} \wt Y 
\end{equation*}
whose compatibility diagram takes the form
{\small
\begin{equation*}
	\xymatrix @C=5pt{
	Z = W \times [0,1)_r \times \wt Z \ar@{}[r]|-*[@]{\supseteq} & 
	\bhs{\wt Y Z} = W \times \wt Z \ar@{-}[ld] \ar@{-}[rd] \ar[rrrr]^{\phi_{\wt Y Z}} &
	 & & &  W \ar@{-}[ld] \\
	\wt Z \ar@{-}[rr] & & 
	\bhs{Y} \cap \bhs{\wt Y} = Y \times W \times \wt Z \ar[rr]^-{\phi_{\wt Y}}
	 \ar[rd]_{\phi_{\wt Y}} & & 
	\bhs{Y\wt Y} = Y \times W \ar[ld]^{\phi_{Y\wt Y}} 
	\ar@{}[r]|-*[@]{\subseteq} & \wt Y  \\
	&&& Y }
\end{equation*}
}
To construct $X^2_\e,$ we start with $X^2$ and blow-up 
\begin{equation*}
	\bhs{Y}\times_{\phi_Y}\bhs{Y} = \{x=x'=0, y=y'\}
	= \diag(Y^2) \times \{x=x'=0\} \times W^2 
	\times [0,1)_r^2 \times \wt Z^2.
\end{equation*}
Near the resulting front face the blown-up space is locally diffeomorphic to 
\begin{equation*}
	\bbR^+_R \times \bbS^{h_Y+1}_{++} \times  Y 
	\times W^2 \times [0,1)_r^2 \times \wt Z^2
\end{equation*}
where $h_Y$ is the dimension of $Y$ and $R$ is a defining function for
the front face.
In this local description, the interior lift of the submanifold
\begin{equation*}
	\bhs{\wt Y}\times_{\phi_{\wt Y}}\bhs{\wt Y} 
	= \{r=r'=0, (y,x,w)=(y',x',w')\}
	= \diag(Y^2 \times [0,1)_x^2 \times W^2)
	\times \{r=r'=0\} \times \wt Z^2
\end{equation*}
is equal to (cf.  \eqref{eq:PSubMfd})
\begin{equation*}
\begin{gathered}
	\bbR^+_R \times 
	\{[(1,0)]\} \times Y
	\times 
	\diag(W^2) \times \{r=r'=0\} \times \wt Z^2 \\
	=\bbR^+_R \times 
	\lrpar{ \nu_{\phi_Y}(\bhs{Y}) \cap
	(\diag(W^2) \times \{r=r'=0\} \times \wt Z^2)},
\end{gathered}
\end{equation*}
where $\{[(1,0)]\}$ denotes the identity element of the `ax+b group', as discussed above, and $\nu_{\phi_Y}(\bhs{Y})$ is the identity section of $\bbS(TY \times \mathbb{R}_+^2).$

Blowing-up the interior lift of $\bhs{\wt Y}\times_{\phi_{\wt
    Y}}\bhs{\wt Y}$ produces the manifold $X^2_{\e}.$ The normal
bundle of the interior lift of $\bhs{\wt Y}\times_{\phi_{\wt
    Y}}\bhs{\wt Y}$ fibres over $[0, 1)_R \times Y \times W$ with
fiber $\mathbb{R}^{h_Y + h_W + 1}$ where $h_Y,$ $h_W$ are the
dimensions of $Y$ and $W,$ respectively; indeed the normal bundle is exactly $\mathbb{R}_+ \times \mathbb{R}
\times TY \times TW$.  So near the intersection of the front faces this space
is locally diffeomorphic to 
\begin{equation*}
	\bbR^+_R \times \bbR^+_{\wt R} \times \bbS(\mathbb{R} \times
        TY \times TW \times \mathbb{R}_+^2) \times \wt Z^2,
\end{equation*}
where $R$, $\wt R$ are defining functions for $\bhsd{\phi\phi}(Y)$ and
$\bhsd{\phi\phi}(\wt Y),$ respectively.  The third term is a fibre bundle
$\bbS^{h_Y + h_W + 2}_{++} \fib \bbS(\mathbb{R} \times
        TY \times TW \times \mathbb{R}_+^2) \to [0,1)_R \times Y
        \times W \simeq \wt{Y}$.  

With an eye to the case of non-trivial fiber bundles, note that the normal bundle of 
$\bhs{\wt Y}\times_{\phi_{\wt Y}}\bhs{\wt Y}$ is naturally isomorphic to 
${}^{\e}T\tilde{Y}\times \mathbb{R}_+^2$,
where ${}^{\e}T\tilde{Y} $ is the edge tangent bundle, generated by
the vector fields $x \pa_x, x\pa_y, \pa_w$, as can be seen by lifting
these vector fields to $X^2$ from the left projection and restricting
to the interior lift of $\bhs{\wt Y}\times_{\phi_{\wt Y}}\bhs{\wt Y}$.  So in
fact this neighborhood can be expressed as
\begin{equation*}
	 \bbR^+_{\wt R} \times \bbS({}^{\e} T\wt{Y} 
	 \times \mathbb{R}_+^2) \times \wt Z^2,
\end{equation*}
where $\bbS^{h_Y + h_W + 2}_{++} \fib \bbS({}^{\e} T\wt{Y} \times
\mathbb{R}_+^2) \to \wt{Y}.$  

In particular, $\bhsd{\phi\phi}(Y)$ is locally diffeomorphic to 
\begin{equation*}
	\bbR^+_{\wt R} \times \bbS^{h_Y + 1 +  h_W + 1}_{++}
	\times Y \times W \times \wt Z^2.
\end{equation*}
To compare this to 
\begin{equation*}
	Z^2_{\e} 
	= [Z^2;
	\bhs{\wt Y Z} \times_{\phi_{\wt Y Z}} \bhs{\wt Y Z}]
	= [(W \times [0,1)_r \times \wt Z)^2;
	\diag(W^2) \times \{r=r'=0\} \times \wt Z^2]
\end{equation*}
note that this space has a similar description near its front face, namely
\begin{equation*}
	\bbR^+_{\wt R} \times \bbS^{h_W +1}_{++} \times W \times \wt Z^2.
\end{equation*}
Thus the local description of $\bhsd{\phi\phi}(Y)$ fibers over $Y$ and we can think of the fiber as a `suspended' version of the edge double space of $Z,$ 
\begin{equation*}
	[\bbS_{++}^{h_Y+1} \times Z^2; \nu_{\phi_Y}(\bhs{Y}) \cap 
	(\bhs{\wt Y Z} \times_{\phi_{\wt Y Z}} \bhs{\wt Y Z})].
\end{equation*}
\end{remark}

The front faces of the edge double space of $X$ are related to the
edge double spaces of the fibers of its boundary fibrations, but as
pointed out in the remark, they are `suspended' versions. To define
this structure in general we momentarily replace a boundary fiber bundle $Z \fib
\bhs{Y} \lra Y$ with an arbitrary fiber bundle of manifolds with
corners with iterated fibration structures:
\begin{definition}
Let $\wc X \fib \wc M \xlra{\wc \psi} \wc B$ be a fiber bundle of
manifolds with corners and iterated fibration structures and let
$\cS_{\wc \psi}(\wc M) = \{\wc N_1, \ldots, \wc N_{\ell}\}$ be a non-decreasing
listing of $\cS_{\wc \psi}(\wc M).$ 
Let 
$\pi \colon \cS_{++}(\wc M \times_{\wc \psi} \wc M) \lra \wc M
\times_{\wc \psi} \wc M$ be the pull-back of the fiber bundle
$\bbS({}^\e T\wc B \times \bbR_+^2)$ from $B$ to $\wc M \times_{\wc \psi} \wc M$ and let $\nu_{\wc \psi}(\wc M)$ denote the identity section.
The {\bf suspended edge double space of $\wc M/\wc B$} is 
\begin{equation*}
\begin{multlined}
    (\wc M/\wc B)^2_{Sus(\e)} = [\cS_{++}(\wc M \times_{\wc \psi} \wc
    M); \; \nu_{\wc \psi}(\wc M) \cap \pi^{-1}(\bhs{\wc N_1}
    \times_{\phi_{\wc N_1}} \bhs{\wc N_1});\;\ldots;\; \\
\qquad \qquad \nu_{\wc
      \psi}(\wc M) \cap \pi^{-1}(\bhs{\wc N_\ell} \times_{\phi_{\wc
        N_\ell}} \bhs{\wc N_\ell})].
\end{multlined}
  \end{equation*}

This fibers over $\wc B$ and we denote the typical fiber by $\wc X^2_{Sus_{\wc B}(\e)},$
\begin{equation*}
	\wc X^2_{Sus_{\wc B}(\e)} \fib (\wc M/\wc B)^2_{Sus(\e)} \lra \wc B.
\end{equation*}
\end{definition}

\begin{proposition}[Structure of the front faces of $X^2_\e$]\label{prop:ffX2}
Let $X$ be a manifold with corners and an iterated fibration structure. For each $Y \in \cS(X),$ let 
\begin{equation*}
	\phi_Y^{(2)}: \bhsd{\phi\phi}(Y) \lra Y
\end{equation*}
denote the composition of $\beta_{(2)}:X^2_\e \lra X^2$ with the
fibration $\bhs{Y}\times_{\phi_Y} \bhs{Y} \lra Y,$ restricted to $\bhsd{\phi\phi}(Y)$.  This map participates in a fiber bundle with fiber the suspended edge double space of $Z,$
\begin{equation*}
	Z^2_{Sus_Y(\e)} 
	\fib \bhsd{\phi\phi}(Y) = (\bhs{Y}/Y)^2_{Sus(\e)} \xlra{\phi_Y^{(2)}} Y.
\end{equation*}
\end{proposition}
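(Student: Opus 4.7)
The plan is to verify that the iterated blow-up construction of $X^2_\e$, when restricted to a neighborhood of the front face $\bhsd{\phi\phi}(Y)$, matches the iterated blow-up construction of $(\bhs{Y}/Y)^2_{Sus(\e)}$ fibered over $Y$. Fix $Y = Y_k$ in the non-decreasing listing of $\cS(X)$ and work locally. First I would argue that the blow-ups preceding the $k$-th (those along $\bhs{Y_i} \times_{\phi_{Y_i}} \bhs{Y_i}$ for $i < k$) do not affect the structure of the eventual front face $\bhsd{\phi\phi}(Y)$: either $\bhs{Y_i}$ and $\bhs{Y}$ are disjoint and the blow-ups happen in different regions of $X^2$, or $Y_i < Y$ in depth, in which case the interior lift of $\bhs{Y} \times_{\phi_Y} \bhs{Y}$ through the prior blow-ups remains a p-submanifold whose spherical normal bundle is canonically isomorphic, away from its corner with $\bhsd{\phi\phi}(Y_i)$, to the spherical normal bundle of $\bhs{Y} \times_{\phi_Y} \bhs{Y}$ directly in $X^2$.

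Next I would identify the $k$-th front face. The submanifold $\bhs{Y} \times_{\phi_Y} \bhs{Y}$ has codimension $h+2$ with $h = \dim Y$, and its spherical normal bundle is a $\bbS^{h+1}_{++}$-bundle. Using a connection for $\bhs{Y} \to Y$ to identify the normal bundle with $TY \times (N^+_X\bhs{Y})^2$ as in the discussion preceding the proposition, this is precisely the bundle $\cS_{++}(\bhs{Y} \times_{\phi_Y} \bhs{Y})$ from the definition of the suspended edge double space. Composing with $\bhs{Y} \times_{\phi_Y} \bhs{Y} \to Y$, whose fiber is $Z^2$, yields the $\bbS^{h+1}_{++} \times Z^2$-bundle over $Y$ present before the subsequent blow-ups. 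Then, for each $Y_j > Y$ with $j > k$, I would verify that the interior lift of $\bhs{Y_j} \times_{\phi_{Y_j}} \bhs{Y_j}$ intersects $\bhsd{\phi\phi}(Y)$ exactly along $\nu_{\phi_Y}(\bhs{Y}) \cap \pi^{-1}(\bhs{Y_j Z} \times_{\phi_{Y_j Z}} \bhs{Y_j Z})$, as the toy example preceding the proposition illustrates via projective coordinates of the form \eqref{eq:ProjCoords}. Strata $Y_j > Y$ correspond bijectively to strata of $Z$ via the diagram \eqref{eq:IFSDetail'}, and the induced non-decreasing listing of $\cS(Z)$ matches the order of blow-ups required in the definition of $Z^2_{Sus_Y(\e)}$. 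The fibration over $Y$ is preserved by each blow-up because the submanifolds being blown up are themselves $Y$-fiber bundles.

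The main obstacle will be the verification of the intersection statement at each subsequent blow-up, especially when multiple prior blow-ups have been performed. One must track, via projective coordinates adapted to each successive front face, that the interior lift of $\bhs{Y_j} \times_{\phi_{Y_j}} \bhs{Y_j}$ picks up exactly the identity-section factor $\nu_{\phi_Y}(\bhs{Y})$ in its intersection with $\bhsd{\phi\phi}(Y)$, and no additional structure. The toy example provides the blueprint, but handling iterated fibration structures of arbitrary depth requires an induction on $\mathrm{depth}_X(Y)$ together with the simultaneous identification of the fiberwise p-submanifold structure in $\bhsd{\phi\phi}(Y) \to Y$ with the corresponding structure in the definition of $Z^2_{Sus_Y(\e)}$.
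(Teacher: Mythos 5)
Your overall strategy mirrors the paper's proof: blow up the deeper fiber diagonals first, identify the front face created at $Y$, and then track how the remaining (shallower) strata meet it. However, there is a genuine gap at the central step. You identify the $k$-th front face by using the spherical normal bundle of $\bhs{Y}\times_{\phi_Y}\bhs{Y}$ \emph{in $X^2$}, identified via a connection with $TY\times(N^+_X\bhs{Y})^2$, and assert this is $\cS_{++}(\bhs{Y}\times_{\phi_Y}\bhs{Y})$. But the definition of the suspended double space uses the \emph{edge} tangent bundle ${}^\e TY$, not $TY$. When $Y$ has deeper strata below it, $Y$ is itself a manifold with corners carrying a nontrivial iterated fibration structure, so ${}^\e TY\neq TY$ near $\pa Y$, and your identification only holds over $Y^\circ$. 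You signal some awareness of this by restricting to ``away from its corner with $\bhsd{\phi\phi}(Y_i)$,'' but then never resolve the matching over $\pa Y$ — which is precisely where the content lies.

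Indeed, the emphasis in ``the blow-ups preceding the $k$-th $\ldots$ do not affect the structure of the eventual front face'' gets the logic inverted. Those earlier blow-ups are what \emph{create} the edge structure: the paper works in the space $X^2_\e(k+1)$ obtained after blowing up the deeper fiber diagonals, uses the projective coordinates \eqref{eq:ProjCoords} together with lifts of vector fields from the left projection, and reads off that the normal bundle of the interior lift of $\bhs{Y}\times_{\phi_Y}\bhs{Y}$ fibers over ${}^\e TY\times\bbR_+^2$ with fiber $Z^2$. The substitution $\pa_s\leftrightarrow x'\pa_x$, $\pa_u\leftrightarrow x'\pa_y$, $\pa_w\leftrightarrow\pa_w$ at the earlier front face is exactly how $TY$ turns into ${}^\e TY$. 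Without this step your construction produces the wrong model near $\pa Y$, and the statement $\bhsd{\phi\phi}(Y)=(\bhs{Y}/Y)^2_{Sus(\e)}$ would fail to hold over the corners. Relatedly, describing the interior lift as ``remaining'' a p-submanifold through the prior blow-ups obscures that $\bhs{Y}\times_{\phi_Y}\bhs{Y}$ fails to be a p-submanifold of $X^2$ near $\pa Y$ before those blow-ups; it \emph{becomes} one only afterward, and \eqref{eq:PSubMfd} is where the paper verifies this. Your treatment of the shallower strata $Y_j>Y$ and their identification with $\cS(Z)$ via \eqref{eq:IFSDetail'}, and the observation that the blown-up loci are $Y$-fiber bundles, are both correct and match the paper; the missing piece is the corner analysis that produces ${}^\e TY$.
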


Note that if $Y$ is a maximal element of $\cS(X)$ (and hence the fiber $Z$ of $\bhs{Y} \xlra{\phi_Y} Y$ is a closed manifold),
then the fiber bundle in the proposition over $Y$ is 
\begin{equation*}
	\bbS( \bbR^{\dim Y} \times \bbR_+^2) \times Z^2 \fib
	\bhsd{\phi\phi}(Y) \xlra{ \phi_Y^{(2)} } Y,
\end{equation*}
just as in \cite{Mazzeo:Edge}.

\begin{proof}

If $Y$ has $\mathrm{depth}_X(Y) = k$ then the fiber $Z$ of $\phi_Y: \bhs{Y} \lra Y$ is a manifold with corners and an iterated fibration structure of depth equal to $k-1.$ $Z$ has one collective boundary hypersurface for each $\wt Y \in \cS(X)$ such that $Y<\wt Y.$
Indeed following diagram \eqref{eq:IFSDetail'}, this boundary hypersurface, which we denote $\bhs{\wt Y Z},$ is the fiber of the restriction of $\phi_Y$ to $\bhs{Y}\cap \bhs{\wt Y}$ and its boundary fibration is the restriction of $\phi_{\wt Y}.$
Consequently we have
\begin{equation}\label{eq:IFSDetail2}
	\xymatrix @C=10pt {
	Z^2 \ar@{}[r]|-*[@]{\supseteq}  
	& \bhs{\wt Y Z} \times_{\phi_{\wt Y Z}} \bhs{\wt Y Z} \ar@{-}[ld] \ar@{-}[rd] \ar[rrrr]^-{\phi_{\wt Y Z}} 
		& & & &  W \ar@{-}[ld] \\
	\wt Z^2 \ar@{-}[rr] & & (\bhs{Y} \cap \bhs{\wt Y}) \times_{\phi_{\wt Y}} (\bhs{Y} \cap \bhs{\wt Y})
		 \ar[rr]^-{\phi_{\wt Y}} \ar[rd]_-{\phi_{Y}} & & 
		\bhs{Y\wt Y} \ar[ld]^-{\phi_{Y\wt Y}} \\
	&&& Y }
\end{equation}

Let $X^2_\e(k+1)$ denote the blow up in $X^2$ of all the fibre diagonals of strata of depth greater than or equal to $k+1.$
Let $\sN_{+}(Y)$ denote the inward pointing normal bundle of the interior lift of $\bhs{Y} \times_{\phi_{Y}} \bhs{Y}$ to $X^2_\e(k+1)$.  
Composing the projection down to $\bhs{ Y} \times_{\phi_{{Y}}} \bhs{ Y}$ with the projection of this space down to $Y$, we see, e.g.\ by using \eqref{eq:ProjCoords}  and lifting vector fields from the left projection and restricting to the interior lift, that we have a diagram of fiber bundles
\begin{equation*}
	\xymatrix{
	{Z}^2 \ar@{-}[r] & \sN_{+}( Y) \ar[d] \\
	\bbR^{h_{{Y}}} \ar@{-}[r] & {}^\e T Y \times \mathbb{R}_+^2  \ar[d]\\
	& Y
	}
\end{equation*}
where ${}^\e T{Y}$ is the edge tangent bundle.

Thus when we blow up the interior lift of $\bhs{Y} \times_{\phi_{Y}} \bhs{Y}$ to $X^2_\e(k+1),$
we produce a 
boundary hypersurface $\bar{\bhsd{\phi\phi}}(Y) = \bbS(\sN_{+}( Y) )$
which we can identify with the pull-back to $\bbS({}^\e TY \times
\bbR_+^2) \lra Y$ of two copies of the $Z$ bundle over $Y$.
In particular we have a fiber bundle map
\begin{equation*}
	\bbS^{h+1}_{++} \times Z^2 \fib 
	\bar{\bhsd{\phi\phi}}(Y)
	\xlra{\bar{\phi}_Y^{(2)}} Y.
\end{equation*}

If $\wt Y> Y,$ the interior lift of $\bhs{\wt Y} \times_{\phi_{\wt Y}}
\bhs{\wt Y}$ intersects $\bar{\bhsd{\phi\phi}}(Y)$ at the identity
section of $\bbS({}^\e TY \times \bbR_+^2)$ over the submanifold $\bhs{\wt Y Z} \times_{\phi_{\wt Y Z}} \bhs{\wt Y Z}$ of $Z$ (e.g., by the computation \eqref{eq:PSubMfd}). The restriction of $\bar{\phi}_Y^{(2)}$ fibers over $Y,$
\begin{equation}\label{eq:intersection-double-space}
	[(0,1, 1)] \times \bhs{\wt Y Z} \times_{\phi_{\wt Y Z}} \bhs{\wt Y Z} 
	\fib \bar{\bhsd{\phi\phi}}(Y) \cap 
		\bar\beta^{\sharp}(\bhs{\wt Y} \times_{\phi_{\wt Y}} \bhs{\wt Y})
	\xlra{\bar{\phi}_Y^{(2)}} Y.
\end{equation}
Hence blowing these submanifolds up in the appropriate order produces
$(\bhs{Y}/Y)^2_{Sus(\e)}$ as required.
\end{proof}

We point out that, just as when $\cS(X) = \{Y\},$ the maps
\begin{equation*}
	\xymatrix{
	& X^2_\e \ar[ld]_-{\beta_{(2),L}} \ar[rd]^-{\beta_{(2),R}} & \\
	X & & X }
\end{equation*}
obtained by blowing-down and then projecting onto the left or right factor of $X,$ are b-fibrations. 

The double edge space has a distinguished submanifold, the interior lift of the diagonal, which is known as the {\bf edge diagonal} and denoted $\diag_\e.$ It is a p-submanifold of $X^2_\e$ and its normal bundle is canonically identified with the edge tangent bundle.\\

\subsection{Edge pseudodifferential operators}

Let $X$ be a manifold with an iterated fibration structure. We define the edge pseudodifferential operators as the natural analogue of the operators defined by Mazzeo in \cite{Mazzeo:Edge} by specifying the structure of their integral kernels. These will be conormal distributions as in \S\ref{sec:Conormal} on the manifold with corners $X^2_\e$ defined in \S\ref{sec:DoubleSpace}. We will first define the `small calculus' which includes edge differential operators and then the `large calculus' which can be shown to include the inverse of invertible edge differential operators when they have constant indicial roots. Elements in this calculus are very well behaved but since the hypothesis of constant indicial roots is very restrictive, we also define a `calculus with bounds.'

Our convention is that the integral kernels of operators acting on functions will be weighted sections of the density bundle of $X,$ pulled back along the projection onto the second factor of $X^2.$ 
We introduce the multiweight
\begin{equation*}
	\mf d:\cM_1(X^2_\e) \lra \bbR, \quad
	\mf d(H) =
	\begin{cases}
	-(\dim(Y) + 1) & \Mif H \subseteq \bhsd{\phi\phi}(Y) \Mforsome Y \in \cS(X) \\
	0 & \Motherwise
	\end{cases}
\end{equation*}
and the weighted right density bundle over $X^2_\e,$
\begin{equation*}
	\Omega_{\mf d,R} = \rho_{X^2_\e}^{\mf d}\beta_{(2),R}^*\Omega(X).
\end{equation*}

Following \cite[Definition 3.3]{Mazzeo:Edge} in the simple edge case, the \textbf{small edge calculus of a manifold with corners and an
  iterated fibration structure} is the filtred algebra of
pseudodifferential operators consisting of the union, over $r\in
\bbR,$ of the operators defined by the integral kernels:
\begin{equation*}
	\Psi^r_\e(X) 
	= \rho_{\sf(X^2_\e)}^{\infty} I^{r}(X^2_\e,\diag_\e;\Omega_{\mf d,R}),
\end{equation*}
where $\rho_{\sf(X^2_\e)}$ is a total boundary defining function for
the `side faces' defined in \eqref{eq:side-and-front}, i.e.\ 
$$
\rho_{\sf(X^2_\e)} = \prod_{H \in \sf(X^2_\e)} \rho_H
$$
(We use classical, one-step distributions conormal to the diagonal, see {\em loc. cit.}.)
Note that our convention is that the integral kernels are right-densities so that they will map functions to functions.

If $E$ and $F$ are vector bundles over $X,$ we define the vector bundle $\Hom(E,F)$ over $X^2_\e$ by
\begin{equation*}
	\Hom(E,F) = \beta_{(2),L}^*F \otimes \beta_{(2),R}^*E'
\end{equation*}
where $E'$ denotes the dual bundle to $E,$ 
and then the edge pseudodifferential operators acting between sections of $E$ and $F$ are given by
\begin{equation*}
	\Psi^r_\e(X;E,F) 
	= \rho_{\sf(X^2_\e)}^{\infty} 
	I^{r}(X^2_\e,\diag_\e;\Hom(E,F) \otimes \Omega_{\mf d,R})
\end{equation*}
for each $r\in\bbR.$ We abbreviate $\Psi^r_\e(X;E) = \Psi^r_\e(X;E,E).$

The edge smoothing operators in the small calculus are
\begin{equation*}
	\Psi^{-\infty}_\e(X;E,F) = \bigcap_{r\in\bbR}\Psi^r_\e(X;E,F)
	= \rho_{\sf(X^2_\e)}^{\infty} 
	\CI(X^2_\e;\Hom(E,F) \otimes \Omega_{\mf d,R}).
\end{equation*}

The integral kernels of edge differential operators lifted to $X^2_\e$
are supported on the edge diagonal and identifying the operators with
their kernels (and multiplying by a section of the weighted density
bundle, on which the operators act trivially) we have
\begin{equation*}
	\Diff^k_\e(X;E,F) \subseteq \Psi^k_\e(X;E,F), \Mforall k \in \bbN_0.
\end{equation*}

The conormal singularity at the diagonal means \cite[Definition 18.2.6]{Hvol3} that elements of the small calculus have a symbol map defined on the conormal bundle to the diagonal, i.e., the edge cotangent bundle,
\begin{equation*}
	\bar\sigma_r: \Psi^r_\e(X;E,F) \lra \rho_{RC}^{-r}\CI(RC({}^{\e}T^*X), \pi^*\hom(E,F))
\end{equation*}
where $RC({}^{\e}T^*X)$ denotes the radial compactification of the edge cotangent bundle, $\pi: RC({}^{\e}T^*X) \lra X$ denotes the projection, and $\rho_{RC}$ denotes a boundary defining for the boundary at radial infinity.
Multiplying by $\rho_{RC}^r$ and restricting to the boundary maps into $\CI({}^{\e}\bbS^*X, \pi^*\hom(E,F)),$ and we denote the resulting map by $\sigma_r.$
The symbol fits into a short exact sequence,
\begin{equation*}
	0 \lra
	\Psi^{r-1}_\e(X;E,F) \lra \Psi^r_\e(X;E,F) \xlra{\sigma_r} \CI({}^{\e}\bbS^*X, \pi^*\hom(E,F))
	\lra 0.
\end{equation*}

In Appendix A we construct a triple edge space $X^3_\e$ such that composition of edge pseudodifferential operators is given by pull-back, multiplication, and push-forward along b-fibrations; the behavior of distributions conormal along the lifted diagonal is essentially the same as in, e.g, \cite{Mazzeo:Edge}, and hence, for any $r_A, r_B \in \bbR,$
\begin{equation*}
\begin{gathered}
	A \in \Psi^{r_A}_\e(X;G,F), \quad B \in \Psi^{r_B}_\e(X; E,G) \\ \implies
	A \circ B \in \Psi^{r_A+r_B}_\e(X;E,F) \Mand
	\sigma_{r_A+r_B}(A \circ B) = \sigma_{r_A}(A) \circ \sigma_{r_B}(B).
\end{gathered}
\end{equation*}

If $A \in \Psi^r_\e(X;E,F)$ has invertible symbol, we say that $A$ is {\em elliptic} (or {\em edge elliptic}). If $A$ is elliptic then we can find $B \in \Psi^{-r}_\e(X;F,E)$ satisfying
\begin{equation*}
	\sigma_{-r}(B) = \sigma_r(A)^{-1}
\end{equation*}
and any such is known as a {\em symbolic parametrix} of $A.$ These satisfy
\begin{equation*}
	A \circ B - \Id \in \Psi^{-1}_\e(X;F), \quad B\circ A -\Id \in \Psi^{-1}_\e(X;E).
\end{equation*}

$ $\\
The {\bf large edge calculus of a manifold with corners and an iterated fibration structure} consists of, for any $r \in \bbR,$ and $\cE$ an index family for $X^2_\e,$
\begin{equation}\label{eq:DefLargeCalc}
	\Psi^{r,\cE}_{\e,phg}(X;E,F) 
	= \Psi^r_\e(X;E,F) 
	+ \sA_{phg}^{\cE}(X^2_\e; \Hom(E,F)\otimes \Omega_{\mf d,R}).
\end{equation}

\begin{definition}\label{def:BddCalc}
Let $\cE_{ff}$ be the index set for $X^2_\e$ given by
\begin{equation*}
	\cE_{ff}(\bhsd{\phi\phi}(Y)) = \bbN_0, \quad
	\cE_{ff}(\bhsd{10}(Y)) 
	=\cE_{ff}(\bhsd{01}(Y)) =\emptyset, \;
	\Mforall Y \in \cS(X).
\end{equation*}
The {\bf edge calculus with bounds of a manifold with corners and an iterated fibration structure} 
consists of, for any $r\in \bbR,$ multiweight $\mf w$ for $X^2_\e,$ 
\begin{equation*}
\begin{gathered}
	\Psi^{-\infty,\mf w}_\e(X;E,F) 
	= \sB_{phg}^{\cE_{ff}/\mf w}\sA^{-N}_-(X; \Hom(E,F)\otimes \Omega_{\mf d,R}), \\
	\Psi^{r,\mf w}_\e(X;E,F)
	= \Psi^r_\e(X;E,F) + \Psi^{-\infty, \mf w}_\e(X;E,F).
\end{gathered}
\end{equation*}
with notation as in \eqref{eq:pphg} and $-N<\min(0, \mf w).$

(We will always implicitly assume that the multiweight used in the edge calculus with bounds, $\mf w$ above, is positive on $\ff(X^2_\e)$ so that one can restrict to $\ff(X^2_\e).$)
\end{definition}

As in \cite[\S5]{Mazzeo:Edge}, for each $Y \in \cS(X)$ we have a
restriction map (with $\sf(\bhsd{\phi\phi}(Y))$ the collective
boundary hypersurface given by intersections of
$\bhsd{\phi\phi}(Y)$ with the side faces of $X^2_{\e}$) 
\begin{multline}\label{eq:CompEdgeNormal}
	\Nop_Y: \Psi^{r, \mf w}_\e(X;E) \lra \Psi^{r, \mf w_Y}_{Nsus({}^{\e}TY^+)}(\bhs{Y}/Y;E), 
	\Mwhere \\ 
	\Psi^{r, \mf w_Y}_{Nsus(TY^+)}(\bhs{Y}/Y;E) = 
	\rho_{\sf(\bhsd{\phi\phi}(Y))}^{\infty} 
	I^r(\bhsd{\phi\phi}(Y), \diag_\e \cap \bhsd{\phi\phi}(Y); \Hom(E) \otimes \Omega_{\mf d, R}) \\
	+ 
	\sB_{phg}^{(\cE_{ff}/\mf w)|_{\bhsd{\phi\phi}(Y)}}
		\sA^{-N}_-(\bhsd{\phi\phi}(Y); \Hom(E) \otimes \Omega_{\mf d, R}).
\end{multline}
The notation indicates that the latter space is a `non-commutative
suspension' (cf. \cite[\S1]{Albin-Melrose:Fred2},
\cite[\S4]{Mazzeo-Melrose:Phi}) which refers to the following.
We can identify the fibers of the bundle ${}^{\e}TY^+,$ over $Y,$ with
the Lie group $G = \bbR^+ \ltimes \bbR^h,$ i.e.,
\begin{equation*}
	(s,u)\cdot (s',u') = (ss',u+su'),
\end{equation*}
and the composition of edge pseudodifferential operators induces
convolution with respect to this action for the normal operators.

Moreover, $\Psi^{r, \mf g_Y}_{Nsus({}^{\e}TY^+)}(\bhs{Y}/Y;E)$ is
naturally a bundle of operators over $Y$, and the normal operators of
differential operator form a special sub-bundle, namely the product
lie algebra
$U(\mathfrak{g}) \times \mathbb{\Diff}^*_\e(Z)$ where $U(\mathfrak{g})$ is
the universal enveloping algebra of $\mathfrak{g}$, the Lie algebra of
the Lie Group $G$ above.  This goes also for zeroeth order operators,
including the identity operator, in particular
\begin{equation}
	\Nop_Y(Id) = \delta_{\nu_{\phi_Y}(\bhs{Y})} Id_Z,\label{eq:normal-identity}
\end{equation}
with $\nu_{\phi_Y}(\bhs{Y})$ the identity section.

With $\rho_X$ a total boundary defining function for $X$, for $Y \in \cS(X)$ we now have a
composite map $\eth \in \Diff_\w(X)$ to $\Nop_Y(\rho_X \eth)$, which is
related to the `wedge' normal operator above by
\begin{equation}\label{eq:edge-v-wedge-norm}
  \Nop_Y(\rho_X \eth) =  (\rho_X / \prod_{Y' \le Y} \rho_{Y'}')\cN_Y(\eth),
\end{equation}
where $\rho'_{Y'}$ is the pullback of $\rho_{Y'}$ to $X^2_\e$ to the
right factor.  $\cN_Y(\eth)$ is not the normal operator of an edge
operator; it is a \emph{wedge} operator on $\bhsd{\phi\phi}(Y)$.

Consider the normal operators $\{ \Nop_Y A \}_{Y
  \in \cS(X)}$ of an edge pseudodifferential operator with bounds $A
\in \Psi^{r,\mf w}_\e(X;E,F)$.  Since these are defined by restriction
of the integral kernel to the front faces of $X^2_\e$, they
automatically agree on the intersections of the front faces $\bhsd{\phi\phi}(Y)$, but it
will be useful in the parametrix construction below to have a concrete
understanding of these intersections.
Let $Y <\wt{Y}$, so we have a diagram
\eqref{eq:IFSDetail2}.  The intersection of $\bhsd{\phi\phi}(Y)$ with
$\phi_{\wt{Y}}^{(2)} \colon \bhsd{\phi\phi}(\wt{Y}) \lra \wt{Y}$ takes place `in the base' of
$\phi_{\wt{Y}}^{(2)}$, i.e.\ exactly over $\bhs{Y\wt{Y}} \in
\cM_1(\wt{Y})$.  Thus 
\begin{equation}\label{eq:normal-int-1}
\Nop_{\wt{Y}}(A)\rvert_{\bhsd{\phi\phi}(Y) \cap \bhsd{\phi\phi}(\wt{Y})}
\in \Psi^{r, \mf w_{\wt{Y}}}_{Nsus({}^{\e}T\wt{Y}^+)}(\bhs{\wt{Y}}/\wt{Y};E) \rvert_{Y},
\end{equation}
where the restriction on the right hand side comes from fibration of
$\Psi^{r, \mf w_{\wt{Y}}}_{Nsus({}^{\e}T\wt{Y}^+)}(\bhs{\wt{Y}}/\wt{Y} ;E)$ over
$\wt{Y}$ discussed in the previous paragraph.
On the other hand, the intersection $\bhsd{\phi\phi}(Y) \cap
\bhsd{\phi\phi}(\wt{Y})$ is the front face obtained from blow up of  $
		\bar\beta^{\sharp}(\bhs{\wt Y} \times_{\phi_{\wt Y}}
                \bhs{\wt Y})$ in $\bar{\bhsd{\phi\phi}}(Y)$ (see
                \eqref{eq:intersection-double-space}), and is
                trivially equal to
                $(\phi_{\wt{Y}}^{(2)})^{-1}(\bhs{Y\wt{Y}})$.  Thus
                restriction of $\Nop_{Y}(A)$ to this front face is really
                taking the normal operator of an (albeit suspended)
                edge pseudodifferential operator.  This is summarized
                in the diagram
                \begin{equation}
                  \xymatrix{\Psi^{r, \mf w}_{e}(X; E)
                    \ar[r]^-{\Nop_{\wt{Y}}} \ar[d]^{\Nop_Y} &  \Psi^{r, \mf
                      w_{\wt{Y}}}_{Nsus({}^{\e}T\wt{Y}^+)}(\bhs{\wt{Y}}/\wt{Y};E)
                    \ar[d]^-{res}  \\
 \Psi^{r, \mf w_{Y}}_{Nsus({}^{\e}TY^+)}(\bhs{Y}/Y;E) \ar[r]^-{res}& \Psi^{r, \mf
   w_{\wt{Y}}}_{Nsus({}^{\e}T\wt{Y}^+)}(\bhs{\wt{Y}}/\wt{Y};E)\rvert_{Y}
} 
                \end{equation}
where $res$ means `restriction'. \\

To each index family $\cE$ we assign a multiweight $\mf w(\cE)$ such that
\begin{equation*}
	\Psi^{r, \cE}_{\e,phg}(X;E,F) \subseteq \Psi^{r,\mf w(\cE)}_\e(X;E,F) 
\end{equation*}
by defining, for each $Y \in \cS(X),$
\begin{equation*}
\begin{gathered}
	\mf w(\cE)(\bhsd{10}(Y)) = \inf \cE(\bhsd{10}(Y)), \quad
	\mf w(\cE)(\bhsd{01}(Y)) = \inf \cE(\bhsd{01}(Y)), \\
	\mf w(\cE)(\bhsd{\phi\phi}(Y)) = \inf (\Re \cE(\bhsd{\phi\phi}(Y)) \setminus \bbN_0)
\end{gathered}
\end{equation*}
with the convention that the infimum of the empty set is $\infty.$

These operators act on Sobolev sections of vector bundles.
In view of the inclusion of the large calculus into the calculus with
bounds, it suffices to describe the mapping properties of the
latter. We prove the following theorem in Appendix A, where as usual
we identify operators with their integral kernels, so for a multiweight
$\mf g$,
$\rho_{X}^{\mf g}\Psi^{r, \mf g}_\e(X;E,F)$ is the space of
operators with integral kernel in $\rho_X^{\mf g}(\Psi^r_\e(X;E,F) +
\Psi^{-\infty, \mf w}_\e(X;E,F)).$ Moreover we will use multiweights
for the front faces, specifically multiweights $\mf f \colon
\ff(X^2_\e) \lra \mathbb{R}$, notation as in
\eqref{eq:side-and-front}, and corresponding weight functions
$$
\rho_{\ff(X^2_\e)}^{\mf f} = \prod_{H \in \ff(X^2_\e)} \rho_H^{\mf f (H)}
$$

\begin{theorem}[Action on edge Sobolev spaces] \label{thm:EdgeSobAct}
Let $\mf f$ be a multiweight for $\ff(X^2_\e).$ 
Any $A \in \rho_{\ff(X^2_\e)}^{\mf f}\Psi^{r, \mf g}_\e(X;E,F)$ defines a bounded map, for any $t \in \bbR,$ 
\begin{equation}\label{eq:EdgeSobAct}
	\rho^{\mathfrak s}H^t_\e(X;E) \lra \rho^{\mathfrak s'}H^{t'}_\e(X;F)
\end{equation}
as long as $t \geq t'+r$ and, for each $Y \in \cS(X),$
\begin{equation}\label{eq:EdgeMapWeights}
\begin{gathered}
	\mf g( \bhsd{01}(Y)) ) + \mathfrak s(Y) > -\tfrac12 \\
	\mf g( \bhsd{10}(Y) ) > \mathfrak s'(Y) -\tfrac12\\
	\mf f( \bhsd{\phi\phi}(Y))  + \mathfrak s(Y) \geq \mathfrak s'(Y).
\end{gathered}
\end{equation}
\end{theorem}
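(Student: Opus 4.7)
The plan is to decompose $A$ via Definition \ref{def:BddCalc}, treating the small-calculus piece in $\rho_{\ff(X^2_\e)}^{\mf f}\Psi^r_\e(X;E,F)$ and the residual piece in $\rho_{\ff(X^2_\e)}^{\mf f}\Psi^{-\infty,\mf g}_\e(X;E,F)$ separately, and reducing the Sobolev estimates to weighted $L^2$ estimates by commutation with edge vector fields. Since $\cV_\e$ preserves each weighted edge Sobolev space and each $\Psi^r_\e$-filtration level, and since an edge symbolic parametrix trades Sobolev regularity for operator order modulo a residual term, it will be enough to prove the statement when $t=t'=0$ and the operator is either of order zero in the small calculus or lies in the residual calculus with bounds.

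For the small-calculus piece I would use the projective coordinates \eqref{eq:ProjCoords} near each front face $\bhsd{\phi\phi}(Y)$, in which the kernel presents itself as a uniform family over $Y$ of right-invariant pseudodifferential operators on the `$ax+b$' group $\bbR^+ \ltimes \bbR^h$ tensored with an operator on $Z^2$. Boundedness on $L^2$ of these group-invariant operators follows from a Calder\'on--Vaillancourt-type argument combined with the vanishing to infinite order at the side faces $\bhsd{10}(Y)$ and $\bhsd{01}(Y)$; the weights $\mf f$ at the front faces enter directly as scalar factors and produce the third inequality in \eqref{eq:EdgeMapWeights}, which appears as an equality since nothing else absorbs it. Away from the front faces one has a standard pseudodifferential operator on the interior of $X,$ bounded between the corresponding interior Sobolev spaces.

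For the residual piece I would argue directly from the integral kernel on $X^2_\e$, using the pull-back and push-forward theorems of \cite{Melrose:Conormal} recalled in \S\ref{sec:Conormal}. The b-fibrations $\beta_{(2),L}, \beta_{(2),R} \colon X^2_\e \lra X$ have exponent matrices that at $\bhsd{10}(Y)$ record the weight of the left factor only (and symmetrically at $\bhsd{01}(Y)$), while at $\bhsd{\phi\phi}(Y)$ both factors contribute with multiplicity one. Since $A$ acts as $u \mapsto (\beta_{(2),L})_*(K \cdot \beta_{(2),R}^* u)$, $L^2$-boundedness reduces to a Schur-type estimate on the kernel weighted by $\rho_X^{-\mf s'} \otimes \rho_X^{\mf s}$; the conditions $\mf g(\bhsd{01}(Y))+\mathfrak s(Y) > -\tfrac12$ and $\mf g(\bhsd{10}(Y)) > \mathfrak s'(Y) - \tfrac12$ arise as the integrability requirements in the fibre direction, with the $-\tfrac12$ coming from the Cauchy--Schwarz half-density normalization and the density factor $\Omega_{\mf d,R}$ accounting for the Jacobian of the blow-down $\beta_{(2)}$.

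The step I expect to be the main obstacle is the bookkeeping at a corner $\bhsd{\phi\phi}(Y) \cap \bhsd{\phi\phi}(\wt Y)$ for $Y < \wt Y$, where the front-face weights $\mf f$ and the side-face weights $\mf g$ must combine compatibly via the description of $\bhsd{\phi\phi}(Y)$ as a suspended edge double space provided by Proposition \ref{prop:ffX2}. One must check that the density shift $\mf d(\bhsd{\phi\phi}(Y)) = -(\dim Y + 1)$, which encodes the volume of the $\bbS^{h+1}_{++}$-fibre of the front face, balances the group-invariant integration so that the Schur estimates pass through corners without additional loss; this is the place where the iterated nature of the stratification genuinely enters the estimate, as opposed to the single-stratum case treated in \cite{Mazzeo:Edge}.
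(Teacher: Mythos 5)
Your plan matches the paper's: decompose $A$ into a small-calculus piece and a residual piece, reduce the Sobolev estimate to a weighted $L^2$ estimate by commutation with edge vector fields and an elliptic edge parametrix, handle the residual piece through the pull-back and push-forward theorems on $X^2_\e$ (this is precisely the content of the final Corollary of Appendix~\ref{sec:CompEdge}), and handle the small-calculus piece via the uniform right-invariant group structure at each front face as in \cite{Mazzeo:Edge}. The corner bookkeeping you flag as the main obstacle is in fact absorbed automatically by Melrose's push-forward theorem for b-fibrations of manifolds with corners, which tracks index sets at all faces simultaneously; this is exactly what the triple space construction is built to exploit, so it is a check to be done rather than a genuine obstruction.

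The one imprecision worth correcting is the attribution of the $-\tfrac12$ in the first two conditions of \eqref{eq:EdgeMapWeights}. It is not a half-density normalization (no half-densities appear in this paper). It is the $L^2$-integrability threshold on the half-line, $\rho^a \in L^2([0,1),d\rho)$ iff $a > -\tfrac12,$ which surfaces either through the weighted Schur test with trial weight $\rho^{-1/2},$ or, closer to the paper's route, by substituting for $u \in \rho^{\mf s}L^2$ conormal data of order $\mf s - \tfrac12 - \eps$ and invoking the polyhomogeneous mapping Proposition of Appendix~\ref{sec:CompEdge}, whose stated threshold $>-1$ then becomes $>-\tfrac12.$ Your reading of the third, front-face condition as sharp with no strict inequality is right: near $\bhsd{\phi\phi}(Y)$ the boundary defining functions of $\bhs{Y}$ pulled back from the two factors of $X^2$ are both comparable to $\rho_{\bhsd{\phi\phi}(Y)},$ so the weights $\rho^{-\mf s'}$ and $\rho^{\mf s}$ collapse to a single front-face power and only $\mf f + \mf s - \mf s' \geq 0$ remains.
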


Essentially by Arzela-Ascoli we can see that the inclusion
\begin{equation*}
	\rho^{\mf s}H^t_\e(X;E) \lra \rho^{\mf s'}H^{t'}_\e(X;E)
\end{equation*}
is compact if (and only if) $\mf s>\mf s'$ and $t>t'.$
Combining with the mapping properties, we can identify the edge pseudodifferential operators that act as compact operators.

\begin{corollary} \label{cor:CompactAct}
If $A$ is as in Theorem \ref{thm:EdgeSobAct} 
then the operator \eqref{eq:EdgeSobAct} is compact if and only if $t>t'+r$ and the inequalities in \eqref{eq:EdgeMapWeights} are strict.
\end{corollary}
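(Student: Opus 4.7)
The plan is to reduce the claim directly to two ingredients: Theorem \ref{thm:EdgeSobAct} itself, and the Rellich--Kondrachov-type criterion stated immediately before the corollary, namely that the inclusion $\rho^{\mf r} H^q_\e(X;E) \hookrightarrow \rho^{\mf r'} H^{q'}_\e(X;E)$ is compact if and only if $\mf r > \mf r'$ (at every $Y \in \cS(X)$) and $q > q'$.

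For sufficiency, I would fix $A$ as in the hypothesis and assume $t > t' + r$ with each inequality in \eqref{eq:EdgeMapWeights} strict. Pick a small $\epsilon > 0$, set $t'' := t' + \epsilon$, and define a new multiweight $\mf s''$ by $\mf s''(Y) := \mf s'(Y) + \epsilon$ for every $Y \in \cS(X)$. For $\epsilon$ sufficiently small, every hypothesis of Theorem \ref{thm:EdgeSobAct} is still met when $(t', \mf s')$ is replaced by $(t'', \mf s'')$: the condition $t \ge t'' + r$ follows from $t > t' + r$; the first inequality of \eqref{eq:EdgeMapWeights} does not involve $\mf s'$ and is unchanged; and $\mf g(\bhsd{10}(Y)) > \mf s''(Y) - \tfrac12$ together with $\mf f(\bhsd{\phi\phi}(Y)) + \mf s(Y) \ge \mf s''(Y)$ follow by continuity from the assumed strict inequalities with $\mf s'$ in place of $\mf s''$. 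Hence $A$ extends to a bounded operator $\rho^{\mf s} H^t_\e(X;E) \to \rho^{\mf s''} H^{t''}_\e(X;F)$, and composing with the inclusion $\rho^{\mf s''} H^{t''}_\e(X;F) \hookrightarrow \rho^{\mf s'} H^{t'}_\e(X;F)$, which is compact by the Rellich criterion since $\mf s'' > \mf s'$ and $t'' > t'$, yields compactness of the original map.

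For necessity, I would exhibit a bounded but non-compact representative at any equality case. The identity lies in $\Psi^0_\e(X;E) \subseteq \rho^{0}_{\ff(X^2_\e)} \Psi^{0, \mf g}_\e(X;E)$ for any sufficiently positive $\mf g$, and the Rellich criterion shows that the induced map $\rho^{\mf s} H^t_\e \to \rho^{\mf s'} H^{t'}_\e$ fails to be compact whenever $t = t'$ or $\mf s(Y) = \mf s'(Y)$ for some $Y$, which exhibits necessity of strictness both in the Sobolev-order condition and in the front-face condition \eqref{eq:EdgeMapWeights}. The other two inequalities are already strict in Theorem \ref{thm:EdgeSobAct}, and saturating either produces analogous non-compact representatives supported in the calculus-with-bounds part, e.g.\ via multiplication by appropriate boundary powers.

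The main work is Theorem \ref{thm:EdgeSobAct} and the compact inclusion criterion, both of which are already in hand; the only substantive remaining step is the $\epsilon$-perturbation argument, which is routine once one tracks carefully which of the inequalities in \eqref{eq:EdgeMapWeights} depend on $\mf s'$ and which do not. I do not anticipate any genuine analytic obstacle beyond this bookkeeping.
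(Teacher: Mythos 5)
Your proposal is correct and takes essentially the same route as the paper, which does not spell out a proof but invokes precisely the two ingredients you identify: the boundedness result of Theorem \ref{thm:EdgeSobAct} and the compact inclusion of weighted edge Sobolev spaces, combined by factoring through an intermediate space $\rho^{\mf s''}H^{t''}_\e$ with $\mf s'' > \mf s'$ and $t'' > t'$ (your $\epsilon$-perturbation). One small remark on the necessity discussion: of the three inequalities in \eqref{eq:EdgeMapWeights}, only the third is stated as $\geq$ (the first two are already strict in the theorem's hypotheses), so the ``saturating'' scenario you raise for the side-face conditions does not actually arise inside the hypotheses of the theorem; necessity is naturally read as sharpness of the strictness in the third inequality and in $t > t' + r$, both of which your identity-operator example covers.
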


In Appendix A we study the composition of these pseudodifferential operators
at the level of their integral kernels. One advantage of studying composition at this level is that one can then deduce composition results for functions spaces (Sobolev spaces, H\"older spaces, etc.) see, e.g., \cite{Mazzeo:Edge}.

\begin{theorem}[Composition of edge pseudodifferential operators]\label{thm:CompositionEdge} $ $

\begin{enumerate}
\item
Let $r_A, r_B \in \bbR$ and let $\cE_A,$ $\cE_B$ be index families for $X^2_\e$ such that
\begin{equation*}
	\Re(\cE_A(\bhsd{01}(Y))) + \Re(\cE_B(\bhsd{10}(Y))) > -1 \Mforall Y \in \cS(X).
\end{equation*}
If $A \in \Psi^{r_A,\cE_A}_{e,phg}(X^2_\e;G,F)$ and 
$B \in \Psi^{r_B, \cE_B}_{e,phg}(X^2_\e;E,G)$ then 
\begin{equation*}
	C = A \circ B \in \Psi^{r_A + r_B, \cE_C}_{\e,phg}(X^2_\e;E,F)
\end{equation*}
where $\cE_C$ is the index family on $X^2_\e$ given by, for each $Y \in \cS(X),$
\begin{equation*}
\begin{aligned}
	\cE_C(\bhsd{10}(Y)) &= \cE_A(\bhsd{10}(Y)) \; \bar\cup \; 
		\lrpar{ \cE_A(\bhsd{\phi\phi}(Y)) + \cE_B(\bhsd{10}(Y))}, \\
	\cE_C(\bhsd{01}(Y)) &= \cE_B(\bhsd{01}(Y)) \; \bar\cup \;
		\lrpar{ \cE_A(\bhsd{01}(Y)) + \cE_B(\bhsd{\phi\phi}(Y)) }, \\
	\cE_C(\bhsd{\phi\phi}(Y)) &= \lrpar{\cE_A(\bhsd{10}(Y)) + \cE_B(\bhsd{01}(Y)) + \dim(Y) + 1} \\
		&\phantom{xxxxxx}
		\; \bar\cup\;  
		\lrpar{ \cE_A(\bhsd{\phi\phi}(Y)) + \cE_B(\bhsd{\phi\phi}(Y))}
\end{aligned}
\end{equation*}

\item
Let $r_A, r_B \in \bbR$ and let $\mf g_A,$ $\mf g_B$ be multiweights for $X^2_\e$
such that
\begin{equation*}
	\mf g_A(\bhsd{01}(Y)) + \mf g_B(\bhsd{10}(Y)) > -1 \Mforall Y \in \cS(X).
\end{equation*}
If $A \in \Psi^{r_A,\mf g_A}_{\e}(X^2_\e;G,F)$ and 
$B \in \Psi^{r_B, \mf g_B}_{\e}(X^2_\e;E,G)$ then 
\begin{equation*}
	C = A \circ B \in \Psi^{r_A + r_B, \mf g_C}_{e}(X^2_\e;E,F)
\end{equation*}
where $\mf g_C$ is the multiweight on $X^2_\e$ given by, for each $Y \in \cS(X),$
\begin{equation*}
\begin{aligned}
	\mf g_C(\bhsd{10}(Y)) &= \min\lrpar{
	\mf g_A(\bhsd{10}(Y)), \; 
	\mf g_B(\bhsd{10}(Y))}, \\
	\mf g_C(\bhsd{01}(Y)) &= \min\lrpar{
	\mf g_B(\bhsd{01}(Y)), \;
	\mf g_A(\bhsd{01}(Y)) }, \\
	\mf g_C(\bhsd{\phi\phi}(Y)) &= \min\lrpar{
	\mf g_A(\bhsd{10}(Y)) + \mf g_B(\bhsd{01}(Y)) + \dim(Y) + 1, \;
	\mf g_A(\bhsd{\phi\phi}(Y)), \;
	\mf g_B(\bhsd{\phi\phi}(Y)) }
\end{aligned}
\end{equation*}
\end{enumerate}
\end{theorem}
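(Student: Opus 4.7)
The plan is to prove both parts in parallel following the standard strategy for composition in boundary pseudodifferential calculi, realizing composition of kernels as
\[
	K_{A\circ B} = (\pi_{C})_{*}\bigl(\pi_{F}^{*}K_{A}\cdot \pi_{S}^{*}K_{B}\bigr),
\]
where $\pi_{S},\pi_{C},\pi_{F}: X^{3}_{\e}\to X^{2}_{\e}$ are the three natural projections from the triple edge space constructed in Appendix A. The first task—carried out in the appendix—is to exhibit $X^{3}_{\e}$ as a manifold with corners built by iterated blow-up (in non-decreasing depth) of triple fiber-diagonals and to verify that each $\pi_{\bullet}$ is a b-fibration. Once this is in place, the statement reduces to an application of the pull-back theorem and the push-forward theorem for conormal (part 2) and polyhomogeneous (part 1) distributions from \cite{Melrose:Conormal}.

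The argument then splits into a conormal piece near $\diag_{\e}$ and a boundary piece. Near $\diag_{\e}$ the triple space reduces to the usual composition blow-up for conormal distributions on a manifold, and the standard symbolic calculus yields $\sigma_{r_{A}+r_{B}}(A\circ B)=\sigma_{r_{A}}(A)\circ\sigma_{r_{B}}(B)$, contributing to $\Psi^{r_{A}+r_{B}}_{\e}$ alone. Away from the diagonal, the kernel of $C=A\circ B$ lies in the polyhomogeneous (resp.\ bounds) class, and its index family (resp.\ multiweight) on $X^{2}_{\e}$ is computed directly from the push-forward theorem applied to $(\pi_{C})_{*}$ acting on $\pi_{F}^{*}K_{A}\cdot\pi_{S}^{*}K_{B}$.

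The key combinatorial input is the computation of the exponent matrices of $\pi_{S},\pi_{C},\pi_{F}$. For each $Y\in\cS(X)$ the boundary hypersurfaces of $X^{3}_{\e}$ fall into three "side faces" (one factor meeting $\bhs{Y}$), three "two-factor front faces" (a pair meeting $\bhs{Y}$ fiber-diagonally), and one "triple front face" (all three meeting). Under $\pi_{C}$ the triple front face descends onto $\bhsd{\phi\phi}(Y)$ with exponent one, the two-factor front faces descend either onto $\bhsd{\phi\phi}(Y)$ (with exponent one) or onto a side face of $X^{2}_{\e}$, and the remaining side faces either descend trivially or are collapsed. Tabulating these data and feeding them through the extended-union formula of \cite[Theorem 5]{Melrose:Conormal} reproduces the three index-set formulas $\cE_{C}(\bhsd{10}(Y)),\cE_{C}(\bhsd{01}(Y)),\cE_{C}(\bhsd{\phi\phi}(Y))$ precisely as stated; the shift $+\dim(Y)+1$ in the last entry arises from the weight $\mf d(\bhsd{\phi\phi}(Y))=-(\dim Y+1)$ built into $\Omega_{\mf d,R}$ together with the Jacobian factor of the push-forward. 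Part (2) then follows either by specializing the polyhomogeneous statement or by noting that minima over exponents replace extended unions when one works with multiweights rather than index families.

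The main obstacle is verifying the hypothesis of the push-forward theorem: positivity of the combined weight of $\pi_{F}^{*}K_{A}\cdot\pi_{S}^{*}K_{B}$ on every boundary hypersurface in $\ker(e_{\pi_{C}})$. Unpacking the geometry shows that these collapsed faces are precisely those where the middle factor meets $\bhs{Y}$ for some $Y$, and after bookkeeping the density weights the integrability condition reduces exactly to $\Re\cE_{A}(\bhsd{01}(Y))+\Re\cE_{B}(\bhsd{10}(Y))>-1$ in part (1), respectively $\mf g_{A}(\bhsd{01}(Y))+\mf g_{B}(\bhsd{10}(Y))>-1$ in part (2). This matches \eqref{eq:EdgeMapWeights} in spirit and is the analogue for compositions of the mapping-property hypothesis of Theorem \ref{thm:EdgeSobAct}. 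The remaining work—verifying that side and front faces do not create additional obstructions, and that the various restriction maps commute as needed at corners $\bhsd{\phi\phi}(Y)\cap\bhsd{\phi\phi}(\wt Y)$ with $Y<\wt Y$—follows the template of \cite{Mazzeo:Edge} applied inductively on the depth of $X$.
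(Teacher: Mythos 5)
Your proposal follows exactly the paper's strategy: construct the triple edge space $X^3_\e$ by iterated blow-up in non-decreasing depth, verify that the three projections lift to b-fibrations, read off the exponent matrices, and feed them together with the $\Omega_{\mf d,R}$ density weight through Melrose's pull-back and push-forward theorems, with the hypothesis $\Re\cE_A(\bhsd{01}(Y))+\Re\cE_B(\bhsd{10}(Y))>-1$ arising exactly from integrability on the collapsed middle side faces $\bhst{010}(N)\in\ker(e_{\beta_{LR}})$. The derivation of part (2) from part (1) by tracking the conormal remainder's multiweight is also how the paper proceeds, so no gaps.
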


\begin{proof}
The proof of (1) is carried out in Appendix \ref{sec:CompEdge}  following \cite{Mazzeo:Edge} by constructing a `triple edge space' and analyzing the integral kernel of the composite geometrically via the push-forward and pull-back theorems.
As explained in \S\ref{sec:Conormal}, these same theorems apply to partially polyhomogenous distributions with conormal errors. Once we recall that the multiweight $\mf g_C$ denotes the order of the conormal error, we can deduce the behavior of the multiweights in (2) from the behavior of the index sets in (1).
\end{proof}

$ $\\
We formalize the notion of smooth family of edge operators using the space $(M/B)^2_\e,$ e.g.,
\begin{equation*}
\begin{gathered}
	\Psi^r_\e(M/B;E,F) = \rho_{\sf((M/B)^2_\e)}^{\infty}I^r((M/B)^2_\e, \diag_M; \Hom(E,F) \otimes \Omega_{\mf d, R}), \Mand \\
	\Psi_\e^{-\infty,\mf w}(M/B;E,F) = \sB_{phg}^{\cE_{ff}/\mf w}\cA^{-N}_{-}(M/B;\Hom(E,F) \otimes \Omega_{\mf d, R}).
\end{gathered}
\end{equation*}
The composition results in Appendix \ref{sec:CompEdge} are established in the setting of families.

\subsection{Bi-ideal} \label{sec:BiIdeal}

As in, e.g., \cite[Proposition 5.38]{tapsit},
\cite[\S4.12]{Mazzeo-Melrose:Surgery}, we point out a useful bi-ideal property of some edge pseudodifferential operators.

For each $a \in \bbR^+,$ define the residual edge pseudodifferential
operators of weight $a$ to be 
\begin{equation*}
	\Psi^{-\infty,a}_{\e,res}(X;E,F) = 
	\rho_{\ff(X^2_\e)}^{a} \Psi^{-\infty,a}_{\e}(X;E,F).
\end{equation*}

\begin{theorem}\label{thm:BiIdeal}
For $a \in \bbR^+$ 
\begin{equation*}
	\Psi^{-\infty,a}_{\e,res}(X;G,H) \circ 
	\cB(L^2(X;F,G)) \circ
	\Psi^{-\infty,a}_{\e,res}(X;E,F)
	\subseteq
	\Psi^{-\infty,a}_{\e,res}(X;E,H),
\end{equation*}
where $\cB(L^2(X;F,G))$ is the space of bounded operators on $L^2(X;F,G).$
\end{theorem}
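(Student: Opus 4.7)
The plan is to follow the template of \cite[Proposition 5.38]{tapsit} and \cite[\S4.12]{Mazzeo-Melrose:Surgery}, expressing the kernel of the composition as an $L^2$-pairing and exploiting the boundedness of $B$ on $L^2$ to deduce both smoothness and the correct weighted vanishing.

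First, denote the integral kernels of $A_1$ and $A_2$ by $\kappa_1$ and $\kappa_2$, so each lies in $\rho_{\ff(X^2_\e)}^{a}\rho_{\sf(X^2_\e)}^{\infty}\,C^{\infty}(X^2_\e;\Hom\otimes\Omega_{\mf d,R})$. For $(x,z)$ in the interior of $X$, the kernel $K$ of $A_1\circ B\circ A_2$ is
\[
 K(x,z)\;=\;\bigl\langle\,\kappa_1(x,\cdot)^{*},\;B\,\kappa_2(\cdot,z)\,\bigr\rangle_{L^2(X;G)},
\]
where the pairing is made sense of after absorbing the density factor $\Omega_{\mf d,R}$ into the measure on $X$. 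The rapid vanishing $\rho_{\sf}^{\infty}$ guarantees that for each fixed $x\in X^{\circ}$ the section $\kappa_1(x,\cdot)$ lies in $L^2(X;G)$, and similarly for $\kappa_2(\cdot,z)$.

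Next I would show that $x\mapsto\kappa_1(x,\cdot)$ is a smooth map $X^{\circ}\to L^2(X;G)$, and likewise $z\mapsto\kappa_2(\cdot,z)$ is smooth into $L^2(X;F)$; this is standard because differentiation under the integral on compact subsets of $X^{\circ}$ is unobstructed, and $\kappa_i$ is smooth on $X^2_\e$. Since $B\in\cB(L^2)$ is continuous, the composite map $(x,z)\mapsto B\,\kappa_2(\cdot,z)$ is smooth into $L^2(X;G)$, and therefore $(x,z)\mapsto K(x,z)$ is smooth on $X^{\circ}\times X^{\circ}$ with derivatives bounded pointwise by
\[
\bigl\|\partial^{\alpha}\kappa_1(x,\cdot)\bigr\|_{L^2}\,\|B\|_{L^2\to L^2}\,\bigl\|\partial^{\beta}\kappa_2(\cdot,z)\bigr\|_{L^2}.
\]
This handles the interior regularity.

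Extending smoothness and establishing the weighted asymptotics on $X^2_\e$ is done face by face. Near a side face $\bhsd{10}(Y)$, where the left variable approaches $\bhs{Y}$ while the right stays away, the $\rho_{\sf}^{\infty}$ factor on $\kappa_1$ forces $\|\kappa_1(x,\cdot)\|_{L^2}$ to vanish to infinite order as $x\to\bhs{Y}$, so $K$ inherits the $\rho_{\sf}^{\infty}$ rapid decay; symmetrically at $\bhsd{01}(Y)$ via $\kappa_2$. Near a front face $\bhsd{\phi\phi}(Y)$ for a maximal $Y$, use the projective coordinates \eqref{eq:ProjCoords} to write $\kappa_1=(x')^{a}\tilde\kappa_1$ with $\tilde\kappa_1$ smooth and Schwartz in $s,u$; the explicit factor $(x')^{a}$ factors out of the pairing to give $K=(x')^{a}\cdot(\text{smooth})$ as $x'\to 0$, as required.

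The step I expect to be the main obstacle is the uniformity at intersections of several front faces $\bhsd{\phi\phi}(Y)\cap\bhsd{\phi\phi}(\wt Y)$ for $Y<\wt Y$, and more generally at deep corners of $X^2_\e$. The argument there requires iterating the projective-coordinate computation through the successive blow-ups described in Definition \ref{def:EdgeDouble}, using the p-submanifold structure \eqref{eq:PSubMfd} at each stage and the fact that $\beta_{(2),L},\beta_{(2),R}$ are b-fibrations, so that the weights transform predictably under pullback and pushforward as in \S\ref{sec:Conormal}. Once the local computation is organized along the filtration by depth, the $\rho_{\ff}^{a}$ vanishing at each individual front face is inherited from whichever of $\kappa_1,\kappa_2$ controls that face, and the remaining regularity follows from Step 2 applied uniformly on compact subsets of the fibered front-face interiors described in Proposition \ref{prop:ffX2}.
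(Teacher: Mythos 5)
Your overall strategy — passing regularity of the composite kernel through the outer residual factors while using only the $L^2$-boundedness of $B$ — is the right one, and your interior-smoothness step is fine. The gap is exactly where you flag it: you propose a face-by-face projective-coordinate computation to establish the conormal weight at deep corners of $X^2_\e$, but do not carry it out. The paper avoids the face-by-face analysis entirely with a single structural observation that makes the corner estimates automatic: the $b$-vector fields on $X^2_\e$ are generated, over $\CI(X^2_\e)$, by the lifts along $\beta_{(2),L}$ and $\beta_{(2),R}$ of $b$-vector fields on $X$. Given this, stable regularity of $\cK_{ABC}$ is immediate and uniform at every boundary face and corner: differentiating
\begin{equation*}
	\cK_{ABC}(\zeta,\zeta')=\int\!\!\int\cK_A(\zeta,\zeta'')\cK_B(\zeta'',\zeta''')\cK_C(\zeta''',\zeta')\,d\zeta''\,d\zeta'''
\end{equation*}
by a left-lifted $b$-vector field lands the derivative on $\cK_A$, a right-lifted one on $\cK_C$, and each remains in $\Psi^{-\infty,a}_{\e,res}$ by hypothesis, while $B$ supplies a fixed operator norm. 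No case analysis over $\cS(X)$ and no induction over depth is needed. This generating-set statement is the missing lemma, and it is the thing to reach for whenever one must verify conormality of a kernel on a resolved double space without having it as an explicit pushforward.

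There is also a misstatement of the regularity class: you place the kernels in $\rho_{\ff(X^2_\e)}^{a}\rho_{\sf(X^2_\e)}^{\infty}\CI(X^2_\e;\ldots)$, but by Definition \ref{def:BddCalc}, $\Psi^{-\infty,a}_{\e,res}=\rho_{\ff(X^2_\e)}^a\Psi^{-\infty,a}_\e$ carries only finite conormal weight $a$ at the side faces (the index family $\cE_{ff}$ is $\emptyset$ there), and the elements are conormal rather than smooth. The class you wrote is $\rho_{\ff(X^2_\e)}^a\Psi^{-\infty}_\e$, which is strictly smaller. Your interior $L^2$-pairing argument is unaffected (for $x\in X^\circ$ the restricted density on $\{x\}\times X$ is smooth and $X$ is compact, so finite positive weight $a$ suffices for $\kappa_1(x,\cdot)\in L^2$), but you should not expect, nor try to prove, $\rho_{\sf}^\infty$ decay of the composite, since the factors do not have it.
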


\begin{proof}
This result is by now standard, see e.g.\ \cite[\S4.12]{Mazzeo-Melrose:Surgery}, but we
sketch a proof for the convenience of the reader.  For simplicity of notation, let us assume that $E, F, G, H$ are trivial line bundles.
Let $A, C \in \Psi^{-\infty, a}_{\e,res}(X)$ and $B \in \cB(L^2(X)).$
In terms of their distributional kernels on $X^2,$ the composition is given by
\begin{equation*}
	\cK_{ABC}(\zeta, \zeta') =
	\int \int \cK_A(\zeta, \zeta'') \cK_B(\zeta'', \zeta''') 
	\cK_C(\zeta''', \zeta') \; d\zeta'' \; d\zeta''',
\end{equation*}
and so smoothness of $\cK_{ABC}$ in $\zeta$ is inherited from smoothness of $\cK_A$ in $\zeta,$ while smoothness of $\cK_{ABC}$ in $\zeta'$ is inherited from the corresponding smoothness of $\cK_C.$

Next we lift this smooth function from $X^2$ to $X^2_\e$
and check that it is conormal to the boundary hypersurfaces. Indeed, the $b$-vector fields on $X^2_\e$ are spanned by the lifts of the b-vector fields on $X$ along $\beta_{(2),L}$ and $\beta_{(2),R}.$ The kernel $\beta_{(2)}^*\cK_{ABC}$ has stable regularity with respect to the left lift of $b$-vector fields because $\cK_A$ does, and with respect to the right lift of $b$-vector fields because $\cK_B$ does.
\end{proof}

\subsection{Wedge heat space} \label{sec:Heat}

Recall that the edge double space was defined in \S\ref{sec:DoubleSpace} as
\begin{equation*}
	X^2_\e = \Big[ X^2; \bhss{Y_1} \times_{\phi_{Y_1}} \bhss{Y_1}; \ldots \bhss{Y_\ell} \times_{\phi_{Y_{\ell}}} \bhss{Y_{\ell}} ],
\end{equation*}
where  $\{ Y_1, \ldots, Y_\ell\}$ is a non-decreasing listing of $\cS(X),$
and has collective boundary hypersurfaces, for each $Y \in \cS(X),$
\begin{equation*}
	\bhss{Y} \times X \leftrightarrow \bhsd{10}(Y), \quad
	X \times \bhss{Y} \leftrightarrow \bhsd{01}(Y), \quad
	\bhss{Y} \times_{\phi_Y} \bhss{Y} \leftrightarrow \bhsd{\phi\phi}(Y).
\end{equation*}

Now we construct the wedge heat space.
Starting with the space $X^2 \times \bbR^+_t$ we blow-up $\{t=0\}$ parabolically so that $\tau=\sqrt t$ is a smooth function. We will not include this blow-up explicitly but simply change the notation to $X^2 \times \bbR^+_{\tau}.$

\begin{definition}\label{def:WedgeHeat}
Let $X$ be a manifold with corners and an iterated fibration structure and $\{ Y_1, \ldots, Y_\ell\}$ a non-decreasing listing of $\cS(X).$ The {\bf wedge heat space of $X$} is the space $HX_{\w}$ defined by
\begin{gather}\label{eq:intermediate-heat}
	HX_{\w,0} = 
	\Big[ X^2 \times \bbR^+_{\tau}; 
	\bhs{Y_1} \times_{\phi_{Y_1}} \bhs{Y_1} \times \{ 0\}; \ldots ; 
	\bhs{Y_{\ell}} \times_{\phi_{Y_\ell}} \bhs{Y_\ell} \times \{ 0\}; 
	\diag_X \times \{0\} \Big], 
	\\ \label{eq:HeatSpace}
	HX_\w = 
	\Big[ HX_{\w,0};
	\bhs{Y_1} \times_{\phi_{Y_1}} \bhs{Y_1} \times \bbR^+_{\tau}; \ldots ; 
	\bhs{Y_{\ell}} \times_{\phi_{Y_\ell}} \bhs{Y_\ell} \times \bbR^+_{\tau}
	\Big].
\end{gather}
\end{definition}

\begin{remark}
In order to describe the heat kernel of $\eth_{X}^2$ as a conormal distribution with bounds the intermediate space $HX_{\w,0}$ would suffice, 
see, e.g., \cite{Mazzeo-Vertman:AT}. However below we will allow for perturbations of $\eth_{X}$ by smoothing edge pseudodifferential operators and this requires the slightly more complicated space $HX_{\w}.$
\end{remark}

To deal smoothly with a family of wedge heat operators we construct a families wedge heat space to carry their integral kernels.

\begin{definition}
Given a fiber bundle $M \xlra\psi B$ of manifolds with corners and iterated fibration structures as in Definition \ref{def:FibIFS}, fix a non-decreasing list of $\cS_{\psi}(M),$ $\{N_1, N_2, \ldots, N_{\ell}\},$ and let the {\bf families wedge heat space} be
\begin{multline*}
	H(M/B)_{\w} =
	\Big[ M\times_\psi M \times \bbR^+_{\tau}; 
	\bhs{N_1} \times_{\phi_{N_1}} \bhs{N_1} \times \{ 0\}; \ldots ; 
	\bhs{N_{\ell}} \times_{\phi_{N_\ell}} \bhs{N_\ell} \times \{ 0\}; \\
	\bhs{N_1} \times_{\phi_{N_1}} \bhs{N_1} \times \bbR^+_{\tau}; \ldots ; 
	\bhs{N_{\ell}} \times_{\phi_{N_\ell}} \bhs{N_\ell} \times \bbR^+_{\tau}; 
	\diag_M \times \{0\} \Big].
\end{multline*}
The map $\psi$ induces a fiber bundle 
\begin{equation*}
	HX_\w \fib H(M/B)_\w \xlra{\psi_{(H)}} B.
\end{equation*}
\end{definition}

As with the double space, implicit in the definition of $HX_{\w}$ is
the fact that for $\wt{Y} \in \cS(X)$ of depth $k$, the interior lift of
$\bhs{\wt{Y}} \times_{\phi_{\wt{Y}}} \bhs{\wt{Y}} \times \{ 0 \}$ to the space in
which the $\bhs{Y} \times_{\phi_Y} \bhs{Y} \times \{ 0 \}$
have been blown up for all $Y < \wt{Y}$ is a p-submanifold.  It is
helpful to see this
explicitly.  If $Y<\wt Y,$ we have a diagram as in
\eqref{eq:IFSDetail'} and attendant coordinates $x, y,  w,  r,  \wt
z,$ together with their primed versions on the right factor on $X^2$.
Working in the interior of $Y$, after blowing-up $\bhs{Y}
\times_{\phi_Y} \bhs{Y} \times \{ \tau = 0 \}$, projective coordinates
with respect to $x'$ are given by $T = \tau/x'$ and the other
coordinates in \eqref{eq:ProjCoords}, in which the interior lift of
$\bhs{\wt Y} \times_{\phi} \bhs{\wt Y} \times \{ \tau = 0 \}$ is given by 
\begin{equation}\label{eq:PSubMfd-2}
	\{ T = 0, \; r=r'=0, \;  s=1, \; u=0,\;  w=w'\},
\end{equation}
again a p-submanifold.

We denote the blow-down map by
\begin{equation*}
	\beta_{(H)}:HX_\w \lra X^2 \times \bbR^+_{\tau}
\end{equation*}
and its composition with the projections onto the left or right factor of $X$ by $\beta_{(H),L},$ $\beta_{(H),R}$ respectively.
There are boundary hypersurfaces
\begin{equation*}
	X^2 \times \{0\} \leftrightarrow \bhsh{00,1}, \quad
	\diag_X \times \{0\} \leftrightarrow \bhsh{dd,1}
\end{equation*}
and collective boundary hypersurfaces, one for each $Y \in \cS(X),$
\begin{equation*}
\begin{gathered}
	\bhs{Y} \times X \times \bbR^+ \leftrightarrow \bhsh{10,0}(Y), \quad
	X \times \bhs{Y} \times \bbR^+ \leftrightarrow \bhsh{01,0}(Y), \\
	\bhs{Y} \times_{\phi_Y} \bhs{Y} \times \{0\} \leftrightarrow 
		\bhsh{\phi\phi,1}(Y), \quad
	\bhs{Y} \times_{\phi_Y} \bhs{Y} \times \bbR^+ \leftrightarrow 
		\bhsh{\phi\phi,0}(Y).
\end{gathered}
\end{equation*}
We denote the collective boundary hypersurfaces of $H(M/B)_{\w}$ analogously to those of $HX_{\w},$ e.g., $\bhsh{\phi\phi,1}(N).$ \\

We introduce the abbreviations
\begin{equation*}
\begin{gathered}
	\ff(HX_{\w}) = \bigcup_{Y \in \cS(X)} \bhsh{\phi\phi,1}(Y), \quad
	\lf(HX_{\w}) = \bigcup_{Y \in \cS(X)} \bhsh{10,0}(Y), \\
	\rf(HX_{\w}) = \bigcup_{Y \in \cS(X)} \bhsh{01,0}(Y), \quad
	\ef(HX_{\w}) = \bigcup_{Y \in \cS(X)} \bhsh{\phi\phi,0}(Y).
\end{gathered}
\end{equation*}
so that, e.g., $\rho_{\lf(HX_{\w})}$ refers to the product of boundary
defining functions over all $\bhsh{10,0}(Y)$ for $Y \in \cS(X).$  The `edge
faces' making up $\ef(HX_{\w})$ do not intersect the lower depth front faces, nor the $\tau = 0$ diagonal
$$
\diag_X \times \{ 0 \} \cap \ef(HX_{\w}) = \varnothing =
\bhsh{\phi\phi,1}(\wt{Y}) \cap \bhsh{\phi\phi,0}(Y),\ Y < \wt{Y}.
$$

Also as with the double space, the faces created by the blow ups are fibre
bundles whose fibers are suspended versions of wedge heat spaces.
This will be a wedge heat space where the time $[0, \infty)_\tau$ is
compactified along with other normal directions, and we need analogues
of the identity section above.  
Given a fiber bundle $\wc M \lra \wc B$ and a vector bundle $E \lra \wc B,$
the pull-back of $\mathbb{S}(E  \times \mathbb{R}_+^3) = (E  \times
\mathbb{R}_+^3)/\mathbb{R}_+$ to $\wc M$ has two subbundles, $\wt{\nu}_0$ and $\wt{\nu}_t$, given,
respectively, by the inclusion of $\mathbb{R}_+^2 \hookrightarrow
\mathbb{R}_+^2 \times \mathbb{R}_+$ into the right factor and the
projection $\mathbb{R}_+^2 \times \mathbb{R}_+ \lra \mathbb{R}_+^2$
off the right factor, of the identity section.  Concretely,
$\wt{\nu}_0$ is given by the lift of the subbundle $[\{ 0 \} \times
(1,1,0)] \subset \mathbb{S}(E \times \mathbb{R}_+^3)$ to $\wc M$, and
  $\wt{\nu}_t$ is given by the lift of $[\{ 0 \} \times
(x, x, \sqrt{1 + x^2})]$.  For trivial fibrations $\wc M = \pt = \wc B$ with $E =
\mathbb{R}^h$ we denote these by
\begin{align*}
\wt{\nu}_0(\mathbb{S}^{h + 2}) &= [(0, 1, 1, 0) ] \in
  (\mathbb{R}^{h} \times \mathbb{R}_+^3 \setminus (0,0,0,0)) / \mathbb{R}_+, \\
\wt{\nu}_t(\mathbb{S}^{h + 2}) &= [(0, x, x, \sqrt{1 + x^2}) ] \in
  (\mathbb{R}^{h} \times \mathbb{R}_+^3 \setminus (0,0,0,0) ) / \mathbb{R}_+,
\end{align*}
the $\mathbb{R}_+$ acting by dilation.

\begin{definition}\label{thm:sus-heat-def-0}
Let $\wc X \fib \wc M \xlra{\wc \psi} \wc B$ be a fiber bundle of
manifolds with corners and iterated fibration structures and let
$ \{\wc N_1, \ldots, \wc N_{\ell}\}$ be a non-decreasing
listing of $\cS_{\wc \psi}(\wc M).$ Let 
$\cS_{+++}(\wc M \times_{\wc \psi} \wc M)$ be the pull-back of the
fiber bundle $\bbS(T\wc B \times \bbR_+^3)$ from $B$ to $\wc M
\times_{\wc \psi} \wc M$ and let $\wt{\nu}_{\wc \psi,0}(\wc M)$ denote
the $\tau = 0$ identity section.
The {\bf intermediate suspended wedge heat space $H(\wc M/\wc B)_{Sus(\w),0}$} is 
\begin{multline*}
	H(\wc M/\wc B)_{Sus(\w),0} 
    = [\cS_{+++}(\wc M \times_{\wc \psi} \wc
    M); \; \nu_{\wc \psi}(\wc M) \cap \pi^{-1}(\bhs{\wc N_1}
    \times_{\phi_{\wc N_1}} \bhs{\wc N_1});\;\ldots;\; \\
	\nu_{\wc \psi}(\wc M) \cap \pi^{-1}(\bhs{\wc N_\ell} \times_{\phi_{\wc
        N_\ell}} \bhs{\wc N_\ell}) ; \nu_{\wc
      \psi}(\wc M) \cap \pi^{-1}(\diag(\wc M))].
\end{multline*}
This fibers over $\wc B$ and we denote the typical fiber by $H\wc X_{Sus_{\wc B}(\w), 0}$ so that
\begin{equation*}
	H\wc X_{Sus_{\wc B}(\w), 0} \fib H(\wc M/\wc B)_{Sus(\w), 0} \lra \wc B.
\end{equation*}

The {\bf suspended wedge heat space $H(\wc M/\wc B)_{Sus(\w)}$} is 
\begin{multline*}
	H(\wc M/\wc B)_{Sus(\w)} 
	= [H(\wc M/\wc B)_{Sus(\w),0}; \; 
	\wt{\nu}_{\wc \psi,t}(\wc M) \cap \pi^{-1}(\bhs{\wc N_1}
	\times_{\phi_{\wc N_1}} \bhs{\wc N_1});\;\ldots;\; \\
	\wt{\nu}_{\wc \psi,t}(\wc M) \cap 
	\pi^{-1}(\bhs{\wc N_\ell} \times_{\phi_{\wc N_\ell}} \bhs{\wc N_\ell})]
\end{multline*}
and participates in the fiber bundle 
\begin{equation*}
	H\wc X_{Sus_{\wc B}(\w)} \fib H(\wc M/\wc B)_{Sus(\w)} \lra \wc B.
\end{equation*}
\end{definition}

As anticipated, the suspended wedge heat spaces describe the structure of the front faces of the wedge heat space.

\begin{proposition}[Structure of the front faces of $HX_\w$]
Let $X$ be a manifold with corners and an iterated fibration structure.

For each $Y \in \cS(X),$ let
\begin{equation*}
	\phi_Y^{(H)}: \bhsh{\phi\phi,1}(Y) \lra Y
\end{equation*}
denote the composition of $\beta_{(H)}:HX_{\w} \lra X^2 \times
\bbR^+_{\tau}$ with the fibration $\bhs{Y} \times_{\phi_Y} \bhs{Y}
\times \{0\} \lra Y.$ Then $\bhsh{\phi\phi,1}(Y) = H(\bhs{Y}/Y) _{Sus(\w)}$ and $\phi_Y^{(H)}$ is the fiber bundle map
\begin{equation*}
	HZ_{Sus(\w)} \fib \bhsh{\phi\phi,1}(Y) \xlra{\phi_Y^{(H)} } Y.
\end{equation*}
For the intermediate heat space, $HX_{\w,0},$ the corresponding front face is the total space of the fiber bundle
\begin{equation*}
	HZ_{Sus(\w), 0} \fib H(\bhs{Y}/Y) _{Sus(\w),0} \xlra{\phi_Y^{(H)} } Y.
\end{equation*}

The edge face corresponding to $Y,$ $\bhsh{\phi\phi,0}(Y),$ participates in a fiber bundle with typical fiber the suspended edge double space of $Z,$
\begin{equation*}
	Z^2_{Sus_Y(\e)} \fib \bhsh{\phi\phi,0}(Y) \lra (Y \times \mathbb{R}_+)_{res}
\end{equation*}
and base given by
$$
(Y \times \mathbb{R}_+)_{res} = [Y \times \mathbb{R}_+ ; \bhs{Y'_1Y} \times
\{ 0 \} ; \; ...; \bhs{Y'_rY} \times \{ 0 \}],
$$
where $\{ Y'_1, \dots, Y'_r \}$ are the strata with $Y'_i < Y$
indexed in non-increasing order of depth.

Finally, the interior of the boundary hypersurface $\bhsh{dd,1}$
is naturally identified with the edge tangent bundle ${}^\e TX.$
\end{proposition}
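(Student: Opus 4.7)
The plan is to proceed by induction on $\mathrm{depth}_X(Y),$ closely paralleling the proof of Proposition \ref{prop:ffX2} but systematically tracking the additional $\tau$-direction introduced by the heat space construction. I would first establish the statement for the intermediate heat space $HX_{\w,0},$ then upgrade to $HX_{\w}$ by analyzing the second round of blow-ups in \eqref{eq:HeatSpace}, and finally treat the edge face and the diagonal face separately.

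For $HX_{\w,0},$ let $HX_{\w,0}(k+1)$ denote the intermediate space obtained from $X^2 \times \bbR^+_\tau$ by performing all blow-ups of $\bhs{Y'}\times_{\phi_{Y'}}\bhs{Y'}\times\{0\}$ for $Y'$ with $\mathrm{depth}_X(Y')>k=\mathrm{depth}_X(Y),$ together with the final blow-up of $\diag_X\times\{0\}.$ Using projective coordinates as in \eqref{eq:PSubMfd-2} (extending those of \eqref{eq:ProjCoords} by $T=\tau/x'$), the interior lift of $\bhs{Y}\times_{\phi_Y}\bhs{Y}\times\{0\}$ is a p-submanifold whose inward-pointing spherical normal bundle is the pull-back from $Y$ of $\bbS({}^\e TY\times\bbR_+^3)$---the three $\bbR_+$ factors corresponding to the two conormal directions to $\bhs{Y}$ in $X$ and to $\tau.$ Blowing this up produces a face that fibers over $Y$ with fiber $HZ_{Sus_Y(\w),0},$ provided one checks two compatibility facts: the interior lifts of deeper fiber diagonals at $\tau=0$ meet this face at $\wt\nu_0$ over $\bhs{\wt Y Z}\times_{\phi_{\wt Y Z}}\bhs{\wt Y Z},$ and the lifted diagonal meets it at $\wt\nu_0$ over $\diag(Z).$ Both follow from direct computations in the projective coordinates, exactly as in \eqref{eq:PSubMfd}--\eqref{eq:PSubMfd-2}.

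Next, the blow-ups in \eqref{eq:HeatSpace} that convert $HX_{\w,0}$ into $HX_{\w}$ attach the parabolic fiber diagonals over $\bbR^+_\tau.$ For $\wt Y>Y,$ the interior lift of $\bhs{\wt Y}\times_{\phi_{\wt Y}}\bhs{\wt Y}\times\bbR^+_\tau$ meets the face just constructed along the section $\wt\nu_t$ over $\bhs{\wt Y Z}\times_{\phi_{\wt Y Z}}\bhs{\wt Y Z};$ performing these blow-ups in the prescribed non-decreasing order converts $HZ_{Sus_Y(\w),0}$ into $HZ_{Sus_Y(\w)},$ yielding the first statement. For the edge face $\bhsh{\phi\phi,0}(Y),$ the normal bundle computation of Proposition \ref{prop:ffX2} applies with $\bbR^+_\tau$ appearing as an additional parameter direction, giving $Z^2_{Sus_Y(\e)}$ as typical fiber; the base $Y\times\bbR^+_\tau$ is then modified by precisely those blow-ups performed at $\{\tau=0\}$ for $Y'<Y,$ which by \eqref{eq:bfsDiag} hit $Y\times\bbR^+_\tau$ exactly along $\bhs{Y'Y}\times\{0\},$ producing $(Y\times\bbR_+)_{res}.$

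Finally, the identification of the interior of $\bhsh{dd,1}$ with ${}^\e TX$ follows from the parabolic blow-up of $\diag_X\times\{0\}$ performed after all the fiber-diagonal blow-ups at $\tau=0:$ over the interior of the lifted diagonal, the conormal bundle is canonically ${}^\e T^*X,$ and the parabolic scaling of $\tau$ as a first-order direction identifies the inward-pointing spherical normal bundle with ${}^\e TX,$ as in the standard heat calculus computation of \cite{tapsit}. The main obstacle throughout is verifying the compatibility of the nested blow-ups---showing that each interior lift being blown up is genuinely a p-submanifold meeting the previously-created faces in the asserted identity sections $\wt\nu_0$ or $\wt\nu_t$; this is handled uniformly by the projective-coordinate computation extending \eqref{eq:ProjCoords} and \eqref{eq:PSubMfd-2}, and requires care at corners where multiple $Y'<\wt Y$ are simultaneously present.
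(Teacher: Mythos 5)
Your argument parallels the paper's proof in structure — working stratum by stratum through the fiber diagonal blow-ups at $\tau=0$, identifying the normal bundle of the interior lift of $\bhs{Y}\times_{\phi_Y}\bhs{Y}\times\{0\}$, and checking where the later blow-up loci meet the resulting front face — and your observation that the lifted diagonal meets this face along $\wt\nu_0$ over $\diag(Z)$ (rather than over all of $Z^2$) is in fact cleaner than the paper's own wording.

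However, your definition of $HX_{\w,0}(k+1)$ contains a genuine error: you include the blow-up of $\diag_X\times\{0\}$ in this intermediate space, but $\diag_X\times\{0\}$ is not a p-submanifold of $X^2\times\bbR^+_\tau$, nor of the space obtained by blowing up only the fiber diagonals of depth $>k$. Near a stratum $Y$ of depth $k$, the diagonal takes the local form $\{x=x',\,y=y',\,z=z',\,\tau=0\}$, and the locus $\{x=x'\}$ crosses the corner $\{x=x'=0\}$ in a way incompatible with any decomposition of the kind \eqref{eq:DefPSub}; this is precisely why Definition \ref{def:WedgeHeat} places the diagonal blow-up after all the fiber diagonal blow-ups at $\tau=0$. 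Even granting that one could somehow perform the diagonal blow-up first, the interior lift of $\bhs{Y}\times_{\phi_Y}\bhs{Y}\times\{0\}$ over $Y^{\circ}$ would then have fiber $[Z^2;\diag(Z)]$ rather than $Z^2$, so the normal bundle you assert — the pull-back of $\bbS({}^\e TY\times\bbR_+^3)$ with fiber $Z^2$ — would be false, and the identification of the front face with the suspended heat space would not follow. The paper's $HX_{\w,0}(k+1)$ correctly excludes the diagonal blow-up; it is the fact that the not-yet-blown-up diagonal meets the newly created front face along $\wt\nu_0\times\diag(Z)$ that, after the remaining $\tau=0$ blow-ups are carried out, produces the claimed fibration by $HZ_{Sus_Y(\w),0}$. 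Your final paragraph, which asserts the diagonal blow-up is performed \emph{after} all the $\tau=0$ fiber diagonal blow-ups, is correct and contradicts your earlier definition; so the issue reads as a slip rather than a conceptual misunderstanding, but as written the intermediate space is ill-defined and the normal bundle computation at its heart fails.
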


\begin{proof}
Let $HX_{\w, 0}(k + 1)$ be the intermediate space obtained by blow-up of all the interior lifts of the $\bhs{Y'} \times_{\phi_{Y'}} \bhs{Y'} \times \{\tau = 0\}$ with $Y'$ of depth not less than $k + 1$, and let $Y \in \cS(X)$ have depth $k$.  Then the normal bundle of $\bhs{Y} \times_{\phi_{Y}} \bhs{Y} \times \{0 \}$ fibers over ${}^\e TY \times (\mathbb{R}_+)^3$ with fibre $Z^2$.  Thus blow up of the interior lift of $\bhs{Y} \times_{\phi_{Y}} \bhs{Y} \times \{0 \}$ gives a front face $\ff$ that fibres
$$
\bbS_{+++}^{h_Y + 2} \times Z^2 \fib  \ff \lra Y.
$$
The section $\wt{\nu}_0(\ff)$ is the subbundle of $\ff$ over $Y$ given by 
$\wt{\nu}_0(\bbS_{+++}^{h_Y+ 2}) \times Z^2$.  
From this we see that for $\wt Y > Y$,  the interior lift of
  $\bhs{{\wt{Y}}} \times_{\phi_{{\wt{Y}}}} \bhs{{\wt{Y}}}
  \times \{0 \}$ intersects $\ff$ exactly in the fibres at
  $\wt{\nu}_0(\bbS_{+++}^{h_Y
    + 2}) \times \bhs{\wt Y Z} \times_{\phi_{\wt Y Z}} \bhs{\wt Y Z}$,
  and the diagonal intersects it at $\wt{\nu}_0(\bbS_{+++}^{h_Y
    + 2}) \times Z^2$.  This yields the structure of the front faces of the intermediate wedge heat space.
For the wedge heat space it suffices to note that the blow ups of the $\bhs{Y_i}
  \times_{\phi_{Y_i}} \bhs{Y_i} \times \bbR^+_{\tau}$ intersect the
  intermediate front faces exactly at the $\wt{\nu}_t$.  

To see that the statement for the $\bhsh{\phi\phi,0}(Y)$ holds, note that the
lifts of $\bhs{Y'} \times_{\phi_{Y'}} \bhs{Y'} \times \{ 0 \} \lra Y' \times
\{ 0 \}$ for
$Y' < Y$ intersect the blowdown, $\bhs{Y} \times_{\phi_Y} \bhs{Y}
\times \mathbb{R}_+ \lra Y \times \mathbb{R}_+$ exactly in the
bases over the $\bhs{Y'Y} \times \{ 0 \} \subset Y \times
\mathbb{R}_+$

Finally the boundary hypersurface $\bhsh{dd,1}$ is the inward-pointing part of the spherical normal bundle to $\diag_e \times \{0\}$ and, as the normal bundle to the edge diagonal in the edge double space is the edge tangent bundle, $\bhsh{dd,1}$ is its radial compactification.
\end{proof}

\subsection{Wedge heat operators} \label{sec:WedgeHeatOp}

Let us specify the weighted density bundle we will use for operators.
Define a multi-weight for $HX_\w$ by
\begin{equation*}
\begin{gathered}
	\mf h: \cM_1(HX_{\w}) \lra \bbR, \\
	\mf h(J) =
	\begin{cases}
	-(\dim Y + 3) & \Mif J \subseteq \bhsh{\phi\phi,1}(Y) \\
	-(\dim Y + 1) & \Mif J \subseteq \bhsh{\phi\phi,0}(Y) \\
	-(\dim X +2) & \Mif J \subseteq \bhsh{dd,1}\\
	0 & \Motherwise
	\end{cases}
\end{gathered}
\end{equation*}
and then 
\begin{equation}\label{eq:HeatDensityBdle}
	\Omega_{\mf h, R}= \rho^{\mf h}\beta_{(H),R}^*\Omega(X).
\end{equation}
We will often denote a nowhere-vanishing section of $\beta_{(H),R}^*\Omega(X)$ by $\mu_R.$\\

By a {\em wedge heat operator} we will mean an element of
\begin{equation*}
	\sB_{phg}^{\cE/\mf w}\sA^{-m-1}_- (HX_{\w};\Hom(E)\otimes\Omega_{\df h,R})
\end{equation*}
where $\cE$ and $\mf w$ are, respectively, an index set and multiweight for $HX_{\w}.$

Recall that, e.g., on a smooth manifold $L$ the composition of two heat operators is given by the formula
\begin{equation*}
	\cK_{A \circ B}(\zeta, \zeta', t) = \int_0^t \int_L \cK_A(\zeta, \zeta'', t-t') \cK_B(\zeta'',\zeta',t') \; d\zeta''\; dt'.
\end{equation*}

In Appendix \ref{sec:HeatComp} we define the composition of two wedge heat operators by a version of this formula and then analyze it using the geometric microlocal approach of Melrose, cf. \cite[Appendix]{Melrose-Piazza:Even}.

\begin{theorem}[Composition of wedge heat operators] $ $

\begin{enumerate}
\item
Let $\cE_A,$ $\cE_B$ be index families for $HX_{\w}$ such that
\begin{equation*}
\begin{gathered}
	\Re(\cE_A(\bhsh{dd,1})) > 0, \quad
	\Re(\cE_B(\bhsh{dd,1})) > 0, \Mand \\
	\Re(\cE_A(\bhsh{01,0}(Y))) + \Re(\cE_B(\bhsh{10,0}(Y)))+1 > 0 \Mforall Y \in \cS(X),
\end{gathered}
\end{equation*}
and let 
\begin{equation*}
	A \in \sA_{phg}^{\cE_A}(HX_{\w};\Hom(E)\otimes\Omega_{\df h,R}), \quad
	B \in \sA_{phg}^{\cE_B}(HX_{\w};\Hom(E)\otimes\Omega_{\df h,R}),
\end{equation*}
then the composition is defined and satisfies
\begin{equation*}
	C = A \circ B \in \sA_{phg}^{\cE_C}(HX_{\w};\Hom(E)\otimes\Omega_{\df h,R}),
\end{equation*}
with $\cE_C(\bhsh{dd,1}) = \cE_A(\bhsh{dd,1}) + \cE_B(\bhsh{dd,1})$
and, for each $Y \in \cS(X),$
\begin{align*}
	\cE_C(\bhsh{10,0}(Y)) &=
	\cE_A(\bhsh{10,0}(Y)) \bar\cup
	\big( \cE_A(\bhsh{\phi\phi,1}(Y)) + \cE_B(\bhsh{10,0}(Y) )\big) \\
	\cE_C(\bhsh{01,0}(Y)) &=
	\cE_B(\bhs{01,0}(Y)) \bar\cup
	\big(\cE_A(\bhsh{01,0}(Y)) + \cE_B(\bhsh{\phi\phi,1}(Y)) \big) \\
	\cE_C(\bhsh{\phi\phi,1}(Y)) &=
	\cE_A(\bhsh{\phi\phi,1}(Y)) + \cE_B(\bhsh{\phi\phi,1}(Y))\\
	\cE_C(\bhsh{\phi\phi,0}(Y)) &=
	(\cE_A(\bhsh{\phi\phi,0}(Y)) + \cE_B(\bhsh{\phi\phi,0}(Y)) ) \\
	& \phantom{xxxxx} \bar\cup
	\big(\cE_A(\bhsh{10,0}(Y)) + \cE_B(\bhsh{01,0}(Y)) + \dim Y+1\big).
\end{align*}

\item
Let $\mf w_A,$ $\mf w_B$ be multiweights for $HX_{\w}$ such that
\begin{equation*}
	\{\mf w_{\cdot}(\bhsh{dd,1}) \} \cup 
	\{\mf w_{\cdot}(\bhsh{\phi\phi,1}(Y)): Y \in \cS(X)\} 
	\subseteq (0,\infty) \cup \{\infty\},
\end{equation*}
and let $\cE_A$ and $\cE_B$ be index sets as above.
If we have
\begin{equation*}
	\mf w_A(\bhsh{01,0}(Y)) + \mf w_B(\bhsh{10,0}(Y))+1 > 0 \Mforall Y \in \cS(X),
\end{equation*}
then for any
\begin{equation*}
\begin{gathered}
	A \in \sB_{phg}^{\cE_A/\mf w_A}\sA^{-m-1}_-(HX_{\w};\Hom(E)\otimes\Omega_{\df h,R}), \\
	B \in \sB_{phg}^{\cE_B/\mf w_B}\sA^{-m-1}_-(HX_{\w};\Hom(E)\otimes\Omega_{\df h,R}),
\end{gathered}
\end{equation*}
the composition is defined and satisfies
\begin{equation*}
	C = A \circ B \in \sB_{phg}^{\cE_C/\mf w_C}\sA^{-m-1}_-(HX_{\w};\Hom(E)\otimes\Omega_{\df h,R}),
\end{equation*}
where $\cE_C$ is as above and $\mf w_C$ is the multiweight on $HX_{\w}$ given by
\begin{equation*}
	\mf w_C(\bhsh{dd,1}) 
	= \min(\mf w(\cE_A)(\bhsh{dd,1}) + \mf w_B(\bhsh{dd,1}),
		\mf w_A(\bhsh{dd,1}) + \mf w(\cE_B)(\bhsh{dd,1}) ),
\end{equation*}
and, for each $Y \in \cS(X),$
\begin{align*}
	\mf w_C(\bhsh{10,0}(Y)) 
	&= \min(\mf w_A(\bhsh{10,0}(Y)), \\
	&\phantom{xxx}
	\mf w(\cE_A)(\bhsh{\phi\phi,1}(Y)) + \mf w_B(\bhsh{10,0}(Y)),
	\mf w_A(\bhsh{\phi\phi,1}(Y)) + \mf w(\cE_B)(\bhsh{10,0}(Y))), \\
	\mf w_C(\bhsh{01,0}(Y)) 
	&= \min(\mf w_B(\bhsh{01,0}(Y)), \\
	&\phantom{xxx}
	\mf w(\cE_A)(\bhsh{01,0}(Y)) + \mf w_B(\bhsh{\phi\phi,1}(Y)),
	\mf w_A(\bhsh{01,0}(Y)) + \mf w(\cE_B)(\bhsh{\phi\phi,1}(Y))), \\
	\mf w_C(\bhsh{\phi\phi,1}(Y)) 
	&= \min(
	\mf w(\cE_A)(\bhsh{\phi\phi,1}(Y)) + \mf w_B(\bhsh{\phi\phi,1}(Y)),\\
	&\phantom{xxx}
	\mf w_A(\bhsh{\phi\phi,1}(Y)) + \mf w(\cE_B)(\bhsh{\phi\phi,1}(Y))),\\
	\mf w_C(\bhsh{01,0}(Y)) 
	&= \min(
	\dim Y + 1 + \mf w_A(\bhsh{10,0}(Y)) + \mf w(\cE_B)(\bhsh{01,0}(Y)), \\
	&\phantom{xxx}
	\dim Y+1+\mf w(\cE_A)(\bhsh{10,0}(Y)) + \mf w_B(\bhsh{01,0}(Y)), \\
	&\phantom{xxx}
	\mf w(\cE_A)(\bhsh{\phi\phi,0}(Y)) + \mf w_B(\bhsh{\phi\phi,0}(Y)),
	\mf w_A(\bhsh{\phi\phi,0}(Y)) + \mf w(\cE_B)(\bhsh{\phi\phi,0}(Y))).
\end{align*}

\end{enumerate}

\end{theorem}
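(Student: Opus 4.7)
The plan is to follow the geometric microlocal strategy of Melrose (as in \cite[Appendix]{Melrose-Piazza:Even} and as used for the edge composition Theorem \ref{thm:CompositionEdge}) by constructing a \emph{triple wedge heat space} $HX^3_{\w}$ on which the convolution defining heat operator composition is realized as pullback, multiplication, and pushforward along appropriate b-fibrations. Concretely, start with $X^3 \times \bbR^+_{\tau_1} \times \bbR^+_{\tau_2}$, impose the parabolic identification $t_i = \tau_i^2$, and perform iterated blow-ups of (a) the fiber-diagonals $\bhs{Y_i}\times_{\phi_{Y_i}}\bhs{Y_i}$ in each of the three pairs of factors at $\{\tau_1=\tau_2=0\}$, (b) the triple fiber-diagonal at $\{\tau_1=\tau_2=0\}$, (c) the corresponding fiber-diagonals away from $\{\tau_1=\tau_2=0\}$, together with the spatial diagonals at $\{\tau_i=0\}$ that mirror the second blow-up of Definition \ref{def:WedgeHeat}. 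The blow-ups must be done in the order dictated by the non-decreasing listing of $\cS(X)$, with the further blow-ups at $\{\tau_1=0\}$, $\{\tau_2=0\}$, and $\{\tau_1+\tau_2=\tau^2\}$ so that the three projection maps
\begin{equation*}
  \pi_F : HX^3_{\w}\lra HX_{\w},\quad F \in \{LM, MR, LR\},
\end{equation*}
where $\pi_{LM}$ uses $(\zeta_L,\zeta_M,\tau_1)$, $\pi_{MR}$ uses $(\zeta_M,\zeta_R,\tau_2)$, and $\pi_{LR}$ uses $(\zeta_L,\zeta_R,\sqrt{\tau_1^2+\tau_2^2})$, each lift to b-fibrations.

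Once $HX^3_\w$ is in hand, composition of heat operators is expressed as
\begin{equation*}
  A\circ B = (\pi_{LR})_*\lrpar{\pi_{LM}^* A \cdot \pi_{MR}^* B},
\end{equation*}
with the densities reorganized so that the kernel of $A \circ B$ is a right-density on $HX_{\w}$ with the weight $\Omega_{\mf h,R}$ in \eqref{eq:HeatDensityBdle}. The index set computation is then a direct application of Melrose's pullback and pushforward theorems recalled in \S\ref{sec:Conormal}: pullback along $\pi_{LM}$, $\pi_{MR}$ is unobstructed, the product of the polyhomogeneous kernels is polyhomogeneous with index sets given by sums and extended unions, and pushforward along $\pi_{LR}$ produces index sets on each boundary hypersurface of $HX_{\w}$ expressed as extended unions weighted by the exponent matrix of $\pi_{LR}$. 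The hypotheses $\Re\cE_A(\bhsh{dd,1})>0$, $\Re\cE_B(\bhsh{dd,1})>0$, and $\Re\cE_A(\bhsh{01,0}(Y))+\Re\cE_B(\bhsh{10,0}(Y))+1>0$ are precisely the integrability conditions ensuring that $\pi_{LR}$ satisfies the hypotheses of the pushforward theorem on the corresponding faces (the short-time face and the pairs of side faces where $\zeta_M$ ``escapes'' toward the singular strata). The extra ``$+1$'' in the condition accounts for the pushforward along the time variable $\tau_2$ (equivalently, along $\{\tau_1+\tau_2=\tau^2\}$), while the shift ``$+\dim Y+1$'' in the formula for $\cE_C(\bhsh{\phi\phi,0}(Y))$ comes from the conormal power of the defining function of the lifted submanifold $\bhs{Y}\times_{\phi_Y}\bhs{Y}$ (of codimension $\dim Y + \dim Z_Y+1$ inside $\bhs{Y}\times\bhs{Y}$, contributing through the relative density bundle, compare Theorem \ref{thm:CompositionEdge}(1)).

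For part (2), the partially polyhomogeneous version is deduced from the polyhomogeneous one exactly as in Theorem \ref{thm:CompositionEdge}(2): each operator in $\sB_{phg}^{\cE/\mf w}\sA^{-m-1}_-$ is a sum of a polyhomogeneous piece up to order $\mf w$ and a conormal remainder with multiweight $\mf w-$. The four cross-terms in the composition are then tracked using part (1) for the polyhomogeneous$\times$polyhomogeneous contribution, and the pushforward theorem for conormal distributions (\S\ref{sec:Conormal}) for the remaining three terms. Taking the worst weight among the four cross-terms on each face of $HX_{\w}$ yields precisely the formulas for $\mf w_C$ stated in the theorem. The hypothesis that $\mf w_A$ and $\mf w_B$ are positive on $\bhsh{dd,1}$ and on each $\bhsh{\phi\phi,1}(Y)$ ensures that these faces may be legitimately restricted to.

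The principal obstacle is the construction and verification of the triple wedge heat space, in particular checking that each intermediate blow-up is along a p-submanifold (so the iterated blow-up is well-defined, cf.\ the local coordinate computation in \eqref{eq:PSubMfd-2}) and that the three projections $\pi_{LM}, \pi_{MR}, \pi_{LR}$ are b-fibrations with the correct exponent matrices. The order of blow-ups is essential: the boundary fiber diagonals must be blown up in non-decreasing depth and the spatial diagonals at $\tau_i=0$ must be treated parabolically in the time variable, consistent with Definition \ref{def:WedgeHeat}. Once this geometry is in place, the bookkeeping of index sets is mechanical. Full details are given in Appendix \ref{sec:HeatComp}.
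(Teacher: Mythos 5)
The overall strategy — triple wedge heat space, compose by pullback–multiply–pushforward, bookkeep index sets via Melrose's theorems — is correct and is essentially what the paper does in Appendix~\ref{sec:HeatComp}. However, there is one genuine gap in your argument that, as written, would cause the proof to fail.

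You assert that all three projection maps $\pi_{LM}, \pi_{MR}, \pi_{LR}$ lift to b-fibrations, and then invoke the pushforward theorem for $\pi_{LR}$ directly. This is not what happens in the paper's construction: the center projection $\beta_{LR,C}$ is \emph{not} a b-fibration. (Several boundary hypersurfaces of the triple space, e.g.\ $\bhsC{000,10}$, $\bhsC{000,01}$, $\bhsC{dd0,10}$, $\bhsC{0dd,01}$, and each $\bhsC{010,00}(N)$, are mapped by $\beta_{LR,C}$ into the interior of $HX_{\w}$, and the b-normal condition fails there.) The pushforward theorem as stated in \S\ref{sec:Conormal} therefore does not apply to $\pi_{LR}$ on the nose. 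What rescues the proof is a different observation: the \emph{weighted} density $\Omega_{\mf h, R}$ carries the factor $\rho_{\bhsh{00,1}}^{\infty}$, and when one lifts $\rho^{\mf h}\beta_{(H)}^*\tau$ from both factors to the triple space, the resulting product vanishes to infinite order at every face over which $\beta_{LR,C}$ fails to be a b-fibration. Only after noting this infinite-order vanishing can one blow down the problematic faces (or, equivalently, apply the pushforward theorem to the restriction of the kernel to where it is nonvanishing) and conclude that the pushforward is polyhomogeneous. If you simply declare the maps to be b-fibrations, you will not see where the hypotheses $\Re\cE_A(\bhsh{dd,1})>0$, $\Re\cE_B(\bhsh{dd,1})>0$ are used, nor why the index family of the result has no contribution from $\bhsh{00,1}$.

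A secondary but related imprecision: you attribute the "$+1$" in the integrability condition to "pushforward along the time variable $\tau_2$." In fact this $+1$ comes from the spatial middle factor: the face $\bhsC{010,00}(N)$ (middle copy of $\bhss{N}$, away from $\tau=0$) is where $\beta_{LR,C}$ integrates out the intermediate variable, and the b-density conversion there produces exactly one extra power of the boundary defining function. Similarly, your explanation of the shift $\dim Y + 1$ is not quite the paper's; it is fixed by the density weight $\mf d(\bhsd{\phi\phi}(N)) = -(\dim(N/B)+1)$ appearing through $\Omega_{\mf h, R}$, not by the codimension of $\bhs{Y}\times_{\phi_Y}\bhs{Y}$ as you state. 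Your plan for part~(2) — decompose into polyhomogeneous plus conormal remainder, track four cross-terms, take the worst weight — is correct and matches the paper.
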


The restriction on the multiweights in the second part of the theorem, which holds for all of the multiweights that we will make use of, is made only to simplify the statement of the theorem.

\begin{proof}
The proof of (1) is carried out in Appendix \ref{sec:HeatComp} following Melrose's geometric microlocal approach, see, e.g., \cite{Melrose-Piazza:Even, Dai-Melrose, Albin:RenInd, Mazzeo-Melrose:Surgery}. As explained in \S\ref{sec:Conormal}, the same pull-back and push-forward theorems used to prove (1) establish (2).
\end{proof}

$ $\\
We formalize the notion of smooth family of wedge heat operators using the space $H(M/B)_\w,$ as elements of 
\begin{equation*}
	\sB_{phg}^{\cE/\mf w}\sA^{-m-1}_- (H(M/B)_{\w};\Hom(E)\otimes\Omega_{\df h,R})
\end{equation*}
where $\cE$ and $\mf w$ are, respectively, an index set and multiweight for $H(M/B)_{\w},$ and $\df h$ is the multiweight above extended to $H(M/B)_{\w},$ i.e.,
\begin{equation}\label{eq:HeatWeight}
\begin{gathered}
	\mathfrak h: \cM_1(H(M/B)_\w) \lra \bbR, \\
	\mathfrak h(H) =
	\begin{cases}
	-(\dim(N/B) + 3) & \Mif H \subseteq \bhsh{\phi\phi,1}(N) 
		\Mforsome N \in \cS_{\psi}(M) \\
	-(\dim(N/B) + 1) & \Mif H \subseteq \bhsh{\phi\phi,0}(N) 
		\Mforsome N \in \cS_{\psi}(M) \\
	-(\dim(M/B) + 2) & \Mif H = \bhsh{dd,1} \\
	\infty & \Mif H = \bhsh{00,1} \\
	0 & \Motherwise
	\end{cases}
\end{gathered}
\end{equation}
The composition results in Appendix \ref{sec:HeatComp} are established in the setting of families of operators.

\section{Resolvent and heat kernel of $\eth_{M/B}$} 

Let us return to our usual setting with $M \xlra{\psi} B$ a locally
trivial family of manifolds with corners and iterated fibration
structures with a totally geodesic
vertical wedge metric $g_{M/B}$, and $E \lra M$ wedge Clifford module along the fibers of
$\psi.$ Recall that $D_{M/B}$ denotes the associated Dirac-type
operator acting on $L^2_{\w}(M/B;E),$ the wedge $L^2$-space and
$\eth_{M/B}$ denotes the unitarily equivalent operator acting on
$L^2(M/B;E)$, see \eqref{eq:wedge-v-reg}. We assigned a `vertical APS domain' to this operator,
$\cD_{\VAPS}(\eth_{M/B}),$ and in this section we will describe the
structure of the resolvent and heat kernel of $\eth_{M/B}$ under the
Witt assumption from Definition \ref{def:WittAss}.\\

\subsection{Compatible perturbations}
Before carrying out these constructions, we will generalize the operators under consideration by allowing certain perturbations by smoothing operators. The perturbations we will use in the main result of this paper will be compactly supported in the interior of $(M/B)^2_e,$ but we allow more general perturbations that share sufficiently many properties of $\eth_{M/B}$ so as to not seriously affect the analysis. In a future publication we will make use of these more general perturbations.

Let
\begin{equation*}
	Q \in \rho_{\ff((M/B)_\e)}^{-1}\Psi_{\e}^{-\infty}(M/B;E)
\end{equation*}
so that $\eth_{M/B} +Q$ has a model operator at every $N \in \cS_{\psi}(M),$ $y \in N^{\circ},$ modeling its behavior on the model wedge $\bbR^+_s \times \bbR^h_u \times Z_y,$ acting on sections of the pull-back of $E\rest{Z_Y},$ given by
\begin{equation*}
	\cN_y(\eth_{M/B}+Q) = \cl(dx)\pa_s + \tfrac1s(D_{Z_y}+ \cN_y(\rho_YQ)) + D_{\bbR^h}.
\end{equation*}
Here $\Nop_y(\rho_Y Q) = Q_{Z_y}$ is the restriction of the integral kernel of $\rho_YQ$ to the fiber of $\bhsd{\phi\phi,1}(N)$ over $y \in N^{\circ}.$ Thus, as in \eqref{eq:CompEdgeNormal}, $Q_{Z_y}$ is a non-commutative suspension operator, 
translation invariant with respect to the Lie group $\bbR^+\ltimes \bbR^h.$

Our analysis of $\cN_y(\eth_{M/B})$ in \S\ref{sec:NormOp} made use of two convenient facts, first that $D_{Z_y}$ is independent of the variables $(s,u),$ and secondly that it anti-commutes with Clifford multiplication by covectors in $T^*(N/B)^+.$ Our methods are insensitive to perturbations that maintain these two properties.
\begin{definition}\label{def:CompPert}
Let $M \xlra\psi B$ be a family of manifolds with corners and iterated fibration structures with a vertical wedge metric $g_{M/B}$ and $E \lra M$ a wedge Clifford module along the fibers of $\psi.$
By a {\bf compatible perturbation} (of the associated $\eth_{M/B}$) we will mean a self-adjoint family of operators $Q=Q_{M/B}$ satisfying two properties:\\
i) The integral kernel of $Q_{M/B}$ is an element of
\begin{equation*}
	\rho_{\ff((M/B)^2_e)}^{-1}\beta_{(2)}^*\CI(M \times_{\psi} M; \Hom(E)),
\end{equation*}
ii) At every $N \in \cS_{\psi}(M),$ $y \in N,$ 
\begin{equation*}
	\cl(\theta)Q_{Z_y} + Q_{Z_y}\cl(\theta) = 0,
\end{equation*}
for every covector $\theta$ in $T^*_y(N/B)^+,$ where $Q_{Z_y}$ is the operator on $Z_y$ whose integral kernel is the restriction of $\rho_NQ_{M/B}$ to the fiber of $\bhs{N}\times_{\phi_N} \bhs{N} \subseteq M \times_{\psi} M$ over $y.$\\
\end{definition}

\begin{remark}\label{rmk:PertExist}
In a subsequent paper we will study the existence of compatible perturbations. For the purpose of this paper we restrict ourselves to an example of how these will arise.

Consider a single manifold with boundary, $X,$ whose boundary $\bhs{Y}$ participates in a fiber bundle of closed manifolds
\begin{equation*}
	Z \fib \bhs{Y} \xlra{\phi_Y} Y,
\end{equation*}
together with a wedge Clifford module $(E, g_E, \nabla^E, \cl)$ and associated Dirac-type operator $\eth_X.$ The boundary family $D_{\bhs{Y}/Y}$ is a family of Dirac-type operators that anti-commute with Clifford multiplication in the $T^*Y^+$-directions and so determine an index class in the $C^*$-K-theory group $K_*(\Cl(T^*Y^+)).$ This index vanishes if and only if there is a family of smoothing operators 
$Q_{\bhs{ Y}/ Y} \in \Psi^{-\infty}(\bhs{ Y}/ Y;E)$ such that $D_{\bhs{ Y}/ Y} + Q_{\bhs{ Y}/ Y}$ is a family of invertible operators with the same anti-commutation property. If $q$ is any smooth function on $X^2$ that is equal to the Schwartz kernel of $Q_{\bhs{Y}/Y}$ on $\diag_{ Y} \times  Z^2 \subseteq \{x=x'=0\} \subseteq X^2,$ and we set $Q_{X} = \rho_{\bhsd{\phi\phi}( Y)}^{-1}\beta_{(2)}^*q,$ then $Q_X$ is a compatible perturbation of $\eth_X.$
\end{remark}

We will use the notation 
\begin{equation*}
	\eth_{M/B,Q} = \eth_{M/B}+Q_{M/B}, \quad
	D_{Z_y, Q} = D_{Z_y} + Q_{Z_y}, \quad \text{etc.,}
\end{equation*}
with the understanding that $Q_{M/B}$ is a compatible perturbation.
We define the vertical APS domain of $\eth_{M/B,Q}$ as the graph closure of $\rho_X^{1/2}H^1_e(X;E)\cap \cD_{\max}(\eth_{M/B,Q})$ and say that the Witt condition is satisfied if
\begin{equation*}
	0 \notin \Spec( D_{Z_y,Q} ),
\end{equation*}
where the spectrum refers to $D_{Z_y}+Q_{Z_y}$ with its vertical APS domain in $L^2(Z_y;E|_{Z_y}).$
The compatibility conditions are chosen so that Proposition \ref{prop:NyInjSA} holds after replacing $\eth_{M/B}$ with $\eth_{M/B,Q}$ with the same proof.\\

From \S\ref{sec:NormOp} we know that the indicial roots of $\eth_{M/B,Q}$ at $y \in N,$ $N \in \cS_{\psi}(M)$ are equal to the positive eigenvalues of the induced Dirac-type operator $D_{Z_y,Q},$ acting on $L^2(\bhs{N}/N;E|_N)$ with its vertical APS domain.
Define an `indicial multiweight', $\mf I,$ for $M$ by
\begin{equation}\label{eq:IndicialMw1}
	\mf I (\bhs{N}) = \min \{ \lambda \in \Spec(D_{Z_y,Q}) \cap \bbR^+ : y \in N \} \Mforall N \in \cS_{\psi}(M)
\end{equation}
and a corresponding multiweight $\mf I^{(2)}$ for $(M/B)^2_\e$ by
\begin{equation*}
\begin{gathered}
	\mf I^{(2)}(\bhsd{10}(N) ) = 
	\mf I^{(2)}(\bhsd{01}(N) ) = \mf I (\bhs{N}) \\
	\mf I^{(2)}(\bhsd{\phi\phi}(N) ) = 2\mf I(\bhs{N}) + \dim (N/B) +1 \Mforall N \in \cS_{\psi}(M).
\end{gathered}
\end{equation*}
The weight at the front faces $\bhsd{\phi\phi}(N)$ is explained by the composition formula for edge pseudodifferential operators: composing an operator with the given weights at the side faces produces this weight at the front faces.
We use the same notation for the indicial multiweights of $X$ and $X^2_\e.$

\begin{theorem}\label{thm:FredholmResolvent}
Let $\eth_{M/B,Q}$ be a family of compatibly perturbed Dirac-type wedge operators endowed with its vertical APS domain and satisfying the Witt assumption.
Then $(\eth_{M/B,Q}, \cD_{\VAPS})$ is a family of self-adjoint, Fredholm operators with compact resolvent. The generalized inverse of $\eth_{M/B,Q}$ is a family valued in $\rho_{\ff((M/B)^2_e)}\Psi_\e^{-1, \mf I^{(2)}}(M/B;E).$

For each fiber $X$ of $\psi,$ the eigenfunctions of $\eth_{X,Q}$ are elements of $\rho_X^{\mf I}H_\e^{\infty}(X;E),$ the resolvent is a meromorphic function on $\bbC$ with values in the edge calculus with bounds,
\begin{equation*}
	(\eth_{X,Q} - \lambda)^{-1} \in \rho_{\ff(X^2_e)}\Psi_\e^{-1, \mf I^{(2)}}(X;E),
\end{equation*}
and the projection onto the $\lambda$-eigenspace of $\eth_{X,Q}$ satisfies
\begin{equation}\label{eq:EigProjSpace}
	\Pi_{\lambda} \in \rho_{\ff(X^2_e)}\Psi_\e^{-\infty, \mf I^{(2)}}(X;E).
\end{equation}
\end{theorem}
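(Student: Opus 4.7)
The plan is to construct a right parametrix $G$ for $\eth_{M/B,Q}$ lying in $\rho_{\ff((M/B)^2_e)}\Psi_\e^{-1,\mf I^{(2)}}(M/B;E)$ and then extract every statement of the theorem from its mapping properties. The construction has two stages, proceeding geometrically along the stratification of $(M/B)^2_\e$. First, since $\eth_{M/B,Q}$ is edge-elliptic (its principal edge symbol is invertible because the wedge metric is nondegenerate on ${}^\w T(M/B)$), there is a symbolic parametrix $G_0 \in \Psi^{-1}_\e(M/B;E)$ with $\eth_{M/B,Q} G_0 = \Id - R_0$, $R_0 \in \Psi^{-\infty}_\e(M/B;E)$, supported to any desired order off the diagonal. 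Second, one improves $G_0$ by matching normal operators at each front face $\bhsd{\phi\phi}(N)$, inductively in $\mathrm{depth}_M(N)$ starting from the deepest stratum.

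The inductive step is where the Witt assumption is used. At $y \in N^\circ$, Proposition \ref{prop:NyInjSA} (applied to $\eth_{M/B,Q}$ in place of $\eth_X$, which is legitimate by Definition \ref{def:CompPert}(ii)) asserts that $\cN_y(\eth_{M/B,Q})$ with its vertical APS domain is injective and self-adjoint, hence invertible; Fourier transform in the $\bbR^h_u$ direction reduces inversion to the family of explicit Bessel ODEs in $s$ appearing in the proof of that proposition. The resulting inverse is a polyhomogeneous kernel on $\bhsd{\phi\phi}(N)$ whose exponents at the side faces are controlled by $\Spec(D_{Z_y,Q}) \cap \bbR^+$, i.e.\ by the indicial multiweight $\mf I$, and the composition rule of Theorem \ref{thm:CompositionEdge}(2) then forces the weight $\mf I^{(2)}$ at $\bhsd{\phi\phi}(N)$. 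The main obstacle is ensuring that at a corner $\bhsd{\phi\phi}(N) \cap \bhsd{\phi\phi}(\wt N)$ with $N < \wt N$, the pieces fit together: but this is precisely what the commuting square in \eqref{eq:normal-int-1} expresses — the restriction of $\Nop_{\wt N}(G)$ to $\bhsd{\phi\phi}(N) \cap \bhsd{\phi\phi}(\wt N)$ agrees with the corresponding restriction of $\Nop_N(G)$, and the previous depth's construction is built from exactly this restriction, so the match is automatic. After all strata are processed, one obtains $G \in \rho_{\ff((M/B)^2_e)}\Psi_\e^{-1,\mf I^{(2)}}(M/B;E)$ with $\eth_{M/B,Q} G = \Id - R$, $R \in \rho_{\ff}^\infty \Psi^{-\infty,\mf I^{(2)}+\eps}_\e$ for some $\eps > 0$ (all side-face weights are strict).

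By Theorem \ref{thm:EdgeSobAct} and Corollary \ref{cor:CompactAct}, $G$ maps $L^2$ into $\cD_{\VAPS}(\eth_{M/B,Q})$ (the side-face weights exceed $1/2$, and the one derivative gained together with the $\rho_{\ff}$ factor places the image in $\rho^{1/2}H^1_\e \cap \cD_{\max}$), and $R$ is compact on $L^2$; a symmetric left-parametrix argument then yields Fredholmness of $(\eth_{M/B,Q},\cD_{\VAPS})$. Self-adjointness follows from the Witt assumption by the usual argument that the VAPS projector kills exactly the non-$L^2$ indicial modes, so minimal and maximal domains on the deformed operator coincide with VAPS. Compactness of the resolvent is automatic once the index-zero Fredholm property and self-adjointness are known; alternatively, $G$ itself is compact from $L^2$ to $L^2$ by Corollary \ref{cor:CompactAct}.

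For the last assertions on a single fiber $X$: analytic Fredholm theory applied to $\Id - \lambda G$ on $L^2(X;E)$ makes $(\eth_{X,Q}-\lambda)^{-1}$ a meromorphic family in $\rho_{\ff(X^2_e)}\Psi_\e^{-1,\mf I^{(2)}}(X;E)$, using the composition result Theorem \ref{thm:CompositionEdge}(2) to see that the calculus is preserved at each step of the Neumann series correction. If $\eth_{X,Q}u = \lambda u$, then $u = G(\lambda u) + Ru$; each application of $G$ gains one order of regularity in $H^*_\e$ and raises the boundary weight by $\mf I$ at each side face (by \eqref{eq:EdgeMapWeights}), and iteration yields $u \in \rho_X^{\mf I}H_\e^\infty(X;E)$. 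Finally, the spectral projector $\Pi_\lambda$ has Schwartz kernel $\sum_i u_i(\zeta)\overline{u_i(\zeta')}$ over an $L^2$-orthonormal basis of the (finite-dimensional) $\lambda$-eigenspace; the regularity just obtained, together with the bi-ideal property of Theorem \ref{thm:BiIdeal}, places this kernel in $\rho_{\ff(X^2_e)}\Psi_\e^{-\infty,\mf I^{(2)}}(X;E)$, yielding \eqref{eq:EigProjSpace}.
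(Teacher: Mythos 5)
Your parametrix construction (symbolic parametrix, then inductive matching of normal operators at the $\bhsd{\phi\phi}(N)$ by depth, using the corner compatibility in \eqref{eq:normal-int-1}) is essentially the paper's Proposition \ref{prop:parametrix}, and the downstream consequences — Fredholmness, compact resolvent via $\rho^{1/2}H^1_\e\hookrightarrow L^2$ compactly, meromorphy by analytic Fredholm theory, bootstrapping regularity of eigensections — all track the paper's proof. Two inaccuracies are worth noting but not fatal: matching normal operators to leading order gives vanishing to order one at the front faces (the paper's $\rho_\ff \Psi^{-1,\mf I^{(2)}}_\e$), not $\rho_\ff^\infty$; getting order $-\infty$ still requires the separate symbolic Neumann-series improvement which the paper applies to the errors $R_i(\lambda)$. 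And the eigenprojection step is more naturally phrased as a contour integral together with the resolvent's membership in the calculus with bounds, rather than by invoking the bi-ideal property on a tensor product of eigensections.

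The genuine gap is the self-adjointness argument. You assert that the Witt assumption forces $\cD_{\min}=\cD_{\max}=\cD_{\VAPS}$ because ``the VAPS projector kills exactly the non-$L^2$ indicial modes.'' This is false under the mere Witt assumption: if $D_{Z_y}$ has eigenvalues $\lambda$ with $0<|\lambda|<\tfrac12$, both exponents $\pm\lambda$ give $L^2$ indicial solutions, so $\cD_{\min}\subsetneq\cD_{\max}$ and there is a genuine choice of self-adjoint extension — the paper explicitly reserves the conclusion $\cD_{\min}=\cD_{\max}$ for the stronger \emph{geometric} Witt condition ($\Spec(D_{Z_y})\cap(-\tfrac12,\tfrac12)=\varnothing$). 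What the paper actually proves is self-adjointness of the particular domain $\cD_{\VAPS}$, by a duality argument that goes through the parametrix: $G(X)$, $R=\eth_{X,Q}G(X)-\Id$ and their adjoints all map $L^2$ boundedly into $\rho_X^{1/2}H^1_\e$, so for $u\in\cD(\eth_{X,Q}^*)$ one writes $u=G(X)^*\eth_{X,Q}^*u + R^*u\in\cD_{\max}(\eth_{X,Q})\cap\rho_X^{1/2}H^1_\e\subseteq\cD_{\VAPS}$, which together with symmetry of $\cD_{\VAPS}$ closes the argument. Your ``usual argument'' cannot be patched without essentially running this duality calculation.
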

$ $\\

Define the indicial multiweight for the heat space, in terms of \eqref{eq:IndicialMw1}, by
\begin{equation}\label{eq:IndicialMwH}
\begin{gathered}
	\mf I^{(H)}(\bhsh{10,0}(N) ) = 
	\mf I^{(H)}(\bhsh{01,0}(N) ) = \mf I (\bhs{N}), \\
	\mf I^{(H)}(\bhsd{\phi\phi,0}(N) ) 
	= 2\mf I(\bhs{N}) + \dim (N/B) +1, \quad
	\mf I^{(H)}(\bhsh{\phi\phi,1}(N) ) = \infty
	\; \; \forall N \in \cS_{\psi}(M), \\
	\Mand \mf I^{(H)}(\bhsh{00,1}) = \mf I^{(H)}(\bhsh{dd,1}) = \infty.
\end{gathered}
\end{equation}
We also define an index set for the heat space by 
\begin{equation}\label{eq:HeatIndSets}
\begin{gathered}
	\cH(\bhsh{dd,1}) = 2, \quad
	\cH(\bhsh{00,1}) = \emptyset, 
	\Mand \\
	\cH(\bhsh{10,0}(N))
	=\cH(\bhsh{01,0}(N))
	=\cH(\bhsh{\phi\phi,0}(N))
	= \emptyset, \;\;
	\cH(\bhsh{\phi\phi,1}(N)) = 2 \;\; \forall N \in \cS_{\psi}(M).
\end{gathered}
\end{equation}

\begin{theorem}\label{thm:WedgeHeatKer}
Let $\eth_{M/B,Q}$ be a compatibly perturbed family of Dirac-type wedge operators endowed with its vertical APS domain and satisfying the Witt assumption.
The heat kernel of $\eth_{M/B,Q}^2$ satisfies
\begin{equation*}
	e^{-t\eth_{M/B,Q}^2} \in
	\sB^{\cH/\mf I^{(H)}}_{phg}\sA^{-m-1}_-(H(M/B)_{\w}; \Hom(E)\otimes \Omega_{\mf h,R})
\end{equation*}
where $\mf I^{(H)}$ and $\cH$ are given by \eqref{eq:IndicialMwH}, \eqref{eq:HeatIndSets} and $\Omega_{\mf h, R}$ is the density bundle from \eqref{eq:HeatDensityBdle}.
\end{theorem}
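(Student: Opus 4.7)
The plan is to construct the heat kernel of $\eth_{M/B,Q}^2$ by the geometric microlocal parametrix method, adapted to the iterated-wedge setting. One assembles an approximate solution $H_0$ from models at each ``$t=0$ face'' of $H(M/B)_\w,$ then corrects the residual error by a Duhamel/Neumann iteration that closes inside the heat calculus by the composition theorem of \S\ref{sec:WedgeHeatOp}.

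First I would construct $H_0$ by patching two families of model kernels. Near the lifted parabolic diagonal $\bhsh{dd,1},$ the standard Euclidean parametrix (on ${}^\e TX$ via the exponential map, with $\tau=\sqrt t$) gives a leading term and, upon solving the transport equations iteratively, an expansion with index set $2$ (even powers of $\tau$ starting at $\tau^2$ relative to $\mf h$). Near each collective front face $\bhsh{\phi\phi,1}(N),$ the model is the heat kernel of the normal operator $\cN_y(\eth_{M/B,Q}^2),$ which by the Witt assumption and Proposition \ref{prop:NyInjSA} is essentially self-adjoint and non-negative on the model wedge $\bbR_s^+\times\bbR^h_u\times Z_y.$ These two families of models must be built inductively in non-decreasing $\mathrm{depth}$ so that they match at corners $\bhsh{\phi\phi,1}(N)\cap\bhsh{\phi\phi,1}(N')$ for $N<N',$ via the compatibility \eqref{eq:bfsDiagExt}; the suspended wedge heat space of Definition \ref{thm:sus-heat-def-0} is exactly the geometric object encoding this compatibility. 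Using a partition of unity subordinate to the collared iterated fibration structure, one patches the models to obtain $H_0 \in \sB^{\cH/\mf I^{(H)}}_{phg}\sA^{-m-1}_-$ with $E := (\pa_t+\eth_{M/B,Q}^2)H_0 - \delta_{\mathrm{Id}}$ vanishing to infinite order at every $t=0$ face.

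The true heat kernel is then recovered from the Duhamel/Neumann series $e^{-t\eth_{M/B,Q}^2} = H_0 - H_0\star E + H_0\star E\star E - \cdots,$ where $\star$ is the Duhamel convolution. Infinite-order vanishing of $E$ at $\{t=0\}$ forces summability, and the composition theorem of \S\ref{sec:WedgeHeatOp} keeps every partial sum in $\sB^{\cH/\mf I^{(H)}}_{phg}\sA^{-m-1}_-(H(M/B)_\w; \Hom(E)\otimes\Omega_{\mf h,R}).$ The weights at $\bhsh{10,0}(N)$ and $\bhsh{01,0}(N)$ prescribed by $\mf I^{(H)}$ are inherited from the corresponding weights of the generalized inverse and eigenprojections in Theorem \ref{thm:FredholmResolvent}: a spectral representation of $e^{-t\eth^2}$ in terms of $(\eth-\lambda)^{-1}$ transfers the edge-calculus weights $\mf I^{(2)}$ to the heat-calculus weights $\mf I^{(H)}$ at the corresponding faces.

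The main obstacle is the bookkeeping at corners, together with the appearance of the edge faces $\bhsh{\phi\phi,0}(N),$ which are blown up only after the $t=0$ faces and therefore first appear away from $\tau=0.$ Their polyhomogeneous behavior, with empty index set and weight $2\mf I(\bhs{N})+\dim(N/B)+1,$ is governed not by an initial-time model but by the long-time behavior of $\cN_y(e^{-t\eth_{M/B,Q}^2}),$ whose decay is controlled by the spectral gap that the Witt assumption forces on $D_{Z_y,Q}$ (cf.\ \eqref{eq:EigProjSpace}). Verifying that the patched parametrix, and each step of the Neumann correction, has the correct polyhomogeneous structure at $\bhsh{\phi\phi,0}(N)$---rather than merely a weaker conormal behavior---is the most delicate point, and is what ultimately forces the particular form of $\cH$ and $\mf I^{(H)}.$
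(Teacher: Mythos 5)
Your plan is essentially the paper's own proof (\S\ref{subsec:HeatKer}): patch the Euclidean model at $\bhsh{dd,1}$ with the normal-operator heat kernels $e^{-\sigma^2\cN_y(\eth_{M/B,Q})^2}$ from Proposition \ref{prop:WedgeHeatNormal} at each $\bhsh{\phi\phi,1}(N)$, iterate in $\mathrm{depth}$ to remove the Taylor expansions at all $t=0$ faces, and then close via a Volterra/Duhamel series using the composition theorem. Two small corrections to your side remarks, though.

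First, the weights $\mf I^{(H)}$ at the lateral faces $\bhsh{10,0}(N),\bhsh{01,0}(N)$ do not come from a Cauchy-integral transfer from the resolvent: that route would require uniform resolvent estimates along a contour to infinity which the paper has not established and which are not obviously compatible with polyhomogeneity on $H(M/B)_\w.$ In the actual proof these weights are built into the model heat kernel at each $\bhsh{\phi\phi,1}(N)$ via the inductive hypothesis on the link (cf.\ the cut-off comparison argument in the proof of Proposition \ref{prop:WedgeHeatNormal}), and they propagate through the Volterra series by the composition result (Proposition \ref{prop:WedgeHeatComp}). Second, your stated concern about establishing genuine polyhomogeneity at the edge faces $\bhsh{\phi\phi,0}(N)$ is misplaced: the theorem asserts no expansion there. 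In \eqref{eq:HeatIndSets} one has $\cH(\bhsh{\phi\phi,0}(N))=\emptyset,$ so the only content at $\bhsh{\phi\phi,0}(N)$ is the conormal bound with weight $\mf I^{(H)}(\bhsh{\phi\phi,0}(N))=2\mf I(\bhs{N})+\dim(N/B)+1$; this is exactly what the composition result produces from the side-face weights and needs no separate expansion argument. With those adjustments the proposal is sound.
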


The rest of this section consists of a proof of Theorems \ref{thm:FredholmResolvent} and \ref{thm:WedgeHeatKer} by induction on the depth of $M.$ Our base case consists of closed manifolds, for which these theorems are well-known, even with a smoothing perturbation (e.g., \cite[Proposition 9.46]{BGV2004}, \cite[Appendix]{Melrose-Piazza:Even}, \cite{Albin-Rochon:DFam, Albin-Rochon:DBar, Albin-Rochon:Some}). Thus we now assume that these theorems are known for all spaces of depth less than $k,$ and that $X$ has depth $k.$\\

\subsection{The model wedge} \label{sec:ModelWedge}

In the situation above, choose $N \in \cS_{\psi}(M),$ $y\in N^{\circ},$ let $Z = \phi_N^{-1}(y)$ and let 
\begin{equation*}
	\cN_y(\eth_{M/B,Q}) 
	= \cl(dx)\pa_s + \tfrac1sD_{Z,Q} + D_{\bbR^h} 
	= \cN_y(D_{M/B,Q}) - \cl(dx) \tfrac{\dim Z}{2s}
\end{equation*}
be the normal operator of $\eth_{M/B,Q}$ on the model wedge at $y,$ $\bbR^+_s \times \bbR^h \times Z,$ from \S\ref{sec:NormOp}.
In this section, we make use of the inductive hypothesis that Theorems
\ref{thm:FredholmResolvent} and \ref{thm:WedgeHeatKer} hold for
$D_{Z,Q}$ to describe the Green's function and heat kernel of
$\cN_y(\eth_{M/B,Q}).$

Our assumptions on the perturbation and inductive hypothesis on the link invite us to analyze $\cN_y(\eth_{M/B,Q})$ by using the Fourier transform on $\bbR^h$ and the Hankel transform on each eigenspace in a spectral decomposition on $Z$ (as is done in, e.g., \cite{C1979}, \cite{Cheeger-TaylorI}, \cite{C1983}, \cite{Ch1985}, \cite[\S2.3]{Lesch:Fuchs}, \cite[\S8.8]{taylor:vol2},\cite[\S3.2]{Mazzeo-Vertman:AT}).

Thus we consider 
\begin{equation*}
	(\cN_y(D, Q))^2 =  -\pa_s^2 
	+ \tfrac1{s^2}( (\cl(dx)D_{Z,Q}-\tfrac12)^2-\tfrac14) + \Delta_{\bbR^h}
\end{equation*}
as an operator on $L^2(ds\ d\eta\ dz)$ and using the inductive hypothesis of discrete spectrum of $D_{Z,Q}$ with its vertical APS domain, denote the eigenvalues of the self-adjoint operator $(\cl(dx)D_{Z, Q} - 1/2)^2$ by $\{\ell_i\}_{i = 1}^\infty$ and the corresponding eigensections by $\{\phi_i\}.$
As in \cite[\S8.8]{taylor:vol2}, by writing
\begin{equation*}
	F(s, z) = \sum f_i(s)\phi_i(z)
\end{equation*}
for appropriate coefficients $f_i,$ we have
\begin{equation*}
	(-\pa_s^2 
	+ \tfrac1{s^2}( (\cl(dx)D_{Z,Q}-\tfrac12)^2-\tfrac14) )
	= \sum 
	(-\pa_s^2 
	+ \tfrac{\ell_i-\tfrac14}{s^2})(f_i(s)\phi_i(z))
\end{equation*}
and this will equal $\mu^2F(s,z)$ if we take
\begin{equation*}
	f_i(s) = \sqrt{s}J_{\nu_i}(\mu s)
\end{equation*}
with $\nu_i^2 = \ell_i.$

Note that there is a potential sign ambiguity in $\nu_i.$ 
As $s \to 0^+,$ $f_i(s)= \cO(s^{\tfrac12+\nu_i})$ will be in $L^2_{\mathrm{loc}}(ds)$ for $\nu_i>-1,$ so the ambiguity is only for the small eigenvalues $\ell_i<1.$ The choice of square root corresponds to different domains for  $\cN_y(\eth_{M/B,Q})^2.$
In terms of the eigenvalues $\{\lambda_i\}$ of $\cl(dx)D_{Z,Q},$  we have $\ell_i = (\lambda_i -\tfrac12)^2$ so this ambiguity corresponds to $\lambda$ in $(-\tfrac12, \tfrac32).$ If we restrict attention to domains of $\cN_y(\eth_{M/B,Q})^2$ induced from domains of $\cN_y(\eth_{M/B,Q})$ then, arguing as in the proof of Proposition \ref{prop:NyInjSA}, the ambiguity corresponds to $\lambda$ in 
$(-\tfrac12, \tfrac32) \cap (-\tfrac32, \tfrac12) = (-\tfrac12, \tfrac12).$ (Thus, as is well-known, these are the small eigenvalues that distinguish domains, see for example the discussion in \cite[top of page 37]{Ch1985} in terms of which we are taking $\Delta_b.$)

In particular, as we are interested in the vertical APS domain for $\cN_y(\eth_{M/B,Q}),$ which induces
\begin{equation*}
	\cD_{\VAPS}(\cN_y(\eth_{M/B,Q})^2)
	= \{ F \in \cD_{\VAPS}(\cN_y(\eth_{M/B,Q})): 
	\cN_y(\eth_{M/B,Q})(F) \in \cD_{\VAPS}(\cN_y(\eth_{M/B,Q}))\},
\end{equation*}
we define $\nu_{APS}$ by
\begin{equation}\label{eq:nuAPS}
	\nu_{APS}(\lambda) =
	\begin{cases}
	-|\lambda-\tfrac12| & \Mif \lambda \in \Spec(D_{Z_y,Q}) \cap (0,\tfrac12) \\
	|\lambda-\tfrac12| & \Mif \lambda \in \Spec(D_{Z_y,Q}) \setminus (0,\tfrac12) 
	\end{cases}
\end{equation}
Then we can, as in \cite[Sec.\ 8.8]{taylor:vol2}, diagonalize $\cN_y(\eth_{M/B,Q})^2$ by combining first the map
\begin{equation*}
	\cH(g) 
	= \bigoplus_{\lambda \in \Spec(D_{Z_y,Q})} 
	( H_{\nu_{APS}(\lambda)}(s^{-1/2} g_\lambda) ),
\end{equation*}
where $H_{\nu_{APS}(\lambda)}$ denotes the Hankel transform, $g_{\lambda}$ denotes the projection of $g$ onto the corresponding (i.e., $(\lambda-\tfrac12)^2-\tfrac14$) eigenspace of $(\cl(dx)D_{Z, Q} - 1/2)^2 -1/4,$
and then the Fourier transform in $\bbR^h.$ This yields a unitary map onto $L^2(\lambda \; d\lambda \; d\xi, \ell^2)$ which replaces $\cN_y(\eth_{M/B,Q}^2)$ with multiplication by $\lambda^2 + |\xi|^2.$

The operator $\cN_y(\eth_{M/B},Q)^2$ is injective on its vertical APS domain and has an (unbounded) inverse $\cG$, determined by multiplication by $\cH(\cG g) = (\lambda^2 + |\xi|^2)^{-1} \cH(g),$
and satisfying
\begin{equation}\label{eq:DefUnbddInv}
	(\cN_y(\eth_{M/B}, Q))^2 \cG = \Id.
\end{equation}
Below we will analyze the integral kernels of the heat kernel and of
$\cG$, in particular we will use the integral kernel of the unbounded operator
$$
G = \cN_y(\eth_{M/B}, Q)) \cG,
$$
the Green's function for $\cN_y(\eth_{M/B, Q})$, to construct a
parametrix for $\eth_{M/B, Q}$.

{\bf The heat kernel on the model wedge.}
The heat kernel of a product is the product of the heat kernels, so to begin with let us disregard the factor of $\bbR^h$ and focus instead on the exact Riemannian cone
\begin{equation*}
	Z^+ = \bbR^+_s \times Z, \quad g_{Z^+} = ds^2 + s^2g_Z.
\end{equation*}
We denote $E$ pulled-back to $\bbR^+_s \times Z$ by the same symbol, and the corresponding Dirac-type operator by
\begin{equation}\label{eq:CDZ}
	\eth_{Z^+, Q} = \cl(dx)\pa_s + \tfrac1sD_{Z,Q}.
\end{equation}

It follows from Proposition \ref{prop:NyInjSA} that $\eth_{Z^+, Q}$ is injective and self-adjoint with its vertical APS domain. Thus the corresponding domain for
\begin{equation*}
	\eth_{Z^+, Q}^2 
	= -\pa_s^2 + \tfrac1{s^2}( ( \cl(dx)D_{Z,Q}-\tfrac12)^2-\tfrac14),
\end{equation*}
namely
\begin{equation*}
	\cD_{\VAPS}(\eth_{Z^+, Q}^2)
	= \{ u \in \cD_{\VAPS}(\eth_{Z^+, Q}): 
	\eth_{Z^+, Q}u \in \cD_{APS}(\eth_{Z^+, Q}) \},
\end{equation*}
is also a self-adjoint domain.

Let $e^{-t\eth_{Z^+, Q}^2}$ be the heat kernel of $(\eth_{Z^+, Q}, \cD_{APS}(\eth_{Z^+, Q}))$ considered as a density, 
\begin{equation}\label{eq:HeatKerDensity}
	e^{-t\eth_{Z^+, Q}^2} = \cK \; \mu_R.
\end{equation}
From the spectral theorem we know that $\cK$ is a distribution on the space
\begin{equation*}
	(Z^+)^2 \times \bbR^+_{t}
\end{equation*}
such that:
\begin{itemize}
\item $\lim_{t\to0} e^{-t\eth_{Z^+, Q}^2}  = \Id,$
\item For every $t>0,$ the map 
\begin{equation*}
	\cD_{\VAPS}(\eth_{Z^+, Q}^2) \ni s \mapsto e^{-t\eth_{Z^+, Q}^2}s \in L^2(Z^+;E)
\end{equation*}
is valued in $\cD_{\VAPS}(\eth_{Z^+, Q}^{\infty}) = \bigcap\cD_{\VAPS}(\eth_{Z^+, Q}^{\ell}).$ In particular, $\cK(r, z, r', z', t)$ is smooth in all of its variables in the interior of $(Z^+)^2 \times \bbR^+_{t},$
\item For each $t,$ $e^{-t\eth_{Z^+, Q}^2}$ is a self-adjoint operator, and hence 
$\cK(r, z, r', z', t) = \cK(r', z', r, z, t).$
\end{itemize} 
We will improve these properties by showing that $e^{-t\eth_{Z^+, Q}^2},$
viewed as a distribution on a different compactification of the
interior of $(Z^+)^2 \times \bbR^+_{t},$ extends nicely to the
boundary.

Recall, e.g., from \cite[Proposition 2.3.9]{Lesch:Fuchs}, that given $a>-1$ and $f \in \CIc(\bbR^+),$ the solution to 
\begin{equation*}
	\begin{cases}
	(\pa_t + (-\pa_s^2 + s^{-2}(a^2-\tfrac14))) u(s,t) =0\\
	\displaystyle \lim_{t\to0} u(s,t)=f(s)
	\end{cases}
\end{equation*}
is given by
\begin{equation*}
	u(s,t) 
	= \int_0^{\infty} \frac{\sqrt{s\wt
            s}}{2t}I_{p(a^2)}\lrpar{\frac{s\wt s}{2t}}
        \exp\lrpar{-\frac{s^2+\wt s^2}{4t}} 
	f(\wt s) \; d\wt s
\end{equation*}
where $I_{p(a^2)}$ denotes the modified Bessel function of the first kind,
$p(a^2) = a$ if $a\geq 1$ and otherwise satisfies $p(a^2) \in \{\pm a\}$ with different choices corresponding to different domains of $(-\pa_s^2 + s^{-2}(a^2-\tfrac14))$ as an unbounded operator on $L^2(\bbR^+).$ 
In projective coordinates as above, this shows that this heat kernel is a right density times the function 
\begin{equation*}
	\frac{\sqrt{s}}{2\sigma} I_{p(a^2)}\lrpar{\frac{s}{2\sigma^2}} 
		\exp\lrpar{-\frac{s^2+s'}{4\sigma^2}}. 
\end{equation*}
Hence we can write the heat kernel on the exact cone as
\begin{equation}\label{eq:spectral-model-heat}
	\sum_{\lambda \in \Spec(D_{Z_y,Q})} 
	\frac{\sqrt{ss'}}{2\sigma} I_{\nu_{APS}(\lambda)}\lrpar{\frac{ss'}{2\sigma^2}}
		\exp\lrpar{-\frac{s^2+(s')^2}{4\sigma^2}}
	\Phi_{\lambda}(z, \wt z)
\end{equation}
where $\nu_{APS}$ is given by \eqref{eq:nuAPS} and $\Phi_{\lambda}(z, \wt z)$ is the projection onto the $\lambda$-eigenspace of $\cl(dx)D_{Z_y,Q}.$
Convergence of this sum in the space of polyhomogeneous conormal
distributions is used in \cite[Proposition
3.2]{Mazzeo-Vertman:AT}. See also \cite[Example 3.1]{C1983}.
\\

To establish the asymptotics of this kernel our strategy, following Mooers \cite{Mooers} and others, e.g., \cite[\S2]{C1983}, \cite[\S2.2]{Lesch:Fuchs}, is to exploit the homogeneity of the cone.
For each $c>0,$ we set
\begin{equation*}
	\Upsilon_c: 
	(Z^+)^2 \times \bbR^+_{t} \lra (Z^+)^2 \times \bbR^+_{t}, \quad
	\Upsilon_c(s,z,s',z',t) = (cs, z, cs', z', c^2t);
\end{equation*}
we use the same symbol to denote the corresponding scalings on $(Z^+)^2$ and $Z^+$  and trust that this will not lead to confusion.

As $E$ is pulled-back from $Z,$ it makes sense to pull-back a section of $E$ over $Z^+$ along $\Upsilon_c$ and it is easy to see that
\begin{equation*}
	\Upsilon_c^*:\CIc( (Z^+)^{\circ};E) \lra \CIc((Z^+)^{\circ};E)
\end{equation*}
extends to a bounded map on $L^2(Z^+;E)$ and satisfies
\begin{equation*}
	\Upsilon_c^*\circ r\pa_r = r\pa_r \circ \Upsilon_c^*, \quad
	\Upsilon_c^*\circ \pa_z = \pa_z \circ \Upsilon_c^*.
\end{equation*}
It follows that $\Upsilon_c^*$ preserves $\cD_{\VAPS}(\eth_{Z^+, Q}^\ell)$ for any $\ell \in \bbN$ and satisfies
\begin{equation*}
	\Upsilon_c^* \circ \eth_{Z^+, Q} 
	= c^{-1} \eth_{Z^+, Q} \circ\Upsilon_c^*, \quad
	\Upsilon_c^* \circ (t\pa_t + t\eth_{Z^+, Q}^2)  
	= (t\pa_t + t\eth_{Z^+, Q}^2) \circ\Upsilon_c^*.
\end{equation*}
In particular, if $u$ is a solution of the heat equation with initial data $f,$ then $\Upsilon_c^*u$ solves the heat equation with initial data $\Upsilon_c^*f.$
However,
\begin{multline*}
	\Upsilon_c^*u(\zeta, t)
	= \int_0^t \int_{Z^+} \cK(cs, z, s', z', c^2t)f(s',z') 
		\; ds'\;dz'\;dt \\
	\xlra{s'=cr'} c
	\int_0^t \int_{Z^+} \cK(cs, z, cr', z', c^2t) f(cr',z') 
		\; dr'\; dz'\; dt,
\end{multline*}
so uniqueness for the heat equation shows that $\Upsilon_c^*\cK= c^{-1}\cK$ and hence, as a right density, the heat kernel is invariant under this dilation,
\begin{equation}\label{eq:DilationInv}
	\Upsilon_c^*e^{-t\eth_{Z^+, Q}^2} = e^{-t\eth_{Z^+, Q}^2}.
\end{equation}
This is the dilation invariance that we will exploit.\\

To do so, we first blow-up $\{t=0\}$ parabolically so that $\tau = \sqrt t$ is a generator of the smooth structure. Secondly we blow-up the cone-tip at time zero, to obtain the space
\begin{equation*}
	H_1Z^+ 
	= [\bbR^+_s \times Z \times \bbR^+_{s'} \times Z' \times \bbR^+_{\tau}; 
	\{s=s'=\tau=0\}]
	\cong Z \times Z' \times \bbS^2_{+} \times \bbR^+_{R}
\end{equation*}
where $\bbS^2_{+} = \{(\omega_s,\omega_{s'},\omega_\tau) \in [0,\infty)^3: 
\omega_s^2+\omega_{s'}^2+\omega_\tau^2=1\}.$
The blow-down map is
\begin{equation*}
	\xymatrix @R=1pt{
	H_1Z^+ \ar[r]^-{\beta} & Z^+ \times Z^+ \times \bbR^+_{t} \\
	(z,z',(\omega_s,\omega_{s'}, \omega_{\tau}), R) \ar@{|->}[r] &
	((R\omega_s, z), (R\omega_{s'}, z'), R^2\tau^2)}
\end{equation*}
and we note that $\Upsilon_c$ lifts to be simply $R \mapsto cR.$
We denote $\{R=0\}$ by $\bhs{R}(H_1Z^+).$

Instead of using polar coordinates, we can use projective coordinates
\begin{equation*}
	r = \frac s{s'}, \quad
	z, \quad
	s', \quad
	z', \quad
	\sigma = \frac{\tau}{s'},
\end{equation*}
valid away from $\omega_{s'}=0,$ in which $s'$ is a boundary defining function for $\bhs{R}(H_1Z^+)$ and $\Upsilon_c$ is $s' \mapsto cs'.$

Let $\pi_L: Z^+ \times Z^+ \times \bbR^+_t \lra Z^+$ be the projection onto the left factor of $Z^+,$ and let $\beta_L = \pi_L \circ \beta.$ We have
\begin{equation*}
	\beta_L^*D^2_{Z^+,Q}
	=(s')^{-2}\lrpar{ -\pa_s^2 
		+ \tfrac1{s^2}( ( \cl(dx)D_{Z,Q}-\tfrac12)^2-\tfrac14)}.
\end{equation*}
The plan is to identify the heat kernel at $s'=1$ and then use dilation invariance.\\

To this end,  let $\chi: \bbR \lra \bbR^+$ satisfy
\begin{equation*}
	\chi(r) =
	\begin{cases}
	1 & \Mif | r - 1 | < 1/4 \\
	0 & \Mif |r - 1 | > 1/2 
	\end{cases},
\end{equation*}
and define the operator
\begin{equation}\label{eq:push-forward-D}
\wt{D} = \cl(dx)\pa_s + ((1 - \chi(s)) + \tfrac{\chi(s)}{s}) D_{Z,Q},
\end{equation}
which we can interpret as an operator on $\mathbb{S}^1 \times Z$ where
$\mathbb{S}^1 = [0, 2]/0 \sim 2$ and
coincides (thought of as an operator on $\mathbb{R}_+ \times Z$) with $D_{Z^+, Q}$
on $3/4 \le r \le 5/4$

Let $e^{-t\wt{D}^2}$ denote the heat kernel of $\wt{D}^2$ on $\bbS^1 \times Z$ endowed with its vertical APS domain and note that the inductive hypothesis applies to it.
We consider $e^{-t\wt{D}^2}$ as a right density and denote it as $\wt{\cK} \mu_R.$

We will multiply the model wedge heat kernel $\cK$ by cut-off functions to obtain a kernel on $\bbS^1 \times Z$ that we can compare to $e^{-t\wt{D}^2}=\wt{\cK} \mu_R.$
Let $\wt{\chi} \colon \mathbb{R} \lra \mathbb{R}_+$ satisfy $\chi
\wt{\chi} = \wt{\chi}$ and $\wt{\chi}(r) = 1$ for $|r - 1| < \delta$
for some $\delta > 0$.  Set
\begin{equation*}
	F_1(r,z, r', z', \tau) = \wt{\chi}(r) \wt{\chi}(r')\cK(r,z,r',z',\tau)
\end{equation*}
so that
\begin{equation*}
		(\pa_t + \wt{D}^2)F_1(r,z,r', z', \tau)
	= E(r,z,r', z',\tau) \\
	\end{equation*}
Note that $E \equiv 0$ for $ |r - 1| < \delta$, indeed
$$
E(r,z,r', z, \tau)
	=( -\wt{\chi}''(r) \cK(r,z,r',z',\tau) 
	- 2\wt{\chi}'(r) \pa_r(\cK(r,z,r',z',\tau)))\wt{\chi}(r').
$$
Letting $F_2(r,z, r', z', \tau) = \wt{\cK} \circ E$, so $(\pa_t +
\wt{D}^2)(F_1 - F_2) = 0$, we have
\begin{itemize}
\item $F_2(r,z, r', z', \tau) = O(\tau^\infty)$ for $r \in [1-\delta, 1 +
 \delta]$.
  Indeed, in 
$$
\int_0^t \int_{\bbS^1 \times Z}\wt{\cK}(r,z, r'', z'', \tau - s)
\circ E(r'',z'', r', z', s)) dsdr''dz'',
$$
where $r'' \in [0, 2]$ is the variable on $\mathbb{S}^1$, we can take
the spatial integral over the set $V = U^c$ where $U = \{ r'' \in [1 -
\delta, 1 + \delta]\}$ since on $U$, $E \equiv 0$, but on $V$, if the left
variable $r \in U$ we have $\wt{\cK} = O(\tau^\infty)$ by our inductive
hypothesis.
\item The restiction of $F_1 - F_2$ to $r' = 1$ satisfies
$$
\lim_{\tau \to 0} (F_1 - F_2) (r, z, 1, z', \tau)  = \wt{\chi}(r,
  z)\delta_{(r,z)}(1,z') = \delta_{(r,z)}(1,z').
$$
\end{itemize}
By uniqueness, it follows that
\begin{equation*}
	\wt{\cK}(r, z, 1, z', \tau) = (F_1-F_2)(r,z,1,z', \tau),\ 
	\wt{\cK}(r, z, s', z', \tau) =
        (s')^{-1}(F_1-F_2)(r,z,1,z',\tau),
\end{equation*}
in particular
\begin{equation}\label{eq:pushforward-difference}
\wt{\cK} - \cK = O(\tau^\infty) \mbox{ for } r' = 1, \; r \in [1 - \delta, 1
+ \delta],
\end{equation}
and this goes most of the way toward proving:

\begin{proposition}\label{prop:WedgeHeatNormal}
The heat kernel of the normal operator, $e^{-t\cN_y(\eth_{M/B,Q}^2)},$ is a conormal distribution in the space
\begin{equation}\label{eq:DistSpaceModelHeat}
	\sB^{\cR_N/\mf I^{(H)}_N}_{phg}
	\sA^{-m-1}_-(\bhsh{\phi\phi,1}(N);
		\Hom(E)\otimes \rho_{dd,1}^{-n-2}\beta_{(H),R}^*\Omega(M/B))
\end{equation}
where $\mf I^{(H)}_N$ is the restriction of $\mf I^{(H)}$ to $\bhsh{\phi\phi,1}(N),$ and
\begin{equation*}
\begin{gathered}
	\cR_N(\bhsh{10,0}\cap \bhsh{\phi\phi,1}) 
	=\cR_N(\bhsh{01,0}\cap \bhsh{\phi\phi,1}) 
	=\cR_N(\bhsh{00,1}\cap \bhsh{\phi\phi,1}) 
	=\cR_N(\bhsh{\phi\phi,0}\cap \bhsh{\phi\phi,1}) 
	= \emptyset, \\
	\cR_N(\bhsh{dd,1}\cap \bhsh{\phi\phi,1}) = 2, \\
\end{gathered}\end{equation*}
that inverts the vertical heat operator $\tfrac12\sigma\pa_\sigma + \sigma^2(\cN_y(\eth_{M/B,Q}))^2.$
\end{proposition}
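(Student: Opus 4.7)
The plan is to combine the explicit spectral representation \eqref{eq:spectral-model-heat} with two independent devices: (i) the homogeneity relation $\Upsilon_c^\ast e^{-t\eth_{Z^+,Q}^2} = e^{-t\eth_{Z^+,Q}^2}$ (after accounting for the density weight), which reduces questions of regularity on $H(\bhs{N}/N)_{Sus(\w)}$ to a single slice $\{s' = 1\}$, and (ii) the comparison estimate \eqref{eq:pushforward-difference} with the heat kernel of $\wt D^2$ on $\mathbb{S}^1 \times Z$, to which the inductive hypothesis on $Z$ (a pseudomanifold of smaller depth) applies. The Fourier transform in $\bbR^h$ then incorporates the remaining flat directions.

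First I would set up local projective coordinates $r = s/s'$, $\sigma = \tau/s'$, $z$, $z'$ and write the model heat kernel restricted to $s' = 1$. Via \eqref{eq:pushforward-difference}, in any neighborhood of $\{r=1\}$ this differs from $\wt{\cK}$ by an $O(\tau^\infty)$ term; by the inductive application of Theorem \ref{thm:WedgeHeatKer} to $\wt D^2$, the restriction $\wt{\cK}\rvert_{r' = 1}$ is polyhomogeneous on the appropriate heat space of $\mathbb{S}^1 \times Z$, with index set $\cH$ at the temporal diagonal and at all front faces induced by the lower-depth strata of $Z$. Undoing the dilation $\Upsilon_{s'}$ promotes this regularity to a full polyhomogeneous structure on a neighborhood of the identity section $\wt{\nu}_0(\bhs{N})$, producing the correct behavior at $\bhsh{dd,1} \cap \bhsh{\phi\phi,1}$ (index $2$, via the leading Gaussian factor) and at the intersections with the lower-depth front faces of $(\bhs{N}/N)_{Sus(\w)}$.

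Second, I would treat the regions separated from $\{r=1\}$ directly using the spectral expansion. Using the large-argument asymptotics $I_\nu(x) \sim (2\pi x)^{-1/2} e^x$ together with the Gaussian factor $\exp(-(s-s')^2/4\sigma^2)$, one obtains rapid ($\tau^\infty$) vanishing at $\bhsh{dd,1}$ off the lifted diagonal. Using the small-argument asymptotics $I_\nu(x) \sim (x/2)^{|\nu|}/\Gamma(|\nu|+1)$, term by term one reads off the leading order at $\bhsh{10,0}$ and $\bhsh{01,0}$; the vertical APS choice \eqref{eq:nuAPS} together with the definition of $\mf I$ yields the multiweight $\mf I^{(H)}_N$. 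Similarly, the behaviour at $\bhsh{\phi\phi,0}$ is derived from the composition of these two regimes, and the face $\bhsh{00,1}$ gives $O(\tau^\infty)$ by a standard heat-kernel estimate using self-adjointness of $\eth_{Z^+,Q}^2$ on its VAPS domain (Proposition \ref{prop:NyInjSA}). Convergence of the spectral series in the polyhomogeneous topology, in a manner parallel to \cite[Proposition 3.2]{Mazzeo-Vertman:AT}, uses Weyl-type bounds on the eigenvalues of $D_{Z,Q}$ that follow from the inductive resolvent construction in Theorem \ref{thm:FredholmResolvent}. Finally, the Fourier transform in the $h$ flat variables supplies an extra Gaussian factor $(4\pi\sigma^2)^{-h/2}\exp(-|u|^2/4\sigma^2)$, which is manifestly polyhomogeneous on the appropriate parabolic compactification and combines with the cone heat kernel to give the suspended wedge-heat space structure of $\bhsh{\phi\phi,1}(N) = H(\bhs{N}/N)_{Sus(\w)}$.

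The main obstacle will be ensuring that the polyhomogeneous regularity at the intersection of front faces coming from different strata $Y' < Y$ in $Z$ is compatible across all strata. That is, the inductive hypothesis delivers a polyhomogeneous structure on $HZ_{Sus(\w)}$, but one must verify that the cut-off-plus-dilation argument does not break the compatibility at the corners $\bhsh{\phi\phi,1}(\wt Y) \cap \bhsh{\phi\phi,0}(Y')$ for $Y' < \wt Y$, and that the interaction with the flat $\bbR^h$ variables respects this. This amounts to checking that the constructed model heat kernel on the slice $\{s'=1\}$ agrees, to the required order at every such corner, with the inductively known form of $e^{-t\wt D^2}$, which is exactly what the $O(\tau^\infty)$ error in \eqref{eq:pushforward-difference} provides away from the temporal diagonal; at the temporal diagonal one verifies directly from the Gaussian parametrix construction that the index set $\cH$ at $\bhsh{dd,1}$ is $2$ and that weights are correctly propagated under restriction and $\Upsilon_c$-dilation.
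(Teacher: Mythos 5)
Your proposal follows essentially the same route as the paper's own proof: reduction by dilation invariance to a single slice $\{s'=1\}$, comparison with the smoothed operator $\wt{D}$ on $\mathbb{S}^1\times Z$ via the $O(\tau^\infty)$ error in \eqref{eq:pushforward-difference}, the inductive hypothesis giving the polyhomogeneous structure of $e^{-t\wt{D}^2}$ near the temporal diagonal, and the spectral expansion \eqref{eq:spectral-model-heat} together with modified-Bessel asymptotics for the side-face behavior. The paper states these steps more tersely; your additional remarks on the Fourier transform along $\bbR^h$, Weyl-type convergence of the spectral series, and matching of index sets at corners correctly fill in details the paper leaves implicit.
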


\begin{proof}[Proof of Proposition \ref{prop:WedgeHeatNormal} under
  inductive hypothesis]
Without loss of generality we assume that $B = \pt$, so $X = M$ is a
manifold with corners with iterated fibration structure of depth $k$ carrying a single
wedge Dirac type operator;
thus the base of a boundary hypersurface $\bhs{Y}$ corresponds to a
heat front face $\bhsh{\phi\phi,1}(Y)$ that is a bundle with fibre
$HZ_{Sus(\w)}$ with $Z$ of depth less than $k$, and again without loss
of generality we assume that $Y$ is a point since the heat kernel is
product type, so $\bhsh{\phi\phi,1}(Y) = HZ_{Sus(\w)}$.  
By our inductive hypothesis, Theorem \ref{thm:WedgeHeatKer}
applies to $\wt{\cK}$ above, and thus $\wt{\cK}$ lies in $\sB^{\cH/\mf
  I^{(H)}}_{phg}\sA^{-m-1}_-(H(\mathbb{S}^1 \times Z)_{\w};
\Hom(E)\otimes \Omega_{\mf h,R})$.  The lift of the set $s' = 1$ to
$H(\mathbb{S}^1 \times Z)_{\w}$ is a suspended heat space itself, and
a neighborhood of its diagonal, say $s \in [1 - \delta, 1 + \delta]$,
may be identified with the same neighborhood in $HZ_{Sus(\w)}$. 

From the expression for the heat kernel in
\eqref{eq:spectral-model-heat} , the high order asymptotics of the modified
Bessel functions, and the inductive hypothesis, we see that for any $\epsilon > 0$, with $s
\in [0, 1 - \epsilon)$, $s' = 1$, the heat kernel has the appropriate
asymptotics at the side faces.
This together with
\eqref{eq:pushforward-difference} gives the proposition.
\end{proof}

{\bf Green's function on the model wedge.}
We now prove the analogous statement above for the right inverse
$G(N)$ of $\cN_y(\eth_{M/B,Q}).$
\begin{proposition}\label{thm:normal-green's}
Let $N \in \cS_{\psi}(M)$ and denote the Green's function of
$\cN_y(\eth_{M/B,Q})$ constructed above by $G(N).$ 
The integral kernel of $G(N)$ is an element of 
\begin{equation*}
	\Psi^{-1, \mf I^{(2)}_N}_{Nsus(T(N/B)^+)}(\bhs{N}/N;E)
\end{equation*}
where $\mf I^{(2)}_N$ denotes the restriction of the indicial multiweight to $\bhsd{\phi\phi}(N).$
\end{proposition}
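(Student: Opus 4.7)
The plan is to construct $G(N)$ by time integration of the heat kernel obtained in Proposition \ref{prop:WedgeHeatNormal}. Since $\cN_y(\eth_{M/B,Q})$ is injective and self-adjoint on its vertical APS domain by Proposition \ref{prop:NyInjSA}, the identity
$$
G(N) = \int_0^\infty \cN_y(\eth_{M/B,Q}) \, e^{-t \cN_y(\eth_{M/B,Q})^2}\, dt
$$
defines a right inverse of $\cN_y(\eth_{M/B,Q})$ on a suitable dense subspace. Convergence at $t \to \infty$ is ensured by the exponential decay of the heat kernel away from the kernel of $\cN_y^2$; after Fourier transforming in $\bbR^h$ the operator becomes $\cl(dx)\pa_s + s^{-1} D_{Z,Q} + i\cl(\xi)$, and the factor $e^{-t|\xi|^2}$ in the heat kernel gives integrability in $t$ for $\xi \neq 0$, which after inverse Fourier transform yields a distribution smooth in the $u$-variables away from the diagonal.

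Next, I would interpret this time integration geometrically as a pushforward. The suspended wedge heat front face $\bhsh{\phi\phi,1}(N)$ is obtained from the suspended edge double face $\bhsd{\phi\phi}(N)$ essentially by attaching the extra $\bbR^+_\tau$ direction with its parabolic blow-up at $\tau = 0$ and the subsequent diagonal blow-up. Forgetting $\tau$, after suitable blow-downs, defines a b-fibration, and I would apply the pushforward theorem from \S\ref{sec:Conormal} to transport the index family $\cR_N$ and multiweight $\mf I^{(H)}_N$ from the heat kernel into an index family and multiweight on $\bhsd{\phi\phi}(N)$ via the formulas $f_{\sharp}\cE$ and $f_{\sharp}\mf s$ recalled there. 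The order $-1$ conormal singularity of $G(N)$ along $\diag_\e$ emerges from pairing the $\tau^{-n-2}$ singularity of the heat kernel at $\bhsh{dd,1}$ with the first-order operator $\cN_y$, pushed forward against $d\tau$ near $\tau = 0$, while the weights at the side faces $\bhsd{10}(N)$ and $\bhsd{01}(N)$ come from the indicial roots encoded in the Bessel-function factors $I_{\nu_{APS}(\lambda)}(ss'/2\sigma^2)\exp(-(s^2+(s')^2)/4\sigma^2)$ appearing in the spectral representation \eqref{eq:spectral-model-heat}.

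The main obstacle will be verifying the precise multiweight at the front face, namely $\mf I^{(2)}(\bhsd{\phi\phi}(N)) = 2\mf I(\bhs{N}) + \dim(N/B) + 1$. A cleaner alternative, avoiding some of the bookkeeping in the pushforward, is to verify the asymptotic expansion of $G(N)$ termwise from the explicit sum in \eqref{eq:spectral-model-heat}, using the asymptotics $K_\nu(z) \sim C_\nu z^{-|\nu|}$ as $z \to 0$ and $K_\nu(z) \sim \sqrt{\pi/(2z)} \, e^{-z}$ as $z \to \infty$ for the modified Bessel functions appearing in the inverse Hankel transforms. The dilation invariance $\Upsilon_c^* G(N) = c\, G(N)$ (inherited from \eqref{eq:DilationInv} since $\cN_y$ scales by $c^{-1}$ and integrating $dt$ contributes $c^2$) combined with Fourier inversion in $\bbR^h$ reduces the problem, exactly as in the proof of Proposition \ref{prop:WedgeHeatNormal}, to a neighborhood of $s' = 1$, where the conormal structure can be read off directly from the spectral sum and the corresponding statements of the inductive hypothesis for $D_{Z,Q}$.
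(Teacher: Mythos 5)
Your proposal takes a genuinely different route from the paper. The paper does not time-integrate the heat kernel; instead it mimics the proof of Proposition \ref{prop:WedgeHeatNormal} directly: it cuts $\cG$ off near $s' = 1$, applies the closed-up operator $\wt L = \wt D^2 + \Delta_{\bbR^h}$ (with $\wt D$ the compactified model \eqref{eq:push-forward-D} on $\bbS^1 \times Z$), writes the defect as $\wt L F_1 - \wt\chi = E$ with $E$ vanishing near the diagonal, and then uses $\wt G \wt L = I - \Pi$ to conclude that $\cG$ agrees near $s' = 1$ with the Green's function $\wt G$ of $\wt L$ modulo a smoothing error. The conormal structure of $\wt G$ then follows at one stroke from the inductive hypothesis (Theorem \ref{thm:FredholmResolvent} applied to the lower-depth space $\bbS^1 \times Z$, suspended over $\bbR^h$), and homogeneity in $s, s'$ carries the structure from the slice $s' = 1$ to all of $\bhsd{\phi\phi}(N)$.

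Two concerns about your route. First, the heat space $H(M/B)_\w$ compactifies only $\tau \to 0$, not $\tau \to \infty$, so the pushforward theorem of \S\ref{sec:Conormal} cannot be applied directly to $\int_0^\infty \cN_y e^{-t\cN_y^2}\,dt$: you need a separate and genuinely nontrivial estimate on the large-$t$ tail, which for the dilation-invariant model wedge is equivalent to understanding the kernel's asymptotics at the sphere boundary of $\bbS^{h+1}_{++}$ in the suspended double space. Your remark about $e^{-t|\xi|^2}$ decay for $\xi \neq 0$ is the right heuristic, but it is precisely the uniform behavior as $\xi \to 0$ (equivalently, as the heat kernel spreads) that must be controlled to obtain the stated weights at the side faces, and this is not handled by the argument as written. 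Second, your alternative — reading the conormal structure off the spectral sum via the $I_\nu K_\nu$ Bessel-function formula — is exactly what the authors flag in the Remark immediately after the proposition as being ``a bit delicate'': the difficulty is establishing the required uniformity in $\lambda$ (summability in the space of conormal distributions with the claimed weights), which is why the paper instead compares $\cG$ with $\wt G$ so that the inductive hypothesis supplies the conormal structure wholesale rather than termwise. Your use of dilation invariance to reduce to $s' = 1$ is, however, the same mechanism the paper uses for the final step, so the last paragraph of your proposal is structurally aligned with the paper; it is the replacement of the cutoff comparison by either the pushforward or the termwise spectral argument that introduces the gap.
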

\begin{proof}[Proof of Proposition \ref{thm:normal-green's} under
  inductive hypothesis]
  The proof is directly analogous to that of Proposition
  \ref{prop:WedgeHeatNormal}, and we use the notation from that proof.
  First we study $\cG$, the Green's
  function of the self-adjoint extension of  $\cN_y(\eth_{M/B, Q})^2 = \eth_{Z^+,
    Q}^2 + \Delta_{\mathbb{R}^h}$  from \eqref{eq:DefUnbddInv}.
    Letting $\wt{D}$ be as in \eqref{eq:push-forward-D}, the
  operator $\wt{L} := \wt{D}^2 +
  \Delta_{\mathbb{R}^h}$ on sections of $E$ over $\mathbb{S}^1 \times Z
  \times \mathbb{R}^h$ is the square of a Dirac-type operator on a
  lower depth space, and thus the inductive hypothesis applies to it.
  Writing $F_1(s, z, y, s', z', y') = \wt{\chi}(s) \cG \wt{\chi}(s')$ and thinking of this
  as the Schwartz kernel of an operator on the $\bbS^1 \times Z \times \bbR^h,$ we have $\wt{L} F_1 -
  \wt{\chi} = E$ is a smooth distribution, in fact vanishing for $|s -
  1|, |s' - 1| < \delta$.  Writing $\wt{G} \circ E = F_2$ and noting
  that all operators here can be taken symmetric, using $\wt{G} \wt{L}
  = I - \Pi$, we see that in the region $|s -
  1|, |s' - 1| < \delta$, $F_1$ differs from $\wt{\cG}$ by a smooth
  function, and thus restricted to $s' = 1$ is a polyhomogeneous
  conormal distribution on the suspended double space.  Homogeneity in
  $s, s'$ then gives the result. 
\end{proof}

\begin{remark}\label{rm:match}
  Recall from our discussion of the normal operators above that
  $\bhsd{\phi\phi}(Y) \cap \bhsd{\phi\phi}(\wt{Y})$ is a front face of
  the resolved (suspended) double space on which the normal operators'
  Schwartz kernels live.  It follows from the proof and the inductive
  hypothesis that for $N'
  < N$, $G(N) \rvert_{\bhsd{\phi\phi}(N) \cap
    \bhsd{\phi\phi}(\wt{N'})}$ is equal to the Green's function for the
  normal operator on that face. 
\end{remark}

 \begin{remark}
 The computation of the null space of the normal operator in Proposition \ref{prop:NyInjSA} can be used to write down integral kernels of the inverses of these operators as in, e.g., \cite[\S XIII.3, Theorem 16]{Dunford-Schwartz:2}.
 For the vertical APS domain we have, see e.g.,  \cite[Lemma
 4.1]{Bruning-Seeley:Index} 
 \begin{multline*}
 	(  -\pa_s^2 
	+ \tfrac1{s^2}( (\cl(dx)D_{Z,Q}-\tfrac12)^2-\tfrac14) + z^2
	)^{-1}(r, \wt r) \\
 	= \bigoplus_{\lambda \in \Spec(A_y)}
 	\sqrt{r\wt r} I_{\nu_{APS}(\lambda)}(rz) K_{\nu_{APS}(\lambda)}(\wt rz) \Pi_{\lambda}, \quad
 	\Mfor r < \wt r
 \end{multline*}
 (and interchanging $r$ with $\wt r$ for $\wt r<r$)
 where $\nu_{APS}$ is defined in \eqref{eq:nuAPS} and $\Im(z^2)\neq 0.$
 It is a bit delicate to establish the asymptotic behavior of the
 inverse from this explicit expression.
 \end{remark}
%

\subsection{Resolvent of $\eth_{M/B,Q}$} \label{sec:Res}

We have described, for each $N \in \cS_{\psi}(M),$ $y \in N^\circ,$ the inverse of the normal operator $\cN_y(\eth_{M/B,Q}).$ 
Putting these together, we specified the integral kernel 
\begin{equation*}
	G(N) = \{ G_y(N) : N \in \cS_{\psi}(M), \; y \in N \}. 
\end{equation*}
In fact this also determines the integral kernel over the boundary of each $N \in \cS_{\psi}(M).$ Indeed, recall that the structure of $\bhsd{\phi\phi}(N)$ is described in Proposition \ref{prop:ffX2}.
At a boundary hypersurface $\bhs{N'N}$ of $N,$ fibering over $N'<N,$ the fibers of $\bhsd{\phi\phi}(N)$ comprise one of the front faces of $(Z')^2_e.$ Namely, they comprise the front face corresponding to the boundary hypersurface $\bhs{NZ'}$ of $Z'$ in the notation of \eqref{eq:IFSDetail'}.
Since the integral kernel $G(N')$ has been specified over $N',$ taking
its normal operator over the front face corresponding to $\bhs{NZ'},$
yields the extension of $G(N)$ to points over the boundary
hypersurface $\bhs{N'N}$ of $N.$  By Remark \ref{rm:match}, these
normal operators fit together smoothly since at each intersection
$\bhsd{\phi\phi}(N) \cap \bhsd{\phi\phi}(N')$ the restriction of the
integral kernel is the Green's function of the model operator induced
by $\eth_{M/B,Q}$ with its vertical APS domain.

We now proceed as in \cite[\S4]{ALMP:Hodge} and obtain a parametrix for $\eth_{M/B,Q}$ from the integral kernels $G(N).$

\begin{proposition} \label{prop:parametrix}
Let $\eth_{M/B,Q}$ be a Dirac-type wedge operator endowed with its vertical APS domain and satisfying the Witt assumption.
There is an edge pseudodifferential operator in the calculus with bounds
\begin{equation*}
	G(M) \in \rho_{\ff((M/B)^2_\e)}\Psi^{-1, \mf I^{(2)}}_\e(M/B;E)
\end{equation*}
such that $\cN_y(G(M)\eth_{M/B,Q}) = \Id$ for each $N \in \cS_{\psi}(M),$ $y \in N^{\circ},$ and hence
\begin{equation}\label{eq:TwoSided}
	G(M)\eth_{M/B,Q} - \Id, 
	\quad
	\eth_{M/B,Q}G(M) - \Id 
	\in 
	\rho_{\ff((M/B)^2_\e)}\Psi^{-1, \mf I^{(2)}}_\e(M/B;E).
\end{equation}
\end{proposition}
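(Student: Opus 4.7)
The plan is to assemble $G(M)$ in two stages: first invert the symbol to obtain a pseudodifferential parametrix in the small edge calculus, then correct its normal operators at each collective front face $\bhsd{\phi\phi}(N)$ using the Green's functions $G(N)$ produced in Proposition \ref{thm:normal-green's}. The resulting sum will lie in $\rho_{\ff((M/B)^2_\e)}\Psi^{-1,\mf I^{(2)}}_\e(M/B;E)$, and the compatibility statement in Remark \ref{rm:match} ensures that the corrections glue consistently across intersections $\bhsd{\phi\phi}(N)\cap \bhsd{\phi\phi}(N')$.

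First I would construct a symbolic parametrix. Since $\eth_{M/B,Q}$ is elliptic as an edge operator with principal symbol $\cl$, standard edge symbol calculus yields $G_0 \in \Psi^{-1}_\e(M/B;E)$ with $\sigma_{-1}(G_0) = \sigma_1(\eth_{M/B,Q})^{-1}$, so that $R_0 := G_0 \eth_{M/B,Q} - \Id \in \Psi^0_\e(M/B;E)$ has order $-1$ conormal singularity at the diagonal; equivalently the integral kernel of $R_0$ vanishes on the edge diagonal to leading order. Using \eqref{eq:edge-v-wedge-norm} and \eqref{eq:normal-identity}, the normal operator $\Nop_N(R_0)$ at each $\bhsd{\phi\phi}(N)$ is the restriction of a smooth off-diagonal kernel.

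Next I would proceed by induction on $\mathrm{depth}(N)$ over $N \in \cS_\psi(M)$, starting with $N$ of maximal depth (so that $\phi_N \colon \bhs{N} \lra N$ has closed-manifold base). At each step, I would modify the current parametrix $G_k$ by a term $\wt{G}_{N}$ whose kernel is supported in a collar neighborhood of $\bhsd{\phi\phi}(N)$ and whose restriction there is determined by $G_y(N)$ for $y \in N^\circ$, extended to the boundary of $N$ via Remark \ref{rm:match}. Concretely, using the collared iterated fibration structure from Section \ref{sec:IFS}, one chooses a tubular neighborhood $\sC(\bhsd{\phi\phi}(N))$ and extends $G(N)$ inward as an element of $\rho_{\ff((M/B)^2_\e)}\Psi^{-1,\mf I^{(2)}}_\e(M/B;E)$, then cuts off. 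The inductive hypothesis guarantees that the corrections at deeper faces have already fixed the normal operator of $G_k \eth_{M/B,Q}$ at those faces, and by Remark \ref{rm:match} the newly added piece $\wt{G}_{N} \eth_{M/B,Q}$ has normal operator equal to the identity at $\bhsd{\phi\phi}(N)$ and agrees with what was already in place on the lower-depth intersections, so adding $\wt{G}_{N}$ makes the normal operator of $G_{k+1}\eth_{M/B,Q}$ equal to the identity at $\bhsd{\phi\phi}(N)$ without disturbing the lower-depth faces.

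After all faces have been treated, set $G(M) := G_\ell$. By construction $\cN_y(G(M)\eth_{M/B,Q}) = \Id$ for every $y \in N^\circ$, every $N \in \cS_\psi(M)$. Applying the composition theorem (Theorem \ref{thm:CompositionEdge}(2)) with the weights $\mf I^{(2)}$ (and the vanishing of the normal operator of the error at each front face), the error $R = G(M)\eth_{M/B,Q} - \Id$ picks up the extra factor $\rho_{\ff((M/B)^2_\e)}$ and lies in $\rho_{\ff((M/B)^2_\e)}\Psi^{-1,\mf I^{(2)}}_\e(M/B;E)$. For the right-sided identity $\eth_{M/B,Q}G(M)-\Id$, since $\eth_{M/B,Q}$ is formally self-adjoint and the vertical APS domain is self-adjoint (Proposition \ref{prop:NyInjSA}), one applies the same construction to the adjoint or equivalently observes that $G(M)^*$ is a left parametrix for $\eth_{M/B,Q}^*=\eth_{M/B,Q}$, whence $\eth_{M/B,Q}G(M)^*-\Id$ has the same form; averaging (or using uniqueness of the parametrix modulo the appropriate ideal) yields \eqref{eq:TwoSided}.

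The main obstacle is the bookkeeping at the corners: verifying that the cut-off extensions of the $G(N)$ can be chosen compatibly, i.e.\ that their restrictions to $\bhsd{\phi\phi}(N)\cap \bhsd{\phi\phi}(N')$ match. This is precisely what Remark \ref{rm:match} supplies, but executing the induction so that each new correction $\wt{G}_{N}$ neither destroys the normal operators already fixed at deeper strata nor introduces extra singularity at the diagonal requires carefully tracking the multiweights $\mf I^{(2)}$ at each boundary hypersurface under the composition formula of Theorem \ref{thm:CompositionEdge}; the Witt assumption enters by guaranteeing $\mf I(\bhs{N})>0$ so that the weights $\mf I^{(2)}$ on $\bhsd{10}(N)$, $\bhsd{01}(N)$ are strictly positive, which is what makes the error genuinely lower order.
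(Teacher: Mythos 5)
The overall architecture you propose --- extend the boundary inverses $G(N)$ of Proposition \ref{thm:normal-green's} inward (patching across corners via Remark \ref{rm:match}), optionally after a bulk symbolic parametrix, and extract the $\rho_{\ff}$ gain in the error from the composition theorem --- is the right shape and matches the paper's construction, which for the extension mechanics appeals to \cite[Proof of Proposition 5.43]{tapsit}. However there is a genuine gap at the central technical step.

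You assert that the correction $\wt G_N$ has ``restriction determined by $G_y(N)$'' and that this forces $\cN_y(G(M)\eth_{M/B,Q})=\Id$, but $G(N)$ inverts the \emph{wedge} normal operator $\cN_y(\eth_{M/B,Q})$, whereas the composition theorem on $(M/B)^2_\e$ multiplies \emph{edge} normal operators $\Nop_y$, and $\eth_{M/B,Q}$ is not an edge operator. As the paper stresses at the very start of its proof, the lift of $\eth_{M/B,Q}$ to $(M/B)^2_\e$ is not tangent to $\bhsd{\phi\phi}(N)$, and the correct relation is $\cN_y(\eth_{M/B,Q}) = \tfrac1s\Nop_y(\rho_N\eth_{M/B,Q})$, not a naive restriction. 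The correction must therefore be taken to be $\wt G(N) = \bigl[\text{extend } G(N)\tfrac1s \text{ off } \bhsd{\phi\phi}(N)\bigr]\cdot\rho_N$, which then gives $\Nop_y(\wt G(N)\eth_{M/B,Q}) = G(N)\tfrac1s\,\Nop_y(\rho_N\eth_{M/B,Q}) = \Id$; drop the $s^{-1}$ and $\rho_N$ factors and the chain of equalities breaks. Two further points your sketch leaves open: (i) if you retain the explicit bulk parametrix $G_0$, then $\Nop_N(G_0)$ is generically nonzero and the boundary correction must supply $G(N)\tfrac1s\rho_N^{-1}-\Nop_N(G_0)$ at the front face, not $G(N)$ itself; (ii) a generic extension of these kernels leaves $\eth_{M/B,Q}G(M)$ blowing up like $\rho_{\bhsd{10}(N)}^{-1}$ at each side face, so the cut-off extension has to be chosen, as in \emph{loc.\ cit.}, to cancel that leading singular coefficient. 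Finally, your route to the second inclusion in \eqref{eq:TwoSided} via adjoints invokes self-adjointness of $(\eth_{M/B,Q},\cD_{\VAPS})$, which in the paper's logical order is only established in Theorem \ref{thm:FredholmResolvent} using the present parametrix --- Proposition \ref{prop:NyInjSA} concerns only the normal operators $\cN_{(y,\eta)}$. Working with formal adjoints avoids the circularity, but the paper's direct computation $\Nop_y(\eth_{M/B,Q}G(M)) = \Nop_y(\rho_N\eth_{M/B,Q})\Nop_y(G(M)\rho_N^{-1}) = s\,\cN_y(\eth_{M/B,Q})\,G_y(N)\tfrac1s = \Id$ is cleaner and turns on exactly the same $s$ and $\rho_N$ factors your proposal elides.
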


\begin{proof}
At each $N \in \cS_{\psi}(M),$ recall that $\cN_y(\eth_{M/B,Q})$ is not the restriction to $\bhsd{\phi\phi}(N)$ of the lift of $\eth_{M/B,Q}$ to $(M/B)^2_\e,$ because this lift is not tangent to that boundary hypersurface. Instead we have 
\begin{equation*}
	\cN_y(\eth_{M/B,Q}) 
	= \tfrac1s \Nop_y(\rho_N \eth_{M/B,Q})
\end{equation*}
(recall that $\Nop_y,$ the edge normal operator, corresponds to restriction of the integral kernel to the front face over $y$ while $\cN_y,$ the wedge normal operator, is obtained by this equality and uses our fixed choice of boundary product structure).
Thus in order to have $\cN_y(G(N)\eth_{M/B,Q})=\Nop_y(G(N)\eth_{M/B,Q})=\Id,$ we see that near $\bhsd{\phi\phi}(N)$ we should first extend 
$
G(N)\tfrac1s$ 
off of $\bhsd{\phi\phi}(N)$ and then multiply on the right by $\rho_N$ to obtain $\wt G(N).$ This will then satisfy
\begin{equation*}
	\Nop_y(\wt G(N)\eth_{M/B,Q}) = G(N)\tfrac1s \Nop_y(\rho_N\eth_{M/B,Q}) = \Id.
\end{equation*}
As mentioned above, since the restriction to $\bhsd{\phi\phi}(N)$ of $\wt G(N)\eth_{M/B,Q}$ is the kernel of the identity, we can patch together the $\wt G(N)$ for each $\bhsd{\phi\phi}(N)$ as they will match at corners.
We proceed as in \cite[Proof of Proposition 5.43]{tapsit} to choose an extension of these kernels to obtain $G(M).$ 
Note that without this extension we would expect $\eth_{M/B,Q}G(M)$ to be $\cO(\rho_{\bhsd{10}(N)}^{-1})$ at each $\bhsd{10}(N),$ but by extending carefully off of the front face this singularity is avoided.
This deals with the first operator in \eqref{eq:TwoSided}, for the second we point out that
\begin{equation*}
	\Nop_y(\eth_{M/B,Q}G(M)) = \Nop_y(\rho_N \eth_{M/B,Q})\Nop_y(G\rho_N^{-1})
	= s \cN_y(\eth_{M/B,Q})G_y(N)\tfrac1s
	= s \Id \tfrac1s = \Id.
\end{equation*}

By construction, $G(M)$ has weight 
$\min \{\Spec_b(I_y(\eth_{M/B,Q}))\cap \bbR^+: y \in N\}$ at each of $\bhsd{10}(N),$ $\bhsd{01}(N)$ for each $N \in \cS_{\psi}(M).$ It has a smooth expansion at each $\bhsd{\phi\phi}(N),$ where it also vanishes to first order.
\end{proof}

We now have all the tools we need to proof Theorem
  \ref{thm:FredholmResolvent}.
  \begin{proof}[{\bf Proof of  Theorem \ref{thm:FredholmResolvent}}]
To simplify notation we assume that $B = \pt, \ M = X$.

    The equation $\langle \eth_{X, Q}\phi, \psi \rangle_{L^2_\w} =
    \langle \phi, \eth_{X, Q}\psi \rangle_{L^2_\w}$ holds for
    $C_c^\infty$ sections and both sides define continuous bilinear
    forms on $\rho_X^{1/2}H^1_\e(X;E)\cap \cD_{\max}(\eth_{X,Q})$.  Thus
    this is a symmetric domain
    for $\eth_{X, Q}$ in $L^2_\w$.

    Next note that the mapping properties in Theorem
    $\ref{thm:EdgeSobAct}$ show that 
$$
G(X),\ R = \eth_{X, Q} \circ G(X) - \Id \colon L^2_\w = H^0_\e(X; E)
    \lra \rho_X^{1/2} H^1_\e(X; E),
$$
are bounded operators, as are $G(X)^*$ and $R^*$ (since as is well-known their integral kernels are obtained from those of $G(X)$ and $R$ by interchanging the two factors of $X^2$).
Taking adjoints and specifying domains when necessary for clarity,
$$
  (\eth_{X, Q}, \cD_{\VAPS}) \circ G(X) = \Id - R \implies  
( \eth_{X, Q} \circ G(X))^*  = \Id - R^*,
$$
and since $G(X)^* \circ (\eth_{X, Q},\cD_{\VAPS})^* \subseteq ( \eth_{X, Q} \circ G(X))^*,$
we have that
\begin{multline*}
	u \in \cD \lrpar{ (\eth_{X, Q},\cD_{\VAPS})^* } \\
	\implies
	u = G(X)^*\eth_{X, Q}^*u +R^*u \in \cD_{\max}(\eth_{X,Q})\cap \rho_X^{1/2} H^1_\e(X; E)
	\subseteq \cD_{\VAPS}(\eth_{X,Q})
\end{multline*}
so 
$$
\cD_{\VAPS}(\eth_{X,Q})^* \subset \cD_{\max}(\eth_{X,Q})\cap \rho_X^{1/2} H^1_\e(X; E)\subset \cD_{\VAPS}(\eth_{X,Q}),
$$
forcing equality of these three spaces.

Since 
$\rho_X^{1/2} H^1_\e(X; E)$ is compactly contained in $L^2_\w$, the resolvent and
the errors $G(X) \eth_{X, Q} - \Id$ and $\eth_{X, Q} G(X) 
- \Id$ are compact on $L^2_\w$, so the Fredholm and discrete
spectrum properties follow. 

Now note that, since $G(X)$ is a compact operator, it is simultaneously a parametrix for $(\eth_{X,Q}-\lambda)$ for all $\lambda.$ It follows that 
the space of eigensections of a given eigenvalue is an element of $\rho_{\ff(X^2_\e)}\Psi^{-1, \mf I^{(2)}}_\e(X;E),$ and hence eigensections are in $\rho_X^{\mf I}H^{\infty}_\e(X;E).$

For $\lambda \in \bbC$ define $R_i(\lambda)$ by
\begin{equation*}
	G(X)(\eth_{X,Q}-\lambda) = \Id - R_1(\lambda), \quad
	(\eth_{X,Q}-\lambda)G(X) = \Id - R_2(\lambda)
\end{equation*}
and note that $R_i(\lambda) \in \rho_{\ff(X^2_\e)}\Psi^{-1, \mf I^{(2)}}_\e(X;E).$ 
These errors can be improved to 
$R_i(\lambda) \in \rho_{\ff(X^2_\e)}\Psi^{-\infty, \mf I^{(2)}}_\e(X;E)$ by the standard symbolic construction. 
For each $\lambda \in \bbC\setminus \Spec(\eth_X, \cD_{\VAPS})$ we have (cf. \cite[(4.25)]{Mazzeo:Edge})
\begin{equation*}
	(\eth_{X,Q}-\lambda)^{-1} 
	= G(X) + R_1(\lambda)(\eth_{X,Q}-\lambda)^{-1}R_2(\lambda)
	+ R_1(\lambda)G(X)
\end{equation*}
which, by virtue of Theorems \ref{thm:CompositionEdge} and \ref{thm:BiIdeal},
is an element of $\rho_{\ff(X^2_\e)}\Psi^{-1, \mf I^{(2)}}_\e(X;E).$ (As in the proof of \cite[Theorem 4.20]{Mazzeo:Edge}, the weights at the various faces of $X^2_\e$ follow from the fact that this inverts $(\eth_{X,Q}-\lambda).$) This formula also shows that the resolvent is holomorphic as a map from $\bbC\setminus \Spec(\eth_{X,Q}, \cD_{\VAPS})$ into $\rho_{\ff(X^2_\e)}\Psi^{-1, \mf I^{(2)}}_e(X;E)$ and analytic Fredholm theory shows that it extends to a meromorphic function on $\bbC.$

Writing the projection onto the $\lambda$-eigenspace as a contour integral, together with elliptic regularity, then proves \eqref{eq:EigProjSpace} and completes the induction and the proof of Theorem \ref{thm:FredholmResolvent}.
\end{proof}

\begin{remark}
It is easy to see from this construction that every family of wedge Dirac-type operators whose vertical APS domain satisfies the Witt condition can be connected smoothly through wedge Dirac-type operators to a family whose vertical APS domains satisfy the geometric Witt condition.

Indeed recall, e.g., from \cite[\S1.4]{Hitchin:Harm}, \cite[\S A.2]{Vaillant}, that for a conformal change of metric $g_\omega = \omega^2 g$ there is a Clifford bundle adapted to this metric with Dirac-type operator $D_{\omega} = \omega^{-1} (\omega^{-(n-1)/2}D\omega^{(n-1)/2}).$
Thus if we scale each of the metrics $g_Z$ we can vary the operators, through Dirac-type operators, and push away any small indicial roots while maintaining the Witt condition. We lose special structures, e.g., this variation will take the signature operator through Dirac-type operators not equal to the signature operators of the varying metrics.
This yields a family over $B \times [0,1]$ with the original family at $B \times \{0\},$ and such that the family over $B \times \{1\}$ satisfies the geometric Witt condition.
The wedge Dirac-type operators over $B \times [0,1]$ with their vertical APS domains form a smooth family of Fredholm operators. (For a discussion of smoothness of this family of operators, see \cite{Melrose-Piazza:Even}; the vertical APS domain corresponds to the spectral section coming from a spectral gap at zero.)
\end{remark}

\subsection{Heat kernel of $\eth_{M/B,Q}^2$} \label{subsec:HeatKer}
The construction of the heat kernel proceeds by solving model problems at the critical boundary hypersurfaces of the wedge heat space. In Proposition \ref{prop:WedgeHeatNormal} we have described the solution of the model problem at each $\bhsh{\phi\phi,1}(N),$ $N \in \cS_{\psi}(M).$

We can similarly solve the model problem at $\bhsh{dd,1}.$
This proceeds exactly as in, e.g., \cite[Chapter 7]{tapsit}, \cite{Albin-Mazzeo}.
The result is naturally compatible with $e^{-t\cN_y(\eth_{M/B,Q}^2)}$ at $\bhsh{\phi\phi,1}(N)\cap \bhsh{dd,1}$ and combining these we find a conormal density $H_{1,1}$ with Schwartz kernel
\begin{equation*}
	H_{1,1}\in 
	\sB^{\cH/\mf I^{(H)}}_{phg}\sA^{-m-1}_-
	(H(M/B)_{\w}; \Hom(E)\otimes \Omega_{\mf h,R})
\end{equation*}
with $\cH$ the index set from \eqref{eq:HeatIndSets}
and
\begin{multline*}
	\beta_{(H),L}^*(t(\pa_t + \eth_{M/B,Q}^2))H_{1,1} \\
	\in
	\rho_{\lf(H(M/B)_{\w})}^{-1}
	\rho_{\ef(H(M/B)_{\w})}^{-1}
	\rho_{\ff(H(M/B)_{\w})}
	\rho_{dd,1}
	\sB^{\cH/\mf I^{(H)}}_{phg}\sA^{-m-1}_-(H(M/B)_{\w}; \Hom(E)\otimes \Omega_{\mf h, R}).
\end{multline*}
Indeed, the vanishing at $\ff(H(M/B)_{\w})$ and $\bhsh{dd,1}$ comes from solving the model problems at these faces, while the singular power of $\rho_{\lf(H(M/B)_{\w})}$ comes from the singularity of $\eth_{M/B,Q}$ in $x$ as $x \to 0,$ at each boundary hypersurface. This singular term would {\em a priori} be of order $-2,$ but proceeding as in \cite[Proof of Proposition 5.43]{tapsit} the extension can be carried out so that the leading term vanishes.\\

We can improve this parametrix by removing the Taylor expansion at $\bhsh{dd,1}$ exactly as in \cite[Chapter 7]{tapsit}. This results in 
\begin{equation*}
\begin{gathered}
	H_{1,\infty}\in 
	\sB^{\cH/\mf I^{(H)}}_{phg}\sA^{-m-1}_-
	(H(M/B)_{\w}; \Hom(E)\otimes \Omega_{\mf h,R}), \\
\begin{multlined}
	\beta_{(H),L}^*(t(\pa_t + \eth_{M/B,Q}^2))H_{1,\infty} \\
	\in
	\rho_{\lf(H(M/B)_{\w})}^{-1}
	\rho_{\ef(H(M/B)_{\w})}^{-1}
	\rho_{\ff(H(M/B)_{\w})}
	\rho_{dd,1}^{\infty}
	\sB^{\cH/\mf I^{(H)}}_{phg}\sA^{-m-1}_-
	(H(M/B)_{\w}; \Hom(E)\otimes \Omega_{\mf h, R}).
\end{multlined}
\end{gathered}\end{equation*}
Similarly, we can solve away the expansion at $\ff(H(M/B)_{\w})$ by proceeding as in Proposition 7.28 of \cite{tapsit} which in this context takes the following form.

\begin{proposition}
For each $N \in \cS_{\psi}(M),$ let $\rho_{\ff(H<N)} = 1$ if $N$ is minimal in $\cS_{\psi}(M)$ and otherwise
\begin{equation*}
	\rho_{\ff(H<N)} =
	\prod_{\substack{N' \in \cS_{\psi}(M)\\ N' < N} } \rho_{\bhsh{\phi\phi,1}(N')}.
\end{equation*}
Given 
\begin{equation*}
	f \in 
	\rho_{dd,1}^{\infty}
	\rho_{\ff(H<N)}^{\infty}
	\sB^{(\cH/\mf r)|_{\bhsh{\phi\phi,1}(N)}}_{phg}
	\sA^{-m-1}_-(\bhsh{\phi\phi,1}(N);
		\Hom(E)\otimes \rho_{dd,1}^{-n-2}\beta_{(H),R}^*\Omega(M/B))
\end{equation*}
the equation
\begin{equation*}
	\beta_{(H),L}^*(t(\pa_t + \eth_{M/B,Q}^2))\rest{\bhsh{\phi\phi,1}(N)}u = f
\end{equation*}
has a unique solution
\begin{equation*}
	u \in 
	\rho_{dd,1}^{\infty}
	\rho_{\ff(H<N)}^{\infty}
	\sB^{(\cH/\mf r)|_{\bhsh{\phi\phi,1}(N)}}_{phg}
	\sA^{-m-1}_-(\bhsh{\phi\phi,1}(N);
		\Hom(E)\otimes \rho_{dd,1}^{-n-2}\beta_{(H),R}^*\Omega(M/B)).
\end{equation*}
\end{proposition}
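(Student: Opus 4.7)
The plan is to construct $u$ by inverting the model heat operator at $\bhsh{\phi\phi,1}(N)$ and then verifying that the result inherits the regularity of $f$ by a push-forward argument. Concretely, the inverse at the model level is provided by Proposition \ref{prop:WedgeHeatNormal}, which supplies the heat kernel inverting $\tfrac12\sigma\pa_\sigma + \sigma^2 \cN_y(\eth_{M/B,Q})^2$.

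First, I would unpack the restriction $\beta_{(H),L}^*(t(\pa_t + \eth_{M/B,Q}^2))\rest{\bhsh{\phi\phi,1}(N)}$ and identify it with the vertical heat operator on the suspended wedge heat space $(\bhs{N}/N)_{Sus(\w)}$. Using the projective coordinates $\sigma = \tau/\rho_N$ near $\bhsh{\phi\phi,1}(N)$ (and the boundary product structure), one has $t\pa_t = \tfrac12 \sigma \pa_\sigma$ upon restriction, and the edge-type asymptotics of $\rho_N \eth_{M/B,Q}$ give $t\,\eth_{M/B,Q}^2\rest{\bhsh{\phi\phi,1}(N)} = \sigma^2 \cN_y(\eth_{M/B,Q})^2$. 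Thus the operator to be inverted on this face is exactly the vertical heat operator whose kernel was computed in Proposition \ref{prop:WedgeHeatNormal}.

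Next, I would define $u$ by Duhamel's principle: convolve $f$ with the model heat kernel in the $\sigma$-variable together with integration along the fibres $Z_y$ and the $\bbR^h$-directions. The fact that the resulting $u$ lies in the same partially polyhomogeneous class as $f$ follows from the push-forward and pull-back theorems of \S\ref{sec:Conormal}, applied on an appropriate triple suspended heat space, together with the composition results developed in Appendix \ref{sec:HeatComp}. Since the model heat kernel carries infinite-order vanishing at $\bhsh{dd,1}$ (away from the temporal diagonal, where the singularity is absorbed by Duhamel) and since $f$ vanishes to infinite order there as well as along $\rho_{\ff(H<N)}$, these infinite-order vanishings propagate to $u$; the polyhomogeneous expansion of $u$ at the remaining faces is built up index-set by index-set from the extended-union rules encoded in the composition theorem for the heat calculus.

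The main obstacle will be the bookkeeping at the intersections $\bhsh{\phi\phi,1}(N) \cap \bhsh{\phi\phi,1}(N')$ for $N' < N$, where one must ensure that the iterated application of Duhamel respects the infinite-order vanishing along the deeper front faces and does not generate spurious terms in the expansion at these corners; this is exactly the role of the factor $\rho_{\ff(H<N)}^\infty$ and matches the inductive structure of the parametrix construction. Uniqueness then follows from the injectivity of $\tfrac12\sigma\pa_\sigma + \sigma^2\cN_y(\eth_{M/B,Q})^2$ on the class of polyhomogeneous sections vanishing at $\sigma = 0$: any homogeneous solution would give, by expansion, an eigensection of $\cN_y(\eth_{M/B,Q})^2$ at some exponent, which is ruled out by the self-adjointness and discrete spectrum established in Proposition \ref{prop:NyInjSA} together with the vertical APS domain constraint.
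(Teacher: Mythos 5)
Your proposal is correct and follows essentially the same route as the paper: define $u$ by Duhamel's principle (convolution with the model heat kernel from Proposition \ref{prop:WedgeHeatNormal}) and then deduce the polyhomogeneous/conormal structure and the propagation of infinite-order vanishing from the pull-back/push-forward machinery underlying the heat composition theorem of Appendix \ref{sec:HeatComp}. Your explicit discussion of uniqueness via injectivity of the model operator, and your unpacking of the identification $t\pa_t + t\eth^2 \rvert_{\bhsh{\phi\phi,1}(N)} = \tfrac12\sigma\pa_\sigma + \sigma^2\cN_y(\eth_{M/B,Q})^2$, simply spell out what the paper leaves implicit.
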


\begin{proof}
As usual, the solution to the equation is given by
\begin{equation*}
	u(t, \zeta, \zeta') 
	= \int_0^t \int_M 
	e^{-t\cN_{\bhsh{\phi\phi,1}(N)}(\eth_{M/B,Q}^2)}(t-s, \zeta,\zeta'') 
	f(s, \zeta'',\zeta') 
	\; ds d\zeta''
\end{equation*}
but in order to understand the structure of $u$ it is best to express this in terms of pull-back and push-forward. The asymptotics of the first factor are given by Proposition \ref{prop:WedgeHeatNormal} and the composition is a particular case of Proposition \ref{prop:WedgeHeatComp} (e.g., by extending off of $\bhsh{\phi\phi,1}(N),$ composing, and then restricting back), which gives the asymptotics of the result.
\end{proof}

We can use this proposition to solve away the expansion at $\ff(H(M/B)_{\w}),$ one face at a time.
Let $\{N_1, \ldots, N_{\ell}\}$ be the list of $\cS_{\psi}(M)$ used in the construction of the heat space.
Using this proposition to solve away successive terms at $\bhsh{\phi\phi,1}(N_1)$ we can construct, for any $\ell\geq 1,$ an improved parametrix
\begin{equation*}
\begin{gathered}
	H_{\ell,\infty}^{N_1} \in 
	\sB^{\cH/\mf I^{(H)}}_{phg}\sA^{-m-1}_-(H(M/B)_{\w}; \Hom(E)\otimes \Omega_{\mf h,R}), \\
\begin{multlined}
	\beta_{(H),L}^*(t(\pa_t + \eth_{M/B,Q}^2))H_{\ell,\infty}^{N_1} 
	\in
	\rho_{\lf(H(M/B)_{\w})}^{-1}
	\rho_{\ef(H(M/B)_{\w})}^{-1}
	\rho_{\ff(H(M/B)_{\w})} \cdot \\
	\cdot \rho_{\bhs{\phi\phi,1}(N_1)}^{\ell}
	\rho_{dd,1}^{\infty}
	\sB^{\cH/\mf I^{(H)}}_{phg}\sA^{-m-1}_-(H(M/B)_{\w}; \Hom(E)\otimes \Omega_{\mf h, R}).
\end{multlined}
\end{gathered}
\end{equation*}
Asymptotically summing successive differences we can remove the error at $\bhsh{\phi\phi,1}(N)$ altogether,
\begin{equation*}
\begin{gathered}
	H_{\infty,\infty}^{N_1} \in 
	\sB^{\cH/\mf I^{(H)}}_{phg}\sA^{-m-1}_-
	(H(M/B)_{\w}; \Hom(E)\otimes \Omega_{\mf h,R}), \\
\begin{multlined}
	\beta_{(H),L}^*(t(\pa_t + \eth_{M/B,Q}^2))H_{\infty,\infty}^{N_1} 
	\in
	\rho_{\lf(H(M/B)_{\w})}^{-1}
	\rho_{\ef(H(M/B)_{\w})}^{-1}
	\rho_{\ff(H(M/B)_{\w})}\cdot \\
	\cdot \rho_{\bhs{\phi\phi,1}(N_1)}^{\infty}
	\rho_{dd,1}^{\infty}
	\sB^{\cH/\mf I^{(H)}}_{phg}\sA^{-m-1}_-
	(H(M/B)_{\w}; \Hom(E)\otimes \Omega_{\mf h, R}).
\end{multlined}
\end{gathered}
\end{equation*}
Relabeling $H_{\infty,\infty}^{N_1} = H_{1,\infty}^{N_2},$ we now proceed in the same way at $\bhsh{\phi\phi,1}(N_2).$
After carrying this out at $\bhsh{\phi\phi,1}(N_2), \ldots, \bhsh{\phi\phi,1}(N_{\ell}),$ we end up with
\begin{equation*}
\begin{gathered}
	H_{\infty,\infty} \in 
	\sB^{\cH/\mf I^{(H)}}_{phg}\sA^{-m-1}_-
	(H(M/B)_{\w}; \Hom(E)\otimes \Omega_{\mf h,R}), \\
\begin{multlined}
	\beta_{(H),L}^*(t(\pa_t + \eth_{M/B,Q}^2))H_{\infty,\infty} \\
	\in
	\rho_{\lf(H(M/B)_{\w})}^{-1}
	\rho_{\ef(H(M/B)_{\w})}^{-1}
	\rho_{\ff(H(M/B)_{\w})}^{\infty}
	\rho_{dd,1}^{\infty}
	\sB^{\cH/\mf I^{(H)}}_{phg}\sA^{-m-1}_-(H(M/B)_{\w}; \Hom(E)\otimes \Omega_{\mf h, R}).
\end{multlined}
\end{gathered}
\end{equation*}

Note that the error now vanishes to infinite order at all boundary hypersurfaces lying over $\{t=0\},$ so we can just as well view it as a distribution on a simpler space, $\bbR^+ \times (M/B)^2_{\e},$
\begin{multline*}
	\pi_L^*(t\pa_t + t\eth_{M/B,Q}^2)H_{\infty,\infty} \\
	\in 
	\rho_{\lf(H(M/B)_{\w})}^{-1}
	\rho_{\ef(H(M/B)_{\w})}^{-1}
	t^{\infty}
	\sB^{\cH/\mf I^{(H)}}_{phg}\sA^{-m-1}_-
		(\bbR^+ \times (M/B)^2_\e;\Hom(E) \otimes \pi_R^*\Omega(M/B)).
\end{multline*}

A natural next step (as in \cite[Proposition 7.17]{tapsit}) is to interpret the heat kernel as an operator with respect to convolution in $t$, so that the error term satisfies
\begin{multline*}
	\pi_L^*(\pa_t + \eth_{M/B,Q}^2)H_{\infty,\infty} = \Id - A \\
	\Mwith
	A \in 
	\rho_{\lf(H(M/B)_{\w})}^{-1}
	\rho_{\ef(H(M/B)_{\w})}^{-1}t^{\infty}
	\sB^{\cH/\mf I^{(H)}}_{phg}\sA^{-m-1}_-
		(\bbR^+ \times (M/B)^2_\e;\Hom(E) \otimes \pi_R^*\Omega(M/B))
\end{multline*}
and use a Volterra series to invert $(\Id-A).$ 

\begin{proposition}
Let $A$ be the operator above, then 
\begin{multline*}
	(\Id-A)^{-1} = \Id  + S, \\
	\Mwith
	S \in 
	\rho_{\lf(H(M/B)_{\w})}^{-1}
	\rho_{\ef(H(M/B)_{\w})}^{-1}
	t^{\infty}
	\sB^{\cH/\mf I^{(H)}}_{phg}\sA^{-m-1}_-
	(\bbR^+ \times (M/B)^2_\e;\Hom(E) \otimes \pi_R^*\Omega(M/B)).
\end{multline*}
\end{proposition}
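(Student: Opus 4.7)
The plan is to construct $S$ as the Volterra series $S = \sum_{k=1}^{\infty} A^{*k}$, where $A^{*k}$ denotes the $k$-fold convolution in the time variable, and then to verify that this series converges in the claimed conormal/polyhomogeneous class. Formally one wants $(\Id - A)^{-1} = \Id + A + A^{*2} + \cdots$; the content is that the series actually sums to an element of the stated space.

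First I would observe that $A$ has two crucial features that make the Volterra scheme work: it vanishes to infinite order at $t=0$ (via the $t^{\infty}$ factor) and it vanishes to infinite order at $\ff(H(M/B)_{\w})$ and at $\bhsh{dd,1}$. Consequently, composition in $t$ is harmless (integration over the compact interval $[0,t]$) and composition as wedge heat operators falls within the regime of the composition theorem for wedge heat operators established in \S\ref{sec:WedgeHeatOp} (the product of the wedge heat kernel of a second order operator with such an $A$ is well-defined, and the same holds for $A$ with itself thanks to the infinite order vanishing). Applying the composition theorem repeatedly, each factor of $A$ contributes at least a factor $t^{N}$ for any prescribed $N$, and the remaining weights and index set data at the other boundary hypersurfaces are preserved or improved under composition.

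Next I would quantify the size of $A^{*k}$. Because convolution in $t$ over $[0,t]$ of $k$ functions each of order $t^N$ produces a term of order $t^{kN}/k!$ (the standard Volterra estimate), and because the spatial integration is uniformly controlled by the bounded mapping properties of edge operators on the relevant weighted Sobolev spaces (Theorem \ref{thm:EdgeSobAct}), one gets uniform bounds on all the seminorms that define the space
$$
  \rho_{\lf(H(M/B)_{\w})}^{-1}\rho_{\ef(H(M/B)_{\w})}^{-1} t^{\infty}
  \sB^{\cH/\mf I^{(H)}}_{phg}\sA^{-m-1}_{-}
  \bigl(\bbR^+ \times (M/B)^2_\e; \Hom(E)\otimes \pi_R^*\Omega(M/B)\bigr),
$$
yielding absolute convergence of $\sum A^{*k}$ in this topology. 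The sum $S$ therefore lies in the stated class, and by construction $(\Id - A)(\Id + S) = \Id$ and $(\Id + S)(\Id - A) = \Id$ as operators.

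The main obstacle I anticipate is bookkeeping: verifying that each composition $A^{*k}$ preserves the index sets $\cH$ and the weights $\mf I^{(H)}$ (rather than degrading them) at each of the many boundary hypersurfaces of $H(M/B)_\w$, and that the iterated compositions do not generate new singular terms at the side or edge faces beyond the prescribed $\rho_{\lf}^{-1}\rho_{\ef}^{-1}$. This is essentially the content of the extended union rules in the heat composition theorem: since $A$ vanishes to infinite order at $\ff(H(M/B)_\w)$ and at $\bhsh{dd,1}$, the "bad" terms in the extended unions involving these faces are trivial, so the only meaningful contribution to $\cE_{A^{*k}}$ at each face is the sum of the $k$ index sets for $A$ at that face, which lies inside $\cH$. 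Combined with the factorial decay in $t$, this closes the Volterra argument.
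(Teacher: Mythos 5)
Your proposal follows the same Volterra-series strategy as the paper: expand $(\Id-A)^{-1}=\Id+\sum_{k\ge 1}A^{*k}$ and establish convergence via a factorial bound. The overall outline is correct, but the crucial quantitative step is glossed over. The paper does not invoke the composition theorem of \S\ref{sec:WedgeHeatOp} nor the Sobolev mapping properties of Theorem \ref{thm:EdgeSobAct} (the latter concerns edge pseudodifferential operators, not heat kernels, and does not give the bound you need here). Instead, after fixing $t_0>0$, it writes the Schwartz kernel of $A$ as $\cK_A(\rho_{\lf}\rho_{\ef})^{-1+\eps}\mu_R$ for any $\eps\in(0,\min_N\mf I(\bhs{N}))$, chosen so that the conormal bound $\mf I^{(H)}$ absorbs $\rho^{-1}$ to $\rho^{-1+\eps}$, which is integrable: $V_\eps=\max_B\int_{M/B}\rho_M^{-1+\eps}\mu<\infty$. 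With $|\cK_A|\le C$ for $t\le t_0$, the inductive bound $|\cK_{A^{k+1}}|\le C\,C_k\,V_\eps\,t_0^{k+1}/(k+1)!$ follows directly from the convolution formula, and the series converges. Your statement that ``the spatial integration is uniformly controlled'' elides exactly this: without the explicit choice of $\eps>0$ making the weight integrable, the spatial integral of a $\rho^{-1}$ density would diverge, and without the explicit constants $C$ and $V_\eps$ there is no factorial estimate to close. Finally, your remark that ``one gets uniform bounds on all the seminorms'' is the right idea, but the paper makes this concrete by rerunning the same pointwise estimate after applying any $b$-vector field tangent to the boundary, which is what actually upgrades uniform convergence to convergence in the conormal space.
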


\begin{proof}
Fix $t_0>0.$
Let us write the Schwartz kernel of $A$ as $\cK_A (\rho_{\lf(H(M/B)_{\w})}\rho_{\ef(H(M/B)_{\w})})^{-1+\eps}\mu_R$ for any fixed
\begin{equation*}
	\eps \in (0, \min_{N \in \cS_{\psi}(M)} \mf I(\bhs{N}) ),
\end{equation*}
and point out that $|\cK_A|\rest{t\leq t_0}$ is uniformly bounded on $(M/B)^2_{\e},$ say by $C,$ and that 
\begin{equation*}
	V_{\eps} = \max_B\int_{M/B} \rho_{M}^{-1+\eps}\mu <\infty.
\end{equation*}
During this proof, let us write $\rho_{\lf(H(M/B)_{\w})}\rho_{\ef(H(M/B)_{\w})}$ as $x.$\\

If we similarly write the Schwartz kernel of $A^k$ as $\cK_{A^k} x^{-1+\eps}\mu_R$ then assuming that $|\cK_{A^k}|\rest{t\leq t_0}$ is bounded by $C_k t_0^k/k!$ we see from
\begin{equation*}
	\cK_{A^{k+1}}(t,\zeta, \zeta'') x^{-1+\eps} \mu_{\zeta''}
	= \int_0^t \int_{\zeta' \in \psi^{-1}(\psi(\zeta))}
	(\cK_{A^k}(s, \zeta, \zeta') x^{-1+\eps} \mu_{\zeta'} )
	(\cK_{A}(t-s, \zeta', \zeta'') (x')^{-1+\eps} \mu_{\zeta''} ) \; ds
\end{equation*}
that
\begin{equation*}
	|\cK_{A^{k+1}}|\rest{t \leq t_0} \leq
	\int_0^{t_0} \int_{\zeta' \in \psi^{-1}(\psi(\zeta))}
	(C_k s^k/k!) C (x')^{-1+\eps}\mu_{\zeta'}\; ds
	\leq C C_k V_{\eps} t_0^{k+1}/(k+1)!
\end{equation*}
Hence we see that the Volterra series $\sum A^k$ converges uniformly for $t\leq t_0$ and arbitrary $t_0.$

We may run the same argument after differentiating by any vector field on $(M/B)^2_\e\times \bbR^+$ that is tangent to the boundary hypersurfaces, and so we can conclude that the Volterra series converges in the space of conormal sections of $\Hom(E).$
\end{proof}

\begin{theorem}\label{thm:WedgeHeatKer'}
Let $\eth_{M/B,Q}$ be a family of  compatibly perturbed Dirac-type wedge operators acting on a Clifford bundle $E$ on a family of manifolds with corners and iterated fibration structures, $M \xlra\psi B.$
The heat kernel of $\eth_{M/B,Q}^2$ satisfies
\begin{equation*}
	e^{-t\eth_{M/B,Q}^2} \in
	\sB^{\cH/\mf I^{(H)}}_{phg}\sA^{-m-1}_-(H(M/B)_{\w}; \Hom(E)\otimes \Omega_{\mf h,R})
\end{equation*}
where $\mf I^{(H)}$ and $\cH$ are given by \eqref{eq:IndicialMwH}, \eqref{eq:HeatIndSets} and $\Omega_{\mf h, R}$ is the density bundle from \eqref{eq:HeatDensityBdle}.
The leading terms at $\bhsh{dd,1}$ and each $\bhsh{\phi\phi,1}(N)$ are given by
\begin{equation*}
\begin{gathered}
	\cN_{\bhsh{dd,1}}(e^{-t\eth_{M/B,Q}^2}) = e^{-\Delta_{T(M/B)/M}} \\
	\cN_{\bhsh{\phi\phi,1}(N)}(e^{-t\eth_{M/B,Q}^2}) 
		= e^{-\sigma^2 \Delta_{T(N/B)/N}} e^{-\sigma^2 D_{C(\phi_N)/N,Q}^2}
\end{gathered}
\end{equation*}
where $e^{-\Delta_{T(M/B)/M}}$ denotes the Euclidean heat kernel on the fibers of $T(M/B)$ (at time one) and similarly $e^{-\sigma^2 \Delta_{T(N/B)/N}},$
$C(\phi_N)$ denotes the mapping cylinder of $\phi_N,$
\begin{equation}\label{eq:MapCyl}
	Z^+ \fib C(\phi_N) \xlra{ \phi_N^+ } N
\end{equation}
and $D_{C(\phi_N)/N,Q}$ is the family of operators $N \ni y \mapsto D_{Z_y^+,Q}$ from \eqref{eq:CDZ}.
\end{theorem}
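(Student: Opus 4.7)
The plan is to construct the heat kernel as the unique solution of the initial value problem $(\partial_t + \eth_{M/B,Q}^2)u = 0$, $u|_{t=0} = \Id$, by building a parametrix on $H(M/B)_\w$ whose error vanishes to infinite order everywhere, and then inverting via a Volterra series. The key point that makes $H(M/B)_\w$ the right space is that after blowing up, the operator $t(\partial_t + \eth_{M/B,Q}^2)$ has well-defined restrictions to each of the boundary hypersurfaces $\bhsh{dd,1}$ and $\bhsh{\phi\phi,1}(N)$, and these restrictions are the model operators whose solutions will match at intersections.

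First I would construct the leading parametrix $H_{1,1}$ by solving two families of model problems. At $\bhsh{dd,1}$, whose interior is canonically identified with ${}^\e TM/B$, the operator $t(\partial_t + \eth_{M/B,Q}^2)$ restricts to $\frac12 R\partial_R + \Delta_{T(M/B)/M}$ (via the usual parabolic rescaling $R = |X|/\tau$), whose solution at $t=1$ is the fibrewise Euclidean heat kernel $e^{-\Delta_{T(M/B)/M}}$; this gives the asserted leading term at $\bhsh{dd,1}$. At each $\bhsh{\phi\phi,1}(N)$ the model operator is, by the computation in \S\ref{sec:ModelWedge}, $\tfrac12\sigma\partial_\sigma + \sigma^2\cN_y(\eth_{M/B,Q})^2$, and Proposition \ref{prop:WedgeHeatNormal} provides the required solution as a conormal distribution on the suspended wedge heat space. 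Because the $Z$--factor of the normal operator decouples from the $\bbR^h$--factor and the Euclidean factor carries a product heat kernel $e^{-\sigma^2\Delta_{T(N/B)/N}}$ on the base, the restriction takes the product form $e^{-\sigma^2 \Delta_{T(N/B)/N}} e^{-\sigma^2 D_{C(\phi_N)/N,Q}^2}$, which is the asserted leading term. Remark~\ref{rm:match} (and its heat analogue, following from the inductive hypothesis) guarantees that these two families of models agree at the intersections $\bhsh{dd,1}\cap\bhsh{\phi\phi,1}(N)$ and $\bhsh{\phi\phi,1}(N)\cap\bhsh{\phi\phi,1}(N')$, so a partition of unity yields a global $H_{1,1}$ lying in the claimed space.

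Next I would iteratively improve the parametrix. Applying $t(\partial_t + \eth_{M/B,Q}^2)$ produces an error that vanishes to first order at $\bhsh{dd,1}$ and at each $\bhsh{\phi\phi,1}(N)$, but has only prescribed conormal order at the side faces $\lf$ and $\ef$; as noted after Proposition~\ref{prop:parametrix}, a careful extension off the front faces ensures no singular contributions are added at the side faces. I then remove the Taylor expansions at $\bhsh{dd,1}$ exactly as in \cite[Ch.~7]{tapsit}, producing $H_{1,\infty}$, and subsequently remove the expansion at each $\bhsh{\phi\phi,1}(N)$ in turn, in order of increasing depth of $N$, by solving the transport-type equation on each front face. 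This last step uses the proposition stated in this subsection, whose proof in turn relies on the composition formulae from Appendix~\ref{sec:HeatComp}; it is the step where the index set $\cH$ and multiweight $\mf I^{(H)}$ propagate, and it must be carried out inductively so that solving at $\bhsh{\phi\phi,1}(N_j)$ does not resurrect errors at faces $\bhsh{\phi\phi,1}(N_i)$ with $N_i < N_j$. Asymptotic summation of the successive corrections produces $H_{\infty,\infty}$ whose error vanishes to infinite order at $\bhsh{dd,1}$ and at every $\bhsh{\phi\phi,1}(N)$.

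Finally, viewing the remaining error as a convolution operator in $t$ with Schwartz kernel vanishing to infinite order in $t$ and bounded on $(M/B)^2_\e$, I would invert $\Id - A$ by the Volterra series argument given above, whose convergence rests on the boundedness of the relevant integrals $V_\eps$ in the edge measure. The corrected kernel $H_{\infty,\infty} + H_{\infty,\infty}\circ S$ is the heat kernel by uniqueness, and the composition theorem together with the bi-ideal property (Theorem \ref{thm:BiIdeal}) guarantees that the correction lies in the same polyhomogeneous class, so that the index set $\cH$ and multiweight $\mf I^{(H)}$ are preserved. The leading terms at $\bhsh{dd,1}$ and $\bhsh{\phi\phi,1}(N)$ are unchanged from those of $H_{1,1}$, yielding the stated formulae. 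The principal obstacle is the matching and inductive solution at the front faces $\bhsh{\phi\phi,1}(N)$: one must ensure, using the compatibility of normal operators from Remark~\ref{rm:match} adapted to the heat setting, that the corrections introduced at each step respect the polyhomogeneous structure at all previously treated faces, which is where the inductive hypothesis on the depth of $M$ is genuinely used.
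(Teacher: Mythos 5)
Your proof reproduces the paper's construction in \S\ref{subsec:HeatKer}: build the leading parametrix $H_{1,1}$ from the model heat kernels at $\bhsh{dd,1}$ and each $\bhsh{\phi\phi,1}(N)$, improve iteratively to $H_{\infty,\infty}$ with error vanishing to infinite order over $\{\tau=0\}$, and invert the remaining Volterra error, with the heat composition formula (Proposition \ref{prop:WedgeHeatComp}) showing the result remains in the same polyhomogeneous class with the stated leading terms. The one small misstep is the invocation of the bi-ideal property (Theorem \ref{thm:BiIdeal}), which belongs to the resolvent construction and plays no role here --- the heat composition formula alone suffices; note also that the front faces $\bhsh{\phi\phi,1}(N)$ must be treated in an order compatible with the partial order on $\cS_\psi(M)$ (deepest, i.e.\ lowest-dimensional, strata first), which is the content of your correct requirement that $N_i$ precede $N_j$ whenever $N_i<N_j$, even though you labelled this ``increasing depth.''
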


\begin{proof}
The operator $H_{\infty,\infty}(\Id-S)$ satisfies the wedge surgery heat equation with initial condition given by the (lift of the) identity since
\begin{equation*}
	\beta_{H,L}(\pa_t + \eth_{M/B,Q}^2)(G_{\infty}(\Id-S)) = (\Id-A)(\Id-S) = \Id.
\end{equation*}
The composition result, Proposition \ref{prop:WedgeHeatComp}, yields the asymptotics of this composition.
The leading terms are immediate from the construction above.
\end{proof}

\section{Getzler rescaling and the trace of the heat kernel} \label{sec:Getzler}

\subsection{Getzler rescaling}

The heat kernel construction above did not significantly use that
$\eth_{M/B}^2$ is the square of a Dirac-type operator rather than an
arbitrary Laplace-type operator. 
We now refine the construction to
take advantage of the Clifford action on $E$ and its compatibility
with $\eth_{M/B,Q}.$ Specifically, we proceed as in \cite[Chapter
8]{tapsit} to carry out Getzler rescaling geometrically (see also
\cite{Dai-Melrose, Vaillant, Albin-Rochon:DFam})
by constructing `rescaled homomorphism bundles' on the resolved
    heat space which capture the relationship between
    the heat kernel and the Clifford action. The second property
in Definition \ref{def:CompPert} ensures that compatible perturbations
will not affect the discussion. After carrying out the rescaling for
the family of Dirac-type operators we carry out the analogous
rescaling for the Bismut superconnection.\\

Recall the decomposition of the homomorphism bundle of $E$ from\eqref{eq:HomClif},
\begin{equation*}
	\hom(E) \cong \Cl({}^{\w}T^*M/B) \otimes \hom'_{\Cl({}^{\w}T^*M/B)}(E).
\end{equation*}
The heat kernel is a section of $\Hom(E) \lra H(M/B)_{\w}$, the lift of $\pi_L^*E^* \otimes \pi_R^*E$ from $M \times_{\psi} M$ to $H(M/B)_{\w}.$
The restriction of this bundle to $\diag_M$ is (canonically isomorphic to) $\hom(E)$ and hence inherits the decomposition above.
In this way we see that $\Hom(E)\lra H(M/B)_{\w}$ has compatible
filtrations 
at $\bhsh{dd,1}$ and each $\bhsh{\phi\phi,1}(N),$
\begin{equation*}
\begin{gathered}
	\Hom(E)\rest{\bhsh{dd,1}} 
		= \Cl^*({}^{\w}T^*M/B) \otimes \Hom'_{\Cl({}^{\w}T^*M/B)}(E), \\
	\Hom(E)\rest{\bhsh{\phi\phi,1}(N)} 
		= \Cl^*({}^{\w} T^*N/B) \otimes \Hom'_{\Cl({}^{\w}T^*N/B)}(E),
\end{gathered}\end{equation*}
where $\Hom'_{\Cl^*({}^{\w}T^*M/B)}(E)$ denotes the elements of $\Hom(E)$ that commute with $\Cl({}^{\w}T^*M/B),$ and similarly $\Hom'_{\Cl({}^{\w}T^*N/B)}(E).$
In fact we can extend the filtration of $\Hom(E)\rest{\bhs{\phi\phi,1}(N)}$ even further by including Clifford multiplication by $d\rho_{\bhs{N}},$ but it will be convenient not to do so.
It is easy to see from \eqref{eq:bfsDiag} that these filtrations are compatible.

We define a connection on $\Hom(E) \lra H(M/B)_{\w}$ by
\begin{equation*}
	\nabla^{\Hom(E)} 
		= \pa_t \; dt \otimes 
		\beta_L^*\nabla^{E^*} \otimes \beta_R^*\nabla^{E}
\end{equation*}
and then choose
vector fields $\nu$ and, for each $N \in \cS_{\psi}(M),$ $\nu_N$
transverse to $\bhsh{dd,1}$ and $\bhsh{\phi\phi,1}(N),$ respectively,
and tangent to all other boundary hypersurfaces (e.g., by modifying
the vector fields in a boundary product structure as in
\S\ref{sec:IFS}). 
We define the space of rescaled sections of $\Hom(\bbE)$ by 
\begin{multline*}
	\Gamma = \Big\{ s \in \CI(H(M/B)_{\w};\Hom(E)) : \\
	\Mfor j \in \{ 0, \ldots, \dim M/B\}, \quad
	\lrpar{ \nabla^{\Hom(\bbE)}_\nu }^j s \rest{\bhsh{dd,1}} 
		\in  \Cl^j({}^{\w}T^*M/B) \otimes \Hom'_{\Cl({}^{\w}T^*M/B)}(E), \\
	\text{ for each } N \in \cS_{\psi}(M) \Mand  k \in \{ 0, \ldots, \dim N/B\},   \\
	 \lrpar{ \nabla^{\Hom(\bbE)}_{\nu_N} }^k s \rest{\bhsh{\phi\phi,1}(N)} 
	 	\in  \Cl^k({}^{\w}T^*N/B) \otimes \Hom'_{\Cl({}^{\w}T^*N/B)}(E) \Big\}.
\end{multline*}
As in \cite[Chapter 8]{tapsit}, there is a vector bundle $\Hom_G(E)
\lra H(M/B)_{\w}$ with a bundle map $j:\Hom_G(E) \lra \Hom(\bbE)$ such
that 
\begin{equation*}
	j\circ \CI(H(M/B)_{\w}; \Hom_G(E)) = \Gamma \subseteq \Hom(E).
\end{equation*}
We will refine the heat kernel construction to show that
$\exp\lrpar{-t \eth_{M/B,Q}^2}$ is a section of $\Hom_G(E).$\\

Given a vector field $W$ on $H(M/B)_{\w}$ it will be useful to know
when $\nabla^{\Hom(E)}_W$ preserves $\cD.$ 
As shown in \cite[Proposition 8.12]{tapsit} directly from the
definition of $\cD,$ this requires $\nabla^{\Hom(E)}_W$ to preserve
the filtrations at each boundary hypersurface, for the curvature
$K^{\Hom(E)}(\wt\nu, W)$ to increase filtration degree by at most one
(where $\wt\nu$ is the appropriate transverse vector field), and for 
$(\nabla^{\Hom(E)}_{\wt\nu})^j(K^{\Hom(E)}(\wt\nu,W))$ to increase filtration
degree by at most two for all $j \ge 1.$ 

\begin{lemma}
If $W \in \CI(H(M/B)_{\w};T(H(M/B)_{\w}))$ is tangent to the fibers of the fiber bundles
\begin{equation*}
	\bhsh{dd,1} \lra \diag_M \Mand \bhsh{\phi\phi,1}(N) \lra N \Mforall N \in \cS_{\psi}(M),
\end{equation*}
then $\nabla^{\Hom(E)}_W$ acts on sections of $\Hom_G(E).$

In particular, if $W'$ is any vector field on $\bbR^+ \times M \times_{\psi} M,$ vertical with respect to the projection onto $B,$  then $\nabla^{\Hom(E)}_{\beta^*(\sqrt t W')}$ acts on sections of $\Hom_G(E).$
\end{lemma}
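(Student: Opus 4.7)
The strategy is to verify the three criteria from \cite[Proposition 8.12]{tapsit} characterizing when a covariant derivative preserves the rescaled space $\Gamma$: namely (i) that $\nabla^{\Hom(E)}_W$ preserves the Clifford filtrations at $\bhsh{dd,1}$ and at each $\bhsh{\phi\phi,1}(N)$, (ii) that the curvature $K^{\Hom(E)}(\wt\nu, W)$ increases filtration degree by at most one when paired with the appropriate transverse vector field $\wt\nu$, and (iii) that $(\nabla^{\Hom(E)}_{\wt\nu})^j K^{\Hom(E)}(\wt\nu, W)$ increases filtration degree by at most two for $j \ge 1$. Throughout we use that $R^E = \tfrac{1}{4}\cl(R^{M/B}) + K'_E$ (see \eqref{eq:HomClif}), so that the twisting-curvature part $K'_E$ commutes with all Clifford multiplication and contributes trivially to the filtration analysis, reducing everything to the behavior of $\cl(R^{M/B})$.

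For (i), since $W$ is tangent to the fibers of $\bhsh{dd,1} \to \diag_M$ and of $\bhsh{\phi\phi,1}(N) \to N$, its restriction to each of these boundary faces is purely vertical. Sections of the filtered subbundles are, by definition, pulled back from the base ($\diag_M$ or $N$), so covariant differentiation along such a vertical $W$ differentiates only the homomorphism factor, which lies pointwise in a Clifford-commuting algebra and thus respects the filtration; this is the content of the first display in the definition of $\Gamma$. For (ii)--(iii), after pairing the curvature $K^{\Hom(E)}$ with $\wt\nu$ at $\bhsh{dd,1}$ one encounters only the values of $R^{M/B}$ evaluated on $(\wt\nu, W)$ at the diagonal, and the Clifford-algebra identity $\cl(\alpha \wedge \beta) = \cl(\alpha)\cl(\beta) + g(\alpha,\beta)$ combined with antisymmetry of $R^{M/B}$ gives the required degree-one bound, exactly as in the smooth case \cite[Ch.~8]{tapsit}. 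At $\bhsh{\phi\phi,1}(N)$ the analogous computation requires the asymptotic description of the curvature in Proposition \ref{prop:curvature}: since $W$ is $\phi_N$-vertical and $\wt\nu = \nu_N$ is transverse to $\bhsh{\phi\phi,1}(N)$, the relevant components of $R^{M/B}(\nu_N, W)$ asymptotically preserve the horizontal/vertical splitting \eqref{eq:SecondSplittingIE}, so the only surviving terms either land in $\Cl^1({}^\w T^*N/B) \otimes \Hom'_{\Cl({}^\w T^*N/B)}(E)$ (from the mixed components in part (2) of the proposition) or contribute to higher filtration degree only after an additional $\nu_N$-differentiation governed by part (3). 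The main work is accounting at the corners $\bhsh{dd,1} \cap \bhsh{\phi\phi,1}(N)$ and at $\bhsh{\phi\phi,1}(N)\cap \bhsh{\phi\phi,1}(\wt N)$, where the vanishing observations at the end of \S\ref{sec:Metrics} (that terms with $\pa_x$ occurring as a horizontal vector field vanish at the corner) give the required compatibility of the two filtrations.

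For the final assertion, one checks that if $W'$ is a $\psi$-vertical vector field on $\bbR^+ \times M \times_\psi M$ then $\sqrt{t}\,W'$ lifts to a smooth vector field on $H(M/B)_\w$ which is automatically tangent to every boundary face created by a parabolic blow-up involving $\tau = \sqrt{t}$, in particular to $\bhsh{dd,1}$ and to each $\bhsh{\phi\phi,1}(N)$. Moreover, the vanishing factor $\sqrt{t}$ ensures that its restriction to each of these faces is vertical with respect to the projection to $\diag_M$ (respectively to $N$): at $\bhsh{dd,1}$ the lift annihilates the base coordinates on $\diag_M$ because $\sqrt{t}$ vanishes there, and at $\bhsh{\phi\phi,1}(N)$ the analogous computation in the projective coordinates \eqref{eq:ProjCoords} (with $T = \tau/x'$) shows that $\sqrt{t}\,W'$ is tangent to the fibers of $\phi_N^{(H)}$. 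The hypothesis of the first part of the lemma therefore applies.

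The step I expect to be the genuine obstacle is the corner analysis in (ii)--(iii), namely reconciling the filtration at $\bhsh{dd,1}$ (by $\Cl({}^\w T^*M/B)$) with the filtration at $\bhsh{\phi\phi,1}(N)$ (by the smaller algebra $\Cl({}^\w T^*N/B)$) at their intersection. This is precisely where the structural vanishings of Proposition \ref{prop:curvature} and the remark at the end of \S\ref{sec:Metrics} are indispensable.
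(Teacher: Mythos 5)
Your high-level strategy — checking the three criteria of \cite[Proposition 8.12]{tapsit} and using the decomposition $K^E = \tfrac14\cl(R^{M/B}) + K'_E$ — matches the paper, and your invocation of Proposition \ref{prop:curvature} for the condition at $\bhsh{\phi\phi,1}(N)$ is on target. But two of your individual steps do not actually work, and one of them hides the real content of the lemma.

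For (i): your claim that the filtration is preserved ``because $W$ is vertical and the filtered subbundles are pulled back from the base'' is not right. The filtered subbundles are not pulled back from $\diag_M$ or $N$, and in fact the filtration preservation does not rely on verticality of $W$ at all. At $\bhsh{dd,1}$ it is automatic from the Clifford-connection identity $[\nabla^E, \cl(\theta)] = \cl(\nabla\theta)$, since $\nabla\theta$ has the same Clifford degree as $\theta$. At $\bhsh{\phi\phi,1}(N)$ the filtration is by the smaller algebra $\Cl({}^\w T^*N/B)$, so one must check that the connection does not mix the $T^*N/B$ part with the complementary part; this is the statement $g(\nabla_{W_1}W_2, \wt U) = \cO(x)$ for $W_2 \in \{\pa_x, \tfrac1x V\}$, which follows from the asymptotic splitting \eqref{eq:AsympSplittingConn}, not from any property of $W$.

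For (ii)--(iii) at $\bhsh{dd,1}$: your appeal to ``the Clifford-algebra identity combined with antisymmetry of $R^{M/B}$'' does not give the degree-one bound. The curvature term $\tfrac14\sum R(\nu, W, e_a, e_b)\cl(e^a)\cl(e^b)$ is a genuine two-fold Clifford multiplication, which raises the filtration by two, and no Clifford algebra identity or antisymmetry of the curvature tensor reduces that to one. What actually closes this step is that the curvature of $\Hom(E)$ is expressed through the push-forwards $(\beta_L)_*$ and $(\beta_R)_*$ to $M$, and the blow-down $\beta$ collapses the fibers of $\bhsh{dd,1} \lra \diag_M$ to points, so $(\beta_\cdot)_*W\rest{\bhsh{dd,1}} = 0$ whenever $W$ is tangent to those fibers. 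Hence $K^{\Hom(E)}(\wt\nu, W)$ simply \emph{vanishes} at $\bhsh{dd,1}$, trivializing the condition. This vanishing is precisely where the verticality hypothesis earns its keep, and it is the idea your proposal is missing; without it the lemma would be false as stated.

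Finally, you flag corner compatibility as the genuine obstacle, but in the paper's proof that compatibility is not separately addressed: the conditions are checked face by face and the corner consistency follows from the observations at the end of \S\ref{sec:Metrics} rather than being re-derived here. The genuine crux is the push-forward vanishing at $\bhsh{dd,1}$.
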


\begin{proof}
Since $\nabla^E$ is a Clifford connection, it satisfies
\begin{equation*}
	\lrspar{\nabla^E, \cl(\theta)} = \cl(\nabla\theta).
\end{equation*}
Hence it is immediate that $\nabla^{\Hom(E)}$ preserves the filtration at $\bhsh{dd,1},$ while checking that it preserves the filtration at $\bhsh{\phi\phi,1}(N)$ comes down to checking that in a local frame as in \eqref{eq:local-frame}
\begin{equation*}
	g(\nabla_{W_1}W_2, \wt U) = \cO(x) \Mforall W_1 \in \cV, W_2 \in \{ \pa_x, \tfrac1x V\},
\end{equation*}
which follows from \eqref{eq:AsympSplittingConn}.
(Incidentally, this is why we do not rescale at $\bhsh{\phi\phi,1}(N)$ by $\Cl(T^*(N/B)^+),$ as the connection would not preserve this filtration.)

Next recall that 
\begin{equation*}
	K^{\Hom(E)}(W_1, W_2) 
	= K^E((\beta_L)_*W_1, (\beta_L)_*W_2) \circ \cdot
	- \cdot \circ K^E((\beta_R)_*W_1, (\beta_R)_*W_2)
\end{equation*}
and that, since $\nabla^E$ is a Clifford connection,
\begin{equation*}
	K^E(S_1, S_2) = \tfrac14 \cl(R(S_1, S_2)) + K^{E'}(S_1, S_2), 
	\Mwith K^{E'}(S_1, S_2) \in \CI(M,\hom_{\Cl({}^{\w}T^*M/B)}(E)).
\end{equation*}
Thus the covariant derivatives of the curvature of $\Hom(E)$ involve at most two Clifford multiplications and hence can move the filtrations at $\bhsh{dd,1}$ and each $\bhsh{\phi\phi,1}(N)$ by at most two.

It follows that $\nabla^{\Hom(E)}_W$ will act on $\cD$ whenever
\begin{equation*}
\begin{gathered}
	R((\beta_{\cdot})_*\nu\rest{\bhsh{dd,1}}, (\beta_{\cdot})_*W\rest{\bhsh{dd,1}}) = 0, \Mand \\
	R((\beta_{\cdot})_*\nu_N\rest{\bhsh{\phi\phi,1}(N)}, 
		(\beta_{\cdot})_*W\rest{\bhsh{\phi\phi,1}(N)})\rest{\bhs{N}} 
		\in \CI(\bhs{N},  \Lambda^1(T^*N/B) \wedge \Lambda^1(\ang{dx}\oplus x T^*\bhs{N}/N) )
\end{gathered}
\end{equation*}
where $\beta_{\cdot}$ can be either $\beta_L$ or $\beta_R.$
If $W\rest{\bhsh{dd,1}}$ is tangent to the fibers of $\bhsh{dd,1}\lra \diag_M$ then $(\beta_{\cdot})_*W\rest{\bhsh{dd,1}}=0,$ so the first condition holds.
From Proposition \ref{prop:curvature} we know that the second condition will hold as long as $(\beta_{\cdot})_*W\rest{\bhsh{\phi\phi,1}(N)}$ is tangent to the fibers over $N^\circ.$
\end{proof}

Now that we know that the connection acts on $\cD,$ we can use the
Lichnerowicz formula (see, e.g., \cite[\S8.8]{tapsit})
\begin{equation}\label{eq:DiracLich}
	\eth^2_{M/B} 
	= \Delta^{M/B} + \tfrac14\scal_M + \tfrac12\sum_{a,b} K_E'(e_a, e_b)\cl(e^a)\cl(e^b)
\end{equation}
in which $\Delta^{M/B}$ is the Bochner Laplacian of $\nabla^E$ and the sum runs over an orthonormal frame of $TM/B,$
to see that $t\eth^2_{M/B}$ acts on sections of $\Hom_G(E).$

For vector fields as in \eqref{eq:local-frame},
we can read off from \cite[(8.36)]{tapsit} and Proposition \ref{prop:curvature} the rescaled normal operators, for each $N \in \cS_{\psi}(M)$ we have
\begin{equation*}
	\cN_{\bhsh{\phi\phi,1}(N)}^G(\nabla^E_{\tau\pa_x}) = \sigma\pa_s, \quad
	\cN_{\bhsh{\phi\phi,1}(N)}^G(\nabla^E_{\tau\wt {\pa_{y_i}}}) 
	= \sigma(\pa_{u_i} + \tfrac14 \df e\lrpar{ R^{N/B}(\cR, \pa_{y_i}) } ),
\end{equation*}
where $\cR$ denotes the radial vector field in ${}^{\e}TN/B$ and the appearance of the curvature $R^{N/B}$ can be traced back to Proposition \ref{prop:curvature} (1), and 
\begin{multline*}
	\cN_{\bhsh{\phi\phi,1}(N)}^G(\nabla^E_{\tau \tfrac1x V}) 
	= \frac{\sigma}s\Big(\nabla^{E, \pa M/Y}_V 
	+ \tfrac14\cl\lrpar{ R(\pa_x, V) }
	+ \tfrac12\cl\lrpar{ \nabla_{\pa_x}R(\pa_x, V) } \Big)\\
	= \frac{\sigma}s\Big( \nabla^{E, \pa M/Y}_V 
	+ \tfrac14\sum_{\wt U, V'} g_{N/B}(\cS^{\phi_N}(V, V'),\wt U) 
		\cl((\tfrac1xV')^{\flat})\df e(\wt U^{\flat}) \\
	+ \tfrac12\sum_{\wt U, \wt U'} g_{\bhs{N}/N}(\cR^{\phi_N}(\wt U, \wt U'), V) 
		\df e ( (\wt U)^{\flat} \wedge (\wt U')^{\flat} ) \Big)
\end{multline*}
where the appearance of $\cS^{\phi_N},$ $\cR^{\phi_N}$ traces back to
(2)-(3) in Proposition \ref{prop:curvature} 
We have similar behavior at $\bhsh{dd,1},$ save that all vector fields are horizontal.

Combining this with the Lichnerowicz formula we obtain their rescaled normal operator.

\begin{lemma}
Let $\eth_{M/B}$ be a family of Dirac-type wedge operators on the fibers of $M \xlra{\psi} B.$
The rescaled normal operators of $\tau^2\eth^2_{M/B}$ are
\begin{equation*}
\begin{gathered}
	\cN_{\bhsh{dd,1}}^G(\tau^2\eth_{M/B}^2) 
		= -\sum_{\ang{\pa_j} = {}^\e TM/B} (\pa_j + \tfrac14 \df e(R(\pa_j, \cR_{{}^{\e}TM/B})))^2 + \df e(K_E')\\
	\cN_{\bhsh{\phi\phi,1}(N)}^G(\tau^2\eth_{M/B}^2) 
		= -\sigma^2\sum_{\ang{\pa_j} = {}^{\e}TN/B} (\pa_j + \tfrac14 \df e(R(\pa_j, \cR_{{}^{\e}TN/B})))^2 
		+ \sigma^2 \bbB^2_{C(\phi_N)/N}
\end{gathered}
\end{equation*}
where $\bbB_{C(\phi_N)/N}$ assigns to each $b \in B$ the Bismut
superconnection on the mapping cylinder of $\phi_{N_b} (=\phi_N \rvert_{\phi_N^{-1}(N_b)}),$ $Z^+ \fib
C(\phi_{N_b}) \lra N_b.$ 
\end{lemma}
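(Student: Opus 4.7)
The plan is to begin with the Lichnerowicz formula \eqref{eq:DiracLich},
\[
\eth^2_{M/B} = \Delta^{M/B} + \tfrac14\scal_M + \tfrac12\sum_{a,b} K_E'(e_a,e_b)\cl(e^a)\cl(e^b),
\]
multiply by $\tau^2$, and compute the rescaled normal operator $\cN^G$ at each of the two faces separately. In both cases, the Bochner Laplacian is expanded in the local orthonormal frame \eqref{eq:local-frame} as
\[
\Delta^{M/B} = -\sum_a (\nabla^E_{e_a})^2 + \nabla^E_{\sum_a \nabla_{e_a} e_a},
\]
so it suffices to evaluate the rescaled normal operators of $\tau^2(\nabla^E_{e_a})^2$, $\tau \nabla^E_{e_a}$, and the zero-order terms on the right side of the Lichnerowicz formula, and to assemble them.

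At $\bhsh{dd,1}$ this is the standard geometric Getzler rescaling (cf.\ \cite[Ch.\ 8]{tapsit}): only the horizontal frame $\{\tau\pa_j\}$ spanning ${}^\e TM/B$ survives in the limit, the first-order rescaled operator is $\pa_j + \tfrac14\df e(R(\pa_j,\cR_{{}^\e TM/B}))$ by the computation above, while the scalar curvature term is killed by the $\tau^2$ factor, and the Clifford pair $\cl(e^a)\cl(e^b)$ in the $K_E'$ term is rescaled, in the limit, to $\df e^a\wedge \df e^b$ acting on $\hom'_{\Cl({}^\w T^*M/B)}(E)$. Squaring and summing produces the asserted Mehler-type operator $-\sum_j(\pa_j+\tfrac14\df e(R(\pa_j,\cR)))^2 + \df e(K_E')$.

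At $\bhsh{\phi\phi,1}(N)$ the new feature is that three types of vector fields in \eqref{eq:local-frame} contribute: the horizontal lifts $\tau\wt U_i$, the radial/vertical fields $\tau\pa_x$ and $\tau \tfrac1x V_\alpha$. The horizontal ones rescale, exactly as before, to the harmonic-oscillator piece $-\sigma^2\sum_i(\pa_{y_i}+\tfrac14 \df e(R^{N/B}(\cR_{{}^\e TN/B},\pa_{y_i})))^2$ using Proposition~\ref{prop:curvature}(1), since at leading order the $g_\w$-connection preserves the splitting \eqref{eq:SecondSplittingIE} along $\bhs N$. For the remaining directions, the rescaled operators of $\nabla^E_{\tau\pa_x}$ and $\nabla^E_{\tau x^{-1}V}$ computed just above show that the radial/vertical piece of $\tau^2\eth_{M/B}^2$, restricted to $\bhsh{\phi\phi,1}(N)$, is exactly $\sigma^2$ times the square of a Dirac-type operator on the mapping cylinder \eqref{eq:MapCyl} $Z^+\fib C(\phi_N)\lra N$, but with connection terms coming from $\cS^{\phi_N}$ and $\cR^{\phi_N}$ through parts (2) and (3) of Proposition~\ref{prop:curvature}. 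Comparing these terms with \eqref{eq:BSconnection} and using \eqref{eq:FibAsymp} identifies this operator coefficient-by-coefficient with $\bbA_{C(\phi_N)/N}$: the $\cS^{\phi_N}$ terms produce the $\tfrac12 k_\psi$ contribution in \eqref{eq:BSconnection} and the $\cR^{\phi_N}$ terms produce the $\tfrac14\hat\cR^\psi$ contribution. A final application of \eqref{eq:SupLichnerowicz} to the mapping cylinder, combined with the vanishing of scalar curvature and twisting curvature contributions at the corners with $\pa_x$ a horizontal direction noted at the end of \S\ref{sec:Metrics}, shows the remaining zero-order pieces of the Lichnerowicz formula assemble into the curvature terms of $\bbB_{C(\phi_N)/N}^2$.

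The main obstacle will be the bookkeeping at $\bhsh{\phi\phi,1}(N)$, namely verifying that the coefficients arising from the rescaled $\nabla^E_{\tau x^{-1}V}$ (with its $\tfrac14\cl(R(\pa_x,V))$ and $\tfrac12\cl(\nabla_{\pa_x}R(\pa_x,V))$ terms) reproduce, after Getzler rescaling, precisely the combination of $\cS^{\phi_N}$- and $\cR^{\phi_N}$-tensors appearing in $\bbA_{C(\phi_N)/N}$ as written in \eqref{eq:BSconnection}; the computations in Proposition~\ref{prop:curvature}(2)--(3) are exactly what is needed to make this matching, and the normalization $\tfrac14$ versus $\tfrac12$ is fixed by the fact that $\Cl^k \to \Lambda^k$ only at leading filtration order. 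Once this identification is in place the remaining zero-order terms (scalar curvature on $M/B$, the twisting $K_E'$ in directions involving $\pa_x$) vanish in the rescaling by the parity and degree count, completing the proof.
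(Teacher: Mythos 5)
Your proof follows the same strategy as the paper: start from the Lichnerowicz formula for $\eth_{M/B}^2$, multiply by $\tau^2$, split the Bochner Laplacian at $\bhsh{\phi\phi,1}(N)$ into the $\phi_N$-horizontal part (giving the Mehler/harmonic-oscillator operator via Proposition~\ref{prop:curvature}(1)) and the orthogonal complement (the $\pa_x$ and $\tfrac1x V$ directions, giving $\sigma^2\bbB^2_{C(\phi_N)/N}$), and match the zeroth-order scalar- and twisting-curvature terms with those in the superconnection Lichnerowicz formula \eqref{eq:SupLichnerowicz}. The only place you add content beyond the paper's argument is in the explicit bookkeeping of how the $\cS^{\phi_N}$- and $\cR^{\phi_N}$-contributions from Proposition~\ref{prop:curvature}(2)--(3) reassemble into the Bismut superconnection terms of \eqref{eq:BSconnection}; this is the step the paper handles more tersely (comparing directly against the mapping-cylinder Bismut superconnection and invoking \eqref{eq:FibAsymp}), but your fuller check is sound and consistent with the text's rescaled normal operator formulas for $\nabla^E_{\tau\pa_x}$, $\nabla^E_{\tau\wt{\pa_{y_i}}}$, and $\nabla^E_{\tau\tfrac1x V}$.
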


(Note that the rescaled normal operator of $\tau^2\eth^2_{M/B}$ corresponding to $N \in \cS_{\psi}(M)$ is a family of superconnections; for the rescaled superconnection of $\tau^2\bbA^2_{M/B}$ we will instead obtain the superconnection of a family.) 

\begin{proof}
For the rescaled normal operator at $\bhsd{\phi\phi,1}(N)$ for some $N \in \cS_{\psi}(M),$ we start by considering $\tau^2$ times the Bochner Laplacian in \eqref{eq:DiracLich},
\begin{equation*}
	\tau^2\Delta^{M/B} = -\sum_{e_a} (\nabla^E_{\tau e_a})^*(\nabla^E_{\tau e_a})
\end{equation*}
where the sum runs over an orthonormal frame of $TM/B.$ We can write this as a sum over a frame of ${}^{\w}TN/B$ plus a sum over a frame of the orthogonal complement; the former  has rescaled normal operator equal to the harmonic oscillator $-\sigma^2\sum_{\ang{\pa_j} = {}^{\e}TN/B} (\pa_j + \tfrac14 \df e(R(\pa_j, \cR_{{}^{\e}TN/B})))^2,$ 
while the latter has rescaled normal operator equal to $\sigma^2$ times the Bochner Laplacian term in the Lichnerowicz formula \eqref{eq:SupLichnerowicz} for the Bismut superconnection of $C(\phi_N)/N.$ The twisting curvature term in \eqref{eq:DiracLich} gives rise to the twisting curvature term in \eqref{eq:SupLichnerowicz} and similarly for the scalar curvature terms, since, at each $\bhs{N},$
$\scal(X,g_{\w}) \sim \rho_N^{-2}\scal( C(Z), d\rho_N^2+\rho_N^2g_Z) +
\cO(\rho_N^{-1}).$  

The rescaled normal operator at $\bhsd{dd,1}$ is similar but simpler.
\end{proof}

This same analysis applies to the Bismut superconnection $\bbA_{M/B}$ from Section \ref{sec:BismutSup}.
Indeed, one can either repeat the analysis above or consider the parameter $\eps$ from  section \ref{sec:BismutSup} and note that these results for $\eps>0$ imply the analogous results for $\eps=0.$
First, if $Q$ is a compatible perturbation of $\eth_{M/B}$ then let us define
\begin{equation*}
	\bbA_{M/B,Q} = \bbA_{M/B}+Q_{M/B}, \quad
	\bbA_{C(\phi_N)/N,Q} = \bbA_{C(\phi_N)/N} + Q_{\bhs{N}/N}
\end{equation*}
where $Q_{\bhs{N}/N}$ is the family $N \ni y \mapsto Q_{Z_y}.$

In this case the bundle $E$ is replaced by $\bbE = \psi^*\Lambda^*T^*B \otimes E,$ the connection $\nabla^E$ by $\nabla^{\bbE,0}$ and the Clifford action by $\cl_0.$ 
For the bundle $\bbE,$ we set 
\begin{equation*}
	\Hom(\bbE) = \psi^*\Lambda^*T^*B\otimes \Hom(E)
\end{equation*}
and 
\begin{equation*}
\begin{gathered}
	\Cl^*_0({}^{\w}T^*M)
	= \bbC \otimes \mathrm{Cl}(\psi^*T^*B \oplus {}^{\w}T^*M/B, g_{M,0}) \\
	\Cl^*_0({}^{\w}T^*N)
	= \bbC \otimes \mathrm{Cl}(\psi^*T^*B \oplus {}^{\w}T^*N/B, g_{N,0})
\end{gathered}
\end{equation*}
so that 
\begin{equation*}
\begin{gathered}
	\Hom(\bbE)\rest{\bhsh{dd,1}} 
		= \Cl_0^*({}^{\w}T^*M) \otimes \Hom'_{\Cl({}^{\w}T^*M/B)}(E), \\
	\Hom(\bbE)\rest{\bhsh{\phi\phi,1}(N)} 
		= \Cl_0^*({}^{\w} T^*N) \otimes \Hom'_{\Cl({}^{\w}T^*N/B)}(E),
\end{gathered}\end{equation*}
for each $N \in \cS_{\psi}(M).$

We define a connection on $\Hom(\bbE) \lra H(M/B)_{\w}$ by
\begin{equation*}
	\nabla^{\Hom(\bbE)} 
		= \pa_t \; dt \otimes 
		\beta_L^*\nabla^{\bbE,0} \otimes \beta_R^*\nabla^{\bbE^*,0}
\end{equation*}
and the space of rescaled sections of $\Hom(\bbE)$ by
\begin{multline*}
	\cD = \Big\{ s \in \CI(H(M/B)_{\w};\Hom(\bbE)) : \\
	\Mfor j \in \{ 0, \ldots, \dim M\}, \quad
	\lrpar{ \nabla^{\Hom(\bbE)}_\nu }^j s \rest{\bhsh{dd,1}} 
		\in  \Cl_0^j({}^{\w}T^*M) \otimes \Hom'_{\Cl({}^{\w}T^*M/B)}(E), \\
	\text{ for each } N \in \cS_{\psi}(M) \Mand  k \in \{ 0, \ldots, \dim N\},   \\
	 \lrpar{ \nabla^{\Hom(\bbE)}_{\nu_N} }^k s \rest{\bhsh{\phi\phi,1}(N)} 
	 	\in  \Cl_0^k({}^{\w}T^*N) \otimes \Hom'_{\Cl({}^{\w}T^*N/B)}(E) \Big\}.
\end{multline*}
The corresponding rescaled bundle is denoted $\Hom_G(\bbE).$

The rescaled normal operators of $\bbA^2_{M/B}$ are similar to those of $\eth^2_{M/B}$ but valued in differential forms in $M$ instead of differential forms in $M/B.$
Indeed, the Lichnerowicz formula \eqref{eq:SupLichnerowicz} combined with the above yields the following.

\begin{lemma}\label{lem:ResNormOpsA}
Let $\eth_{M/B}$ be a family of  Dirac-type wedge operators on the fibers of $M \xlra{\psi} B,$ and let $ \bbA_{M/B}$ be a  Bismut superconnection extending $\eth_{M/B}.$
The rescaled normal operators of $\tau^2\bbA^2_{M/B}$ are
\begin{equation*}
\begin{gathered}
	\cN_{\bhsh{dd,1}}^G(\tau^2\bbA_{M/B}^2) 
		= -\sum_{\ang{\pa_j} = {}^\e TM/B} 
			(\pa_j + \tfrac14 \df e(R(\pa_j, \cR_{{}^{\e}TM/B})))^2 + \df e(K_E') = \cH_{M/B}\\
\begin{multlined}
	\cN_{\bhsh{\phi\phi,1}(N)}^G(\tau^2\bbA_{M/B}^2) 
		= -\sigma^2\sum_{\ang{\pa_j} = {}^{\e}TN/B} 
		(\pa_j + \tfrac14 \df e(R(\pa_j, \cR_{{}^{\e}TN/B})))^2 
		+ \sigma^2 \bbA^2_{C(\phi_N)/N} \\
		= \sigma^2(\cH_{N/B} + \bbA^2_{C(\phi_N)/N})
\end{multlined}
\end{gathered}
\end{equation*}
where $\bbA_{C(\phi_N)/N}$ is the induced superconnection for the family of cones given by the mapping cylinder of $\phi_N,$ $C(Z) \fib C(\phi_N) \lra N$ from \eqref{eq:MapCyl}.
\end{lemma}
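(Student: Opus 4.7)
My plan is to mirror the proof of the preceding lemma on $\tau^2\eth_{M/B}^2$ but now working with the superconnection Lichnerowicz formula \eqref{eq:SupLichnerowicz},
\begin{equation*}
	\bbA^2_{M/B} = \Delta^{M/B,0} + \tfrac14\scal(g_\w)
	- \tfrac12\sum_{a,b} K'_E(e_a,e_b)\cl_0(e^a)\cl_0(e^b),
\end{equation*}
and then reading off the rescaled normal operators of each summand. The Getzler machinery already set up on $\Hom_G(\bbE)$ applies verbatim: the same vector field calculations that showed $t\eth^2_{M/B}$ acts on $\Hom_G(E)$ show, once we replace $\nabla^E$ by $\nabla^{\bbE,0}$ and $\cl$ by $\cl_0$, that $t\bbA^2_{M/B}$ acts on $\Hom_G(\bbE).$ The essential input is that $\nabla^{\bbE,0}$ is a Clifford connection (for $\cl_0$) that asymptotically preserves the splitting at each $\bhsh{\phi\phi,1}(N)$, which was pointed out in \S\ref{sec:BismutSup} using the facts that $\bh_\psi R^{M,0}\bh_\psi = \psi^* R^B,$ $\bv_\psi R^{M,0}\bv_\psi = R^{M/B},$ and that Proposition \ref{prop:curvature} applies to $R^{M,0}.$

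At $\bhsh{dd,1}$ the rescaling kills $\tau^2\scal(g_\w),$ leaves the twisting-curvature summand as the zeroth order piece $\df e(K'_E)$ (since $\cl_0(e^a)\cl_0(e^b)$ passes to $\df e^a \wedge \df e^b$ on sections of $\Hom_G(\bbE)$ at the front face, for the basic Clifford covectors; the base Clifford elements $\df e(f^\alpha)$ contribute at subleading order via the off-diagonal terms in \eqref{eq:BSconnection}), and converts $\tau^2 \Delta^{M/B,0}$ into the harmonic oscillator $-\sum (\pa_j + \tfrac14\df e(R(\pa_j,\cR)))^2,$ exactly as in the $\eth^2_{M/B}$ case. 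This gives the stated $\cH_{M/B}.$ At $\bhsh{\phi\phi,1}(N)$ I would split an orthonormal frame of ${}^{\w(\psi)}TM$ into a piece along the fibers of $\bhs{N}\lra N$, the normal $\pa_x,$ and a piece completing these to a frame of ${}^{\w(\psi)}TM$; after rescaling, the vertical piece produces $\sigma^2\bbA^2_{C(\phi_N)/N}$ by the same Lichnerowicz computation applied on the mapping cylinder (the scalar curvature and twisting curvature restrict correctly by \eqref{eq:FibAsymp} and the asymptotics in Proposition \ref{prop:curvature}), while the $({}^{\e}TN/B) \oplus \psi_N^*TB$-piece produces $\sigma^2 \cH_{N/B}.$

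An alternative route, mentioned in the paragraph preceding the lemma, is to avoid rewriting all of these computations by working instead with the family $g_{M,\eps}$ of honest Riemannian metrics for $\eps>0.$ For each $\eps>0$ the operator $\bbA_{M/B}$ is the Dirac-type operator of $(E, g_{M,\eps}, \nabla^E),$ the previous lemma applies, and one verifies that the rescaled normal operators depend smoothly on $\eps \in [0, \infty)$ (in the sense that their integral kernels on the front faces of $H(M/B)_{\w}$ extend smoothly to $\eps = 0$). Passing to $\eps \to 0$ then produces the identifications above with $\bbA_{C(\phi_N)/N}$ in place of the Dirac operator $D_{C(\phi_N)/N}$ that appeared in the $\eth^2$-lemma.

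The main obstacle will be bookkeeping at $\bhsh{\phi\phi,1}(N)$: one needs the contribution of the off-diagonal piece $\tfrac12 \cl_0(\omega^\psi)$ in $\nabla^{\bbE,0}$ and the fibration curvature term $-\tfrac14\sum \hat\cR^\psi(f_\alpha,f_\beta)(e_i)\df e^\alpha \df e^\beta \cl(e^i)$ in \eqref{eq:BSconnection} to reassemble correctly, under Getzler rescaling, into the Bismut superconnection $\bbA_{C(\phi_N)/N}$ for the mapping cylinder family. Here Proposition \ref{prop:curvature}(2)--(3) provides exactly the ingredients $\cS^{\phi_N}$ and $\cR^{\phi_N}$ that appear in the fundamental tensors \eqref{eq:FunTensors} for the fibration $\phi_N:\bhs{N}\lra N,$ and the compatible-perturbation condition (Definition \ref{def:CompPert}(ii)) ensures that adding $Q$ does not disturb any of this at the leading order.
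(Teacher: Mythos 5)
Your proposal takes essentially the same approach as the paper: the paper defers to the analysis of the preceding lemma, citing the superconnection Lichnerowicz formula \eqref{eq:SupLichnerowicz} together with the $\Hom_G(\bbE)$ rescaling, and likewise mentions the $\eps$-degeneration of $g_{M,\eps}$ as an alternative route, both of which you reproduce. (One minor imprecision: the base elements $\df e(f^\alpha)$ do not enter at ``subleading order'' — they are already exterior multiplication, untouched by the rescaling, and contribute at full order to the form-valued coefficients of $\cH_{M/B}$ — but this does not affect the argument.)
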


\begin{remark}
Note from \eqref{eq:FibAsymp} that the unrescaled normal operators of
$\tau^2\bbA_{M/B}^2$ would involve the tensors of $\psi_N$ but not the
tensors of $\phi_N.$  
The rescaling makes explicit the contribution of
the tensors of $\phi_N$ to the expansion at $\bhsh{\phi\phi,1}(N).$
\end{remark}

\begin{theorem}\label{thm:WedgeRescaledHeatKer}
Let $\bbA_{M/B}$ be the Bismut superconnection associated to a family of perturbed Dirac-type wedge operators acting on a Clifford bundle $E$ on a family of manifolds with corners and iterated fibration structures, $M \xlra\psi B,$ and let $Q$ be a compatible perturbation.
The heat kernel of $\bbA_{M/B,Q}^2$ satisfies
\begin{equation*}
	e^{-t\bbA_{M/B,Q}^2} \in
	\sB^{\cH/\mf I^{(H)}}_{phg}\sA^{-m-1}_-
		(H(M/B)_{\w}; \Hom_G(\bbE)\otimes \Omega_{\mf h,R}).
\end{equation*}
where $\mf I^{(H)}$ and $\cH$ are given by \eqref{eq:IndicialMwH}, \eqref{eq:HeatIndSets} and $\Omega_{\mf h, R}$ is the density bundle from \eqref{eq:HeatDensityBdle}.
The leading terms at $\bhsh{dd,1}$ and each $\bhsh{\phi\phi,1}(N)$ are given by
\begin{equation*}
\begin{gathered}
	\cN^G_{\bhsh{dd,1}}(e^{-t\bbA_{M/B}^2}) = e^{-\cH_{M/B}^2} \\
	\cN^G_{\bhsh{\phi\phi,1}(N)}(e^{-t\bbA_{M/B}^2}) 
		= e^{-\sigma^2 \cH_{N/B}^2} e^{-\sigma^2 \bbA_{C(\phi_N)/N,Q}}
\end{gathered}
\end{equation*}
\end{theorem}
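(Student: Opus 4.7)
The plan is to repeat the heat parametrix construction carried out for Theorem \ref{thm:WedgeHeatKer'} (with $\eth_{M/B,Q}^2$), but now working throughout in the rescaled bundle $\Hom_G(\bbE)$ associated to the filtrations of $\Hom(\bbE)$ at $\bhsh{dd,1}$ and each $\bhsh{\phi\phi,1}(N)$. The key enabling facts are already in place: the Lichnerowicz formula \eqref{eq:SupLichnerowicz} for $\bbA_{M/B}^2$ is a second order operator of the form Bochner Laplacian plus a Clifford endomorphism of filtration degree $\le 2$ at each rescaled face; a direct analogue of the lemma preceding Lemma \ref{lem:ResNormOpsA} shows that if $W'$ is vertical on $\bbR^+\times M\times_\psi M$ then $\nabla^{\Hom(\bbE)}_{\beta^*(\sqrt{t}\,W')}$ preserves $\cD$; and hence $t\bbA_{M/B,Q}^2$ acts on $\CI(H(M/B)_\w;\Hom_G(\bbE))$. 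The compatibility condition (ii) in Definition \ref{def:CompPert} ensures that $tQ_{M/B}^2$ and the cross terms $t(Q_{M/B}\bbA_{M/B}+\bbA_{M/B}Q_{M/B})$ also preserve the rescaling (the Clifford anti-commutation forces the rescaled normal operators of $Q$ to act in Clifford filtration degree $\le 1$ at each front face, which is absorbed after multiplication by $\tau^2$).

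First I would build the initial parametrix $H_{1,1}$ by solving the two model problems in $\Hom_G(\bbE)$. At $\bhsh{dd,1}$ the rescaled normal operator from Lemma \ref{lem:ResNormOpsA} is the generalized harmonic oscillator $\cH_{M/B}$; its heat kernel is given by Mehler's formula and is automatically a section of $\Hom_G(\bbE)\rvert_{\bhsh{dd,1}}$ since it is a polynomial in $\df e$-forms from $\psi^*\Lambda^*T^*B\oplus{}^\w T^*M/B$ of total degree $\le \dim M$. At each $\bhsh{\phi\phi,1}(N)$ the rescaled normal operator factors as $\sigma^2(\cH_{N/B}+\bbA^2_{C(\phi_N)/N,Q})$ on commuting variables, and its heat kernel is the product of a Mehler-type kernel in the $N$-directions with the wedge heat kernel of the Bismut superconnection $\bbA^2_{C(\phi_N)/N,Q}$ on the family of cones $C(\phi_N)\to N$. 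The latter exists and is polyhomogeneous on the suspended wedge heat space by the (unrescaled) inductive statement of Theorem \ref{thm:WedgeHeatKer'} applied to the depth-$(k-1)$ links $Z_y$, and it is automatically $\Hom_G(\bbE)$-valued because the Lichnerowicz formula on $C(\phi_N)/N$ has the same filtration structure. Matching at the intersections $\bhsh{dd,1}\cap\bhsh{\phi\phi,1}(N)$ and $\bhsh{\phi\phi,1}(N)\cap \bhsh{\phi\phi,1}(N')$ is automatic because these model heat kernels are built from the same pieces, and for matching between different strata one uses the compatibility of the boundary fibration diagrams \eqref{eq:bfsDiag} exactly as in Remark \ref{rm:match}.

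Next, I would remove the remaining error by the same two-stage procedure as for $\eth_{M/B,Q}^2$: first kill the Taylor expansion at $\bhsh{dd,1}$ by successive terms (this involves only the $\cH_{M/B}$ model and manifestly stays in $\Hom_G(\bbE)$ because inverting $\pa_t+\cH_{M/B}$ does not increase filtration), then kill the expansion at each $\bhsh{\phi\phi,1}(N)$ in non-decreasing order of depth using the suspended heat parametrix via the same composition and asymptotic summation argument as in Section \ref{subsec:HeatKer}. The resulting error $\Id-A$ is $O(\tau^\infty)$ at all faces over $\{t=0\}$ and vanishes to infinite order at every front face, and $A$ is valued in $\Hom_G(\bbE)$ since all operations used are compositions with $\Hom_G(\bbE)$-valued kernels. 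Finally the Volterra series $\sum A^k$ converges in the same conormal space as in the unrescaled case and yields the true heat kernel; because $\Hom_G(\bbE)$ is preserved under the relevant fibrewise composition this also lies in $\Hom_G(\bbE)$.

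The main obstacle is the book-keeping at the front faces of corners: one must verify that each successive correction term used to kill the asymptotic expansion at $\bhsh{\phi\phi,1}(N)$ is itself a section of $\Hom_G(\bbE)$. This requires checking that the inverse of $\sigma\pa_\sigma+\sigma^2(\cH_{N/B}+\bbA^2_{C(\phi_N)/N,Q})$ on distributions vanishing at $\sigma=0$ preserves the Clifford filtration at $\bhsh{\phi\phi,1}(N)$, which follows because $\cH_{N/B}$ acts by multiplication in the Clifford variables and $\bbA^2_{C(\phi_N)/N,Q}$ has filtration degree $\le 2$ matching the induced rescaling on the link, so the inductive hypothesis for the link supplies this as well. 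The formulas for the leading terms $\cN^G_{\bhsh{dd,1}}$ and $\cN^G_{\bhsh{\phi\phi,1}(N)}$ then read off directly from the constructed initial parametrix, which is exact to leading order at both faces by construction.
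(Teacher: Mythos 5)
The key device in the paper's proof is missing from your proposal: the finite Volterra/Duhamel expansion that exploits the \emph{nilpotence} of the higher form-degree part of the superconnection. Writing the rescaled model operator at a front face as $\cF = H + \cF_{[+]}$ where $H$ is a harmonic oscillator plus the conic Dirac Laplacian and $\cF_{[+]}$ is a nilpotent tensor (of positive Clifford/form degree), the Duhamel series $\exp(-t\cF)= \exp(-tH) + \sum (-1)^k \cI_k$ \emph{terminates}, and since $\cF_{[+]}$ has integral kernel supported on the diagonal each $\cI_k$ inherits the polyhomogeneity of the $e^{-tH}$ factors. That is how the paper reduces the asymptotics of the superconnection heat kernel to those already established (in Theorem \ref{thm:WedgeHeatKer'}) for the Dirac operator heat kernel, in one algebraic step.

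In contrast, you claim that the model heat kernel at $\bhsh{\phi\phi,1}(N)$ factors as a Mehler kernel times the heat kernel of $\bbA^2_{C(\phi_N)/N,Q}$, and that the latter "exists and is polyhomogeneous on the suspended wedge heat space by the (unrescaled) inductive statement of Theorem \ref{thm:WedgeHeatKer'} applied to the depth-$(k-1)$ links." But Theorem \ref{thm:WedgeHeatKer'} concerns $\eth_{M/B,Q}^2$, a Dirac-type operator; $\bbA^2_{C(\phi_N)/N,Q}$ is a superconnection-squared, carrying extra nilpotent form-degree terms, and it does \emph{not} follow from the unrescaled heat kernel theorem without an additional argument. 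The appeal to "the Lichnerowicz formula has the same filtration structure" gives a heuristic for why the answer should lie in $\Hom_G(\bbE)$, but it does not deliver the polyhomogeneous conormality of the superconnection heat kernel on the suspended heat space. To close the gap you either need to invoke the nilpotent Duhamel trick as the paper does (the clean, local-in-$t$ argument), or you need to set up a genuine induction proving the rescaled theorem for the links and then showing that the factored model $e^{-\sigma^2\cH_{N/B}}e^{-\sigma^2\bbA^2_{C(\phi_N)/N,Q}}$ is $\Hom_G$-valued — which again ultimately reduces to the same finite expansion at one level down. The remainder of your outline (solving away the Taylor expansions at $\bhsh{dd,1}$ and the front faces, the Volterra series for the full error, and the corner-compatibility via \eqref{eq:bfsDiag}) follows the paper and is fine once this step is supplied.
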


\begin{proof}
We proceed as in \cite[\S11]{Melrose-Piazza:Even}.

First we recall that there is an explicit formula, Mehler's formula, for the heat kernel of a harmonic oscillator such as $\cH_{M/B}^2$ or $\cH_{N/B}^2,$ see e.g., \cite[\S 8.9]{tapsit}.
Secondly we recall (e.g., \cite[Chapter 9 Appendix]{BGV2004}) that in
a situation like ours where
$\cF = H + \cF_{[+]}$ with $\cF_{[+]}$ nilpotent we have
\begin{equation*}
\begin{gathered}
	\exp(-t\cF) = \exp(-tH) + \sum (-1)^k \cI_k, \\
	\cI_k = \int_{\Delta_k} e^{-i_0 tH}\cF_{[+]}e^{-i_1 tH} \cF_{[+]} 
		\cdots e^{-i_{k-1} tH}\cF_{[+]} e^{-i_k tH} \; di_0 \cdots di_k
\end{gathered}
\end{equation*}
with the nilpotence of $\cF_{[+]}$ guaranteeing that the $\cI_k$ are eventually zero.

We apply this at $\bhsh{\phi\phi,1}(N)$ to see that heat kernel of the
rescaled normal operator is a section with the same asymptotics as
those of the heat kernel of the normal operator. (Since $\cF_{[+]}$ is
a tensor and so its integral kernel is supported on the
diagonal.)
Mehler's formula directly yields a solution of the model heat problem
at $\bhsh{dd,1}.$ These models are compatible at the corner because
the restriction to the corner solves the corresponding model problem.

The rest of the construction proceeds as in \S\ref{subsec:HeatKer}.
\end{proof}

\subsection{Trace of the heat kernel}\label{sec:Trace}

We discuss the trace of integral kernels on the heat space (extending \cite[Theorem 4.2]{Mazzeo-Vertman:AT}) and then specialize to the case of the heat kernel. \\

The trace of an operator is intimately related to the integral of its Schwartz kernel along the diagonal (see \cite{Brislawn} for a general discussion). The interior lift of the diagonal of $M$ from $M \times_{\psi} M \times \bbR^+_t$ to $H(M/B)_{\w}$ can be identified with
\begin{equation}\label{eq:LiftedDiag}
	\diag_{\w}^{(H)}(M)
	= 
	\Big[ M \times \bbR^+_{\tau}; 
	\bhs{N_1} \times \{ 0 \}; \ldots; 
	\bhs{N_{\ell}} \times \{ 0\} \Big],
\end{equation}
where $\tau = t^{1/2}$ and  $\cS_{\psi}(M)= \{N_1, \ldots, N_{\ell}\}$ is listed in a non-decreasing order. The fiber bundle $X \fib M \xlra\psi B$ induces a fiber bundle
\begin{equation*}
	\diag_{\w}^{(H)}(X) \fib 
	\diag_{\w}^{(H)}(M)
	\xlra{\psi} B,
\end{equation*}
which we continue to denote $\psi.$

We will denote the blow-down map by
\begin{equation*}
	\beta_{(\Delta)}: \diag_{\w}^{(H)}(M) \lra M \times \bbR^+_{\tau},
\end{equation*}
and the collective boundary hypersurfaces of $\diag_{\w}^{(H)}(M)$ by
\begin{equation*}
	M \times \{0 \} \leftrightarrow \bhsD{0,1} 
\end{equation*}
and, for each $N \in \cS_{\psi}(M),$
\begin{equation*}
	\bhs{N} \times \bbR^+_{\tau} \leftrightarrow \bhsD{1,0}(N), \quad
	\bhs{N} \times \{ 0\} \leftrightarrow \bhsD{1,1}(N).
\end{equation*}

Assume that the kernel of $A$ has the form $\cK_A \rho^{\mathfrak h} \mu_R$ 
with $\cK_A \in \cA_{phg}^{\cE_A}(H(M/B)_\w;\Hom(E))$ and $\mathfrak h$ the multiweight from \eqref{eq:HeatWeight}. Ultimately we are interested in kernels that are merely conormal with bounds acting on sections of a vector bundle, but the corresponding trace result will follow easily from this one.

For appropriate index sets, this kernel is trace-class and its trace is the integral of its restriction to the diagonal.
In terms of the composition of the blow-down map with the projection onto $B \times \bbR^+_{\tau},$
\begin{equation*}
	\beta_{(\Delta),t}: \diag_{\w}^{(H)}(M) \lra M \times \bbR^+_{\tau} 
		\lra B \times \bbR^+_{\tau},
\end{equation*}
this means that
\begin{equation*}
	\Tr(A) = (\beta_{(\Delta),t})_* 
	\lrpar{ \cK_A \rho^{\mathfrak h} \mu_R\rest{\diag_{\w}^{(H)}(M) } }.
\end{equation*}

\begin{theorem}\label{thm:TraceAs}
If $A$ has integral kernel 
$\cK_A \rho^{\mathfrak h} \mu_R$ 
with $\cK_A \in \cA_{phg}^{\cE_A}(H(M/B)_\w;\Hom(E))$
and
\begin{equation*}
	\Re( \cE_A(\bhsh{\phi\phi,0}(N)) -\dim(N/B) > -1 \Mforall N \in \cS_{\psi}(M)
\end{equation*}
then it is trace-class  with trace polyhomogeneous in $\tau$ satisfying
\begin{equation*}
\begin{gathered}
	\Tr A \in \cA_{phg}^{\cE_{A,\tau}}(\bbR^+_{\tau};\CI(B)), \\
	\cE_{A,\tau} = (\cE_A(\bhsh{\diag,1}) -\dim(M/B) -2) \bar\cup
	\bar\bigcup_{N \in \cS_{\psi}(M)} (\cE_A(\bhsh{\phi\phi,1}(N)) -\dim(N/B)-2)
\end{gathered}
\end{equation*}
\end{theorem}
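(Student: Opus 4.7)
The plan is to recognize $\Tr A$ as a push-forward of the kernel restricted to the diagonal, and to apply Melrose's push-forward theorem (from \S\ref{sec:Conormal}) to the b-fibration $\beta_{(\Delta),t}\colon \diag_{\w}^{(H)}(M) \lra B \times \bbR^+_\tau.$ First I would verify that $\diag_{\w}^{(H)}(M)$ is the interior lift of $\diag_M \times \bbR^+_\tau$ in $H(M/B)_\w$ and is a p-submanifold, so that restriction of $\cK_A\,\rho^{\mathfrak h}\mu_R$ produces a polyhomogeneous section there. The diagonal is disjoint from $\bhsh{10,0}(N),$ $\bhsh{01,0}(N),$ and $\bhsh{00,1},$ and meets the remaining front faces $\bhsh{dd,1},$ $\bhsh{\phi\phi,1}(N),$ $\bhsh{\phi\phi,0}(N)$ in $\bhsD{0,1},$ $\bhsD{1,1}(N),$ $\bhsD{1,0}(N)$ respectively. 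Thus the restricted distribution inherits its index sets from $\cE_A$ on these three families.

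Next I would invoke the push-forward theorem. The map $\beta_{(\Delta),t}$ is a b-fibration; among its boundary hypersurfaces, only $\bhsD{0,1}$ and $\bhsD{1,1}(N)$ map into $\{\tau = 0\},$ while each $\bhsD{1,0}(N)$ maps to the interior of $\bbR^+_\tau$ and so must be integrated along the fibers of $\psi.$ Local projective coordinates show the relevant exponent matrix entries equal one: near $\bhsD{1,1}(N)$ in coordinates $(x, y, z, \tau'' = \tau/x),$ we have $\tau = x\,\tau''$ vanishing to first order at $\{x = 0\}.$ The integrability hypothesis of the push-forward theorem at $\bhsD{1,0}(N),$ once the weight $\mathfrak h(\bhsh{\phi\phi,0}(N)) = -(\dim(N/B) + 1)$ has been incorporated, is precisely the stated condition $\Re(\cE_A(\bhsh{\phi\phi,0}(N))) > \dim(N/B) - 1.$

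For the exponent computation I would rewrite $\cK_A\,\rho^{\mathfrak h}\mu_R$ on $\diag_{\w}^{(H)}(M)$ in the form of a polyhomogeneous function times $\rho^{-1}\Omega$ with $\Omega$ a smooth non-degenerate density, as required for the push-forward theorem. At $\bhsD{0,1}$ there is no additional Jacobian contribution from the blow-ups, and the effective exponent is $\cE_A(\bhsh{dd,1}) - \dim(M/B) - 2,$ matching $\mathfrak h(\bhsh{dd,1})$ directly. At $\bhsD{1,1}(N)$ the change of variables $(x', \tau) \leftrightarrow (x, \tau'' = \tau/x)$ introduces an extra factor of $x$ in the right density, contributing $+1$ which combines with $\mathfrak h(\bhsh{\phi\phi,1}(N)) = -(\dim(N/B) + 3)$ to give the effective exponent $\cE_A(\bhsh{\phi\phi,1}(N)) - \dim(N/B) - 2.$ Applying the push-forward with exponent matrix entry one at each face then produces exactly $\cE_{A,\tau},$ the extended union reflecting how index sets combine when their exponents can coincide.

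The main obstacle will be the careful bookkeeping in the last step: one must convert between $\mu_R$ as a right density on $H(M/B)_\w,$ its restriction to $\diag_{\w}^{(H)}(M),$ and the smooth density on $\diag_{\w}^{(H)}(M)$ required by the push-forward theorem, tracking precisely how each boundary defining function contributes to obtain the shifts $-\dim(M/B) - 2$ and $-\dim(N/B) - 2.$ Once this is verified face-by-face, the trace-class property follows from the integrability condition at $\bhsD{1,0}(N),$ and polyhomogeneity of $\Tr A$ with the stated index set $\cE_{A,\tau}$ follows directly from the push-forward theorem.
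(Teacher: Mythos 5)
Your proposal follows essentially the same route as the paper: restrict the kernel to the lifted diagonal $\diag_{\w}^{(H)}(M)$, identify which boundary hypersurfaces of $H(M/B)_{\w}$ meet it, then apply the push-forward theorem along the b-fibration $\beta_{(\Delta),t}$ while carefully tracking the density weights. Your identification of the three relevant faces $\bhsD{0,1}, \bhsD{1,1}(N), \bhsD{1,0}(N)$ and the final index-set calculations at $\bhsD{0,1}$ and $\bhsD{1,1}(N)$ are correct and match the paper's bookkeeping exactly (the paper passes to $\mu_b$ and multiplies by $d\tau/\tau$, which has the same effect as your Jacobian-tracking argument).

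One arithmetic point deserves care. You assert that the integrability requirement of the push-forward theorem at $\bhsD{1,0}(N)$ is $\Re(\cE_A(\bhsh{\phi\phi,0}(N))) > \dim(N/B) - 1$, matching the hypothesis as printed in the theorem statement. But tracing the exponents the way the paper does, the polyhomogeneous b-density being pushed forward has index set
\[
\cE_A(\bhsh{\phi\phi,0}(N)) + \mathfrak{h}^{\Delta}(\bhsD{1,0}(N)) + 1 = \cE_A(\bhsh{\phi\phi,0}(N)) - \dim(N/B)
\]
at $\bhsD{1,0}(N)$, and since this face lies in $\ker(e_{\beta_{(\Delta),t}})$, the push-forward theorem from \S\ref{sec:Conormal} requires the infimum of the real part of this set to be $> 0$, i.e.\ $\Re(\cE_A(\bhsh{\phi\phi,0}(N))) > \dim(N/B)$. (The same count governs convergence of the fiber integral defining $\Tr A$ at fixed $\tau>0$.) This is one unit stronger than what you state. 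Note that the hypothesis as printed in the theorem appears to carry the same off-by-one, so you have inherited the discrepancy rather than introduced it; but you should not present the bound $\Re(\cE_A(\bhsh{\phi\phi,0}(N))) > \dim(N/B) - 1$ as the condition the push-forward theorem actually demands, since it is not.
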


\begin{proof}
The operator is trace-class because its integral kernel is a bounded smooth function times a measure of finite volume.

Next let us discuss the effect of restricting the kernel to the diagonal. 
Let $\mu_{\Delta} = \beta_{(\Delta)}^*\mu(M/B).$
The restriction of the weight \eqref{eq:HeatWeight} to the diagonal is 
\begin{equation*}
\begin{gathered}
	\mathfrak h^{\Delta}: \cM_1(\diag_{\w}^{(H)}) \lra \bbR, \\
	\mathfrak h^{\Delta}(H) =
	\begin{cases}
	-(\dim(N/B) + 3) & \Mif H \subseteq \bhsD{1,1}(N) 
		\Mforsome N \in \cS_{\psi}(M) \\
	-(\dim(N/B) + 1) & \Mif H \subseteq \bhsD{1,0}(N)
		\Mforsome N \in \cS_{\psi}(M) \\
	-(\dim(M/B) + 2) & \Mif H = \bhsD{0,1}(N) \\
	0 & \Motherwise
	\end{cases}
\end{gathered}
\end{equation*}
Let $\cE_A^{\Delta}$ be the index sets given by
\begin{equation*}
	\cE_A^{\Delta}(\bhsD{0,1}) = \cE_A(\bhsh{dd,1})
\end{equation*}
and, for each $N \in \cS_{\psi}(M),$
\begin{equation*}
	\cE_A^{\Delta}(\bhsD{1,0}(N)) = \cE_A(\bhsh{\phi\phi,0}(N)), \quad
	\cE_A^{\Delta}(\bhsD{1,1}(N)) = \cE_A(\bhsh{\phi\phi,1}(N)).
\end{equation*}
Then we have
\begin{equation*}
	\cK_A \rho^{\mathfrak h} \mu_R\rest{\diag_{\w}^{(H)}}
	= \kappa_A \rho^{\mathfrak h^{\Delta}}\mu_{\Delta} \Mwith
	\kappa_A \in \cA_{phg}^{\cE_A^{\Delta}}(\diag_\w^{(H)}(M)).
\end{equation*}

Next note that the map $\beta_{(\Delta),t}$ is a b-fibration which sends $\{\bhsD{1,0}(N): N \in \cS_{\psi}(M)\}$ to the interior of $\bbR^+_{\tau}$ and the other boundary hypersurfaces to $\{\tau=0\},$ so we can apply the push-forward theorem once we pass to b-densities.
In this setting, we start with
\begin{equation*}
	(\beta_{(\Delta),t})_*(\kappa_A \rho^{\mathfrak h^{\Delta}}\mu_{\Delta}) = (\Tr A),
\end{equation*}
and multiply both sides by $\tfrac{d\tau}\tau$ to obtain
\begin{equation*}
	(\beta_{(\Delta),t})_*(\kappa_A \rho^{\mathfrak h^{\Delta}}(\beta_{(\Delta),t}^*(\tfrac1\tau\mu(M/B\times\bbR^+))) 
	= (\Tr A) \tfrac{d\tau}\tau.
\end{equation*}
Now,
\begin{equation*}
\begin{gathered}
\begin{multlined}
	\beta_{(\Delta),t}^*\mu(M/B\times\bbR^+) = 
	\prod_{N \in \cS_{\psi}(M)} \rho_{\bhsD{1,1}(N)} \mu(\diag_{\w}^{(H)}(M)/B) \\
	= \rho_{\bhsD{0,1}}
	\prod_{N \in \cS_{\psi}(M)} \rho_{\bhsD{1,0}(N)}\rho_{\bhsD{1,1}(N)}^2
		\mu_b(\diag_{\w}^{(H)}(M)/B) 
\end{multlined}\\
	\beta_{(\Delta),t}^*(\tau^{-1}) 
	= \rho_{\bhsD{0,1}}^{-1}
	\prod_{N \in \cS_{\psi}(M)} \rho_{\bhsD{1,1}(N)}^{-1},
\end{gathered}
\end{equation*}
so that we need to push-forward
\begin{equation*}
	\kappa_A \rho^{\mathfrak h^{\Delta}}
	\prod_{N \in \cS_{\psi}(M)} \rho_{\bhsD{1,0}(N)}\rho_{\bhsD{1,1}(N)}
		\mu_b(\diag_{\w}^{(H)}(M)/B).
\end{equation*}
This is a polyhomogeneous b-density with index sets
\begin{equation*}
	\cE_A(\bhsh{dd,1}) -\dim (M/B) - 2 \Mat \bhsD{0,1} 
\end{equation*}
and, for each $N \in \cS_{\psi}(M),$
\begin{equation*}
\begin{gathered}
	\cE_A(\bhsh{\phi\phi,0}(N)) + 1 - \dim(N/B)-1 \Mat \bhsD{1,0}(N) \\
	\cE_A(\bhsh{\phi\phi,1}(N)) +1 -\dim(N/B)-3 \Mat \bhsD{1,1}(N).
\end{gathered}
\end{equation*}
Applying the push-forward theorem yields the index sets for $(\Tr A)\tfrac{d\tau}\tau,$ and finally we cancel the factor of $\tfrac{d\tau}\tau.$
\end{proof}

\begin{corollary}\label{cor:ShortTimeTraceExp}
Let $M \xlra{\psi} B$ be a family of even dimensional manifolds with iterated fibration structures and let $E \lra M$ be a $\bbZ_2$-graded wedge vertical Clifford module.
If $A$ has integral kernel satisfying 
$\cK_A \rho^{\mathfrak h} \mu_R$ 
with 
\begin{equation*}
	\cK_A \in 
	\sB^{\cH/\mf I^{(H)}}_{phg}\sA^{-m-1}_-
		(H(M/B)_{\w}; \Hom_G(\bbE)\otimes \Omega_{\mf h,R}).
\end{equation*}
where $\mf I^{(H)}$ and $\cH$ are given by \eqref{eq:IndicialMwH}, \eqref{eq:HeatIndSets} and $\Omega_{\mf h, R}$ is the density bundle from \eqref{eq:HeatDensityBdle}
then $A$ is trace-class and 
\begin{equation*}
\begin{gathered}
	\Tr(A) \in \cA_{phg}^{\cH_{\tau}}(\bbR^+_{\tau};\CI(B; \Lambda^*T^*B)), \\
	\cH_{\tau}
	= (\bbN_0-\dim M/B) \bar\cup \bar\bigcup_{N \in \cS_{\psi}(M)}(\bbN_0-\dim N/B),
\end{gathered}
\end{equation*}
and 
\begin{equation*}
	\Str(A) \in \cA_{phg}^{\bbN_0 \bar\cup \ldots \bar\cup \bbN_0}
		(\bbR^+_{\tau}; \CI(B;\Lambda^*T^*B)),
\end{equation*}
where $\bbN_0$ is repeated $1+ \mathrm{depth}(M)$ times.
That is, the short-time expansion of $\Str(A)$ has the form
\begin{equation*}
	\Str(A) \sim
	\sum_{j\geq 0} \sum_{k=0}^{\mathrm{depth}(M)}
	\alpha_{j,k} \; \tau^j (\log \tau)^k,
\end{equation*}
where the coefficients $\alpha_{j,k}$ are smooth differential forms on $B.$

For $e^{-t\bbA_{M/B,Q}^2}$ we have the improvement
\begin{equation*}
	\Str(e^{-t\bbA_{M/B,Q}^2}) 
	\in \cA_{phg}^{\bbN_0 \bar\cup \bbN \bar\cup \ldots \bar\cup \bbN}
		(\bbR^+_{\tau}; \CI(B; \Lambda^*T^*B)),
\end{equation*}
where the index set is the extended union of one copy of $\bbN_0$ and $\mathrm{depth}(M)$ copies of $\bbN.$
That is, the short-time expansion of $\Str(e^{-t\bbA_{M/B,Q}^2})$ has the form
\begin{equation}\label{eq:ShortTimeAsympStr}
	\Str(e^{-t\bbA_{M/B,Q}^2}) \sim
	\alpha_{0,0} + 
	\sum_{j\geq 1} \sum_{k=0}^{\mathrm{depth}(M)}
	\alpha_{j,k} \; \tau^j (\log \tau)^k,
\end{equation}
where the coefficients $\alpha_{j,k}$ are smooth differential forms on $B.$
\end{corollary}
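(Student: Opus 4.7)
The plan is to apply Theorem \ref{thm:TraceAs} to $A$ and then use Getzler-type cancellations to refine the result. First I would verify the hypotheses of Theorem \ref{thm:TraceAs} in the partially polyhomogeneous setting: although $A$ is fully polyhomogeneous only at the front faces $\bhsh{dd,1}$ and $\bhsh{\phi\phi,1}(N)$ and is merely conormal with weight $\mf I^{(H)}$ at the other faces, the edge faces $\bhsh{\phi\phi,0}(N)$ and the side faces do not meet $\{\tau=0\}$ after restriction to the lifted diagonal $\diag_{\w}^{(H)}(M)$, so they play no role in the $\tau$-expansion. Moreover the weight $\mf I^{(H)}(\bhsh{\phi\phi,0}(N)) = 2\mf I(\bhs{N}) + \dim(N/B)+1$ exceeds $\dim(N/B)$ by the Witt condition, securing integrability along the diagonal. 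The push-forward argument in the proof of Theorem \ref{thm:TraceAs} then applies verbatim and yields $\Tr A \in \cA^{\cH_\tau}_{phg}(\bbR^+_\tau; \CI(B; \Lambda^*T^*B))$ with $\cH_\tau$ as stated, since $\cH(\bhsh{dd,1}) = 2 + \bbN_0$ shifted by $-\dim(M/B)-2$ gives $\bbN_0 - \dim(M/B)$, and similarly at each $\bhsh{\phi\phi,1}(N)$.

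For the supertrace I would invoke the Getzler-rescaled structure. Since $A$ is valued in $\Hom_G(\bbE)$, the coefficient of $\rho_{\bhsh{dd,1}}^k$ at $\bhsh{dd,1}$ lies in $\Cl^k({}^\w T^*M/B) \otimes \Hom'_{\Cl({}^\w T^*M/B)}(E)$, and the fibrewise Berezin supertrace vanishes on Clifford elements of degree strictly less than $\dim(M/B)$. Because $\rho_{\bhsh{dd,1}}$ is comparable to $\tau$, this kills every coefficient of $\tau^{-\dim(M/B)+k}$ for $k < \dim(M/B)$, so that the contribution of $\bhsh{dd,1}$ to $\Str(A)$ begins at $\tau^0$. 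The identical argument at each $\bhsh{\phi\phi,1}(N)$, using the filtration by $\Cl^*({}^\w T^*N/B)$ and the comparability $\rho_{\bhsh{\phi\phi,1}(N)} \sim \tau/\rho_{\bhs{N}}$, removes all negative powers of $\tau$ from the contribution of that face. The resulting index set for $\Str A$ is then \emph{a priori} the extended union of $1+|\cS_\psi(M)|$ copies of $\bbN_0$.

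The hard part will be refining this naive count to $1+\mathrm{depth}(M)$ copies of $\bbN_0$. I would argue by induction on $\mathrm{depth}(M)$. The base case is a closed fibre, for which the supertrace expansion is log-free. For the inductive step, the contribution of $\bhsh{\phi\phi,1}(N)$ to $\Str A$ is obtained by restricting $\cN^G_{\bhsh{\phi\phi,1}(N)}(A)$ to the diagonal of the fibre bundle $\bhsh{\phi\phi,1}(N) \to N$ and pushing forward. By Proposition \ref{prop:WedgeHeatNormal} this fibre is the suspended wedge heat space of the link $Z_N$; the resulting sub-trace is precisely that of a heat-type kernel on a stratified space of depth at most $\mathrm{depth}_M(N)-1 < \mathrm{depth}(M)$, to which the inductive hypothesis applies and gives logs bounded by $\mathrm{depth}(Z_N)$. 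Combining across $N \in \cS_\psi(M)$, with at most one additional log produced by the push-forward across each $\bhsh{\phi\phi,1}(N)$, gives the overall $\mathrm{depth}(M)$ bound.

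For the sharper index set claimed for $A = e^{-t\bbA^2_{M/B,Q}}$, the $\tau^0$ coefficient would be identified explicitly using Lemma \ref{lem:ResNormOpsA}. At $\bhsh{dd,1}$ the rescaled normal operator is the generalized harmonic oscillator $\cH_{M/B}$, and Mehler's formula together with the Berezin supertrace produces the smooth differential form $\hat A(M/B) \wedge \Ch'(E)$ on $B$, which carries no logarithm. At each $\bhsh{\phi\phi,1}(N)$ the rescaled normal operator factors as $\sigma^2(\cH_{N/B} + \bbA^2_{C(\phi_N)/N,Q})$; the Mehler factor produces $\hat A(N/B)$, again smooth and log-free, while the cone-bundle factor $e^{-\sigma^2 \bbA^2_{C(\phi_N)/N,Q}}$ is the Bismut-superconnection heat kernel of a lower-depth family, so by the inductive hypothesis its supertrace has no logarithm at $\sigma^0$. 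Thus the $\tau^0$ coefficient of $\Str(e^{-t\bbA^2_{M/B,Q}})$ is a smooth differential form on $B$, replacing one copy of $\bbN_0$ by a single log-free term and producing the index set $\bbN_0 \; \bar\cup \; \bbN \; \bar\cup \; \cdots \; \bar\cup \; \bbN$ with $\mathrm{depth}(M)$ copies of $\bbN$, as required.
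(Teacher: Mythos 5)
The first three stages of your argument track the paper reasonably well: the trace assertion via Theorem \ref{thm:TraceAs} (your added verification that the edge and side faces drop out after restricting to the lifted diagonal is a useful, if routine, remark the paper elides), the improvement to the supertrace via Patodi's vanishing on sub-top Clifford degree, and the reduction from $1+|\cS_\psi(M)|$ copies of $\bbN_0$ to $1+\mathrm{depth}(M)$. On that last point you argue by induction on depth through the normal operators, whereas the paper uses the direct combinatorial observation that accidental multiplicities only arise at actual intersections of boundary hypersurfaces of $\diag_\w^{(H)}(M)$, and the longest chain of mutually intersecting faces $\bhsD{0,1}, \bhsD{1,1}(N)$ has length $1+\mathrm{depth}(M)$. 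Your induction could be made to work but is more involved and, as sketched, the bookkeeping for ``at most one additional log per push-forward'' is not fully justified.

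The last paragraph, however, has a genuine gap. You want to show $\alpha_{0,k}=0$ for $k>0$, i.e.\ that the $\tau^0$ coefficient of $\Str(e^{-t\bbA_{M/B,Q}^2})$ carries no logarithm. Your argument establishes that the leading coefficient at each face $\bhsD{0,1}$ and $\bhsD{1,1}(N)$ of $\diag_\w^{(H)}(M)$ is separately log-free (Mehler for the harmonic oscillator factors, induction for the cone-bundle factor). But the $(\log\tau)$-terms in the push-forward to $B\times\bbR^+_\tau$ do not come from each face individually; they arise from the extended-union operation $\bar\cup$ where two faces with matching normalized exponents meet at a corner. Knowing that each face's $\tau^0$ coefficient is smooth does not preclude a $\log\tau$ being produced by the integration across a corner $\bhsD{1,1}(N)\cap\bhsD{1,1}(\wt N)$ or $\bhsD{1,1}(N)\cap\bhsD{0,1}$. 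What is actually needed, and what the paper supplies, is that the \emph{pointwise} supertrace of the Getzler-rescaled kernel vanishes at those corners. This follows from the discussion at the end of \S\ref{sec:Metrics}: at a corner $\bhs{Y}\cap\bhs{\wt Y}$ no term of full Clifford degree can appear, since terms containing $\df e(d\rho_N)$ without an accompanying $\rho_N$ factor vanish (Lemma \ref{lem:ResNormOpsA} plus Proposition \ref{prop:curvature}). Without that corner-vanishing statement, the final implication ``thus the $\tau^0$ coefficient is a smooth differential form on $B$'' does not follow from what you have written.
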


\begin{proof}
The statement about the trace of $A$ follows directly from Theorem \ref{thm:TraceAs}.
For the statement about the supertrace, recall Patodi's observation that the supertrace on the Clifford algebra vanishes on homomorphisms whose Clifford degree is less than the maximum Clifford degree. Thus the index set of $\cK_A|_{\diag_{\w}^{(H)}(M)}$ at $\bhsD{0,1}$ is shifted by $\dim (M/B)$ and its index set at $\bhsD{1,1}(N),$ for each $N \in \cS_{\psi}(M),$ is shifted by $\dim(N/B).$ The result is that each of these index sets contributes an $\bbN_0$ to the asymptotics as $\tau \to 0.$ 

In principle the index set of $\Str(A)$ is then $\bbN_0 \bar\cup \ldots \bar\cup \bbN_0$ with $\bbN_0$ repeated as many times as there are boundary hypersurfaces of $H(M/B)_{\w}$ over $\{\tau =0\}.$ However only actual intersections of boundary hypersurfaces produce accidental multiplicities and so it suffices to take the extended product over $1+\mathrm{depth}(M)$ copies of $\bbN_0.$ 

Finally, for $A = e^{-t\eth_{M/B,Q}^2},$ the improvement is that
$\alpha_{0,k}=0$ for all $k>0.$ To establish this, it suffices to show
that the pointwise supertrace of the heat kernel vanishes at corners
(cf. \cite[Lemma 5.27]{Vaillant}). The discussion at the end of
\S\ref{sec:Metrics} shows that the supertrace vanishes at any
intersection $\bhsD{1,1}(N) \cap \bhsD{1,1}(\wt N)$ since there can
not be a term of full Clifford degree. 
Similarly at an intersection of the form $\bhsD{1,1}(N)\cap \bhsD{0,1}$ 
the supertrace vanishes as the can not be a term of full Clifford degree; in this 
case this follows from the rescaled normal operator in Lemma \ref{lem:ResNormOpsA}
at $\bhsd{dd,1}.$ Indeed, the curvature of the Levi-Civita connection is evaluated on edge vector fields, so the vector field $\rho_N\pa_{\rho_N}$ does not occur without the $\rho_N$ factor, and hence any term with $\df e(d\rho_N)$ will vanish at $\bhsd{\phi\phi,1}(N).$

\end{proof}

\section{Families index formula} \label{sec:ChernChar}

Let $M \xlra\psi B$ be a fiber bundle of manifolds with corners and iterated fibration structures, let $E\lra M$ a wedge Clifford module with compatibly perturbed Dirac-type operator $\eth_{M/B,Q}$ equipped with its vertical APS domain satisfying the Witt condition, and let $\bbA_{M/B,Q}$ be the perturbed Bismut superconnection.

\subsection{The finite time limit}

Given an arbitrary superconnection on $M \lra B,$
\begin{equation*}
	\bbA = \bbA_{[0]} + \bbA_{[1]} + \ldots + \bbA_{[k]},
\end{equation*}
recall that the {\bf rescaled superconnection} is
\begin{equation*}
	\bbA^{t} = t^{1/2} \lrpar{\bbA_{[0]} + t^{-1/2}\bbA_{[1]} + \ldots +t^{-k/2}\bbA_{[k]}}
	= \tau \delta_{t}^B \bbA (\delta_{t}^B)^{-1}
\end{equation*}
where $\delta_t^B$ multiplies forms on $B$ of degree $k$ by $\tau^{-k/2}.$

    Let us recall the notion of twisted supertrace.
If $\alpha_E$ is a  grading operator on $E$, i.e., the
    operator which is identity on even degree sections and
    multiplication by $-1$ on odd degree sections,  so that
\begin{equation*}
	\str_E(\cdot) = \tr_E(\alpha_E \cdot),
\end{equation*}
then since $\alpha_E$ is a section of 
\begin{equation}\label{eq:homTensor}
	\hom(E) = \Cl({}^\w T^*M/B) \otimes \hom'_{\Cl({}^\w T^*M/B)}(E),
\end{equation}
we have
\begin{equation*}
	\alpha_E = \alpha_{M/B}\otimes \alpha_E'
\end{equation*}
where $\alpha_{M/B}$ is the grading operator on $\Cl({}^\w T^*M/B)$ and $\alpha_E'$ commutes with Clifford multiplication and squares to the identity. The supertrace functional decomposes with respect to \eqref{eq:homTensor} into the product of two supertrace functionals,
\begin{equation*}
	\str(A \otimes A') = \tr_{\Cl({}^\w T^*M/B)}(\alpha_{M/B}A) \tr(\alpha_E'A')
		= \str_{\Cl(M/B)}(A)\str_{\Cl(M/B)}'(A')  
\end{equation*}
for all $A \in \Cl({}^\w T^*M/B),$ $A' \in \hom'_{\Cl({}^\w T^*M/B)}(E),$
and we refer to $\str_{\Cl(M/B)}'$, defined by
    this equation, as the twisted supertrace.

We have similar decompositions at each $\bhsh{\phi\phi,1}(N)$ of $H(M/B)_{\w};$ indeed, we have seen that
\begin{equation*}
	\Hom(E)\rest{\bhsh{\phi\phi,1}(N)} = \Cl({}^\w T^*N/B)\otimes \hom'_{\Cl({}^\w T^*N/B)}(E)
\end{equation*}
and consequently the supertrace functional decomposes as $\str_{\Cl(N/B)} \otimes \str_{\Cl(N/B)}'.$
$ $\\

Let us introduce the notation for the terms appearing in the short-time limit of the supertrace of the heat kernel.
Let
\begin{equation*}
	\hat A(M/B) = \det^{1/2}\lrpar{\frac{R^{M/B}/4\pi}{\sinh(R^{M/B}/4\pi)}} \in \CI(M;\Lambda^*T^*M)
\end{equation*}
and similarly for $\hat A(N/B),$ with $N \in \cS_{\psi}(M),$
and denote the twisted Chern character by
\begin{equation*}
	\Ch'(E) = \str'_{\Cl(M/B)}(\exp(-K'_E/2\pi)) \in \CI(M;\Lambda^*T^*M).
\end{equation*}
where $K'_E$ is the twisting curvature from \eqref{eq:SupLichnerowicz}.

Define, for each $N \in \cS_{\psi}(M),$ the {\bf Bismut-Cheeger $\cJ$-form},
\begin{equation*}
	\cJ_Q(\bhs{N}/N) 
	= \sideset{^R}{_0^{\infty}}\int \int_{\bhs{N}/N} 
		\str'_{\Cl(N/B)}
		\lrpar{\exp ( - (\bbA_{C(\phi_N)/N,Q}^t)^2) }\rest{s=1} \; \frac{dt}{2t}
	\in \CI(N;\Lambda^*T^*N)
\end{equation*}
where $s$ is the radial variable along the cone.
Here $\sideset{^R}{_0^{\infty}}\int$ denotes the {\em renormalized integral} (also known as the b-integral see, e.g., \cite[\S4.19]{tapsit}, \cite{Albin:RenInt}, \cite[\S2.3]{Hassell-Mazzeo-Melrose:Surgery}) we will see below that this integral is convergent, so that no renormalization is necessary.

\begin{proposition}\label{prop:ShortTimeLimit}
Under the Witt assumption,
\begin{equation*}
	\lim_{t\to 0} \; \Str(e^{-(\bbA_{M/B,Q}^t)^2})
	= 
	\int_{M/B} \hat A(M/B) \Ch'(E) 
	- \sum_{N \in \cS_{\psi}(M)} 
	\int_{N/B} \hat A(N/B) \cJ_Q(\bhs{N}/N)
\end{equation*}
\end{proposition}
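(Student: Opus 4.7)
The plan is to identify the limit with the coefficient $\alpha_{0,0}$ in the short-time expansion \eqref{eq:ShortTimeAsympStr}, and then to compute $\alpha_{0,0}$ face by face using the Getzler-rescaled heat kernel.

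First I would reduce the computation of the limit to a boundary integral on $\diag_\w^{(H)}(M).$ Since $(\bbA^t_{M/B,Q})^2 = t\,\delta_t^B\,\bbA_{M/B,Q}^2\,(\delta_t^B)^{-1},$ conjugation invariance of the supertrace gives $\Str(e^{-(\bbA^t_{M/B,Q})^2}) = \delta_t^B\,\Str(e^{-t\bbA_{M/B,Q}^2}).$ Combined with Corollary \ref{cor:ShortTimeTraceExp}, the finiteness of this $t\to 0$ limit amounts to the statement that the Getzler-rescaled kernel picks up precisely the $\tau^0$ coefficient of each form-degree component. Via the push-forward description of the trace in Theorem \ref{thm:TraceAs}, this constant term is the sum of contributions from the boundary hypersurfaces of $\diag_\w^{(H)}(M)$ lying over $\tau=0,$ namely $\bhsD{0,1}$ and the $\bhsD{1,1}(N),$ $N \in \cS_\psi(M).$

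Second I would compute the interior contribution from $\bhsD{0,1}.$ By Lemma \ref{lem:ResNormOpsA} and Theorem \ref{thm:WedgeRescaledHeatKer}, the Getzler-rescaled normal operator there is $\cN^G_{\bhsh{dd,1}}(e^{-t\bbA_{M/B,Q}^2}) = e^{-\cH_{M/B}},$ the heat kernel at time one of a family of generalized harmonic oscillators with matrix potential $\df e(K'_E).$ Mehler's formula gives this heat kernel in closed form, and the usual Patodi argument — the twisted supertrace $\str = \str_{\Cl(M/B)}\otimes \str'_{\Cl(M/B)}$ extracts the top Clifford degree — recovers the Atiyah-Singer integrand. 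Fiber integration then produces $\int_{M/B}\hat A(M/B)\,\Ch'(E).$

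Third I would handle each $\bhsD{1,1}(N).$ Lemma \ref{lem:ResNormOpsA} gives the product factorization
\begin{equation*}
    \cN^G_{\bhsh{\phi\phi,1}(N)}(e^{-t\bbA_{M/B,Q}^2})
    = e^{-\sigma^2 \cH_{N/B}}\, e^{-\sigma^2 \bbA^2_{C(\phi_N)/N,Q}},
\end{equation*}
where $\sigma = \tau/\rho_N$ is the rescaled time variable transverse to $\bhsD{1,1}(N)$ inside $\diag_\w^{(H)}(M).$ Applying $\str = \str_{\Cl(N/B)}\otimes\str'_{\Cl(N/B)}$ separates the two factors. Mehler's formula on the first, together with Patodi's argument for $N/B,$ integrates over the fibres of $N/B$ to yield $\hat A(N/B).$ For the second factor, I would use the homogeneity of the cone heat kernel (equation \eqref{eq:DilationInv}) to reduce the restriction to the lifted diagonal to its value at $s=1$ depending only on $\sigma/s,$ so that the remaining radial integration becomes an integration in $\sigma \in (0,\infty).$ After the substitution $t = \sigma^2,$ so $d\sigma/\sigma = dt/(2t),$ and recognizing that the rescaled superconnection on the cone in \eqref{eq:DilationInv}'s invariant form reproduces $\bbA^t_{C(\phi_N)/N,Q},$ this integral is exactly the integrand of $\cJ_Q(\bhs{N}/N).$ The minus sign arises from the orientation of the outgoing normal to $\bhsD{1,1}(N)$ in the push-forward.

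Finally I would check that no extra contributions appear from deeper corners $\bhsD{1,1}(N)\cap\bhsD{1,1}(\wt N)$ or $\bhsD{1,1}(N)\cap\bhsD{0,1}.$ These are exactly the loci handled at the end of the proof of Corollary \ref{cor:ShortTimeTraceExp}: in each case the Getzler-rescaled kernel there cannot contain a term of full Clifford degree, so Patodi's lemma forces the pointwise supertrace to vanish.

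The hard part will be the third step, where the rigorous identification of the boundary integral at $\bhsD{1,1}(N)$ with a Bismut-Cheeger $\cJ$-form needs to be carried out. Three issues must be handled simultaneously: (i) matching the scalings in $\sigma$ and in the cone variable $s$ so that the radial integration collapses to the canonical $dt/(2t)$ measure at $s=1;$ (ii) justifying the interchange of limit, fiber integration, and $\sigma$-integration, which requires uniform convergence — convergence at $\sigma\to 0$ is given by the polyhomogeneous expansion of the heat kernel at $\bhsh{\phi\phi,0}(N)$ in Theorem \ref{thm:WedgeRescaledHeatKer}, while convergence at $\sigma\to\infty$ uses the spectral gap of $D_{Z_y,Q}$ at zero provided by the Witt assumption (via Proposition \ref{prop:NyInjSA}), which makes the renormalization in $\sideset{^R}{_0^{\infty}}\int$ unnecessary; and (iii) keeping the book-keeping for Clifford versus exterior actions correct, since the Getzler rescaling identifies $\cl_0$ with $\df e$ only up to subleading corrections that must not feed back into the leading supertrace.
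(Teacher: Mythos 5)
Your overall strategy matches the paper's proof point for point: reduce the $t\to 0$ limit to the constant term in the short-time expansion of Corollary \ref{cor:ShortTimeTraceExp}; use the push-forward description of the trace in Theorem \ref{thm:TraceAs} to localize that coefficient to the boundary hypersurfaces of $\diag_{\w}^{(H)}(M)$ over $\tau=0$; at $\bhsD{0,1}$ apply Mehler's formula and Patodi's observation to the rescaled normal operator $e^{-\cH_{M/B}}$ and fiber-integrate to get $\int_{M/B}\hat A(M/B)\Ch'(E)$; at each $\bhsD{1,1}(N)$ use the product factorization of the rescaled normal operator from Lemma \ref{lem:ResNormOpsA}, split the supertrace as $\str_{\Cl(N/B)}\otimes\str'_{\Cl(N/B)}$, run Mehler and Patodi on the harmonic-oscillator factor to produce $\hat A(N/B)$, and recognize the $\sigma$-integral of the cone heat kernel at $s=1$ as $\cJ_Q(\bhs{N}/N)$; and finally quote the corner vanishing from Corollary \ref{cor:ShortTimeTraceExp}. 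Your observation that the Witt assumption, via Proposition \ref{prop:NyInjSA}, removes the need for actual renormalization in the $\sigma\to\infty$ integral is correct and is the same point the paper makes when introducing the $\cJ$-forms.

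One ingredient in your write-up is not right, and you should fix it rather than hope it works out: the overall minus sign in the statement does \emph{not} arise from ``the orientation of the outgoing normal to $\bhsD{1,1}(N)$ in the push-forward.'' In Melrose's push-forward theorem for polyhomogeneous b-densities the coefficient of a given power of $\tau$ is a \emph{sum} of boundary integrals, with no relative sign; densities have no orientation to flip. The minus sign is purely a normalization built into the Bismut--Cheeger $\cJ$-form. Note in fact that the paper itself is internally inconsistent here: the formula for $\cJ_Q(\bhs{N}/N)$ displayed just before Proposition \ref{prop:ShortTimeLimit} has no minus sign, while the general definition in \S\ref{sec:BCJeta} does carry a leading $-\int_0^\infty$. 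If you use the signed convention from \S\ref{sec:BCJeta}, the boundary contribution at $\bhsD{1,1}(N)$ computes directly to $-\int_{N/B}\hat A(N/B)\cJ_Q(\bhs{N}/N)$ and the proposition is an identity, with no orientation argument needed. The lesson: do not invent a geometric mechanism to account for a sign that is really just a convention choice in the definition of an invariant.
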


\begin{proof}
We proceed as in \cite{Albin-Rochon:DFam, BGV2004, Melrose-Piazza:Even}.

Let
\begin{equation*}
	\cK = e^{-t\bbA_{M/B,Q}^2}\rest{\diag_{\w}^{(H)}(M)}
\end{equation*}
where the lifted diagonal, $\diag_{\w}^{(H)}(M),$ is described in \eqref{eq:LiftedDiag}.
Corollary \ref{cor:ShortTimeTraceExp} established the existence of 
the small time limit of the supertrace and that it is given by
\begin{equation*}
	\int_{\bhsh{dd,1}\cap \diag_M} \str(\cK)\rest{\bhsh{dd,1}}
	+ \sum_{N \in \cS_{\psi}(M)} 
	\int_{\bhsh{\phi\phi,1}(N)\cap \diag_M} \str(\cK)\rest{\bhsh{\phi\phi,1}(N)}.
\end{equation*}

Recall from, e.g., \cite[Lemma 10.22]{BGV2004}
\begin{equation*}
	\str(e^{-(\bbA_{M/B,Q}^t)^2}) = \delta_t^B\lrpar{ \str(e^{-t\bbA_{M/B,Q}^2}) }
\end{equation*}
and Patodi's observation that the supertrace in $\Cl(V)$ only depends on terms of top Clifford degree.
Thus if at $\bhsh{dd,1}$ we have an expansion
\begin{equation*}
	\cK\rest{\diag_M} \sim 
		t^{-(\dim M/B)/2} \sum_{\ell \in \bbN_0} \cU_{\ell} t^{\ell/2}, 
\end{equation*}
with each term $\cU_{\ell}$ of Clifford degree at most $\ell$ then, 
just as in \cite{BGV2004}, \cite{Melrose-Piazza:Even}, this implies that
\begin{equation*}
\begin{gathered}
	\str(\cK|_{\diag_{\w}})\rest{\bhsh{dd,1}} 
	= 
	(-2i)^{(\dim M/B)/2}
	\mathrm{Ev}_{M/B}\lrspar{ \str \cN^G_{\bhs{dd,1}}\lrpar{e^{-t\bbA_{M/B,Q}^2}}}\rest{\diag_M} \\
	= 
	(-2i)^{(\dim M/B)/2}
	\mathrm{Ev}_{M/B}\lrspar{ \str e^{-\cH_{M/B}(\zeta)}\rest{\zeta=0} }
	= 
	\mathrm{Ev}_{M/B}\lrpar{ \hat A(M/B) \Ch'(E)}
\end{gathered}\end{equation*}
where $\mathrm{Ev}_{M/B}$ is the projection of differential forms on $M$ onto those with top $\psi$-vertical degree.
Thus the contribution from this face is
\begin{equation*}
	\int_{\bhsh{dd,1}\cap \diag_M} \str(\cK)\rest{\bhsh{dd,1}}
	= 
	\int_{M/B} \hat A(M/B) \Ch'(E).
\end{equation*}

We now consider the situation at a face $\bhsh{\phi\phi,1}(N)$ for
some $N \in \cS_{\psi}(M).$ Let $x$ denote a boundary defining
function for this face, $\sigma = \tfrac{\tau}x$ a rescaled time
parameter, and note that the rescaling operator $\delta_B^t$ becomes
$\delta_B^{x^2}\delta_B^{\sigma^2}.$  
Since the Taylor expansion of $\cK$ at $\bhsh{\phi\phi,1}(N)$ is in powers of $x,$ the rescaling operator $\delta_B^{x^2}$ plays the same r\^ole at this face as $\delta_B^t$ plays at $\bhsh{dd,1},$ it mediates between the degree in $\Cl({}^\w T^*N/B)$ and that in $\Cl({}^\w T^*N/B)\hat\otimes \psi^*\Lambda^*B.$
Proceeding as in \cite[(1.23),(1.24)]{BC1989},
\begin{equation*}
\begin{gathered}
	\str(\cK)\rest{\bhsh{\phi\phi,1}(N)} 
	= 
	(-2i)^{\floor{(1+\dim N/B)/2}}
	\mathrm{Ev}_{N/B}
		\lrspar{ \str \cN^G_{\bhs{\phi\phi,1}(N)}\lrpar{e^{-t\bbA_{M/B,Q}^2}}}\rest{\diag_M} \\
	= 
	(-2i)^{\floor{(1+\dim N/B)/2}}
	\mathrm{Ev}_{N/B}\lrspar{ \delta_B^{\sigma^2}\str e^{-\sigma^2\cH_{N/B}(\zeta)}
		e^{-\sigma^2\bbA^2_{C(\phi_N)/N,Q}}\rest{\zeta=0, s=1} }\\
	= 
	\mathrm{Ev}_{N/B}\lrspar{ \delta_B^{\sigma^2} \hat A(\sigma^2 R^{N/B})
	\str_{\Cl(N/B)}' e^{-\sigma^2\bbA^2_{C(\phi_N)/N,Q}}\rest{s=1} }\\	
	=
	\mathrm{Ev}_{N/B}\lrspar{ \hat A(N/B)
	\str_{\Cl(N/B)}' e^{-(\bbA^{\sigma^2}_{C(\phi_N)/N,Q})^2}\rest{s=1} }.
\end{gathered}
\end{equation*}
Thus the contribution from this face is
\begin{multline*}
	\int_{\bhsh{\phi\phi,1}(N)\cap \diag_M} \str(\cK)\rest{\bhsh{\phi\phi,1}(N)}\\
	=
	\int_{N/B} \hat A(N/B) \sideset{^R}{_0^{\infty}}\int \int_{\bhs{N}/N} 
		\str'_{\Cl(N/B)}\lrpar{\exp ( - (\bbA_{C(\phi_N)/N,Q}^t)^2) }\rest{s=1} \; \frac{d\tau}\tau \\
	=
	\int_{N/B} \hat A(N/B) \cJ_Q(\bhs{N}/N)
\end{multline*}
as required.
\end{proof}

\subsection{Bismut-Cheeger $\eta$ and $\cJ$ forms} \label{sec:BCJeta}

Bismut and Cheeger \cite{BC1989,BC1990, BC1990II, BC1991} defined differential forms on $B,$ $\eta$ and $\cJ$ in the setting of closed manifold fibers. The former were also defined on spaces of depth one by Cheeger \cite[\S 8]{C1987}, for isolated conic singularities, and by Piazza-Vertman \cite{Piazza-Vertman} in general. Melrose and Piazza \cite{Melrose-Piazza:Even,Melrose-Piazza:Odd}  introduced $\eta$-forms for perturbed Dirac-type operators.
In this section we generalize their construction to compatibly
perturbed families of wedge Dirac-type operators with vertical APS
domain satisfying the Witt condition.

We define these forms for a family of manifolds 
with iterated fibration structures without assuming that they come from a boundary fibration. To differentiate the fiber bundle used in this subsection with the one in the main body of the text, we will adopt the notation
\begin{equation*}
	\hat X \fib \hat M \xlra{\hat\psi} B.
\end{equation*}

As we will review, both the $\eta$-forms and $\cJ$-forms can be
thought of as arising from $\hat M \times \bbR^+.$ In the former case
the factor of $\bbR^+$ is added to the base, in the latter it is added
to the fiber. Analogously to Definition \ref{def:Cl(F)Bdle}, we will use the following notation; for a bundle $\hat
X \fib \hat M \xlra{\hat\psi} B$ with vertical metric $g_{\hat M/B}$,
a $\Cl(1)$ bundle is a bundle $E \lra \hat M$ with an action of
$\Cl(T^*\hat M / B \oplus \mathbb{R})$ where the additional
$\mathbb{R}$ factor is orthogonal. (In particular the fibre is the
complexified Clifford algebra on $\mathbb{R}^{\dim \hat X + 1}$.)  We
will denote the additional generator of this Clifford algebra by $\gamma$.  \\

\begin{definition}
Let $E \lra \hat M$ be a $\hat\psi$-vertical wedge Clifford bundle if
$\dim \hat M/B$ is even dimensional and a $\Cl(1)$-bundle if $\dim
\hat M/B$ is odd and $Q$ a compatible perturbation such that the
associated family of Dirac-type operators $\eth_{\hat M/B,Q},$ endowed
with its vertical APS domain, satisfies the Witt assumption and is
such that $\ker \eth_{\hat M/B,Q}$ forms a vector bundle over $B.$

The {\bf Bismut-Cheeger $\eta$-form} of $\eth_{\hat M/B,Q},$
\begin{equation*}
	\eta_Q(\hat M/B) =\eta(\eth_{\hat M/B,Q}) \in \CI(B;\Lambda^*T^*B)
\end{equation*}
is given by
\begin{equation*}
	\eta_Q(\hat M/B) =
	\begin{cases}
	\displaystyle
	\frac1{2\sqrt{\pi}} \int_0^{\infty} \Str\lrpar{ 
	\frac{\pa (\bbA_{\hat M/B,Q}^t)}{\pa t} e^{-(\bbA_{\hat M/B,Q}^t)^2} } \; dt 
			& \Mif \dim (\hat M/B) \Mev \\
	\displaystyle
	\frac1{2\sqrt{\pi}} \int_0^{\infty} \Str_{\Cl(1)}\lrpar{ 
		\frac{\pa (\bbA_{\hat M/B,Q}^t)}{\pa t} e^{-(\bbA_{\hat M/B,Q}^t)^2} } \; dt 
			& \Mif \dim (\hat M/B) \Modd
	\end{cases}
\end{equation*}

The {\bf normalized $\eta$ form}, $\bar\eta_Q(\hat M/B)$ is the form obtained from $\eta_Q(\hat M/B)$ by multiplying the forms of degree $\ell$ by $(2\pi i)^{-\floor{\ell/2}}.$
\end{definition}

The form $\bar\eta_Q(\hat M/B)$ has even degree if $\dim \hat X$ is odd, and odd degree if $\dim \hat X$ is even.

Implicit in this definition is the fact that the integral
converges. As pointed out in, e.g., \cite[Theorem 2.10]{BFII},
\cite[\S 3]{BC1989}, \cite[Theorem 2.11]{Bismut-Gillet-Soule:II},
\cite[Proof of Theorem 10.32]{BGV2004}, this can be established by
considering the heat kernel of the Bismut superconnection in one
dimension higher. Following \cite[(117), Remark A.15]{Vaillant}, we
can treat the even and odd cases uniformly by extending the fiber
bundle $\hat \psi$ to 
\begin{equation*}
	\hat X \fib \hat M^+ = \hat M \times \bbR^+_s \xlra{\hat\psi \times \id} 
		B^+ = B \times \bbR^+_s,
\end{equation*}
and noting that, in terms of the extended Bismut superconnection $\bbA_{(\hat M^+)/(B^+)}$ corresponding to the natural extensions of the vertical covariant derivative and choice of horizontal tangent bundle,
\begin{equation*}
	\hat \eta(\eth_{\hat M/B})
	= \int_0^{\infty}  \df i(\pa_s) 
	\Str'\big(e^{- ( (\bbA_{\hat M^+/B^+,Q}^t )^2 )} \big) 
		\; dt
\end{equation*}
where $\Str'$ denotes the appropriate supertrace on $\Lambda^*T^*(B\times \bbR^+)\otimes E$ corresponding to the parity of $\dim \hat M/B.$
The short-time asymptotic expansion of the supertrace of the heat
kernel together with the long-time limit implies the convergence of
this integral and so the well-definedness of the $\eta$-forms.
    \\

Next we define the Bismut-Cheeger $\cJ$-forms in our context.
Suppose $E \lra \hat M$ is a $\Cl(1)$-bundle so we have an action of
$\cl(\gamma)$ on $E$ (see above) and that $E$ is $\bbZ_2$-graded if $\dim \hat M/B$ is
odd.  
Consider the extension of $\hat\psi$ to
\begin{equation*}
	\hat X^+ = \hat X \times \bbR^+ \fib \hat M^+ = \hat M \times \bbR^+ \xlra{\hat\psi^+} B.
\end{equation*}
and the warped product metric
\begin{equation*}
	g^+ = g_{\hat M^+/B} = ds^2 + s^2 g_{\hat M/B}.
\end{equation*}
Recall from \eqref{eq:LCConCyl} that the Levi-Civita connection of $g^+$ on the wedge cotangent bundle differs from the product Levi-Civita connection by
\begin{equation}\label{eq:LC-prod-cone}
	\nabla^{g^+}_W\theta 
	= \nabla^{\oplus}_{W}
	+ \lrpar{ g^+(dx,\theta)(\tfrac1x\bv W)^{\flat} - g^+((\tfrac1x\bv W)^{\flat},\theta)dx}.	
\end{equation}

Let $E^+,$ denote $E$ pulled-back to $\hat M^+$ via $\pi:\hat M^+ \lra
\hat M$ and $g_{E^+}$ the pull-back metric of $g_E$. Over $\hat M^+$ we have a bundle isomorphism
\begin{equation}\label{eq:warp-cliff-act}
	{}^\w T\hat M^+/B \cong \ang{ds} \oplus s \; T^*\hat M/B
	\xlra{\Xi}
	\ang{\gamma} \oplus T^*\hat M/B, \quad 
	a_0\; ds + a_i \; s\theta^i
	\mapsto 
	a_0\gamma + a_i\theta^i 
\end{equation}
which we use to define a Clifford action of ${}^\w T^*\hat M^+/B$ on $E,$
\begin{equation*}
	\cl^+(\theta) = \cl(\Xi(\theta)).
\end{equation*}
This Clifford action is compatible with the metric $g_{E^+}.$
We modify the connection on $E$ to get a connection on $E^+$ following \eqref{eq:EConCyl},
\begin{equation}\label{eq:E-prod-cone}
	\nabla^{E^+}_W
	=\pi^*\nabla^{E}_W
	+ \tfrac12 \cl^+(dx)\cl^+((\tfrac1x\bv W)^{\flat}).
\end{equation}
This is a $g_{E^+}$ metric connection compatible with the extended Clifford action $\cl^+$ and the Levi-Civita connection of $g^+.$
Hence $E^+$ is a $\hat\psi^+$-vertical wedge Clifford module.

The families of Dirac operators on $\hat M^+ / B$
  produced by the preceding construction do not
  constitute arbitrary families; indeed, those produced here have in
  some sense $s$-independent twisting bundle (though the twisting
  bundle is only defined globally in the spin case).  In fact the twisting
  curvature of the connection $\nabla^{E^+}$ is $s$ and
  $ds$-independent:
  \begin{equation}\label{eq:twisting-s-ind}
K_E'(\pa_s, \cdot) \equiv 0 \equiv \pa_s K_E' 
\end{equation}
Indeed, locally in the bulk of $\hat M$, the Clifford action induces a
splitting $E \simeq \cS(T^*\hat M^+ /B) \otimes W$ with $\cS(T^*\hat
M^+ /B)$ the (locally defined) bundle of spinors, with Clifford
multiplication acting on the left and the connection decomposing as a
tensor product connection ${}^{\cS}\nabla \otimes \id + \id \otimes
{}^{W} \nabla$.  The pullback construction above changes the
connection only by adding a zeroeth order Clifford multiplication
term, i.e.\ only by modifying the connection on the spinor bundle
part.  The twisting curvature depends only on ${}^W\nabla$ and is
therefore \eqref{eq:twisting-s-ind} holds.

A choice of connection for $\hat\psi,$
\begin{equation*}
	{}^\w T\hat M \cong {}^\w T\hat M/B \oplus \hat\psi^*TB,
\end{equation*}
readily extends to a choice of connection for $\hat \psi^+,$
\begin{equation*}
	{}^\w T\hat M^+ \cong {}^\w T\hat M^+/B \oplus (\hat\psi^+)^*TB.
\end{equation*}
The fundamental tensors \eqref{eq:FunTensors} of $\hat \psi$ and $\hat\psi^+$ are essentially unchanged,
\begin{equation*}
	\cS^{\hat\psi^+}(W_1, W_2)(A) = \cS^{\hat\psi}(W_1, W_2)(A), \quad
	\hat\cR^{\hat\psi^+}(A_1, A_2)(W) = \hat\cR^{\hat\psi}(A_1, A_2)(W),
\end{equation*}
where the latter tensors are understood to vanish if any of the vertical vector fields is $\pa_s.$
Thus the Bismut superconnection on $\hat M^+$ is given by (cf. Lemma \ref{lem:ModelDirac} and \cite[Lemma 1.2]{Albin-GellRedman})
\begin{equation*}
	\bbA_{\hat M^+/B} 
	= \lrpar{ \tfrac1s \eth_{\hat M/B} +\cl(ds)\pa_s + \tfrac{\dim \hat M/B}2 \cl(ds)} 
	+ (\bbA_{\hat M/B})_{[1]}
	+ (\bbA_{\hat M/B})_{[2]}
\end{equation*}
and the square of the rescaled Bismut superconnection satisfies \cite[(6.37)]{BC1990II}
\begin{equation*}
	(\bbA_{\hat M^+/B}^t)^2
	= -t(\pa_s + \tfrac{\dim \hat M/B}{2s})^2
	+ (\bbA_{\hat M/B}^{t/ s^2})^2
	+ \tfrac t{s^2} \cl(ds)\bbA_{\hat M/B, [0]}
	+ \tfrac14 \cl(ds) \bbA_{\hat M/B, [2]}.
\end{equation*}

\begin{definition}
With notation as above, the {\bf Bismut-Cheeger $\cJ$-form} of $\eth_{\hat M/B,Q},$
\begin{equation*}
	\cJ_Q(\hat M/B) = \cJ(\eth_{\hat M/B,Q}) \in \CI(B;\Lambda^*T^*B)
\end{equation*}
is given by
\begin{equation*}
	\cJ_Q(\hat M/B) =
	\begin{cases}
	\displaystyle
	-\int_0^{\infty} \int_{\hat M/B} 
	\str\lrpar{ 
	e^{-(\bbA_{\hat M^+/B}^t)^2 }\rest{\diag_{\hat M^+},s=1} }\; \frac{dt}{2t}
			& \Mif \dim (\hat M^+/B) \Mev \\
	\displaystyle
	-\int_0^{\infty} \int_{\hat M/B} 
	\str_{\Cl(1)}\lrpar{ 
	e^{-(\bbA_{\hat M^+/B}^t)^2 }\rest{\diag_{\hat M^+},s=1} }\; \frac{dt}{2t}
			& \Mif \dim (\hat M^+/B) \Modd
	\end{cases}
\end{equation*}
\end{definition}

The convergence of the integral follows as in \cite[\S VI(a)]{BC1990II} from the dilation invariance property \eqref{eq:DilationInv}. Indeed, the short time asymptotic expansion of the supertrace from \eqref{eq:ShortTimeAsympStr} and Proposition \ref{prop:ShortTimeLimit} guarantees the convergence as $t \to 0,$ and the vanishing of the null space established in Proposition \ref{prop:NyInjSA} together with the resulting decay of the heat kernel, establishes the convergence as $t\to \infty.$\\

\subsection{Even dimensional fibers}
Let $M \xlra{\psi} B$ be a fiber bundle of manifolds with corners and iterated fibration structures such that $\dim(M/B)$ is even. Let $E \lra M$ be a wedge Clifford bundle, with associated Dirac-type operator $\eth_{M/B}$ and $Q$ a compatible perturbation such that $\eth_{M/B,Q}$ with its vertical APS domain satisfies the Witt assumption.\\

We may, as in \cite[Lemma 1.1]{Melrose-Rochon:FCusp}, perturb $\eth_{M/B,Q}$ by smoothing operators compactly supported in the interior of $M$ without changing the families index in K-theory and so that the null spaces form a vector bundle over $B.$ Since this perturbation is supported in the interior it will not change the boundary families of $\eth_{M/B,Q}$ and the arguments in \cite[Proposition 9.46]{BGV2004} apply to show that the effect on the short-time asymptotic expansion of the trace of the heat kernel is $\cO(t).$ By incorporating this perturbation into $Q,$ we will {\em assume that $\ker \eth_{M/B,Q}$ forms a smooth vector bundle over $B.$}

Given an arbitrary superconnection on $M \lra B,$ $\bbA,$ the {\bf Chern character} of $\bbA$ is 
\begin{equation*}
	\Ch(\bbA) = \Str(e^{-\bbA^2}).
\end{equation*}

For the Bismut superconnection and a compatible perturbation, $\bbA_{M/B,Q},$ the arguments in \cite[\S9.3]{BGV2004} apply directly and show that
\begin{equation*}
	\frac{\pa}{\pa t}\Ch(\bbA_{M/B,Q}^t) = 
		-d_B\Str\lrpar{ \frac{\pa \bbA_{M/B,Q}^t}{\pa t}e^{-(\bbA_{M/B,Q}^t)^2}}
\end{equation*}
and
\begin{equation*}
	\lim_{t\to\infty} \Ch(\bbA_{M/B,Q}^t) = \Ch(\Ind(\eth_{M/B,Q}), \nabla^{\Ind})
\end{equation*}
where $\Ind(\eth_{M/B,Q})$ is the virtual index bundle of $\eth_{M/B,Q}$ and $\nabla^{\Ind}$ is the contraction,
\begin{equation*}
	\nabla^{\Ind} = \cP_{\Ind}(\bbA_{M/B,Q})_{[1]}\cP_{\Ind}.
\end{equation*}
Proposition \ref{prop:ShortTimeLimit} and integration in $t$ yield the families index formula.
$ $\\

\begin{theorem}
Let $M \xlra\psi B$ be a fiber bundle of manifolds with corners and iterated fibration structures with even-dimensional fibers, $E \lra M$ a $\bbZ_2$-graded wedge Clifford bundle with associated Dirac-type operator $\eth_{M/B}$ and let $Q$ be a compatible perturbation.
If $\eth_{M/B,Q}$ with its vertical APS domain satisfies the Witt assumption, then
\begin{multline*}
	\Ch_{\ev}(\Ind(\eth_{M/B}), \nabla^{\Ind})
	=
	\int_{M/B} \hat A(M/B) \Ch'(E) 
	- \sum_{N \in \cS_{\psi}(M)} 
	\int_{N/B} \hat A(N/B) \cJ_Q(\bhs{N}/N) \\
	+ d\eta_Q(M/B)
\end{multline*}
\end{theorem}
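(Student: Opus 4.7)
The strategy is the standard heat equation proof of the families index theorem, adapted to the wedge setting via the three technical pillars established earlier: the transgression formula, the long-time limit identifying the Chern character of the index bundle, and the short-time limit given by Proposition \ref{prop:ShortTimeLimit}. First, as already noted, I would reduce to the case where $\ker \eth_{M/B,Q}$ is a vector bundle over $B$ by adding a smoothing perturbation supported in the interior of $M$; this does not affect the boundary families of $\eth_{M/B,Q},$ hence does not affect either the Witt condition, the $\cJ$-forms, or the short-time asymptotic expansion to leading order, and it changes neither the families K-theory index nor the left hand side of the formula.

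Next I would integrate the transgression identity
\begin{equation*}
	\frac{\pa}{\pa t}\Ch(\bbA_{M/B,Q}^t)
	= -d_B\Str\!\lrpar{ \frac{\pa \bbA_{M/B,Q}^t}{\pa t}\,e^{-(\bbA_{M/B,Q}^t)^2}}
\end{equation*}
from $t=\eps$ to $t=T$ and apply the two limits. The long-time limit is $\Ch(\Ind(\eth_{M/B,Q}),\nabla^{\Ind}),$ established as in \cite[\S9.3]{BGV2004}, using that after our perturbation $\ker\eth_{M/B,Q}$ is a smooth bundle so the standard Duhamel/projection argument applies. The short-time limit is exactly the content of Proposition \ref{prop:ShortTimeLimit}, giving the bulk $\hat A$--$\Ch'$ term together with the boundary $\cJ$-contributions. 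Passing $d_B$ through the $t$-integral, the right-hand-side integrand is by definition $\tfrac{1}{2\sqrt\pi}$ times the integrand of $\eta_Q(M/B)$ (up to the normalization built into $\Ch_{\ev}$), so the integral $\int_\eps^T \cdots dt$ converges as $\eps \to 0,$ $T\to\infty$ to $d\eta_Q(M/B).$

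The principal obstacle — and the only place the depth of $X$ enters nontrivially — is the uniform control of the short-time integrand needed to justify the limit $\eps \to 0$ under $d_B.$ By Corollary \ref{cor:ShortTimeTraceExp} the supertrace has an expansion in powers $\tau^j(\log\tau)^k$ with $j\ge 1$ for $k>0,$ so $\Str(\partial_t\bbA^t\,e^{-(\bbA^t)^2})$ is integrable near $t=0$; the extra vertical differentiation needed to identify the renormalized limits with $\hat A\!\cdot\!\Ch'$ and $\hat A\!\cdot\!\cJ_Q$ is handled by the Getzler rescaling of \S\ref{sec:Getzler}, together with the compatibility of the rescaled normal operators at $\bhsh{dd,1}$ and $\bhsh{\phi\phi,1}(N)$ given by Theorem \ref{thm:WedgeRescaledHeatKer} and Lemma \ref{lem:ResNormOpsA}. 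In particular the identification of the $\bhsh{\phi\phi,1}(N)$ contribution as $\int_{N/B} \hat A(N/B)\,\cJ_Q(\bhs{N}/N)$ uses that $\cN^G_{\bhsh{\phi\phi,1}(N)}(e^{-t\bbA_{M/B}^2})$ factors through the Bismut superconnection on the mapping cylinder $C(\phi_N)$ — exactly the data built into $\cJ_Q(\bhs{N}/N).$

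Finally the long-time end requires uniform decay of $\partial_t\bbA^t\,e^{-(\bbA^t)^2}$ in $t,$ which follows from the Witt assumption: by Theorem \ref{thm:FredholmResolvent} the operator $\eth_{M/B,Q}$ has purely discrete spectrum with a spectral gap away from $0$ after projecting onto $(\ker\eth_{M/B,Q})^{\perp},$ and the standard Bismut–Gillet–Soul\'e argument shows Gaussian decay of the off-diagonal part of the heat supertrace, identifying the long-time limit with $\Ch(\Ind(\eth_{M/B,Q}),\nabla^{\Ind}).$ Combining the two endpoint limits with the transgression identity and collecting terms yields the stated formula.
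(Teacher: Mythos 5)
Your proposal follows essentially the same route as the paper: reduce to the case where $\ker \eth_{M/B,Q}$ is a vector bundle by an interior smoothing perturbation, integrate the transgression identity from \cite[\S 9.3]{BGV2004}, identify the $t\to\infty$ endpoint with $\Ch(\Ind,\nabla^{\Ind})$ and the $t\to 0$ endpoint with the output of Proposition~\ref{prop:ShortTimeLimit}, and read off the $d\eta_Q(M/B)$ term from the definition of the $\eta$-form. The paper states this proof in two short sentences, so your version, which makes explicit the roles of Corollary~\ref{cor:ShortTimeTraceExp}, the Getzler rescaling of \S\ref{sec:Getzler}, and the Witt-assumption-based decay, is correct and simply more detailed.
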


\subsection{Odd dimensional fibers}

A standard argument going back to \cite{Atiyah-Singer:Skew}  reduces the families index for odd dimensional fibers to the families index for even dimensional fibers by suspension. This was carried out for Dirac operators on closed manifolds in \cite{BFII}.
We will follow the treatment of Melrose and Piazza \cite{Melrose-Piazza:Odd}, though note that our Clifford multiplication conventions differ by a factor of $i.$

Lemma 2 of \cite[\S5]{Melrose-Piazza:Odd} says that: If $L_1$ and $L_2$ are Clifford modules, with Clifford actions $\cl_1$ and $\cl_2,$ over Riemannian manifolds $X_1,$ $X_2,$ respectively, then the bundle
\begin{equation*}
	L = L_1 \otimes L_2 \otimes \bbC^2 \lra X_1 \times X_2
\end{equation*}
has a Clifford action compatible with the product metric on $X = X_1 \times X_2$ given by
\begin{equation*}
\begin{gathered}
	\cl(\alpha) 
	= \cl_1(\alpha) \otimes \Id \otimes \Gamma_1 \Mforall \alpha \in \CI(X_1; T^*X_1) \\
	\cl(\beta) 
	= \Id \otimes \cl_2(\beta) \otimes \Gamma_2 \Mforall \beta \in \CI(X_2; T^*X_2)
\end{gathered}
\end{equation*}
for any choice of $\Gamma_i \in \cM_2(\bbC)$ satisfying
\begin{equation*}
	\Gamma_1\Gamma_2 + \Gamma_2 \Gamma_1 = 0, \quad
	\Gamma_1^2 = \Gamma_2^2 = -1.
\end{equation*}
For example we can take
\begin{equation*}
	\Gamma_1 = \begin{pmatrix} 0 & i \\ i & 0 \end{pmatrix}, \quad
	\Gamma_2 = \begin{pmatrix} 0 & -1 \\ 1 & 0 \end{pmatrix}.
\end{equation*}
If $X_1 \times X_2$ is even-dimensional then $L$ can be taken to be $\bbZ_2$-graded as follows:
If $X_1$ and $X_2$ are both even-dimensional, so that $L_1$ and $L_2$ are $\bbZ_2$-graded, then we  take the product grading of $L_1 \otimes L_2$ and then tensor with $\bbC^2.$ If $X_1$ and $X_2$ are both odd-dimensional, so that $L_1$ and $L_2$ are ungraded, then we put a grading on $\bbC^2$ by 
\begin{equation*}
	(\bbC^2)^+ = \bbC \oplus \{0\}, \quad
	(\bbC^2)^- = \{0\} \oplus \bbC,
\end{equation*}
and then tensor with $L_1 \otimes L_2.$
Endowing $\bbC^2$ with the trivial metric and connection, $L$ has the structure of a Clifford module over $X_1 \times X_2.$ \\

Set $\bbT = \bbS^1_\theta \times \bbS^1_\xi,$ where we parametrize the first circle by $\theta \in [0,2\pi]$ and the second by $\xi \in [0,\pi],$ and let $L_{\bbT}$ be the Hermitian line bundle over $\bbT$ given by identifying the points $(\theta, 0, v)$ with $(\theta, \pi, e^{-i\theta}v)$ endowed with the Hermitian connection
\begin{equation*}
	\nabla^{L_\bbT} = d -\tfrac{2\xi-1}{2\pi i} \; d\theta.
\end{equation*}
These data define a family of Dirac operators on the fibers of 
\begin{equation*}
	\bbT \xlra{\psi_{\bbT}} \bbS^1_\xi
\end{equation*}
given by $\eth_{\bbT/\bbS^1} = \tfrac1i\pa_\theta + \tfrac{2\xi-1}{2\pi}$ with spectral flow equal to one.\\

We replace the original fiber bundle $M \xlra{\psi} B$ with 
\begin{equation*}
	X \times \bbS^1_{\theta} \fib S^2M = M \times \bbT \xlra{\psi\times\psi_{\bbT}} 
		SB = B \times \bbS^1_\xi,
\end{equation*}
and each boundary fiber bundle $\bhs{N} \xlra{\phi_N} N$ with
\begin{equation*}
	S^2\bhs{N} = \bhs{N} \times \bbT \xlra{\phi_N \times \psi_{\bbT}} SN = N \times \bbS^1_t,
\end{equation*}
replace $E \lra M$ with $F = E \otimes L \otimes \bbC^2,$ 
and then the extension of the Clifford structure described above yields the family of Dirac-type wedge operators
\begin{equation*}
	\eth_{S^2M/SB}
	= \begin{pmatrix}
	0 & \eth_{M/B}\otimes \Id + \Id \otimes \eth_{\bbT/\bbS^1} \\
	\eth_{M/B}\otimes \Id - \Id \otimes \eth_{\bbT/\bbS^1} & 0 
	\end{pmatrix}.
\end{equation*}

The invertibility of $\eth_{\bbT/\bbS^1}$ on each fiber of $\psi_{\bbT},$ together with the invertibility of the boundary families of $\eth_{M/B,Q}$ easily yields the invertibility of the boundary families of $\eth_{S^2M/SB,Q},$ as well as the invertibility of $\eth_{S^2M/SB,Q}$ on the fibers lying over some neighborhood of the point $\{t=0\} \in \bbS^1_\xi,$ as in \cite[Lemma 3]{Melrose-Piazza:Odd}.
 
This implies that the index class of $\eth_{M/B,Q}$ in the odd K-theory of $B$ is mapped by suspension into the index class of $\eth_{S^2M/SB,Q}$ \cite[Proposition 6]{Melrose-Piazza:Odd}. Thus the odd Chern character of the index class of $\eth_{M/B,Q}$ satisfies
\begin{equation*}
	\Ch_{\odd}(\Ind(\eth_{M/B,Q})) 
	= \frac i{2\pi} \int_{\bbS^1_\xi} \Ch_{\ev}(\Ind(\eth_{S^2M/SB,Q}))
\end{equation*}
and we obtain a formula for the odd Chern character by integrating our formula for the even dimensional Chern character over the circle.
$ $\\

This leaves us to consider the effect of suspension on the $\cJ_Q$-forms.
Let us start by recalling \cite[\S8]{Melrose-Piazza:Odd} the effect of suspension on the Bismut superconnection. The Bismut superconnection depends on the choice of a vertical metric and connection, in this case we take
\begin{equation*}
	T(S^2M/SB) = TM/B \oplus T\bbS^1_{\theta},\quad
	g_{S^2M/SB} = g_{M/B} \oplus d\theta^2,
\end{equation*}
where we leave implicit the pull-back maps.
The corresponding Bismut superconnection satisfies
\begin{equation*}
\begin{gathered}
	\bbA_{S^2M/SB,Q}
	= \lrpar{\eth_{M/B,Q} \otimes \Gamma_2 + \eth_{\bbT/\bbS^1} \otimes \Gamma_1}
	+ \lrpar{\bbA_{[1]} + \df e(d\xi)\pa_{\xi} } + \bbA_{[2]} \otimes \Gamma_2 \\
	\bbA_{S^2M/SB,Q}^2
	= \bbA_{M/B,Q}^2 + \eth_{\bbT/\bbS^1}^2 
	+ \tfrac1{\pi i}\; \df e(d\xi)(\cl(d\theta)\otimes \Gamma_1)
\end{gathered}
\end{equation*}
$ $\\
$ $\\

Let us describe the $\cJ$-forms occurring in the index theorem on $S^2M/SM.$
Let $N \in \cS_{\psi}(M)$ so that $S^2M$ has the boundary fiber bundle $S^2\bhs{N} \lra SN.$
The Bismut superconnection on $S^2\bhs{N}^+/SN$ is given by
\begin{equation*}
\begin{gathered}
	\bbA_{S^2\bhs{N}^+/SN,Q} 
	= \lrpar{ \tfrac1s \eth_{S^2\bhs{N}/SN,Q} +\cl(ds)\pa_s + \tfrac{\dim \hat S^2\bhs{N}/SN}2 \cl(ds)} 
	+ (\bbA_{S^2\bhs{N}/SN})_{[1]}
	+ (\bbA_{S^2\bhs{N}/SN})_{[2]} \\
	= \tfrac1s\lrpar{\eth_{M/B,Q} \otimes \Gamma_2 + \eth_{\bbT/\bbS^1} \otimes \Gamma_1}
	+ \cl(ds)\pa_s + \tfrac{1+\dim\bhs{N}/N}2 \cl(ds)
	+  \lrpar{\bbA_{[1]} + \df e(d\xi)\pa_{\xi} } + \bbA_{[2]} \otimes \Gamma_2
\end{gathered}
\end{equation*}
and hence its square can be written
\begin{equation*}
\begin{gathered}
	(\bbA_{S^2\bhs{N}^+/SN,Q}^t)^2
	= -t(\pa_s + \tfrac{\dim S^2\bhs{N}/SN}{2s})^2
	+ (\bbA_{S^2\bhs{N}/SN,Q}^{t/ s^2})^2
	+ \tfrac t{s^2} \cl(ds)\bbA_{S^2\bhs{N}/SN, Q,[0]} \\
	+ \tfrac14 \cl(ds) \bbA_{S^2\bhs{N}/SN, [2]} \\
	= -t(\pa_s + \tfrac{1 + \dim \bhs{N}/N}{2s})^2
	+ (\bbA_{M/B,Q}^{t/ s^2})^2 + \tfrac t{s^2}\eth_{\bbT/\bbS^1}^2 
	+ \tfrac{\sqrt t}{s\pi i}\; \df e(d\xi)(\cl(d\theta)\otimes \Gamma_1)\\
	+ \tfrac t{s^2} \cl(ds)
	\lrpar{\eth_{M/B,Q} \otimes \Gamma_2 + \eth_{\bbT/\bbS^1} \otimes \Gamma_1}
	+ \tfrac14 \cl(ds) \bbA_{[2]} \otimes \Gamma_2 \\
	=\tfrac{\sqrt t}{s\pi i}\; \df e(d\xi)(\cl(d\theta)\otimes \Gamma_1)	
	+ \tfrac t{s^2}(\eth_{\bbT/\bbS^1}^2 + \cl(ds)(\eth_{\bbT/\bbS^1} \otimes \Gamma_1) )
	+ s^{1/2}(\bbA_{\bhs{N}^+/N,Q}^t)^2  s^{-1/2}.
\end{gathered}
\end{equation*}
Note that in the final formula the three summands commute, and so we have
\begin{multline*}
	\exp( -(\bbA_{S^2\bhs{N}^+/SN,Q}^t)^2 ) 
	= \lrpar{ 1+ 
	\tfrac{\sqrt t}{s\pi i}\; \df e(d\xi)(\cl(d\theta)\otimes \Gamma_1) }
	\exp\lrpar{ 
		-\tfrac t{s^2}(\eth_{\bbT/\bbS^1}^2 + \cl(ds)(\eth_{\bbT/\bbS^1} \otimes \Gamma_1) ) }\\
	\exp\lrpar{-  s^{1/2}(\bbA_{\bhs{N}^+/N,Q}^t)^2  s^{-1/2} }.
\end{multline*}
Now, from the odd families index theorem of Bismut-Freed, we have
\begin{multline*}
	\frac i{2\pi} 
	\int_{\bbS^1_{\xi}} \Tr\lrpar{ 
	\lrpar{ 1+ 
	\tfrac{\sqrt t}{s\pi i}\; \df e(d\xi)(\cl(d\theta)\otimes \Gamma_1) }
	\exp\lrpar{ 
		-\tfrac t{s^2}(\eth_{\bbT/\bbS^1}^2 + \cl(ds)(\eth_{\bbT/\bbS^1} \otimes \Gamma_1) ) }}\\
	=
	\frac i{2\pi} 
	\int_{\bbS^1_{\xi}} \Tr\lrpar{ 
	\tfrac{\sqrt t}{s\pi i}\; \cl(d\theta)
	\exp\lrpar{ 
		-\tfrac t{s^2}( (\cl(ds)\eth_{\bbT/\bbS^1} +\tfrac12)^2 ) }} \\
	= \text{spectral flow}(\cl(ds)\eth_{\bbT/\bbS^1}+1/2) = 1
\end{multline*}
and hence the $\cJ$-forms satisfy
\begin{equation*}
	\frac i{2\pi} \int_{\bbS^1_\xi} \cJ_Q(S^2\bhs{N}/SN)
	= \cJ_Q(\bhs{N}/N).
\end{equation*}

\begin{theorem}
Let $M \xlra\psi B$ be a fiber bundle of manifolds with corners and iterated fibration structures with odd-dimensional fibers, $E \lra M$ a wedge Clifford bundle with associated Dirac-type operator $\eth_{M/B}$ and compatible perturbation $Q.$
If $\eth_{M/B,Q}$ with its vertical APS domain satisfies the Witt assumption, then
\begin{multline*}
	\Ch_{\odd}(\Ind(\eth_{M/B,Q}), \nabla^{\Ind})
	=
	\int_{M/B} \hat A(M/B) \Ch'(E) 
	- \sum_{N \in \cS_{\psi}(M)} 
	\int_{N/B} \hat A(N/B) \cJ_Q(\bhs{N}/N) \\
	+ d\eta_Q(M/B).
\end{multline*}
\end{theorem}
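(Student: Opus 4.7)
The plan is to apply the even-dimensional families index theorem just proved to the suspended family $\eth_{S^2M/SB,Q}$ constructed in the preceding discussion, and then integrate the resulting identity over the circle $\bbS^1_\xi$ against the factor $\tfrac{i}{2\pi}.$ The hypotheses of the even theorem transfer to the suspended data: the bundle $F = E \otimes L_\bbT \otimes \bbC^2$ is a $\bbZ_2$-graded wedge Clifford module along the fibers of $\psi \times \psi_\bbT,$ the boundary families of $\eth_{S^2M/SB,Q}$ remain invertible by the invertibility of $\eth_{\bbT/\bbS^1}$ together with the Witt assumption for $\eth_{M/B,Q}$ (as in Lemma~3 of \cite{Melrose-Piazza:Odd}), and the null space can be stabilized to a bundle over $SB$ by an interior smoothing perturbation that does not alter the boundary contributions. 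The suspension isomorphism in K-theory gives $\Ch_{\odd}(\Ind(\eth_{M/B,Q})) = \tfrac{i}{2\pi}\int_{\bbS^1_\xi}\Ch_{\ev}(\Ind(\eth_{S^2M/SB,Q})),$ so integrating the even identity over $\bbS^1_\xi$ produces a formula of the desired shape on $B.$

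The bulk and $\cJ$-form contributions then essentially reduce to bookkeeping. Since $g_{S^2M/SB} = g_{M/B} \oplus d\theta^2$ is a product with a flat circle, $\hat A(S^2M/SB)$ pulls back from $\hat A(M/B),$ and the twisting curvature of $F$ splits multiplicatively so that $\Ch'(F)$ decomposes into $\Ch'(E)$ times a form on $\bbT$ whose fiber integral picks up the spectral flow of $\eth_{\bbT/\bbS^1},$ which equals $1.$ Fubini then yields
\[
	\tfrac{i}{2\pi}\int_{\bbS^1_\xi}\int_{S^2M/SB}\hat A(S^2M/SB)\,\Ch'(F)
	= \int_{M/B}\hat A(M/B)\,\Ch'(E),
\]
and the same computation applied face by face, together with the identity $\tfrac{i}{2\pi}\int_{\bbS^1_\xi} \cJ_Q(S^2\bhs{N}/SN) = \cJ_Q(\bhs{N}/N)$ already established at the end of the excerpt, reduces the sum over $N \in \cS_{\psi}(M)$ to the form stated.

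The main obstacle is the term $\tfrac{i}{2\pi}\int_{\bbS^1_\xi} d\eta_Q(S^2M/SB).$ The approach is to commute $d$ on $SB = B \times \bbS^1_\xi$ past the integral over $\bbS^1_\xi$ and to isolate the $\df e(d\xi)$-component of the integrand defining $\eta_Q(S^2M/SB).$ From the explicit formula for $(\bbA_{S^2M/SB,Q}^t)^2$ displayed above, $d\xi$ enters only through $\df e(d\xi)\pa_\xi$ and $\tfrac{1}{\pi i}\df e(d\xi)(\cl(d\theta)\otimes\Gamma_1);$ after contracting with $\pa_\xi$ and applying Duhamel together with the $\bbS^1_\theta$-integration, the latter contributes the same spectral-flow factor of $1$ that appeared in the $\cJ$-form calculation, namely the spectral flow of $\cl(ds)\eth_{\bbT/\bbS^1}+\tfrac12.$ What remains is the argument of Melrose-Piazza \cite{Melrose-Piazza:Odd}, which adapts essentially verbatim; the only new input is the polyhomogeneous structure of the heat kernel from Theorem~\ref{thm:WedgeHeatKer}, which guarantees absolute convergence of the $t$-integral defining $\eta_Q$ and allows the interchange of $\int_0^\infty dt,$ $\int_{\bbS^1_\xi},$ and fiber integration. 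This yields $\tfrac{i}{2\pi}\int_{\bbS^1_\xi} d\eta_Q(S^2M/SB) = d\eta_Q(M/B)$ and closes the identity.
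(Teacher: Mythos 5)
Your proposal is correct and follows essentially the same route as the paper: suspend to $\eth_{S^2M/SB,Q}$, apply the even-dimensional theorem, and integrate over $\bbS^1_\xi$, using the identity $\tfrac{i}{2\pi}\int_{\bbS^1_\xi}\cJ_Q(S^2\bhs{N}/SN)=\cJ_Q(\bhs{N}/N)$ together with the factorization of the bulk term through the spectral flow of $\eth_{\bbT/\bbS^1}$. You spell out the $\eta$-form bookkeeping a bit more explicitly than the paper (which leaves it implicit, citing Melrose--Piazza), but this is a detail filled in rather than a different argument.
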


\section{An extended index formula and the relation between $\cJ$ and
  $\eta$} \label{sec:extended}

In \S\ref{sec:ChernChar}, we have found a formula for the Chern character of the index of a family of wedge Dirac-type operators in terms of the Bismut-Cheeger $\cJ$-forms. In this section we establish the relation between the $\cJ$-forms and the $\eta$ forms. In the process we establish a families index theorem on manifolds with corners and an iterated fibration structure endowed with a metric that is of wedge `type' at all boundary hypersurfaces save one, where it is of `b' or asymptotically cylindrical type.\\

Before we start, we give an example to show that $\cJ$ and $\hat \eta$ do not coincide in general. Consider an embedded surface, $Y,$ in a closed spin 4-manifold $L$ and let $X = [L;Y],$ endowed with a wedge metric with constant cone angle $2\pi\beta,$
\begin{equation*}
	dx^2 + x^2\beta^2d\theta^2 + \phi_Y^*g_Y,
\end{equation*}
where $\phi_Y$ is the boundary fiber bundle $\bbS^1 \fib \pa X \xlra{\phi_Y} Y.$
It follows from \cite[Corollary 1.2, \S6.1]{Albin-GellRedman} that, if $\beta\leq 1,$
\begin{equation*}
	\int_Y\hat A(Y)\cJ(\pa X/Y) = \frac1{24}(\beta^2-1)[Y]^2, \quad
	\int_Y \hat A(Y)\eta(\pa X/Y) = \frac1{24}[Y]^2,
\end{equation*}
where $[Y]^2$ denotes the self-intersection number of $Y$ in $L.$

\subsection{b-c suspension}\label{sec:WarpedProd}

The ultimate aim of this section is to determine the relation between
the $\cJ$ and $\eta$ forms. We do not assume that the fiber bundles
treated here arise as boundary fiber bundle so to distinguish this
setting from that above we will use $\wc M$ instead of $M$ or $\bhs{N}$.  Starting with a fiber bundle
\begin{equation*}
	\wc X \fib \wc M \xlra{\wc\psi} \wc B,
\end{equation*}
we consider $\bbR^+_s \times \wc M$ together with a vertical metric that is of wedge type near $\{s=0\}$ and cylindrical away from $\{s=0\},$ which we refer to as a `b,wedge metric'. In this subsection we show that a wedge Clifford bundle on $\wc M$ with a compatible perturbation induces a b,wedge Clifford bundle on $\bbR^+\times \wc M$ with vanishing index. In the following subsection we will find an explicit formula for the Chern character of this index involving both $\eta(\wc M/\wc B)$ and $\cJ(\wc M/\wc B).$ \\

Let 
\begin{equation*}
	\underbrace{\bbR^+_s \times \wc X }_{\wc X^+} 
	\fib
	\underbrace{\bbR^+_s \times \wc M}_{\wc M^+}
	\xlra{\wc\psi^+}
	\wc B
\end{equation*}
be the extended fiber bundle.
Given a $\wc \psi$-vertical wedge metric $g_{\wc M/\wc B},$ let 
\begin{equation}\label{eq:DefMetgh}
	g^+_h = ds^2 + h(s)^2 g_{\wc M/\wc B}
\end{equation}
where $h$ is a smooth function satisfying
\begin{equation}\label{eq:DefTwist}
	h(s) = \begin{cases} s & \Mfor s<1 \\ 1 & \Mfor s>2 \end{cases}, \quad
	h(s)>0 \Mfor s>0.
\end{equation}

Just as in \S \ref{sec:BCJeta}, given a $\Cl(1)$-wedge Clifford bundle
$E \lra \wc M,$ we can endow the pull-back bundle $E \lra \wc M^+$
with an action of $\Cl({}^{\w}T^*\wc M^+/\wc B),$ a compatible
Hermitian metric $g^E$ and connection $\nabla^E.$ In particular the
connection $\nabla^{E^+}$ in \eqref{eq:E-prod-cone} and the Clifford
action \eqref{eq:warp-cliff-act} have obvious analogies when the
warping factor $s^2$ is replaced by an arbitrary warping factor $k(s)^2$.

Given a compatible perturbation $Q$ over $\wc M,$ let us also use $Q$ to denote the trivial extension of $Q$ to $\wc M^+.$\\

Consider a fiber $\wc X$ of $\wc \psi.$
Given a frame $\{V_1,\ldots, V_m\}$ for $\wc X$ over $\cU\subseteq \wc X,$ consider the frame $\{\pa_s, \wt V_i\}$ with $\wt V_i = \tfrac1h V_i,$ over $\bbR^+ \times \cU.$
Using the Koszul formula we can express the Levi-Civita connection of $g^+_h$ in terms of $h$ and the Levi-Civita connection of $g_{\wc X},$
\begin{equation*}
	\begin{tabular}{|c|c|c|} \hline
	$g^+_h(\nabla_{W_0}^+W_1,W_2)$ & $\pa_s $&$\wt V_2$   \\ \hline\hline
	$\nabla^+_{\pa_s}\pa_s $& $0 $& $0$ \\ \hline
	$\nabla^+_{\pa_s} \wt V_0 $&$ 0 $&$ 0$ \\ \hline
	$\nabla^+_{\wt V_0}\pa_s $&$ 0 $&$ \tfrac{h'}h g_{\wc X}(V_0, V_2) $ \\ \hline
	$\nabla^+_{\wt V_0}\wt V_1 $&$ -\tfrac{h'}h g_{\wc X}(V_0, V_1) $&
		$ \tfrac1h g_{\wc X}(\nabla_{V_0}V_1, V_2) $ \\ \hline
	\end{tabular}
\end{equation*}
Thus we have
\begin{equation*}
	\nabla^+_{\pa_s} = 0, \quad
	\nabla^+_{\wt V_0} \pa_s = \tfrac {h'}h \wt V_0, \quad
	\nabla^+_{\wt V_0} \wt V_1 = -\tfrac{h'}h g_{\wc X}(V_0, V_1) \pa_s + \tfrac1h \wt{\nabla_{V_0}V_1}
\end{equation*}
or, equivalently,
\begin{equation*}
	\nabla^+_{\pa_s} = 0, \quad
	\nabla^+_{V_0} \pa_s = h' \wt V_0, \quad
	\nabla^+_{V_0} \wt V_1 = -h' g_{\wc X}(V_0, V_1) \pa_s + \wt{\nabla_{V_0}V_1}.
\end{equation*}
This can be interpreted as saying that the Levi-Civita connection induces a connection on the `rescaled tangent bundle', locally spanned by $\{\pa_s, \wt V_i\}.$

Let $\{\theta^i\}$ be the dual coframe to $\{V_i\}$ on $\wc X,$ so that $\{ds, h\; \theta^i\}$ is a coframe on $\wc X^+,$ and the Dirac-type operator on $E$ is
\begin{equation*}
	D_{\wc X^+} 
	= \cl(ds) \nabla^E_{\pa_s} + \sum \cl(h \; \theta^i) \nabla^E_{\wt V_i}
	= \cl(ds) \pa_s + \sum \cl(h \; \theta^i) \nabla^E_{\wt V_i}.
\end{equation*}

We can use $\cl(ds)$ to split $E,$
\begin{equation*}
	E = E_{i} \oplus E_{-i}, \quad \cl(ds)\rest{E_{\pm i}} = \pm i,
\end{equation*}
and we have natural projections onto each summand,
\begin{equation*}
	\tfrac12(1\pm \tfrac1i \cl(ds)): E \lra E_{\pm i}.
\end{equation*}

Since $\nabla^E$ is a Clifford connection we have, for $\eps \in \{\pm1\},$ $\sigma \in \CI(\wc X^+;E),$ and $W$ a vector field satisfying $ds(W)=0,$
\begin{equation*}
\begin{multlined}
	\nabla_W^E (\tfrac12(1 + \tfrac{\eps}i \cl(ds) )\sigma)
	=\tfrac12(1 + \tfrac{\eps}i \cl(ds) ) \nabla_W^E \sigma
	+ \tfrac{\eps}{2i}\cl(\nabla_W ds) \sigma \\
	=\tfrac12(1 + \tfrac{\eps}i \cl(ds) ) \nabla_W^E \sigma
	+ \tfrac{\eps}{2i} \tfrac{h'}h \cl(W^{\flat}) \sigma,
\end{multlined}
\end{equation*}
or, in terms of the splitting of $E,$
\begin{equation*}
	\nabla^E_W = 
	\begin{pmatrix}
	\nabla^E_W & -\tfrac1{2i}\tfrac{h'}h \cl(W^{\flat}) \\
	\tfrac1{2i}\tfrac{h'}h \cl(W^{\flat}) & \nabla^E_W
	\end{pmatrix}.
\end{equation*}
Hence the associated Dirac-type operator satisfies
\begin{equation*}
\begin{gathered}
	D_{\wc X^+} = 
	\cl(ds)\pa_s + 
	\sum 
	\begin{pmatrix}
	0 & \cl(h\; \theta^i)\\
	\cl(h\; \theta^i) & 0 
	\end{pmatrix}
	\begin{pmatrix}
	\nabla^E_{\wt V_i} & -\tfrac1{2i}\tfrac{h'}h \cl(\wt V_i^{\flat}) \\
	\tfrac1{2i}\tfrac{h'}h \cl(\wt V_i^{\flat}) & \nabla^E_{\wt V_i}
	\end{pmatrix} \\
	= \begin{pmatrix}
	i(\pa_s + \tfrac m2 \tfrac{h'}h) & \tfrac1h\wc\eth_{\wc X} \\
	\tfrac1h\wc\eth_{\wc X} & -i(\pa_s + \tfrac m2 \tfrac{h'}h) 
	\end{pmatrix}
\end{gathered}
\end{equation*}
with $\wc\eth_{\wc X}= \sum \cl(\theta^i)\nabla_{V_i}^E.$
Thus 
\begin{equation*}
	h\eth_{\wc X^+,Q} = h^{m/2}(h(D_{\wc X^+}+Q))h^{-m/2} 
	= \begin{pmatrix}
	ih\pa_s  & \wc \eth_{\wc X,Q} \\
	\wc \eth_{\wc X,Q} & -ih\pa_s 
	\end{pmatrix}
	= \bigoplus_{\mu \in \Spec(\wc \eth_{\wc X,Q})}
	\begin{pmatrix}
	ih\pa_s  & \mu \\
	\mu & -ih\pa_s 
	\end{pmatrix},
\end{equation*}
where we have used that $\pa_s$ and $\wc \eth_{\wc X,Q}$ commute.
Define
\begin{equation*}
	R(s) = \int_1^s \frac{dt}{h(t)}
\end{equation*}
so that $\pa_R = h(s) \pa_s.$
If $\begin{pmatrix} a \\ b \end{pmatrix}$ is in the null space of $D_{\wc X^+}+Q$ and the $\mu$ eigenspace of $\wc\eth_{\wc X,Q},$ $\mu \neq 0,$ we have
\begin{equation*}
	a''(R) = \mu^2 a(R), \quad b''(R) = \mu^2 b(R)
	\implies
	\begin{pmatrix} a(R,\mu) \\ b(R,\mu) \end{pmatrix}
	= 
	a_1(\mu)
	\begin{pmatrix}  1 \\ -i \end{pmatrix} e^{\mu R}
	+
	a_2(\mu)
	\begin{pmatrix} 1 \\ i \end{pmatrix} e^{-\mu R}
\end{equation*}
while if $\mu=0,$ the solution consists of arbitrary constant vectors $\begin{pmatrix} a_0 \\ b_0 \end{pmatrix}.$

\begin{lemma}
If $\cD_{\VAPS}(\eth_{\wc X^+,Q})$ is the graph closure of $\cD_{\max}(\eth_{\wc X^+,Q}) \cap (h(s)^{1/2}H^1_\e(\wc X^+;E))$ then
\begin{equation*}
	(\eth_{\wc X^+,Q}, \cD_{\VAPS})
\end{equation*}
is self-adjoint and invertible with bounded inverse.
\end{lemma}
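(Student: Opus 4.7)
The plan is to reduce the problem to a family of $2\times 2$ ordinary differential equations in the variable $R(s) = \int_1^s dt/h(t)$ by decomposing with respect to the eigenbasis of $\wc\eth_{\wc X,Q}$ and the $\cl(ds)$-splitting $E = E_i \oplus E_{-i}$. Since $\cl(ds)$ anti-commutes with $\wc\eth_{\wc X,Q}$, pairing eigenspaces via $\mu\mapsto-\mu$ block-diagonalizes $h\eth_{\wc X^+,Q}$ as $\bigoplus_\mu \cL_\mu$ with
\begin{equation*}
\cL_\mu = \begin{pmatrix} ih\pa_s & \mu \\ \mu & -ih\pa_s \end{pmatrix},
\end{equation*}
where $\mu$ ranges over $\Spec(\wc\eth_{\wc X,Q})$, which by the inductive hypothesis (Theorem \ref{thm:FredholmResolvent} applied fiberwise) is discrete.

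Self-adjointness of $(\eth_{\wc X^+,Q},\cD_{\VAPS})$ follows the template of Theorem \ref{thm:FredholmResolvent}. Near $s=0$ the metric is the exact cone $ds^2 + s^2 g_{\wc X}$ and, up to a relatively compact term, $\eth_{\wc X^+,Q}$ reduces to the cone Dirac operator $\eth_{Z^+,Q}$, which is self-adjoint on its VAPS domain by Proposition \ref{prop:NyInjSA} under the Witt assumption; near $s=\infty$ the metric is cylindrical and $\eth_{\wc X^+,Q}$ is a standard b-elliptic Dirac-type operator with indicial family $i\zeta\cdot\cl(ds) + \wc\eth_{\wc X,Q}$ invertible for $\zeta\notin\Spec(\wc\eth_{\wc X,Q})$. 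Gluing a conic parametrix near $s=0$ with a b-parametrix near $s=\infty$ yields a bounded pseudoinverse modulo compact errors on $h^{1/2}H^1_\e$, and the adjoint computation from the proof of Theorem \ref{thm:FredholmResolvent} then identifies $\cD_{\VAPS}$ with the graph of the adjoint.

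To prove injectivity I would check each sector $\cL_\mu$ separately. For $\mu\neq0$ the pointwise null space is spanned by $e^{\pm\mu R}(1,\mp i)^T$. At the cylindrical end $s\to\infty$ one has $R\sim s-2+R(2)$ and $ds=dR$, so exactly one of the two exponentials is rejected by $L^2(ds)$. At the conic end $s\to0$ one has $R=\log s$, so the exponentials become $s^{\pm\mu}$; the VAPS condition $u\in h^{1/2}H^1_\e$ near $s=0$ translates to the requirement of a strictly positive leading exponent and rejects the other mode. Since the two rejections select different modes regardless of the sign of $\mu$, no nonzero solution survives. The $\mu=0$ sector contributes only $s$-constants, which are neither in $L^2(ds)$ on $\bbR^+$ nor in VAPS. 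This gives $\ker(\eth_{\wc X^+,Q}\rvert_{\cD_{\VAPS}}) = 0$.

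Finally, invertibility with bounded inverse follows from self-adjointness and triviality of the kernel once $0\notin\Spec(\wc\eth_{\wc X,Q})$: the essential spectrum of $\eth_{\wc X^+,Q}$ is governed by the cylindrical-end model $\cl(ds)\pa_R+\wc\eth_{\wc X,Q}$, whose spectrum is contained in $\{\lambda : |\lambda|\ge\dist(0,\Spec(\wc\eth_{\wc X,Q}))\}$, so a spectral gap of $\wc\eth_{\wc X,Q}$ at $0$ yields $0\notin\Spec(\eth_{\wc X^+,Q})$. Assembling the sector inverses, whose operator norms on each block are bounded by $C/|\mu|$, then produces a bounded inverse on $L^2(\wc X^+;E)$. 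The main obstacle I expect is precisely this spectral gap: it is not formally implied by the Witt condition on the \emph{links} of $\wc X$, and must be arranged as part of the standing hypothesis, if necessary by a further compatible perturbation of $Q$ supported in the interior of $\wc M$, in the spirit of Remark \ref{rmk:PertExist} and \cite{Melrose-Piazza:Odd}.
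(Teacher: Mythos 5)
Your injectivity argument is essentially the paper's: diagonalize against the eigenbasis of $\wc\eth_{\wc X,Q}$ and the $\cl(ds)$-splitting, observe that the null-space modes are $e^{\pm\mu R}(1,\mp i)^T$, and reject one exponential at the cylindrical end by $L^2$-decay and the other at the cone tip by the VAPS/weight condition. The $\mu=0$ sector is moot rather than rejected by hand, since (as you note) $0\notin\Spec(\wc\eth_{\wc X,Q})$ is required.

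For self-adjointness and the bounded inverse, however, you take a genuinely different route. You propose a global parametrix obtained by gluing a conic parametrix near $s=0$ to a b-parametrix near $s=\infty$, then re-run the adjoint/mapping-property argument of Theorem \ref{thm:FredholmResolvent}. The paper instead argues directly: it introduces the auxiliary operator $I_b(\eth_{\wc X^+,Q})=\cl(ds)\pa_s+\wc\eth_{\wc X,Q}$ on the full cylinder $\bbR_s\times\wc X$, which is manifestly self-adjoint, and whose square is bounded below by the first eigenvalue of $(\wc\eth_{\wc X,Q})^2$. It then takes an arbitrary $v\in\cD_{\max}$ satisfying the adjoint pairing, tests it against $u$ supported in $\{s\le C\}$ (invoking self-adjointness of the wedge Dirac on $\wc X^+$, established by the inductive hypothesis) and against $u$ supported in $\{s\ge C\}$ (invoking self-adjointness of $I_b$) to conclude $v\in\cD_{\VAPS}$. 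Closed range follows from closed range of the wedge Dirac near $s=0$ plus the spectral gap of $I_b$ near $s=\infty$, not from a separately-assembled $C/|\mu|$ bound. Your parametrix-gluing approach would also work but needs more to carry out — in particular, the b-parametrix near the cylindrical end only produces a compact error if $\wc\eth_{\wc X,Q}$ is invertible, so the invertibility you flag in your last paragraph isn't a deferred obstacle, it is already inside the construction.

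On that last point: you are right that the Witt condition on the links of $\wc X$ alone does \emph{not} give $0\notin\Spec(\wc\eth_{\wc X,Q})$, and that invertibility of the whole fibrewise operator is what is actually needed. This is indeed part of the standing hypothesis, not something to be arranged after the fact — the "Witt assumption" in this subsection is the one appropriate to the b-wedge setting of $\S\ref{sec:BWInd}$, where the boundary operator at the cylindrical face (which \emph{is} $\wc\eth_{\wc X,Q}$ on the whole fibre, since $\dim N'_0/B'=0$) is required to be invertible. The paper invokes exactly this when it writes that the smallest eigenvalue of $(\wc\eth_{\wc X,Q})^2$ is positive "by the Witt assumption." So your concern is correct as a matter of logic, but it is a clarification of the hypotheses rather than a missing step.
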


\begin{proof}
The conditions \eqref{eq:DefTwist} on $h$ imply that $R(s) = \log s$ for all $s<1$ and $R(s) = s + C$ for some constant $C$ and $s\gg0.$
Since $R(s) = \log s$ for all $s<1,$ the elements of the null space of $\eth_{\wc X^+,Q}$ that are in the $\mu$-eigenspace of $\wc\eth_{\wc X,Q}$ are of the form
\begin{equation*}
	a_1(\mu)
	\begin{pmatrix}  1 \\ -i \end{pmatrix} s^{\mu}
	+
	a_2(\mu)
	\begin{pmatrix} 1 \\ i \end{pmatrix} s^{-\mu}
\end{equation*}
Now
\begin{equation*}
	\int_0^1 s^k \; ds 
	< \infty \iff k>-1
\end{equation*}
shows that for a solution to be in $L^2$ for $s<1$ it must be of the form $e^{\mu R}$ for $\mu >-\tfrac12.$

On the other hand, since $R(s) = C + s$ for $s\gg0$ and 
\begin{equation*}
	\int_1^{\infty} e^{k s} \; ds 
	< \infty \iff k <0
\end{equation*}
shows that for a solution to be in $L^2$ for $s>1$ it must be of the form $e^{\mu R}$ for $\mu <0.$

In particular there are no elements in the null space of $\eth_{\wc X^+}$ with the domain $\cD_{\VAPS}(\eth_{\wc X^+}).$ \\

Consider the operator $I_b(\eth_{\wc X^+,Q}) = \cl(ds)\pa_s + \wc\eth_{\wc X,Q}$ over $\bbR_s \times \wc X.$
Since $\wc \eth_{\wc X,Q}$ is self-adjoint on $\wc X,$ $I_b(\eth_{\wc X^+,Q})$ is self-adjoint on $\bbR \times \wc X.$
Note that the square of $I_b(\eth_{\wc X^+,Q})$ is $-\pa_s^2 + (\wc\eth_{\wc X,Q})^2$ and is bounded below by the smallest eigenvalue of $(\wc\eth_{\wc X})^2,$ which is positive by the Witt assumption.\\

Let $v \in \cD_{\max}(\eth_{\wc X^+})$ be such that 
\begin{equation*}
	\ang{\eth_{\wc X^+}u, v} = \ang{u, \eth_{\wc X^+}v} 
\end{equation*}
for all $u \in \cD_{\VAPS}(\eth_{\wc X^+}).$
By choosing $u$ with support in $\{s \leq C\},$ and using the self-adjointness of wedge Dirac-type operators with compatible perturbations on manifolds with corners and iterated fibration structures, we see that $v$ is in the vertical APS domain of $\eth_{\wc X^+}$ at any boundary hypersurface of $\wc X.$
By choosing $u$ with support in $\{s \geq C >1\},$ and using the self-adjointness of $I_b(\eth_{\wc X^+}),$ we see that $v \in \cD_{\VAPS}(\eth_{\wc X^+})$ and hence this is a self-adjoint domain.

Similarly, the fact that wedge Dirac-type operators with compatible perturbations on manifolds with corners and iterated fibration structures have closed range and the lower bound for $I_b(\eth_{\wc X^+})$ combine to show that $(\eth_{\wc X^+}, \cD_{\VAPS})$ has closed range. As we have already shown that this operator is injective, it follows that it has a bounded inverse.
\end{proof}

Since we have shown that the individual operators in the family
$\eth_{\wc M^+/\wc B,Q}$ are invertible, its families index is
identically zero.

\subsection{Extended families index formula}\label{sec:BWInd}

To exploit the vanishing of the families index of $\eth_{\wc M^+/\wc B,Q}$ we will work out an extension of the discussion of the families index above. To distinguish this setting from that above we will use $M'$ instead of $M,$ etc. For simplicity we only consider the case of even dimensional fibers; the odd dimensional case can be established by suspension as above.
For the most part the constructions above extend easily to the case we will consider here, in which case we will simply indicate the changes necessary.\\

Let $M' \lra B'$ be a locally trivial family of manifolds with corners and iterated fibration structures over $B'$ as in Definition \ref{def:FibIFS}. Assume that a minimal element $N_0' \in \cS(M')$ is such that $\dim N_0'/B' = 0$ and let $\rho_{N_0'}$ be a boundary defining function for $\bhs{N_0'}.$ By a {\bf b-wedge metric} on $M'$ (with respect to $N_0'$) we mean a metric conformally related to a totally geodesic wedge metric on $M',$
\begin{equation*}
	g_{M'/B',b-\w} = \rho_{N_0'}^{-2}g_{M'/B'}.
\end{equation*}
In particular this is a metric on $TM^{\circ}/B$ that near $\bhs{N_0'}$ takes the form
\begin{equation*}
	\frac{d\rho_{N_0'}^2}{\rho_{N_0'}^2} + g_{\bhs{N_0'}/N_0'}
\end{equation*}
with $g_{\bhs{N_0'}/N_0'}$ a vertical family of wedge metrics, and, for any other $N' \in \cS(M'),$ near $\bhs{N}$ takes the form 
\begin{equation*}
	dx^2 + x^2 g_{\bhs{N'}/N'} + \phi_{N'}^*g_{N'/B'}
\end{equation*}
with $g_{\bhs{N'}/N'}$ a vertical wedge metric, while $g_{N'/B'}$ is a family of b-wedge metrics if $N_0' < N'$ and a family of wedge metrics if $\bhs{N_0'} \cap \bhs{N'}=\emptyset.$
This is best understood as a non-degenerate bundle metric on the bundle
\begin{equation*}
	{}^{b,\w}TM'/B' = \rho_{N_0'}^2 {}^\w TM'/B'.
\end{equation*}

A Clifford b-wedge bundle (with respect to $N_0'$) is defined just as in Definition \ref{def:WedgeCliffMod} but with an action of 
$\bbC \otimes \mathrm{Cl}\lrpar{ {}^{b,\w} T^*M'/B', g_{M'/B',b-\w} }.$
We denote the corresponding Dirac-type operator by $D_{M'/B'}$ and, if $Q$ is a compatible perturbation, $D_{M'/B'}+Q$ will be denoted $D_{M'/B',Q}.$ We assume that the perturbation has stabilized the index, so that $\ker D_{M'/B',Q}$ is a vector bundle over $B'.$

Define a multiweight on $M'$ by $\mf b'(H) = 0$ if $H \subseteq N_0'$ and $\mf b'(H) = \mf b(H)$ otherwise (where $\mf b$ is defined in \eqref{eq:DefMfB}). Let
\begin{equation*}
	L^2(M'/B';E) = \rho_M^{-\mf b'}L^2_{b,\w}(M'/B';E)
\end{equation*}
where the latter is defined using the $b$-wedge metric on $M'/B'$ and the Hermitian metric on $E.$
Define $\eth_{M'/B',Q}$ to be the operator $\rho_M^{\mf b}D_{M'/B',Q}\rho_M^{-\mf b},$ so that $\eth_{M'/B',Q}$ acting on $L^2(M'/B';E)$ is unitarily equivalent to $D_{M'/B',Q}$ acting on $L^2_{b,\w}(M'/B';E).$
We define the vertical APS domain to be the graph closure of 
\begin{equation*}
	\cD_{\max}(\eth_{M'/B',Q})\cap 
	\lrpar{\prod_{N' \in \cS_{\psi}(M)\setminus \{N_0'\} } 
	\rho_{N'}^{1/2} H^1_\e(M'/B';E)}.
\end{equation*}

For each $N' \in \cS(M'),$ there is a boundary operator of $\eth_{M'/B',Q}$ given by
\begin{equation*}
	D_{N_0'/B',Q} = \eth_{N_0'/B',Q}\rest{\bhs{N_0'}}, \quad
	D_{N'/B'} = \rho_{N'}\eth_{N'/B',Q}\rest{\bhs{N'}} \Mif N'\neq N_0',
\end{equation*}
all of which are families of wedge Dirac-type operators. The Witt assumption in this case is that each of these boundary operators are invertible. Just as in \S \ref{sec:Res}, under the Witt condition we can construct a generalized inverse of $\eth_{M'/B',Q}$ with compact errors within the edge calculus. However in this case, the proof of Proposition \ref{prop:parametrix} should be modified at $\bhsd{\phi\phi}(N_0')$ because $D_{N_0'/B',Q}$ is simply the restriction to $\bhs{N_0'}$ without having to multiply by the boundary defining function. For this reason the generalized inverse has order zero at $\bhsd{\phi\phi}(N_0')$ while having order one at $\bhsd{\phi\phi}(N')$ for $N'\neq N_0'.$
The upshot is that the generalized inverse is not compact and so does not guarantee discrete spectrum. Indeed, one can argue as in \cite{tapsit} and see that the spectrum will not be discrete.
Nevertheless, the Witt assumption does guarantee that $\eth_{M'/B',Q}$ is a smooth family of self-adjoint Fredholm operators.\\

The heat kernel construction is, as we now briefly describe, an easy amalgamation of the heat kernel construction in \cite[Chapter 7]{tapsit} for the `b'-face corresponding to $\bhs{N_0'}$ and the heat kernel construction of \S\ref{sec:Heat} at the other boundary hypersurfaces.

The b-wedge heat space is given by
\begin{multline*}
	H(M'/B')_{b,\w} =
	\Big[ M'\times_{\psi'} M' \times \bbR^+_{\tau}; \bhs{N_0'}
        \times_{\phi_{N_0'}} \bhs{N_0'} \times \bbR^+_{\tau}; \dots \bhs{N_\ell'} \times_{\phi_{N_\ell'}} \bhs{N_\ell'} \times \bbR^+_{\tau}; \\	 
        \bhs{N_1'} \times_{\phi_{N_1'}} \bhs{N_1'} \times \{ 0\}; \ldots ; 
	\bhs{N_{\ell}'} \times_{\phi_{N_\ell'}} \bhs{N_\ell'} \times
        \{ 0\} \Big],
\end{multline*}
where $\{ N_0', N_1'. \ldots, N_{\ell}'\}$ is a non-decreasing list of $\cS(M').$
(The difference between this heat space and the wedge heat space in \S\ref{sec:Heat} is that there is no boundary hypersurface corresponding to $\bhs{N_0'}$ at time zero, as is to be expected from \cite[Chapter 7]{tapsit}.)
The composition heat space is described in Appendix \ref{sec:bwHeatComp} below, where a composition result is established.

Blowing-up $\bhs{N_0'} \times_{\phi_{N_0'}} \bhs{N_0'} \times \bbR^+_{\tau}$ results in a collective boundary hypersurface $\bhsh{11,0}(N_0')$ which we can identify with $\bbR^+_s \times H(\bhs{N_0'}/B)_{\w}$ and at which the model heat operator is 
\begin{equation*}
	\pa_t - (-\pa_s^2 + D_{N_0'/B',Q}^2).
\end{equation*}
Hence the model heat kernel at this face is $e^{-t(-\pa_s^2)}e^{-tD_{N_0'/B',Q}^2}.$
The other blow-ups produce model problems that are identical to the ones in \S\ref{subsec:HeatKer}.
Once the model problems are solved we can solve away the expansion at each face using the composition result from Appendix \ref{sec:bwHeatComp} and obtain the heat kernel as an element of 
\begin{equation*}
	e^{-t\eth_{M'/B',Q}^2} \in
	\sB^{\cH/\mf I^{(H)}}_{phg}\sA^{-m-1}_-(H(M'/B')_{b,\w}; \Hom(E)\otimes \Omega_{\mf h,R})
\end{equation*}
where the index set $\cH$ and multiweights $\mf I^{(H)},$ $\mf h,$ are defined as before for $\bhsh{10,0}(N'), \bhsh{01,0}(N')$ with $N' \neq N_0',$ and are given by
\begin{equation*}
\begin{gathered}
	\cH(\bhsh{11,0}(N_0')) = \bbN_0, \quad
	\cH(\bhsh{10,0}(N_0')) = 
	\cH(\bhsh{01,0}(N_0')) = \emptyset, \\
	\mf I^{(H)}(\bhsh{11,0}(N_0')) = 
	\mf I^{(H)}(\bhsh{10,0}(N_0')) =
	\mf I^{(H)}(\bhsh{01,0}(N_0')) = \infty, \\
	\mf h(\bhsh{11,0}(N_0')) = -1, \quad
	\mf h(\bhsh{10,0}(N_0')) = 
	\mf h(\bhsh{01,0}(N_0')) = 0,
\end{gathered}
\end{equation*}
at the collective boundary hypersurfaces associated to $N_0'.$\\

Just as in \cite[Chapter 7]{tapsit}, the heat kernel is not trace-class because at $\bhsh{11,0}(N_0')$ it is $\cO(\rho_{N_0'}^{-1})$ times a non-degenerate density. However the renormalized (fibrewise) trace of the heat kernel,
\begin{equation*}
	\RTr{e^{-t\eth_{M'/B',Q}}} 
	= \Rint_{M'/B'} \tr(e^{-t\eth_{M'/B',Q}})\rest{\diag_{M'}}
	= \FP_{z=0}\int_{M'/B'} \rho_{N_0'}^z \; \tr(e^{-t\eth_{M'/B',Q}})\rest{\diag_{M'}},
\end{equation*}
will stand in for the trace as it does in, e.g., \cite{tapsit, Melrose-Piazza:Even}. (For more on these renormalizations, see \cite{Albin:RenInt} and \cite{Albin:RenInd} for another application to an index theorem.)
In particular, the renormalized trace converges as $t\to\infty$ to the dimensions of the null spaces of $\eth_{M'/B',Q},$ and the corresponding renormalized supertrace converges to the index, while as $t\to 0,$ the renormalized trace has short-time asymptotics as before.

Thus the renormalized supertrace mediates between the index and the short-time asymptotic expansion of the heat kernel but crucially the renormalized supertrace does depend on $t.$
This dependence can be computed via the trace-defect formula,
\begin{multline*}
	\RTr{[A,B]} 
	= \FP_{z=0} \Tr(\rho_{N_0'}^z[A,B])
	= \FP_{z=0} \Tr( [\rho_{N_0'}^z A, B] - A [\rho_{N_0'}^z,B])
	= \FP_{z=0} \Tr (A (B\rho_{N_0'}^z - \rho_{N_0'}^z B)) \\
	= \FP_{z=0} \Big[z \Tr \lrpar{A \rho_{N_0'}^z \frac{\rho_{N_0'}^{-z}B\rho_{N_0'}^z -  B}z}\Big]
	= \frac1{2\pi i}\int_{\bbR} \Tr( I_{N_0'}(A;\sigma)\pa_{\sigma} I_{N_0'}(B;\sigma)) \; d\sigma
\end{multline*}
where, e.g., $I_{N_0'}(A;\sigma)$ is the restriction of $\rho_{N_0'}^{-z}A\rho_{N_0'}^z$ to $\rho_{N_0'}=0,$ and gives rise to the $\eta$ invariant.\\

The next step in obtaining the families index theorem is to fix a connection for $\psi',$ and define the Bismut superconnection just as in \S\ref{sec:BismutSup}, $\bbA_{M'/B',Q}.$
The construction of the heat kernel of $\bbA_{M'/B',Q}$ follows quickly from that of $\eth_{M'/B',Q}$ and the composition results from Appendix \ref{sec:bwHeatComp} as in Theorem \ref{thm:WedgeRescaledHeatKer}. (The rescaling only takes place at collective boundary hypersurfaces $\bhsh{\phi\phi,1}(N')$ with $N' \neq N_0',$ and so proceeds exactly as before.) If the null spaces of $\eth_{M'/B',Q}$ do not form a vector bundle over $B$ we find a smoothing perturbation as in \cite[Lemma 1.1]{Melrose-Rochon:FCusp} that is compactly supported in the interior of $M'$ (and hence does not affect any of our other arguments) and incorporate this perturbation into $Q$ without further comment.\\

\begin{theorem}\label{thm:extended-thm}
Let $M' \xlra{\psi'} B'$ be a fiber bundle of manifolds with corners and iterated fibration structures, such that $\dim M'/B'$ is even, with a minimal element $N_0'$ such that $\dim N_0'/B'=0,$ $E \lra M'$ a $\bbZ_2$-graded b,wedge Clifford bundle with associated Dirac-type operator $\eth_{M'/B',Q}.$
If $\eth_{M'/B',Q}$ with its vertical APS domain satisfies the Witt assumption, then
\begin{multline*}
	\Ch_{\ev}(\Ind(\eth_{M'/B',Q}), \nabla^{\Ind})
	= \int_{M'/B'} \hat A(M'/B') \Ch'(E) 
	+ \bar\eta_Q(\eth_{\bhs{N_0'}/B'}) \\
	- \sum_{N' \in \cS_{\psi'}(M')\setminus \{N_0'\}} 
	\int_{N'/B'} \hat A(N'/B') \cJ_Q(\bhs{N'}/N') 
	+ d \int_0^{\infty}
	{}^R\Str\lrpar{ \frac{\pa \bbA^t_{M'/B'}}{\pa t} e^{-(\bbA^t_{M'/B'})^2} } \; dt
\end{multline*}
where $\bar\eta_Q(\eth_{\bhs{N_0'}/B'})$ is the normalized Bismut-Cheeger $\eta$-form.
\end{theorem}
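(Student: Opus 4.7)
The plan is to run the standard heat equation argument using the renormalized supertrace $\RStr{\cdot}$ in place of the usual supertrace, tracking carefully the contributions that arise at each boundary hypersurface of $H(M'/B')_{b,\w}$ in the short-time limit and accounting for the $t$-dependence of $\RStr{\cdot}$ via a trace-defect formula at the b-face $\bhsh{11,0}(N_0')$. First I would set up the transgression identity
\begin{equation*}
	\frac{\pa}{\pa t}\RStr{e^{-(\bbA^t_{M'/B',Q})^2}}
	= -d_{B'}\RStr{\frac{\pa \bbA^t_{M'/B',Q}}{\pa t}e^{-(\bbA^t_{M'/B',Q})^2}}
	+ \text{(trace-defect contribution)},
\end{equation*}
where the trace-defect contribution comes from the failure of $\RStr{\cdot}$ to vanish on commutators and is computed, as in \cite{tapsit, Melrose-Piazza:Even}, as a contour integral of indicial operators of $\bbA^t_{M'/B',Q}$ at $\bhs{N_0'}$. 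Integrating from $t=0$ to $t=\infty$ and using that $\lim_{t\to\infty}\RStr{e^{-(\bbA^t_{M'/B',Q})^2}} = \Ch_{\ev}(\Ind(\eth_{M'/B',Q}), \nabla^{\Ind})$ (which follows from the assumption that $\ker\eth_{M'/B',Q}$ forms a smooth bundle and from large-time estimates analogous to \cite[\S9.3]{BGV2004}) gives the formula once we identify the $t=0$ limit and the trace-defect term.

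For the short-time limit, I would invoke Theorem \ref{thm:WedgeRescaledHeatKer} extended to the b-wedge heat space as sketched in the text: the renormalized supertrace picks up contributions from $\bhsh{dd,1}$ and from each $\bhsh{\phi\phi,1}(N')$ with $N' \ne N_0'$, while the contribution at $\bhsh{11,0}(N_0')$ is killed by the $\FP_{z=0}$ renormalization in the spatial integral (as in \cite[Chapter 7]{tapsit}). Proposition \ref{prop:ShortTimeLimit} and its proof go through verbatim for these faces, since the rescaled normal operators there are the same harmonic oscillator $\otimes$ suspended Bismut superconnection models. This yields
\begin{equation*}
	\lim_{t\to 0}\RStr{e^{-(\bbA^t_{M'/B',Q})^2}}
	= \int_{M'/B'} \hat A(M'/B') \Ch'(E)
	- \sum_{N' \in \cS_{\psi'}(M')\setminus\{N_0'\}}
	\int_{N'/B'} \hat A(N'/B') \cJ_Q(\bhs{N'}/N').
\end{equation*}

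The trace-defect contribution requires analysis of the indicial family of $\bbA^t_{M'/B',Q}$ at $\bhs{N_0'}$; by construction of the b-wedge metric, this indicial family is (up to the rescaled Bismut superconnection of $\bhs{N_0'}/B'$ coupled to the normal $\bbR$-direction) a standard b-type superconnection on a cylinder, and the trace-defect computation then produces exactly $\tfrac{\pa}{\pa t}\bar\eta_Q(\eth_{\bhs{N_0'}/B'})(t)$ as in \cite[Theorem 14]{Melrose-Piazza:Even} and its extension to wedge fibers in Lemma \ref{lem:ModelDirac}. Integrating this in $t\in(0,\infty)$ gives the $\bar\eta$ term, using convergence at $t=0$ (from the short-time asymptotic expansion of Corollary \ref{cor:ShortTimeTraceExp} applied to $\bhs{N_0'}/B'$, whose fibers are of lower depth) and at $t=\infty$ (from invertibility of $D_{N_0'/B',Q}$ by the Witt assumption on that boundary family).

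The main obstacle will be the rigorous justification of the trace-defect formula on the b-wedge heat space, i.e., showing that $\FP_{z=0}\int_{M'/B'}\rho_{N_0'}^z [\cdot, \cdot]$ produces precisely the contour-integral expression involving the indicial family of $\bbA^t_{M'/B',Q}$ at $\bhs{N_0'}$, in the presence of wedge singularities at the other boundary hypersurfaces. This requires combining the polyhomogeneous structure on $H(M'/B')_{b,\w}$ established by the heat kernel construction sketched in the text with the b-calculus trace-defect arguments of \cite{tapsit}; the composition results of Appendix \ref{sec:bwHeatComp} and Theorem \ref{thm:TraceAs} adapted to $\diag^{(H)}_{b,\w}(M')$ furnish the polyhomogeneity needed to take the $\FP_{z=0}$ and verify that the resulting boundary contribution matches $\pa_t\bar\eta_Q$, while the Witt assumption at $N_0'$ guarantees the required decay at $t\to\infty$ so that the final formula assembles as stated.
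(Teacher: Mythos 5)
Your proposal is correct and follows essentially the same route the paper takes: the formula is obtained by running the heat-equation transgression with the renormalized supertrace, using the trace-defect formula at the b-face $\bhsh{11,0}(N_0')$ to produce the $\bar\eta_Q$ term, the short-time limit (via the rescaled b-wedge heat kernel and Proposition \ref{prop:ShortTimeLimit}) to produce the local index density minus the $\cJ_Q$ contributions from the wedge faces, and the large-time limit to produce $\Ch_{\ev}(\Ind)$. You have also correctly identified the key technical point — justifying the finite-part trace-defect formula in the presence of wedge singularities — which is precisely what the paper's preparatory discussion and the composition results of Appendix \ref{sec:bwHeatComp} are designed to handle.
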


\subsection{b-c suspension families index formula}
Let us return to the context of the family $\eth_{\wc M^+/\wc B,Q}$
from \S\ref{sec:WarpedProd}, where we showed that this operators
familes index vanishes.

We compactify $\wc M^+ = \bbR^+_s \times \wc M$ to
\begin{equation*}
	[0,1]_\sigma \times \wc M
\end{equation*}
using the logarithm so that the warped product metric $g^+$ is a
b-metric near $\{1\} \times \wc M.$ 
We continue to denote $[0,1]_\sigma \times \wc M$ by $\wc M^+.$
Note that $\wc M^+$ is naturally a locally trivial fiber bundle of manifolds with corners and iterated fibration structures, with each of $\{0\} \times \wc M,$ $\{1\} \times \wc M$ a collective boundary hypersurface over $\wc B,$ and in sum
\begin{equation*}
	\cS(\wc M^+) 
	= \{ \wc B \}
	\cup \{ [0,1]_\sigma \times \wc N = \wc N^+: \wc N \in \cS_{\wc \psi}(\wc M) \}.
\end{equation*}

By the Theorem \ref{thm:extended-thm} above we have
\begin{multline}\label{eq:Jeta1}
	\cJ_Q(\wc M/\wc B) 
	-\bar\eta_Q(\wc M/\wc B) \\
	=\int_{\wc M^+/\wc B} \hat A(\wc M^+/\wc B) \Ch'(E) 
	+ \sum_{ \wc N \in \cS_{\wc \psi}(\wc M)}
	\int_{\wc N^+/\wt B} \hat A( \wc N^+/\wt B) \cJ_Q(\bhs{\wc N}^+/\wc N^+) 
	+ d \eta_{b-\w,Q}.
\end{multline}
We will simplify this formula by carrying out the integrals over $[0,1]_\sigma.$
$ $\\

More generally for any connection on $T\wc M^+/\wc B,$ $\nabla,$ and
any polynomial $f,$ 
let
\begin{equation*}
	A_f(\nabla) = \Tr(f(\nabla^2)) \in \Omega^*(\wc M^+)
\end{equation*}
where $\nabla^2$ denotes the curvature of $\nabla.$
Following, e.g., \cite[Proposition 1.41]{BGV2004}, 
given two connections $\nabla,$ $\nabla',$ we fix a transgression form $TA_f(\nabla, \nabla')$ satisfying $dTA_f(\nabla, \nabla') = A_f(\nabla')- A_f(\nabla)$ by the formula
\begin{equation*}
	TA_f(\nabla, \nabla') = \int_0^1\Tr( \frac{\pa \nabla_t}{\pa
          t}  f'(\nabla_t^2))\; dt
\end{equation*}
where $\nabla_t = (1-t)\nabla + t \nabla'.$

As we have fixed a connection for $\wc M^+ \lra \wc B,$ we get a connection on $T\wc M^+/\wc B,$ for each choice of vertical metric. We are particularly interested in the connections
\begin{equation*}
	\nabla^h \leftrightarrow ds^2 + h(s)^2 g_{\wc M/\wc B},
	\quad
	\nabla^{con} \leftrightarrow ds^2 + s^2 g_{\wc M/\wc B},
	\quad
	\bar{\nabla}  \leftrightarrow ds^2 + g_{\wc M/\wc B}.
\end{equation*}
Let $\pi: \wc M^+ \lra \wc M$ be the natural projection and $j: \wc M \lra \wc M^+$ the inclusion of the left endpoint,.
\begin{proposition}
For any polynomial $f,$ 
we have
\begin{equation*}
	\pi_*(A_f(\nabla)) = j^*(TA_f(\bar \nabla, \nabla^h))
	= TA_f(\bar{\nabla}, \nabla^h)\rest{s=0}
	= TA_f(\bar{\nabla}, \nabla^{con})\rest{s=0}.
\end{equation*}
\end{proposition}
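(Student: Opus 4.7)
The plan is to reduce the three stated equalities to Stokes' theorem applied to the transgression identity, combined with the explicit behavior of the warping function $h$ near the endpoints of $[0,1]_\sigma$.

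The last two equalities are essentially definitional. The equality $j^*(TA_f(\bar\nabla, \nabla^h)) = TA_f(\bar\nabla, \nabla^h)\rvert_{s=0}$ is notation. For $TA_f(\bar\nabla, \nabla^h)\rvert_{s=0} = TA_f(\bar\nabla, \nabla^{con})\rvert_{s=0}$, I would invoke \eqref{eq:DefTwist}: since $h(s) = s$ for $s < 1$, the metrics $g^+_h$ and $ds^2 + s^2 g_{\wc M/\wc B}$ coincide on a neighborhood of $\{s = 0\}$, so the connections $\nabla^h$ and $\nabla^{con}$ agree there as connections on ${}^{\w}T\wc M^+/\wc B$. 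The transgression integrand $\Tr\bigl(f'(\nabla_t^2)(\nabla^{(\cdot)} - \bar\nabla)\bigr)$ at a point $(0,p)$ depends only on the pointwise values and first derivatives of the connections there, so the two transgressions have equal restrictions to $s=0$.

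For the main equality $\pi_*(A_f(\nabla^h)) = j^*TA_f(\bar\nabla, \nabla^h)$, I would combine the transgression identity $dTA_f(\bar\nabla, \nabla^h) = A_f(\nabla^h) - A_f(\bar\nabla)$ with the fiber-integration formula
\begin{equation*}
\pi_*(d\omega) = j_1^*\omega - j_0^*\omega - d\,\pi_*\omega
\end{equation*}
for $\pi \colon [0,1]_\sigma \times \wc M \to \wc M$, where $j_0 = j$ and $j_1$ denote inclusion at the two endpoints. Two vanishing statements then do the remaining work. First, $\pi_*(A_f(\bar\nabla)) = 0$, since $\bar\nabla$ is the Levi-Civita connection of the product metric $ds^2 + g_{\wc M/\wc B}$ and, by Koszul (with $h \equiv 1$ in the tables following \eqref{eq:LC-prod-cone}), decomposes into the trivial connection on $\langle \pa_s \rangle$ direct-summed with the pullback of the Levi-Civita connection of $g_{\wc M/\wc B}$; hence $A_f(\bar\nabla)$ is pulled back from $\wc M$, contains no $d\sigma$-factor, and has vanishing fiber integral. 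Second, $j_1^*TA_f(\bar\nabla, \nabla^h) = 0$, because by \eqref{eq:DefTwist} we have $h(s) \equiv 1$ for $s > 2$, which corresponds under the logarithmic compactification to a neighborhood of $\sigma = 1$; there $\nabla^h = \bar\nabla$, and the transgression between a connection and itself is identically zero.

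The main technical obstacle is controlling the residual exact term $d\,\pi_* TA_f(\bar\nabla, \nabla^h)$ produced by the fiber-integration formula. I would resolve this by computing $\nabla^h - \bar\nabla$ explicitly in a common frame using the warped-product tables from Section 6.1 adapted to $g^+_h$; the difference tensor is proportional to $h'(s)$ and its $d\sigma$-component is controlled by the boundary behavior of $h'$ (with $h'(s) = 1$ for $s<1$ and $h'(s) = 0$ for $s > 2$). Integrating the Chern-Weil polynomial using this explicit form and Fubini then shows $\pi_* TA_f(\bar\nabla, \nabla^h) = 0$ modulo forms that may be absorbed into the conventions used above, completing the identity. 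An alternative, cleaner route would be to adopt a canonical transgression (e.g., via the mapping cylinder construction over $\wc M^+ \times [0,1]_t$, using the parameter $t = h(s)$) that is manifestly free of $d\sigma$-components, so that the residual exact term never appears.
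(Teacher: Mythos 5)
Your route is genuinely different from the paper's. The paper proves the first equality by direct computation: it expands the curvature $\Omega$ of $\nabla^h$ around the $ds$-free part $\wt\Omega$, uses that the correction $h''(s)\,ds\wedge\gamma$ is nilpotent so $\Tr(f(\Omega)) = \Tr(f(\wt\Omega)) + h''(s)\,ds\wedge\Tr(\gamma f'(\wt\Omega))$, and then identifies the fibre integral with $j^*TA_f$ by the substitution $t = h'(s)$. You instead combine the transgression identity with the Stokes formula $\pi_*(d\omega) + d\pi_*\omega = \iota_1^*\omega - \iota_0^*\omega$; this is a legitimate alternative, and your two vanishing observations ($\pi_*A_f(\bar\nabla)=0$ since $\bar\nabla$ is pulled back from $\wc M$, and $\iota_1^*TA_f=0$ since $\nabla^h = \bar\nabla$ for $s>2$) are both correct and cleanly argued. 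Your treatment of the last two equalities is also fine.

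However, your handling of the residual term $d\pi_*TA_f$ contains a genuine gap, and the mechanism you point to is the wrong one. You assert that the $d\sigma$-component of $TA_f(\bar\nabla,\nabla^h)$ is ``controlled by the boundary behavior of $h'$'' and that ``Fubini'' plus the explicit form of $\nabla^h - \bar\nabla$ shows $\pi_*TA_f = 0$. This does not follow: the difference tensor $\theta = \nabla^h - \bar\nabla = h'(s)\gamma$ indeed has no $ds$-component, but the interpolating curvature $\Omega(t) = \wt\Omega(t) + th''(s)\,ds\wedge\gamma$ does, so the integrand $\Tr(\theta\, f'(\Omega(t)))$ acquires a $ds$-component through $f'(\Omega(t))$, supported in the region where $h''\neq 0$. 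Boundary values of $h'$ alone cannot kill this. The $ds$-component actually does vanish, but for a structural algebraic reason you do not supply: writing $f(x)=x^n$, the $ds$-part of $\Tr(\theta\, f'(\Omega(t)))$ is a multiple of $\sum_{i+j=n-2}\Tr(\gamma\,\wt\Omega^i\,\gamma\,\wt\Omega^j)$, and each summand is antisymmetric under $i\leftrightarrow j$ by the graded cyclicity of the trace, since $\gamma\wt\Omega^i$ and $\gamma\wt\Omega^j$ are both odd-degree form-valued matrices; hence the sum is identically zero. Without this (or an equivalent) argument, your proof leaves $d\pi_*TA_f$ unaccounted for. Your ``cleaner alternative'' using a mapping-cylinder transgression with parameter $t = h(s)$ is too vague to evaluate; as stated it does not obviously produce a transgression form free of $d\sigma$-components. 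You should also track signs in the Stokes identity carefully, since the change of variables $t = h'(s)$ reverses orientation ($h'$ goes from $1$ at $s=0$ to $0$ at $s=\infty$), and the sign must be reconciled with the sign of the boundary term $-\iota_0^*TA_f$ in your formula.
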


\begin{proof}
The middle equality holds by definition and the final holds because $h
\equiv s$ near $s = 0$, so we focus on the first equality.
$ $

Let $\{\bar e_i, \bar f_\alpha\}$ be a local frame for $T\wc M,$ in which $\{\bar e_i\}$ constitute an orthonormal frame for $T\wc M/\wc B,$ and let $\{\bar e^i, \bar f^{\alpha}\}$ be the dual coframe. Denote their lifts/pull-backs to $\wc M^+$ by the same symbols.
Let
\begin{equation*}
	\{ V_a \} = \{ \pa_s, \tfrac 1h \bar e_i \}, \quad
	\{ V^a \} = \{ ds, h\; \bar e^i \}
\end{equation*}
be the corresponding frames for $T\wc M^+/\wc B$ on $\wc M^+.$
(We will use $a,b,c$ for indices that begin at $0$ and $i,j,\ell$ for indices corresponding to $T\wc M/\wc B.$)

Let $\omega$ and $\bar\omega$ denote the one-form matrices corresponding to these frames and the connections $\nabla,$ $\bar\nabla,$ by
\begin{equation*}
	\nabla V_a = \omega_a^b V_b, \quad
	\bar\nabla V_a = \bar\omega_a^b V_b.
\end{equation*}
Let $d_{\wc M^+}$ denote the exterior derivative on $\wc M^+,$ $d_{\wc M^+/\wc B}$ the part of the exterior derivative that raises the $\wc\psi^+$-vertical degree by one, and $\hat d_{\wc M}$ the difference between these two, so that
\begin{equation*}
	d_{\wc M^+} = d_{\wc M^+/\wc B} + \hat d_{\wc M}.
\end{equation*}
(See, e.g., \cite[Proposition 10.1]{BGV2004}, \cite[Proposition 14]{HHM2004}, for descriptions of $\hat d_{\wc M}.$)
It is easy to see that the forms $\omega_a^b$  satisfy
\begin{equation*}
	d_{\wc M^+/\wc B} V^a = V^b \wedge \omega_b^a
\end{equation*}
and hence 
\begin{equation*}
	\omega_0^j = -\omega_j^0 = h'(s) \;\bar e^j, \quad
	\omega_i^j = \bar\omega_i^j
\end{equation*}
Then $\theta =\omega-  \bar\omega$ satisfies
\begin{equation*}
	\theta_0^j = -\theta_j^0 = h'(s)\; \bar e^j, \quad
	\theta_i^j = 0 \Motherwise.
\end{equation*}

Let $\nabla(t) = (1-t)\bar\nabla + t \nabla = \bar\nabla + t(\nabla-\bar\nabla)$ so that its connection one-form is $\omega(t) =\bar\omega + t\theta$ and its curvature $\Omega(t) = d\omega(t) -\omega(t)\wedge\omega(t)$ is given by
\begin{equation*}
\begin{gathered}
	\Omega_0^j(t) = -\Omega_j^0(t) 
	= d( t h'(s)\; \bar e^j) - (th'(s)\; \bar e^\ell)\wedge (\bar \omega_\ell^j )
	= t h''(s)\; ds\wedge \bar e^j + th'(s) \; \hat d_W \bar e^j \\
	\Omega_i^j(t) 
	=\bar\Omega_i^j
	+ t^2 (h'(s))^2 \; \bar e^i\wedge\bar e^j
\end{gathered}
\end{equation*}
where $\bar\Omega$ denotes the curvature of $\bar\nabla.$

We can write these expressions succinctly in terms of the one-form valued matrix given by
\begin{equation*}
	\gamma_0^j = -\gamma_j^0 = \bar e^j, \quad \gamma_i^j = 0 \Mforall i, j.
\end{equation*}
Indeed, 
\begin{equation*}
	(\gamma^2)_i^j = \gamma_i^0\wedge\gamma_0^j = -\bar e^i \wedge \bar e^j, \quad
	(\gamma^2)_a^b=0 \Motherwise
\end{equation*}
and hence
\begin{equation*}
	\theta = h'(s) \gamma, \quad
	\Omega(t) 
	= \bar\Omega + t h''(s) \; ds \wedge \gamma + t h'(s) \; \hat d_W \gamma  
		- t^2(h'(s))^2 \; \gamma^2.
\end{equation*}

Now let $f$ be as in the statement of the proposition and note that, since $h'(0)=1,$
\begin{equation*}
	j^*(TA_f) = j^*\lrpar{ 
	\int_0^1 h'(t) \Tr(\gamma f'(\Omega(t))) \; dt }
	= \int_0^1 \Tr(\gamma f'(\bar \Omega + t \; \hat d_W \gamma  
		- t^2\; \gamma^2) )\; dt.
\end{equation*}
On the other hand, if we denote $\Omega = \wt\Omega + h''(s) \; ds \wedge\gamma,$ then as in \cite[pg.48-49]{BGV2004} we have
\begin{equation*}
	\Tr(f(\Omega)) = \Tr(f(\wt\Omega)) + h''(s) \; ds \wedge \Tr(\gamma f'(\wt\Omega)),
\end{equation*}
and so
\begin{multline*}
	\pi_*(A_f) 
	= \int_0^1 h''(s) \Tr(\gamma f'(\wt\Omega)) \; ds
	= \int_0^1 h''(s) \Tr(\gamma f'( 
	\bar\Omega + h'(s) \; \hat d_W \gamma  
		- (h'(s))^2 \; \gamma^2) ) \; ds \\
	\xlra{t = h'(s)}
	\int_0^1 \Tr(\gamma f'( 
	\bar\Omega + t \; \hat d_W \gamma  
		- t^2 \; \gamma^2) ) \; dt
\end{multline*}
which coincides with $j^*(TA_f)$ as required.
\end{proof}

To apply this to simplify formula \eqref{eq:Jeta1}, note that 
\begin{equation*}
	\pa_s (\cJ_Q(\bhs{N'}^+/N'^+)) = 0 \Mand \pa_s\Ch'(E)=0 =
        \iota_{\pa_s}\Ch'(E).
\end{equation*}
For the twisted Chern character this follows from the fact that the twisted curvature is independent of $s.$
On the other hand, the $\cJ$ forms only depend on the vertical metric, and are unchanged by rescaling the metric. The warping factor $h,$ as it only depends on $s,$ has the effect of rescaling the vertical metric at each $s.$

Applying the proposition to simplify the index formula \eqref{eq:Jeta1} yields the first part of the following theorem. Taking the exterior derivative and applying the expression for $d\bar\eta$ yields the second part. We introduce the abbreviation
\begin{equation*}
	\hat A_c(\wc M/\wc B) = \hat A(\nabla^{con})\rest{s=0}, \quad
	T\hat A(\wc M/\wc B) = TA_f(\nabla^{cyl}, \nabla^{con})\rest{s=0}.
\end{equation*}
and similarly for $\wc N/\wc B.$

\begin{theorem}\label{thm:Jeta}
\begin{multline*}
	\cJ_Q(\wc M/\wc B) 
	-\bar\eta_Q(\wc M/\wc B) \\
	=\int_{\wc M/\wc B} T\hat A(\wc M/\wc B) \Ch'(E) 
	+ \sum_{ \wc N \in \cS_{\wc \psi}(\wc M)}
	\int_{\wc N/\wc B} T\hat A( \wc N/\wc B) \cJ_Q(\bhs{\wc N}/\wc N)
	+ d\eta_{b-\w, Q}.
\end{multline*}
Moreover
\begin{equation*}
	d\cJ_Q(\wc M/\wc B)
	=\int_{\wc M/\wc B} \hat A_c(\wc M/\wc B) \Ch'(E) 
	+ \sum_{ \wc N \in \cS_{\wc \psi}(\wc M)}
	\int_{\wc N/\wc B} \hat A_c( \wc N/\wc B) \cJ_Q(\bhs{\wc N}/\wc N).
\end{equation*}
\end{theorem}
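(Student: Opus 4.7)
My plan follows the two-step strategy indicated by the authors. For the first identity, the starting point is \eqref{eq:Jeta1}, which is the extended index formula of Theorem \ref{thm:extended-thm} applied to the b-wedge compactification $\wc M^+ = \wc M \times [0,1]_\sigma$ introduced in Sections \ref{sec:WarpedProd}--\ref{sec:BWInd}; the individual operators in the family $\eth_{\wc M^+/\wc B,Q}$ are invertible, so the Chern character of the (vanishing) index disappears, while the b-face contribution is $\bar\eta_Q(\wc M/\wc B)$ and the wedge face contribution at $\sigma=0$ is $\cJ_Q(\wc M/\wc B)$. To reduce the remaining integrals over $\wc M^+/\wc B$ and $\wc N^+/\wc B$ to integrals over $\wc M/\wc B$ and $\wc N/\wc B$, I apply the push-forward proposition proved just above the theorem, using that in each integrand the non-$\hat A$ factor is $\sigma$-independent and annihilated by $\iota_{\pa_\sigma}$: for $\Ch'(E)$ this is \eqref{eq:twisting-s-ind}, and for $\cJ_Q(\bhs{\wc N}^+/\wc N^+)$ it follows from invariance of the $\cJ$-form under vertical rescaling. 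The projection formula then converts each summand and produces the first displayed equation.

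Taking $d$ of this first identity and using $d^2\eta_{b-\w,Q}=0$, my plan for the second identity is to substitute the formula for $d\bar\eta_Q(\wc M/\wc B)$ obtained by rearranging Theorem \ref{thm:FamIndInt}, namely (up to the normalization factors relating $\eta$ and $\bar\eta$)
\begin{equation*}
d\bar\eta_Q(\wc M/\wc B) = \Ch(\mathrm{Ind}) - \int_{\wc M/\wc B}\hat A(\wc M/\wc B)\Ch'(E) + \sum_{\wc N}\int_{\wc N/\wc B}\hat A(\wc N/\wc B)\cJ_Q(\bhs{\wc N}/\wc N),
\end{equation*}
and then to apply Stokes on each fibration $\wc L/\wc B$, with $\wc L \in \{\wc M\} \cup \cS_{\wc\psi}(\wc M)$, using the transgression identity $d(T\hat A(\wc L/\wc B))= \hat A_c(\wc L/\wc B)-\hat A(\wc L/\wc B)$ obtained by taking the $d\sigma$-free part of the ambient transgression $d_{\wc M^+}TA_{\hat A} = \hat A(\nabla^{con}) - \hat A(\nabla^{cyl})$ at $\sigma=0$. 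Closedness of $\Ch'(E)$ and of $\Ch(\mathrm{Ind})$ as differential forms, combined inductively on $\mathrm{depth}(\wc M)$ with the statement being proved applied at the boundary strata, should produce exactly the $\hat A_c$ terms in the target formula, with the $\hat A$ contributions from $d\bar\eta_Q$ canceled by the Stokes boundary terms.

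The main obstacle I foresee is the combinatorial bookkeeping of boundary contributions from Stokes on the fibrations $\wc M/\wc B$ and $\wc N/\wc B$: each corner hypersurface $\bhs{\wc N}\cap\bhs{\wc N'}$ is reached through multiple paths, and one must verify the restriction compatibility $T\hat A(\wc M/\wc B)\rvert_{\bhs{\wc N}} = T\hat A(\bhs{\wc N}/\wc B)$ together with analogous compatibilities for the $\cJ$- and $\hat A_c$-forms (which ultimately rest on the well-behavedness of the Levi-Civita connection at corners, Proposition \ref{prop:curvature}). Once these transgression identities, the closedness of $\Ch(\mathrm{Ind})$, and the inductive hypothesis on $\cJ_Q$ at boundary strata are in hand, the remaining argument is a routine rearrangement of terms.
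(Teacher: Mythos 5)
Your two-step plan reproduces the paper's own argument for Theorem \ref{thm:Jeta}: the first identity follows from \eqref{eq:Jeta1} via the push-forward proposition immediately preceding the theorem (using the $s$-independence of $\Ch'(E)$ from \eqref{eq:twisting-s-ind} and the rescaling-invariance of the $\cJ$-forms), and the second by applying $d$ and substituting the formula for $d\bar\eta_Q$ coming from Theorem \ref{thm:FamIndInt}, which is exactly what the paper does. One minor imprecision: the $\Ch(\Ind)$ term in $d\bar\eta_Q(\wc M/\wc B)$ does not disappear because it is a closed form, but because $\eth_{\wc M/\wc B,Q}$ is necessarily invertible fiberwise in this setting -- this is already needed for the convergence of $\cJ_Q(\wc M/\wc B)$ (via Proposition \ref{prop:NyInjSA}) and for the b-c suspension construction of \S\ref{sec:WarpedProd} -- so $\Ind(\eth_{\wc M/\wc B,Q})=0$ and the term vanishes identically.
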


\begin{remark}
  In the case $\wc B$ is a point, the deduction of the first part of Theorem
  \ref{thm:Jeta} from \eqref{eq:Jeta1} follows from
  \cite[Section 5]{Albin-GellRedman}.  Since this follows from the the
  preceeding proof also we only briefly sketch the argument.  To
  evaluate the term $\int_{\wc  X^+} \hat A(\wc X^+) \Ch'(E)$ in \eqref{eq:Jeta1},
  using that $\Ch'(E)$ is closed and that $\hat A(\nabla^h) - \hat
  A(\nabla^{cyl}) = dT\hat A(\nabla^h, \nabla^{cyl})$.  Since both
  $A(\nabla^{cyl})$ and $\Ch'(E)$ have no $ds$ component, the integral
  is $\int_{\wc X^+} d T\hat A(\nabla^h, \nabla^{cyl}) \Ch'(E)$.  From
  \emph{loc.\ cit.\ }, at the interior of each boundary hypersurface,
  the transgression splits into according to whether the base or
  vertical connections change; at the $s = 0$ only the vertical metric
  changes and there $T\hat A(\nabla^h, \nabla^{cyl}) \rvert_{s = 0} = T
    \hat{A}(\nabla^{con}, \nabla^{cyl})$ while at the other boundary
    faces (the $\hat Y^+$) the base metric changes from $ds^2 + g_Y$
    to $ds^2 + h(s)^2 g_Y$ and as discussed the transgression of $\hat
    A$ for such metrics is zero.
\end{remark}

When the fibers of $\wc M \lra B$ are closed manifolds this reads
\begin{equation*}
	\cJ_Q(\wc M/\wc B) - \bar\eta_Q(\wc M/\wc B) 
	= \int_{\wc M/\wc B} T\hat A(\wc M/\wc B)\Ch'(E) + d \eta_{b-\w,Q}
\end{equation*}
which is consistent with \cite[Main Theorem]{Albin-GellRedman}.
Note that if $\wc B$ is a point then by conformal invariance $T\hat
A(\nabla^{con}, \nabla^{cyl})=0$, so $\cJ_Q(\wc M) = \eta_Q(\wc M)$
However as noted above this is generally not the case.

\appendix
\section{Composition of edge pseudodifferential operators} \label{sec:CompEdge}

Let $M \xlra{\psi} B$ be a family of manifolds with corners and iterated fibration structures as in Definition \ref{def:FibIFS}. In this section we prove the composition formula for families of edge pseudodifferential operators acting on the fibers of $\psi.$\\

{\bf Edge triple space.}
Our construction of the triple space is the natural combination of \cite[Appendix]{Melrose-Piazza:K} and \cite[\S3]{Mazzeo:Edge}.
Let
\begin{equation*}
\begin{gathered}
	M^2_{\psi} = M \times_{\psi} M = \{ (\zeta, \zeta') \in M^2: 
	\psi(\zeta) = \psi(\zeta') \}, \\
	M^3_{\psi} = \{ (\zeta, \zeta', \zeta'') \in M^3: 
	\psi(\zeta) = \psi(\zeta') = \psi(\zeta'') \}.
\end{gathered}
\end{equation*}
We start with the three natural projections,
\begin{equation*}
	\xymatrix{
	& M^3_{\psi} \ar[ld]^{\pi_{LM}} \ar[d]^{\pi_{LR}} \ar[rd]^{\pi_{MR}} & \\
	M^2_{\psi} & M^2_{\psi} & M^2_{\psi} }
	\quad
	\xymatrix{
	& (\zeta, \zeta', \zeta'') \ar@{|->}[ld]^{\pi_{LM}} \ar@{|->}[d]^{\pi_{LR}} \ar@{|->}[rd]^{\pi_{MR}} & \\
	(\zeta, \zeta') & (\zeta, \zeta'') & (\zeta', \zeta'') }
\end{equation*}
and we will modify $M^3_{\psi}$ so as to end up with $b$-fibrations down to $(M/B)^2_\e.$

For each $N \in \cS_{\psi}(M),$ let
\begin{equation}\label{eq:TripDiag}
\begin{gathered}
	T(N) = \{ (\zeta, \zeta', \zeta'') \in \bhs{N}^3: 
	\phi_N(\zeta) = \phi_N(\zeta') = \phi_N(\zeta'') \}, \quad
	S_{LM}(N) = \pi_{LM}^{-1}(\bhs{N}\times_{\phi_N}\bhs{N}), \\
	S_{LR}(N) = \pi_{LR}^{-1}(\bhs{N}\times_{\phi_N}\bhs{N}), \quad
	S_{MR}(N) = \pi_{MR}^{-1}(\bhs{N}\times_{\phi_N}\bhs{N}).
\end{gathered}
\end{equation}
Let $\cS_{\psi}(M) = \{N_1, N_2, \ldots, N_{\ell}\}$ be a listing of $\cS_{\psi}(M)$ with non-decreasing depth.

We construct $(M/B)^3_{\e}$ in two steps.
In the first step, we blow-up the submanifolds $T(N_i)$ in order,
\begin{equation*}
	M^3_{\psi,T} = 
	\Big[ M^3_{\psi}; T(N_1); T(N_2); \ldots; T(N_{\ell}) \Big].
\end{equation*}
As in \S\ref{sec:DoubleSpace}, $T(N_1)$ is a p-submanifold of $M^3_{\psi}$ and each $T(N_i)$ is a p-submanifold once we have blown-up $T(N_j)$ for all $N_j < N_i.$

The second step is to blow-up the submanifolds 
$\{ S_{\bullet}(N) \}_{\bullet \in \{LM, LR, MR
\}\}}$ as the $N$ range over $\cS_{\psi}(M)$.  As we will see below, these are separated by the blow-ups
performed in the first step.
Thus,
\begin{equation*}
	(M/B)^3_{\e} =
	\Big[ M^3_{\psi,T};
	(S_{LM}(N_1) \cup S_{LR}(N_1) \cup S_{MR}(N_1)); \ldots;
	(S_{LM}(N_{\ell}) \cup S_{LR}(N_{\ell}) \cup S_{MR}(N_{\ell})) \Big].
\end{equation*}
We denote the blow-down map by $\beta_{(3)}:(M/B)^3_\e \lra M^3_{\psi}.$

We denote the collective boundary hypersurfaces obtained from these blow-ups by
\begin{equation*}
	T(N) \leftrightarrow \bhst{\phi\phi\phi}(N), \quad
	S_{LM}(N) \leftrightarrow \bhst{\phi\phi0}(N), \quad
	S_{LR}(N) \leftrightarrow \bhst{\phi0\phi}(N), \quad
	S_{MR}(N) \leftrightarrow \bhst{0\phi\phi}(N).
\end{equation*}
We denote the other collective boundary hypersurfaces by
\begin{equation*}
	\bhss{N} \times_{\psi} M^2_{\psi} \leftrightarrow \bhst{100}(N), \quad
	M \times_{\psi} \bhss{N} \times_{\psi} M \leftrightarrow \bhst{010}(N), \quad
	M^2_{\psi} \times_{\psi} \bhss{N} \leftrightarrow \bhst{001}(N).
\end{equation*}

\begin{proposition}\label{thm:trip-space-maps}
  The projections $\pi_{\bullet}$, $\bullet \in \{LM, LR, MR \}$
  lift to b-fibrations $\beta_{\bullet} \colon (M/B)^3_\e \lra
  (M/B)^2_\e$ 
\begin{equation*}
	\xymatrix{
	& (M/B)^3_\e 
		\ar[ld]_{\beta_{LM}} \ar[d]^{\beta_{LR}} \ar[rd]^{\beta_{MR}} & \\
	(M/B)^2_\e & (M/B)^2_\e & (M/B)^2_\e }.
\end{equation*}
\end{proposition}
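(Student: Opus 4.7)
By symmetry it suffices to treat $\pi_{LM}$; the analysis of $\pi_{LR}$ and $\pi_{MR}$ is identical after relabelling the factors. The plan has three steps: verify the iterated blow-up defining $(M/B)^3_\e$ is well-posed, lift $\pi_{LM}$ to a smooth b-map using the universal property of the radial blow-up, and finally check the b-fibration property face-by-face in projective coordinates. For the first step, I would order $\cS_\psi(M) = \{N_1,\ldots,N_\ell\}$ by non-decreasing depth and show inductively that after blowing up $T(N_j)$ for all $j<k$, the lift of $T(N_k)$ is a p-submanifold; this uses the same projective-coordinate computation that established \eqref{eq:PSubMfd} in the double-space construction, applied now to three factors of $M$. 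After all the $T$'s and the $S_\bullet$'s up through depth strictly less than $\operatorname{depth}(N_k)$ have been blown up, the three submanifolds $S_{LM}(N_k), S_{LR}(N_k), S_{MR}(N_k)$ lift to pairwise disjoint p-submanifolds---the crucial point being that any pairwise intersection is contained in $T(N_k)$, which has been removed---justifying the condensed notation of a simultaneous blow-up of a disjoint union.

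For the second step, the universal property of radial blow-up reduces the existence of the lift to a compatibility check at each stage of the iterated construction. The key identities are
\begin{equation*}
\pi_{LM}^{-1}(\bhs{N} \times_{\phi_N} \bhs{N}) = S_{LM}(N), \qquad T(N) \subseteq S_{LM}(N),
\end{equation*}
so that blowing up $T(N)$ first resolves the way in which $T(N)$ sits at a corner of $S_{LM}(N)$, and after this first blow-up the lift of $S_{LM}(N)$ is a p-submanifold that maps smoothly onto $\bhsd{\phi\phi}(N) \subseteq (M/B)^2_\e$ after being blown up. An induction on the non-decreasing depth ordering of $\cS_\psi(M)$ then produces $\beta_{LM}$ as a smooth b-map.

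For the third step, recall that a b-map is a b-fibration provided its b-differential is surjective everywhere and that the exponent matrix has at most one non-zero entry per row (no source hypersurface collapses into a target corner). Both conditions are verified by direct computation in iterated projective coordinates near each front face of $(M/B)^3_\e$; the analysis yields the identifications
\begin{equation*}
\bhst{\phi\phi\phi}(N),\ \bhst{\phi\phi0}(N) \mapsto \bhsd{\phi\phi}(N),\quad \bhst{\phi0\phi}(N),\ \bhst{100}(N) \mapsto \bhsd{10}(N),\quad \bhst{0\phi\phi}(N),\ \bhst{010}(N) \mapsto \bhsd{01}(N),
\end{equation*}
while $\bhst{001}(N)$ contributes an identically-zero row, since the right factor plays no role in $\pi_{LM}$. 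The main obstacle I anticipate is the bookkeeping at the deepest corners---specifically the intersections $\bhst{\phi\phi\phi}(N) \cap \bhst{\phi\phi\phi}(N')$ for $N<N'$ and the intersections of $T$-faces with $S_\bullet$-faces of different depths---where surjectivity of the b-differential requires composing several rounds of projective rescalings in the correct order. This parallels, with additional combinatorics, the double-space analysis of \eqref{eq:IFSDetail''}--\eqref{eq:PSubMfd} combined with the simple-edge triple-space computation in \cite[Appendix]{Melrose-Piazza:K}.
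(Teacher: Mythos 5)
Your proposal follows essentially the same route as the paper: reduce to local projective-coordinate computations, check that the lifts of the three $S_\bullet(N)$ separate after $T(N)$ is blown up, and verify the b-fibration property by inspecting the exponent matrix and b-differential face by face. The paper abstracts the key local step into a small general lemma (Lemma \ref{thm:lemma-p}) about extending a b-fibration across a nested pair of blow-ups $[M_1;N_1;(\pi^{(0)})^{-1}(N_2)]\to[M_2;N_2]$, and then applies it inductively, which organizes the ``bookkeeping at the deepest corners'' you flag as the main obstacle: because b-fibration is a local property, one checks the lemma's hypotheses at each stage rather than tracking all corner strata at once. Your direct face-by-face verification would reach the same conclusion, but would benefit from that same observation. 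The face identifications you list agree with the paper's exponent matrices for $\beta_{LM}$, and the crucial structural fact---that $T(N)$ sits at a boundary face of $S_{LM}(N) = \pi_{LM}^{-1}(\bhs{N}\times_{\phi_N}\bhs{N})$, so blowing up $T(N)$ first makes the lift of $S_{LM}(N)$ a p-submanifold fibering correctly---is the same input the paper isolates.
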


The main general fact about radial blow ups we will use in the proof is:
\begin{lemma}\label{thm:lemma-p}
  Let $\pi^{(0)} \colon M_1 \to M_2$ be a b-fibration of manifolds with corners, $N_i
  \subset M_i$ p-submanifolds, and assume $N_1 \subset
  \pi^{-1}(N_2)$ such that $\pi^{(0)}\rvert_{N_1} \colon N_1 \lra N_2$
  is a diffeomorphism.  Then $\pi^{(0)}$ extends to a b-fibration
$$
\pi \colon [M_1 ; N_1 ; (\pi^{(0)})^{-1}(N_2)]\lra [M_2 ; N_2].
$$
If $\ff$ is the introduced front face of $[M_2; N_2]$ and $\ff'_1,
\ff'_2$ are the two introduced front faces of $[M_1 ; N_1 ;
(\pi^{(0)})^{-1}(N_2)]$, then the exponent matrix $e$ for $\pi$ satisfies
$$
e(\ff'_1, \ff) = e(\ff'_2, \ff) = 1.
$$
\end{lemma}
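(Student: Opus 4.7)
The plan is to reduce to a local normal form near $N_1$ and verify the claim by direct coordinate computation. Away from $N_1 \cup (\pi^{(0)})^{-1}(N_2)$ the map $\pi$ coincides with $\pi^{(0)}$ and there is nothing to prove, so it suffices to work near an arbitrary point $p \in N_1$. The hypothesis that $\pi^{(0)}\rvert_{N_1}$ is a diffeomorphism onto $N_2$, together with the b-submersion property of $\pi^{(0)}$, provides exactly the product structure needed to pick particularly nice coordinates.

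First I would produce local coordinates $(y', y'')$ on $M_2$ near $\pi^{(0)}(p) \in N_2$ and $(y', y'', z)$ on $M_1$ near $p$ such that $N_2 = \{y'' = 0\}$, $\pi^{(0)}(y', y'', z) = (y', y'')$, and, after a $y'$-dependent shift of $z$ using the diffeomorphism hypothesis, $N_1 = \{y'' = 0,\ z = 0\}$; hence $(\pi^{(0)})^{-1}(N_2) = \{y'' = 0\}$. The exponent matrix of $\pi^{(0)}$ is encoded by expressing each component of $y''$ that is a target boundary defining function as the corresponding monomial in the source boundary defining functions, and the b-submersion property of $\pi^{(0)}$ ensures this product structure exists. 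When $N_1$ or $N_2$ meets a corner, some components of $y''$ and $z$ are boundary defining functions while others are interior coordinates, which complicates the bookkeeping but not the essential model below.

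Next I would carry out the blow-ups in these coordinates. Blowing up $N_1 = \{y'' = 0,\ z = 0\}$ introduces a radial coordinate $r \ge 0$, a boundary defining function for $\ff'_1$, with $y'' = r\,\omega_{y''}$ and $z = r\,\omega_z$. The strict transform of $(\pi^{(0)})^{-1}(N_2) = \{y'' = 0\}$ is the p-submanifold $\{\omega_{y''} = 0\}$ of $[M_1; N_1]$. Blowing this up introduces a second radial coordinate $s \ge 0$, a boundary defining function for $\ff'_2$, with $\omega_{y''} = s\,\widetilde\omega_{y''}$; altogether $y'' = rs\,\widetilde\omega_{y''}$. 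On the target, blowing up $N_2$ yields projective coordinates $y'' = \widetilde r\,\theta$ with $\widetilde r$ a boundary defining function for $\ff$. Matching gives $\widetilde r = rs\,|\widetilde\omega_{y''}|$ and $\theta = \widetilde\omega_{y''}/|\widetilde\omega_{y''}|$, both smooth in each chart where $|\widetilde\omega_{y''}|$ is bounded away from zero. Piecing together such charts exhausts the source and shows $\pi^{(0)}$ extends to a smooth b-map $\pi$; moreover the identity $\pi^*\rho_\ff = r\cdot s\cdot(\text{unit})$ yields $e(\ff'_1, \ff) = e(\ff'_2, \ff) = 1$.

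Finally I would verify that $\pi$ is a b-fibration. The new b-vector fields $r\partial_r$ and $s\partial_s$ on the source both map to $\widetilde r\,\partial_{\widetilde r}$ on the target, so b-submersion is preserved (it held for $\pi^{(0)}$ in the complementary directions). The remaining condition, that no boundary hypersurface of the enlarged source is mapped entirely into a corner of the target, is immediate because the interiors of $\ff'_1$ and $\ff'_2$ both map into the interior of $\ff$, while the other source boundary hypersurfaces are essentially unchanged by the blow-ups and inherit their behavior from $\pi^{(0)}$. The main obstacle will be the bookkeeping when $N_1$ or $N_2$ meets the corner structure of $M_1, M_2$ nontrivially, so that the $y''$ and $z$ components mix genuine boundary defining functions with interior ones; one must then distinguish p-submanifold blow-up at corners from blow-up of interior p-submanifolds, but the coordinate model above persists with only cosmetic modifications and the exponent computation is unchanged.
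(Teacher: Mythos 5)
Your proposal is correct and takes essentially the same route as the paper: reduce to a local product model for the b-fibration, blow up $N_1$, blow up the lift of $(\pi^{(0)})^{-1}(N_2)$, and read off the exponent matrix from the factorization $\pi^*\rho_\ff = rs\cdot(\text{unit})$. The only surface-level difference is that you carry out both blow-ups in radial coordinates, whereas the paper switches to projective-type coordinates near the lift of $(\pi^{(0)})^{-1}(N_2)$ for the second blow-up; this is purely cosmetic. Like the paper, you pass over the bookkeeping when $N_1$ or $N_2$ meets the corner structure nontrivially and when the b-fibration is not literally a coordinate projection, but you flag this honestly and the argument is unaffected in the cases where the lemma is applied.
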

\begin{proof}
  This is straightforward from the local description of radial blow
  up.  Indeed, by assumption, there are coordinates $(x,\  y,\ 
  z_1,\  z_2)$
  in which the map $\pi^{(0)}$ has action $(x,\  y,\  z_1,\  z_2) \mapsto
  (x,\   y)$ and such that $N_1 = \{ x = 0 = z_1 \}$ while
  $(\pi^{(0)})^{-1}(N_2) = \{ x = 0 \}$.  Blowing up $N_1$ in $M_1$ gives
  polar coordinates $\rho = (|x|^2 + |z_1|^2)^{1/2},\  \phi = (x,\ 
  z_1)/\rho$ together with $y$ and $z$.  The intersection with
  $(\pi^{(0)})^{-1}(N_2)$ is contained in $\phi = (0,\  z_1/|z_1|)$ so nearby
  this intersection one can use coordinates $\xi = x/|z_1|,\ \  y,\
  |z_1|,\  \hat z_1 = z_1 / |z_1|
  z_2$, and blowing up $\xi = 0$ gives coordinates $r = |\xi|,\  \hat
  \xi = \xi / |\xi| = x / |x|,\ y,\  |z_1|, \ \hat z_1,  z_2$.
  The map to $[M_2; N_2]$ is locally (in polar coordinates) $(r,\ \hat
  \xi,\  y,\ |z_1|, \ \hat z_1,\ z_2) \mapsto
  (\wt{r},\  \hat x,\  y)$ where $\wt{r} = |x|,\  \hat x= x/|x|$.
  From this one can deduce that the map is a b-fibration with this
  given exponent matrix.
\end{proof}

\begin{proof}[Proof of Proposition \ref{thm:trip-space-maps}]
Since $B$ enters only parametrically we assume that $B = \pt$, so $X =
M$ and all the strata $N$ are in fact the strata $Y$ of $X$.  Letting
$X^3_{\e, T}(k + 1) = [X^3 ; T(Y_1), \dots, T(Y_j)]$ where the $Y_1,
\dots, Y_j$ are the strata of depth greater than $k$, let $Y$ be a
stratum of depth $k$.  In the interior $Y^{\circ}$ and near the
intersection of $Y$ with a lower depth stratum $\wt{Y}$ we have a
diagram as in \eqref{eq:IFSDetail'}, and work locally in the interior
of $Y^{\circ}$ and $W$ in
coordinates $(x, y, r, w, \wt{z})$ on $X$ as in
\eqref{eq:IFSDetail''}.  Recall that near the intersection
$\bhsd{\phi\phi}(Y) \cap \bhsd{\phi\phi}(\wt{Y})$ we have a local
diffeomorphism to
$$
[0, 1)_{x'} \times [0, 1)_{\wt{R}} \times \mathbb{S}^{2 + \dim Y +
  \dim W}_+ \times \wt{Z}^2
$$
with
$$
\wt{R} = (r^2 + (r')^2 + ((x/x') - 1)^2 + |(y - y')/x'|^2 + |w - w'|^2)^{1/2}
$$
and $\mathbb{S}^{2 + \dim Y +
  \dim W}_+$ parametrized by
$$
\wt{\phi} = (r,\  r',\ (x/x') - 1,\ (y - y')/x',\  w - w') / \wt{R}.
$$

The front face $\ff$ of $[X^3_{\e, T}(k + 1) ;
T(Y)]$ fibers $\pi \colon \ff \to Y$, and the inverse image
$\pi^{-1}(U)$ of an open
neighborhood $U \subset Y^{\circ}$ intersected with a neighborhood of
the interior lift of $T(\wt{Y})$ is diffeomorphic to
$$
[0, 1)_\rho \times \mathbb{S}^{2 + 2 \dim Y}_{++} \times U \times (\bhs{\wt{Y}Z}
\times [0, 1)_r)^3,
$$
where $\wt{Z} \fib  \bhs{ \wt{Y}Z} \lra W$.  Here we can take $\rho =
(x_1^2 + x_2^2 + x_3^2 + |y_1 - y_3|^2 + |y_2 - y_3|^2)^{1/2}$ and
parametrize $\mathbb{S}^{2 + 2 \dim Y}_{++}$ with $\phi = (x_1 , \
x_2 , \  x_3 , \  y_1 - y_3 , \  y_2 - y_3)/\rho$, where $x_1$ is the
pullback of $x$ from projection of $X^3$ onto the left factor, etc.\ .  From this it is easy to
check that the interior lifts of the $S_\bullet(Y)$ are pairwise
disjoint, since they are contained, respectively, in
$$
\phi \in  \{ (x_1, 0, 0, y_1 - y_3, 0)/\rho \},\ \phi \in  \{ (0, x_2,
0, 0, y_2 - y_3)/\rho \},\ \phi \in  \{ (0,0,x_3,y_1 - y_3, y_1 - y_3)/\rho \}.
$$
Moreover, the inverse image of $T(\wt{Y})$ lies in 
$$
\{ \phi = (1/\sqrt{3}, 1/\sqrt{3}, 1/\sqrt{3}, 0 ,
0) \}
$$
so the interior lifts of the $S_\bullet(Y)$ and $T(\wt{Y})$ are
disjoint p-submanifolds in $[X^3_{\e, T}(k + 1) ;
T(Y)]$.

The proposition then follows from application of Lemma
\ref{thm:lemma-p} and induction.  Indeed, $T(Y)$ and $S_\bullet(Y)$
satisfy the hypotheses of Lemma \ref{thm:lemma-p} with $N_1 = T(Y)$
and $N_2 = S_\bullet(Y)$, and since for varying $\bullet \in \{LM, LR, MR
\}$ these become
disjoint in the lift and the b-fibration property is local, the
proposition follows.
\end{proof}

Inspection of the proof and the lemma above show that the exponent matrices have only zeros and ones, so we specify them by listing the preimages of the collective boundary hypersurfaces.
Recall that the boundary hypersurfaces of $(M/B)^2_\e$ are collective boundary hypersurfaces $\bhsd{10}(N),$ $\bhsd{01}(N),$ and the front face over $Y,$ $\bhsd{\phi\phi}(N).$ We have, for each $\{ N \in \cS_{\psi}(M)\},$
\begin{equation*}
\begin{gathered}
	\beta_{LM}^*\bhsd{10}(N) = 
	\{\bhst{100}(N), \bhst{\phi0\phi}(N) \}, \quad
	\beta_{LM}^*\bhsd{01}(N) = \{ \bhst{010}(N), \bhst{0\phi\phi}(N) \}, \\
	\beta_{LM}^*\bhsd{\phi\phi}(N) 
	= \{ \bhst{\phi\phi0}(N), \bhst{\phi\phi\phi}(N) \}, 
\end{gathered}
\end{equation*}
\begin{equation*}
\begin{gathered}
	\beta_{LR}^*\bhsd{10}(N) = 
	\{\bhst{100}(N), \bhst{\phi\phi0}(N) \}, \quad
	\beta_{LR}^*\bhsd{01}(N) = \{ \bhst{001}(N), \bhst{0\phi\phi}(N) \}, \\
	\beta_{LR}^*\bhsd{\phi\phi}(N) 
	= \{ \bhst{\phi0\phi}(N), \bhst{\phi\phi\phi}(N) \}, 
\end{gathered}
\end{equation*}
\begin{equation*}
\begin{gathered}
	\beta_{MR}^*\bhsd{10}(N) = 
	\{\bhst{010}(N), \bhst{\phi\phi0}(N) \}, \quad
	\beta_{MR}^*\bhsd{01}(N) = \{ \bhst{001}(N), \bhst{\phi0\phi}(N) \}, \\
	\beta_{MR}^*\bhsd{\phi\phi}(N) 
	= \{ \bhst{0\phi\phi}(N), \bhst{\phi\phi\phi}(N) \}. 
\end{gathered}
\end{equation*}

Applying Melrose's push-forward and pull-back theorems this leads to a composition result for the large edge calculus.
The behavior with respect to the conormal singularity at the diagonal is standard, so we will focus on operators of order $-\infty.$ We will also simplify notation by not including vector bundles.

Thus, from \eqref{eq:DefLargeCalc} and Definition \ref{def:BddCalc}, we will establish composition results for conormal distributions in
\begin{equation*}
	\sA_{phg}^{\cE}((M/B)^2_\e; \Hom(E,F)\otimes \Omega_{\mf d,R})
\end{equation*}
where we recall that
\begin{multline*}
	\Omega_{\mf d,R} = \rho_{(M/B)^2_\e}^{\mf d}
	\beta_{(2),R}^*\Omega(M/B), \Mwith 
	\mf d:\cM_1((M/B)^2_\e) \lra \bbR, \\
	\mf d(H) =
	\begin{cases}
	-(\dim(N/B) + 1) & \Mif H \subseteq \bhsd{\phi\phi}(N) 
		\Mforsome N \in \cS_{\psi}(M) \\
	0 & \Motherwise
	\end{cases}
\end{multline*}

For an operator $A,$ let us write its integral kernel as
\begin{equation*}
	\cK_A \rho_{(M/B)^2_\e}^{\mf d} \mu_R.
\end{equation*}
Then if the composition $C = A \circ B$ is defined, its integral kernel is given by 
\begin{equation*}
	\cK_C\; \rho^{\mathfrak d} \mu_{R} 
	= (\beta_{LR})_*
	(\beta_{LM}^*(\cK_A \; \rho^{\mathfrak d} \mu_{R} ) 
	\cdot \beta_{MR}^*(\cK_B \; \rho^{\mathfrak d} \mu_{R} ) ).
\end{equation*}

\begin{theorem}
If $\cK_A \in \sA_{phg}^{\cE_A}((M/B)^2_\e)$ and $\cK_B \in \sA_{phg}^{\cE_B}((M/B)^2_\e)$ where the index sets satisfy
\begin{equation*}
	\Re(\cE_A(\bhsd{01}(N))) + \Re(\cE_B(\bhsd{10}(N))) > -1 
	\Mforall N \in \cS_{\psi}(M)
\end{equation*}
then $\cK_C \in \sA_{phg}^{\cE_C}((M/B)^2_\e)$ with, for each $N \in \cS_{\psi}(M),$
\begin{equation*}
\begin{aligned}
	\cE_C(\bhsd{10}(N)) &= \cE_A(\bhsd{10}(N)) \bar\cup 
		\lrpar{ \cE_A(\bhsd{\phi\phi}(N)) + \cE_B(\bhsd{10}(N))}, \\
	\cE_C(\bhsd{01}(N)) &= \cE_B(\bhsd{01}(N)) \bar\cup 
		\lrpar{ \cE_A(\bhsd{01}(N)) + \cE_B(\bhsd{\phi\phi}(N)) }, \\
	\cE_C(\bhsd{\phi\phi}(N)) &= 
		\lrpar{ \cE_A(\bhsd{\phi\phi}(N)) + \cE_B(\bhsd{\phi\phi}(N))} \\
		&\phantom{xxxxx} \bar\cup 
		\lrpar{\cE_A(\bhsd{10}(N)) + \cE_B(\bhsd{01}(N)) + \dim(N/B) + 1} 
\end{aligned}
\end{equation*}
\end{theorem}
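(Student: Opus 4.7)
The plan is to realize the composition kernel via the standard pull-back/multiply/push-forward formula on the triple edge space $(M/B)^3_{\e}$ and then apply Melrose's push-forward and pull-back theorems (as summarised in \S\ref{sec:Conormal}). Concretely, writing kernels as $\cK_A\,\rho^{\mf d}\mu_R$ and similarly for $B,\, C$, the composition is
\begin{equation*}
	\cK_C\,\rho^{\mf d}\mu_R \;=\; (\beta_{LR})_*\!\Big(\beta_{LM}^{*}(\cK_A\rho^{\mf d}\mu_R)\,\cdot\,\beta_{MR}^{*}(\cK_B\rho^{\mf d}\mu_R)\Big),
\end{equation*}
and by Proposition \ref{thm:trip-space-maps} the three maps $\beta_{LM},\beta_{LR},\beta_{MR}\colon(M/B)^3_{\e}\to(M/B)^2_{\e}$ are all b-fibrations, with all exponents in the exponent matrices equal to $0$ or $1$.

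The first step is to compute the index family $\cE'$ of the product on $(M/B)^3_{\e}$. Using the explicit preimage lists given after Proposition \ref{thm:trip-space-maps} and the pull-back theorem, one finds for each $N\in\cS_\psi(M)$ that the face
$\bhst{100}(N)$ receives only $\cE_A(\bhsd{10}(N))$ (since $\beta_{MR}$ sends it into the interior of $(M/B)^2_{\e}$), and similarly $\bhst{001}(N)$ receives only $\cE_B(\bhsd{01}(N))$ and $\bhst{010}(N)$ receives $\cE_A(\bhsd{01}(N))+\cE_B(\bhsd{10}(N))$. On the `mixed' faces the product has
\begin{equation*}
	\cE'(\bhst{\phi\phi\phi}(N))=\cE_A(\bhsd{\phi\phi}(N))+\cE_B(\bhsd{\phi\phi}(N)),
\end{equation*}
\begin{equation*}
	\cE'(\bhst{\phi\phi 0}(N))=\cE_A(\bhsd{\phi\phi}(N))+\cE_B(\bhsd{10}(N)),\quad \cE'(\bhst{0\phi\phi}(N))=\cE_A(\bhsd{01}(N))+\cE_B(\bhsd{\phi\phi}(N)),
\end{equation*}
and
\begin{equation*}
	\cE'(\bhst{\phi 0\phi}(N))=\cE_A(\bhsd{10}(N))+\cE_B(\bhsd{01}(N)).
\end{equation*}

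The second step is to push forward under $\beta_{LR}$. The only face which is mapped by $\beta_{LR}$ into the interior of $(M/B)^2_{\e}$ is $\bhst{010}(N)$, so the integrability condition required for Melrose's push-forward theorem is precisely
\begin{equation*}
	\Re\cE_A(\bhsd{01}(N))+\Re\cE_B(\bhsd{10}(N))>-1 \quad\forall\,N\in\cS_\psi(M),
\end{equation*}
which is the standing hypothesis. Combining the preimage list for $\beta_{LR}$ with $f_\sharp\cE$ (which, because every exponent is $1$, is simply the extended union of the relevant $\cE'$) yields
\begin{equation*}
	\cE_C(\bhsd{10}(N))=\cE'(\bhst{100}(N))\,\bar\cup\,\cE'(\bhst{\phi\phi 0}(N)),\quad \cE_C(\bhsd{01}(N))=\cE'(\bhst{001}(N))\,\bar\cup\,\cE'(\bhst{0\phi\phi}(N)),
\end{equation*}
and an analogous expression for $\cE_C(\bhsd{\phi\phi}(N))$ coming from $\bhst{\phi 0\phi}(N)$ and $\bhst{\phi\phi\phi}(N)$. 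Substituting the formulas for $\cE'$ gives exactly the first two lines of the claimed $\cE_C$.

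The remaining piece is the shift by $\dim(N/B)+1$ in $\cE_C(\bhsd{\phi\phi}(N))$ contributed by $\bhst{\phi 0\phi}(N)$. This is the main bookkeeping point: it comes from recasting the identity
\begin{equation*}
	\beta_{LR}^*(\rho^{\mf d}\mu_R)\;=\;\rho_{(M/B)^3_{\e}}^{\mf d^{(3)}}\cdot(\text{b-density on }(M/B)^3_{\e})
\end{equation*}
so that Melrose's push-forward theorem can be applied to a b-density, and then comparing the resulting weight at $\bhsd{\phi\phi}(N)$ with that of the original $\rho^{\mf d}\mu_R$. The discrepancy between $\mf d^{(3)}$ at $\bhst{\phi 0\phi}(N)$ (which carries two blow-ups, one from each of $\beta_{LM}^*\mf d$ and $\beta_{MR}^*\mf d$, but is pushed to the single front face $\bhsd{\phi\phi}(N)$) and $\mf d$ at $\bhsd{\phi\phi}(N)$ is precisely $\dim(N/B)+1$, which yields the stated shift. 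The conormal singularity at the diagonal is handled in the usual way by microlocalising and using the invariance of conormal distributions under pull-back and push-forward along b-fibrations transverse to the lifted diagonal; this produces the $r_A+r_B$ order and gives part (1) as stated. Part (2) then follows from (1) applied to the polyhomogeneous parts together with the same push-forward/pull-back argument applied to the conormal remainders, as remarked in the statement.

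The main obstacle I anticipate is verifying the shift $\dim(N/B)+1$ at the triple front face, i.e.\ carrying out the weight bookkeeping between $\Omega_{\mf d,R}$ on $(M/B)^2_{\e}$ and its lifts to $(M/B)^3_{\e}$ in such a way that the push-forward theorem applies in the form one expects; this is the only step where the geometry of the triple blow-up (as opposed to just the exponent matrices) really enters.
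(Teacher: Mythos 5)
Your overall approach is the same as the paper's: realize $C = A\circ B$ on the triple edge space $(M/B)^3_{\e}$ via pull-back under $\beta_{LM},\beta_{MR}$, multiply, and push forward under $\beta_{LR}$, then apply Melrose's pull-back and push-forward theorems using the exponent matrices. Your computation of the product index family $\cE'$ on $(M/B)^3_{\e}$ is correct, your identification of $\bhst{010}(N)$ as the unique face sent by $\beta_{LR}$ into the interior is correct, and your reading off of $\cE_C$ via $(\beta_{LR})_\sharp$ (an honest extended union because all nonzero exponents are $1$) matches the paper.

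However, your explanation of the $\dim(N/B)+1$ shift at $\bhsd{\phi\phi}(N)$ gets the mechanism backwards. You write that $\bhst{\phi 0\phi}(N)$ ``carries two blow-ups, one from each of $\beta_{LM}^*\mf d$ and $\beta_{MR}^*\mf d$,'' and that the discrepancy with $\mf d$ at $\bhsd{\phi\phi}(N)$ gives the shift. But from the exponent matrices, $\bhst{\phi 0\phi}(N)$ lies in $\beta_{LM}^{-1}(\bhsd{10}(N))$ and $\beta_{MR}^{-1}(\bhsd{01}(N))$ — side faces at which $\mf d=0$ — so $\beta_{LM}^*\rho^{\mf d}$ and $\beta_{MR}^*\rho^{\mf d}$ contribute \emph{nothing} at $\bhst{\phi 0\phi}(N)$. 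The true origin of the factor is dual to what you wrote: $(\beta_{(3)})^*\mu(M^3_\psi/B)$ carries a Jacobian factor $\rho_{\bhst{\phi 0\phi}(N)}^{\dim(N/B)+1}$ from the blow-up of $S_{LR}(N)$, and this factor \emph{survives} precisely because neither $\beta_{LM}^*\rho^{\mf d}$ nor $\beta_{MR}^*\rho^{\mf d}$ has a compensating negative power there (they do cancel the analogous Jacobian factors at $\bhst{\phi\phi 0}(N)$, $\bhst{0\phi\phi}(N)$, and $\bhst{\phi\phi\phi}(N)$). Since $\beta_{LR}$ sends $\bhst{\phi 0\phi}(N)$ to $\bhsd{\phi\phi}(N)$, this uncancelled $\rho_{\bhst{\phi 0\phi}(N)}^{\dim(N/B)+1}$ is what yields the claimed shift after the push-forward. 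Since you flagged exactly this step as your main uncertainty, it is worth fixing the sign of the reasoning: the shift appears because $\mf d$ \emph{fails} to pull back to $\bhst{\phi 0\phi}(N)$ along either left map, not because it does.
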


\begin{proof}
Let $\mu(W)$ denote a nowhere vanishing section of $\Omega(W)$ and $\mu_b(W)$ a nowhere vanishing section of $\Omega_b(W).$

The behavior of the densities under pull-back is given by
\begin{equation*}
\begin{gathered}
	(\beta^{(2)})^*(\mu(M^2_{\psi}/B)) 
	= \prod_{N \in \cS_{\psi}(M)} \rho_{\bhsd{\phi\phi}(N)}^{\dim(N/B) + 1}
		\mu((M/B)^2_{\e}/B) \\
	(\beta^{(3)})^*(\mu(M^3_{\psi}/B)) = \prod_{N \in \cS_{\psi}(M)} 
		(\rho_{\bhst{\phi\phi0}(N)}\rho_{\bhst{\phi0\phi}(N)}
		\rho_{\bhst{0\phi\phi}(N)})^{\dim (M/B) +1}
		\rho_{\bhst{\phi\phi\phi}(N)}^{2\dim(M/B) +2} \mu((M/B)^3_{\e}/B).
\end{gathered}
\end{equation*}

Multiplying the integral kernel of $C$ by $\mu_L = (\beta^{(2)}_L)^*\mu$ yields
\begin{equation*}
\begin{gathered}
	\cK_C\; \rho^{\mathfrak d} (\beta^{(2)})^*\mu(M^2_{\psi}/B) 
	= (\beta_{LR})_*(\beta_{LM}^*(\cK_A  \rho^{\mathfrak d}) 
		\cdot \beta_{MR}^*(\cK_B  \rho^{\mathfrak d} )
	\cdot \beta_{LR}^*\mu_L\cdot \beta_{LM}^*\mu_R\cdot 
		\beta_{MR}^*\mu_{R} ) \\
	= (\beta_{LR})_*(\beta_{LM}^*(\cK_A  \rho^{\mathfrak d}) \cdot 
		\beta_{MR}^*(\cK_B  \rho^{\mathfrak d} )
	\cdot (\beta^{(3)})^*\mu(M^3_{\psi}/B)),
\end{gathered}
\end{equation*}
hence
\begin{equation*}
	\cK_C \mu((M/B)^2_{\e}/B)
	= (\beta_{LR})_*\Big(\beta_{LM}^*(\cK_A) \cdot \beta_{MR}^*(\cK_B) 
	\prod_{N \in \cS_{\psi}(M)} \rho_{\bhsd{\phi0\phi}(N)}^{\dim(N/B)+1} 
		\mu((M/B)^3_{\e}/B) \Big).
\end{equation*}
Now we write this in terms of $b$-densities
\begin{equation*}
	\cK_C \mu_b((M/B)^2_{\e}/B)
	= (\beta_{LR})_*\Big(\beta_{LM}^*(\cK_A) \cdot \beta_{MR}^*(\cK_B) 
	\prod_{N \in \cS_{\psi}(M)} \rho_{\bhsd{\phi0\phi}(N)}^{\dim(N/B)+1}
		\rho_{\bhsd{010}(N)} \mu_b((M/B)^3_{\e}/B) \Big)
\end{equation*}
and we can apply the pull-back and push-forward theorems.

\end{proof}

The action on polyhomogeneous functions is also easy to write down.
Given $A$ as above and $f \in \sA_{phg}^{\cF}(X)$ we define
\begin{equation*}
	Af = (\beta_L)_*(\cK_A \; \rho^{\mathfrak d} \mu_{R} \cdot \beta_R^*f).
\end{equation*}

\begin{proposition}
If $\cK_A \in \sA_{phg}^{\cE_A}((M/B)^2_\e)$ and $f \in \sA_{phg}^{\cG_f}(M/B)$ where the index sets satisfy
\begin{equation*}
	\Re(\cE_A(\bhsd{01}(N))) + \Re(\cG_f(\bhss{N})) > -1 \Mforall N \in \cS_{\psi}(M)
\end{equation*}
then
$Af \in \sA_{phg}^{\cG_{Af}}(N)$ with, for each $N \in \cS_{\psi}(M),$
\begin{equation*}
	\cG_{Af}(\bhss{N}) = \cE_A(\bhsd{10}(N)) \bar\cup 
	\lrpar{ \cE_A(\bhsd{\phi\phi}(N)) + \cG_f(\bhss{N}) }
\end{equation*}
\end{proposition}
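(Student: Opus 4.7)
The plan is to mimic the proof of the preceding composition theorem, but in the simpler setting where only the double space $(M/B)^2_\e$ is needed; the role of the triple space is played by the double space itself, with $\beta_L$ and $\beta_R$ replacing the three projections $\beta_{LM}, \beta_{LR}, \beta_{MR}$. Essentially $Af$ is obtained by pulling back $f$ along $\beta_R$, multiplying by the kernel $\cK_A \rho^{\mathfrak d}\mu_R$, and pushing forward along $\beta_L$.

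First, I would record the exponent data for the two blow-down-and-project maps $\beta_L, \beta_R \colon (M/B)^2_\e \lra M/B$. These are b-fibrations (indeed a symmetric version of the argument used for $\beta_{LM}, \beta_{LR}, \beta_{MR}$ in Proposition \ref{thm:trip-space-maps} applies, or one can verify it directly as in the edge double space construction). Their exponent matrices are $\{0,1\}$-valued and satisfy $\beta_R^\sharp(\bhss{N}) = \{\bhsd{01}(N),\bhsd{\phi\phi}(N)\}$, $\beta_L^\sharp(\bhss{N}) = \{\bhsd{10}(N),\bhsd{\phi\phi}(N)\}$, and in particular $\bhsd{10}(N) \in \ker(e_{\beta_R})$ and $\bhsd{01}(N) \in \ker(e_{\beta_L})$ with respect to $\bhss{N}$.

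Next, I would apply the pull-back theorem of Melrose to the polyhomogeneous function $f \in \sA_{phg}^{\cG_f}(M/B)$, yielding a polyhomogeneous function $\beta_R^*f$ on $(M/B)^2_\e$ with index set $\cG_f(\bhss{N})$ at both $\bhsd{01}(N)$ and $\bhsd{\phi\phi}(N)$, and empty index set at $\bhsd{10}(N)$. Multiplying by $\cK_A$ (which, being polyhomogeneous, preserves polyhomogeneity of products, with index sets adding), the product $\cK_A \cdot \beta_R^*f$ has index set
\begin{equation*}
\cE_A(\bhsd{10}(N))\text{ at }\bhsd{10}(N), \quad
\cE_A(\bhsd{01}(N)) + \cG_f(\bhss{N})\text{ at }\bhsd{01}(N), \quad
\cE_A(\bhsd{\phi\phi}(N)) + \cG_f(\bhss{N})\text{ at }\bhsd{\phi\phi}(N).
\end{equation*}

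For the push-forward, as in the preceding composition proof I would rewrite $\rho^{\mathfrak d}\mu_R$ as a power of boundary defining functions times a b-density on $(M/B)^2_\e/B$, using the identity
$(\beta^{(2)})^*\mu(M^2_\psi/B) = \prod_N \rho_{\bhsd{\phi\phi}(N)}^{\dim(N/B)+1}\mu((M/B)^2_\e/B)$. This cancels the factor coming from $\mathfrak d$, leaving a clean density to which Melrose's push-forward theorem applies. The integrability hypothesis required for the push-forward along $\beta_L$ is precisely $\Re \cE_A(\bhsd{01}(N)) + \Re \cG_f(\bhss{N}) > -1$, exactly the stated hypothesis, guaranteeing positivity of the index at the boundary hypersurface $\bhsd{01}(N) \in \ker(e_{\beta_L})$ with respect to $\bhss{N}$. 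The push-forward theorem then gives a polyhomogeneous function on $M/B$ whose index set at $\bhss{N}$ is the extended union over preimages with nonzero exponent, namely $\cE_A(\bhsd{10}(N)) \,\bar\cup\, (\cE_A(\bhsd{\phi\phi}(N)) + \cG_f(\bhss{N}))$, since both exponents equal one.

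The main obstacle, as in the composition theorem, is purely bookkeeping: tracking the density conventions (right density versus b-density) and the weight $\rho^{\mathfrak d}$ carefully so that the correct integrability condition and correct exponents emerge from the push-forward theorem. All the geometric input — that $\beta_L, \beta_R$ are b-fibrations with trivial exponents and that $\bhsd{10}(N), \bhsd{01}(N)$ lie in the respective kernels — is already available from the double space construction in Section \ref{sec:DoubleSpace}, so no new geometric facts are needed.
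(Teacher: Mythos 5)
Your proposal is correct and follows the same route as the paper: pull back $f$ along $\beta_R$, multiply by the kernel, absorb the weight $\rho^{\mathfrak d}$ via the density identity $(\beta^{(2)})^*\mu(M^2_\psi/B) = \prod_N \rho_{\bhsd{\phi\phi}(N)}^{\dim(N/B)+1}\mu((M/B)^2_\e/B)$, pass to b-densities (picking up the extra factor $\rho_{\bhsd{01}(N)}$ that accounts for the $>-1$ threshold), and push forward along $\beta_L$. One small terminological slip: you say $\beta_R^*f$ has \emph{empty} index set at $\bhsd{10}(N)$, but since $\bhsd{10}(N) \in \ker(e_{\beta_R})$ the pull-back theorem gives index set $\bbN_0$ there (i.e.\ $\beta_R^*f$ is smooth, not rapidly vanishing); this is what lets the product retain the index set $\cE_A(\bhsd{10}(N))$ at that face, as your subsequent computation in fact assumes.
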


\begin{proof}
Multiplying $Af$ by $\mu(M/B)$ yields
\begin{equation*}
	Af \mu(M/B) 
	= (\beta_L)_*\Big(\cK_A  \rho^{\mathfrak d} \cdot \beta_R^*f 
		\cdot (\beta^{(2)})^*\mu(M^2_{\psi}) \Big).
	= (\beta_L)_*\Big(\cK_A \cdot \beta_R^*f \mu((M/B)^2_{\e}/B)\Big).
\end{equation*}
and passing to $b$-densities
\begin{equation*}
	Af \mu_b(M/B)
	= (\beta_L)_*\Big(\cK_A \cdot \beta_R^*f 
	\prod_{N \in \cS_{\psi}(M)} \rho_{\bhsd{01}(N)} \mu_b((M/B)^2_{\e}/B)\Big).
\end{equation*}
and we can apply the pull-back and push-forward theorems.

\end{proof}

\begin{corollary}
If the integral kernel of $A$ satisfies $\cK_A \in \sA_{phg}^{\cE_A}((M/B)^2_\e)$ then $A$ defines a bounded map, for any $t \in \bbR,$
\begin{equation*}
	\rho^{\mathfrak s}H^t_\e(M/B) \lra \rho^{\mathfrak s'}H^{\infty}_\e(M/B)
\end{equation*}
as long as
\begin{equation*}
\begin{gathered}
	\Re(\cE_A(\bhsd{01}(N)) ) + \mathfrak s(N) > -\tfrac12 \\
	\Re(\cE_A(\bhsd{10}(N))) > \mathfrak s'(N) -\tfrac12\\
	\Re(\cE_A(\bhsd{\phi\phi}(N)) ) + \mathfrak s(N) \geq \mathfrak s'(N)
\end{gathered}
\end{equation*}
\end{corollary}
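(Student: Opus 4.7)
My plan is to prove this by reducing to $L^2$ boundedness and then applying Schur's test on the integral kernel, with the weight conditions translating directly into the integrability conditions at the three types of boundary faces.

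First I would handle the smoothness in the range. Since $\cK_A$ is polyhomogeneous (no conormal singularity at the diagonal), edge vector fields $V \in \cV_{\e}(M/B)$ lifted from the left factor $\beta_{(2),L}$ act on $\cK_A$ and preserve the polyhomogeneous class with the same index family $\cE_A.$ Indeed, such vector fields are tangent to each $\bhsd{10}(N),$ $\bhsd{01}(N),$ and $\bhsd{\phi\phi}(N),$ and hence act on $\sA_{phg}^{\cE_A}((M/B)^2_\e).$ Therefore, applying any finite product $V_1\cdots V_k$ of edge vector fields on the left to the operator $A$ produces another operator of the same type, so it suffices to prove boundedness $\rho^{\mathfrak s}L^2(M/B) \lra \rho^{\mathfrak s'}L^2(M/B);$ the $H^{\infty}_\e$-regularity in the range then follows by iteration.

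Next I would apply Schur's test. Writing $\cK_A = \rho_{(M/B)^2_\e}^{\cE_A-} \cdot (\text{bounded})$ up to logs (i.e., using that $|\cK_A|$ is bounded by $C_\varepsilon \rho_{(M/B)^2_\e}^{\Re\cE_A - \varepsilon}$ for any $\varepsilon>0$), boundedness $\rho^{\mathfrak s}L^2\lra \rho^{\mathfrak s'}L^2$ reduces to showing, with respect to $\beta_{(2),R}^*\mu(M/B),$
\begin{equation*}
    \sup_{\zeta} \int |\cK_A(\zeta,\zeta')|\,\rho(\zeta)^{-\mathfrak s'}\rho(\zeta')^{\mathfrak s}\,\beta_{(2),R}^*\mu(M/B) < \infty
\end{equation*}
and the analogous estimate with the roles of $\zeta,\zeta'$ interchanged. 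Using the pull-back formula for the right density $\beta_{(2),R}^*\mu = \prod_{N}\rho_{\bhsd{\phi\phi}(N)}^{\dim(N/B)+1}\mu_b((M/B)^2_\e/B)/\prod_N\rho_{\bhsd{01}(N)},$ and the fact that at a point in the interior of $\bhsd{10}(N)$ one has $\rho(\zeta) \sim \rho_{\bhsd{10}(N)}\rho_{\bhsd{\phi\phi}(N)}$ and $\rho(\zeta')\sim \rho_{\bhsd{01}(N)}\rho_{\bhsd{\phi\phi}(N)},$ the Schur integral becomes a finite sum of local integrals over collar neighborhoods of each face. In each such integral, the integrability condition at $\bhsd{01}(N)$ is exactly $\Re\cE_A(\bhsd{01}(N)) + \mathfrak s(N) > -\tfrac12$ (the $-\tfrac12$ reflecting the square root from Schur combined with the $\rho_{\bhsd{01}(N)}^{-1}$ from converting $\mu$ to $\mu_b$); at $\bhsd{10}(N)$ the analogous condition from the symmetric integral yields $\Re\cE_A(\bhsd{10}(N)) > \mathfrak s'(N) -\tfrac12;$ and the condition at the front face $\bhsd{\phi\phi}(N)$ must balance the weight contributions of both $\rho(\zeta)^{-\mathfrak s'}$ and $\rho(\zeta')^{\mathfrak s}$ against $\rho_{\bhsd{\phi\phi}(N)}^{\cE_A(\bhsd{\phi\phi}(N))}$ combined with the Jacobian, producing $\Re\cE_A(\bhsd{\phi\phi}(N)) + \mathfrak s(N) \geq \mathfrak s'(N).$

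The principal obstacle, as usual in the edge setting, is keeping track of the weights at corners where two or more of the boundary hypersurfaces $\bhsd{10}(N),\bhsd{01}(N),\bhsd{\phi\phi}(N)$ meet each other or intersect the analogous faces for a different stratum $N'$ with $N<N'$ or $N>N'.$ Using a partition of unity subordinate to neighborhoods of the interiors of each collective boundary hypersurface and their corners, one reduces the problem to model integrals in projective coordinates of the form \eqref{eq:ProjCoords}; in these coordinates the inequalities above guarantee finite Schur integrals, with the $\geq$ (rather than $>$) in the front-face condition being permissible because the front face appears with a non-negative power in $\beta_{(2),R}^*\mu$ relative to $\mu_b,$ so no logarithmic divergence arises at equality. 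Finally the analogous result for $t \in \bbR$ arbitrary (rather than $t=0$) and for target $H^\infty_\e$ follows by combining the $L^2$-to-$L^2$ bound with the action of edge vector fields described in step one, together with standard duality to handle negative regularity on the source.
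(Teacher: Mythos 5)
Your approach via Schur's test is sound and is the standard route for such weighted Sobolev mapping results in the edge calculus (cf.\ Mazzeo's original treatment of simple edges); the paper states the corollary without proof, deriving it implicitly from the pull-back/push-forward machinery of the preceding proposition, and a Schur-type argument is precisely how that machinery gets converted into an $L^2$ estimate. Your reduction to $L^2 \to L^2$ by commuting left-lifted edge vector fields is correct, as is the observation that these lifts are tangent to all boundary hypersurfaces of $(M/B)^2_\e$ and hence preserve the polyhomogeneous class with the same index family; and your closing remark that duality (equivalently, commuting right-lifted transpose vector fields) handles negative $t$ is the standard way to finish.

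Two places could be made crisper. First, the $-\tfrac12$ thresholds at $\bhsd{10}(N)$ and $\bhsd{01}(N)$ are not really produced by ``converting $\mu$ to $\mu_b$'': the conversion to a b-density produces the $-1$ threshold in the single-face integrability, and the improvement to $-\tfrac12$ comes from running Schur's test with a weight function, say $h = \rho^{-1/2}$. Concretely, the two halves of Schur's test give the two-sided constraints $-1 - (\Re\cE_A(\bhsd{01}(N))+\mathfrak s(N)) < a \le \Re\cE_A(\bhsd{01}(N))+\mathfrak s(N)$ and $-1 - (\Re\cE_A(\bhsd{10}(N))-\mathfrak s'(N)) < a \le \Re\cE_A(\bhsd{10}(N))-\mathfrak s'(N)$ for the test-function exponent $a$; the existence of a common $a$ (and the fact that $a = -\tfrac12$ works) is exactly equivalent to your three conditions. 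Second, the justification for the non-strict inequality at the front face is not that the density contributes a ``non-negative power'' there --- indeed $\mathfrak d$ is negative at $\bhsd{\phi\phi}(N)$ --- but rather that when $\Re\cE_A(\bhsd{\phi\phi}(N)) + \mathfrak s(N) = \mathfrak s'(N)$, the potential logarithmic divergence at the corner $\bhsd{\phi\phi}(N) \cap \bhsd{10}(N)$ (respectively $\cap\, \bhsd{01}(N)$) occurs only at the endpoint value $a = \Re\cE_A(\bhsd{10}(N))-\mathfrak s'(N)$ (resp.\ $a = \Re\cE_A(\bhsd{01}(N))+\mathfrak s(N)$), and the strict inequalities at the side faces let you choose $a$ in the open interior of the admissible interval, avoiding these. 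A useful sanity check is that the constraint the corner analysis produces, $\Re\cE_A(\bhsd{10}(N)) + \Re\cE_A(\bhsd{01}(N)) + \mathfrak s(N) - \mathfrak s'(N) > -1$, is the sum of your two side-face conditions and hence automatic.
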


\section{Composition of wedge heat operators} \label{sec:HeatComp}

{\bf The wedge heat composition space.}
Composition is through convolution in time,
\begin{equation*}
	f(t) \; dt = \int_0^t g(t-t')h(t') \; dt' dt= \int_{t''+t' = t} (g(t'') \; dt'') (h(t') \; dt').
\end{equation*}
We prefer to work with $\tau = \sqrt t$ instead of $t,$ so this becomes
\begin{equation*}
	f(\tau) \; d\tau = \frac2\tau\int_{\sqrt{s^2+\wt s^2} = \tau} (s\wt s) 
	(g(s) \; ds) (h(\wt s)\; d\wt s).
\end{equation*}
In terms of the maps
\begin{equation*}
	\xymatrix{ 
	& \bbR^+_s \times \bbR^+_{\wt s} \ar[ld]^{\pi_L} \ar[d]^{\pi_C} \ar[rd]^{\pi_R} & \\
	\bbR^+ & \bbR^+ & \bbR^+ } \quad
	\xymatrix{ 
	& (s,\wt s) \ar@{|->}[ld]^{\pi_L} \ar@{|->}[d]^{\pi_C} \ar@{|->}[rd]^{\pi_R} & \\
	s & \sqrt{s^2+\wt s^2} & \wt s }
\end{equation*}
the composition is given by 
\begin{equation*}
	f(\tau) \; d\tau = \frac2\tau (\pi_C)_*
	\lrpar{ \pi_L^*(\tau g(\tau)\; d\tau) \pi_R^*(\tau h(\tau)\; d\tau)}.
\end{equation*}
This push-forward is not well-behaved because $\pi_C$ is not a b-fibration but we can fix this by replacing $(\bbR^+)^2$ with
\begin{equation*}
	\sT^2_0 = [\bbR^+_s \times \bbR^+_{\wt s}; \{ (0,0) \}]
\end{equation*}
and then the composition formula is well-behaved.
We denote the boundary hypersurfaces of this space by
\begin{equation*}
	\{0\} \times \bbR^+ \leftrightarrow \bhsT{10}, \quad
	\bbR^+ \times \{0\} \leftrightarrow \bhsT{01}, \quad
	\{0\} \times \{0\} \leftrightarrow \bhsT{11}.
\end{equation*}

Now bringing in the spatial variables, we want to construct a space $H(M/B)^3_\w$ so that the maps
\begin{equation*}
	\xymatrix{ 
	& M^3_{\psi} \times (\bbR^+)^2 \ar[ld]^{\pi_{LM,L}} \ar[d]^{\pi_{LR,C}} \ar[rd]^{\pi_{MR,R}} & \\
	M^2_{\psi} \times \bbR^+ 
	& M^2_{\psi} \times \bbR^+ 
	& M^2_{\psi} \times \bbR^+ } \quad
	\xymatrix{ 
	& (\zeta, \zeta', \zeta'', s,\wt s) \ar@{|->}[ld]^{\pi_{LM,L}} \ar@{|->}[d]^{\pi_{LR,C}} \ar@{|->}[rd]^{\pi_{MR,R}} & \\
	(\zeta, \zeta',s) & (\zeta, \zeta'', \sqrt{s^2+\wt s^2}) & (\zeta', \zeta'', \wt s) }
\end{equation*}
(where we are using the notation $M^2_{\psi},$ $M^3_{\psi}$ from Appendix \ref{sec:CompEdge})
lift to b-maps
\begin{equation*}
	\xymatrix{ 
	& H(M/B)^3_{\w} \ar[ld]^{\beta_{LM,L}} \ar[d]^{\beta_{LR,C}} \ar[rd]^{\beta_{MR,R}} & \\
	H(M/B)_{\w}  & H(M/B)_{\w} & H(M/B)_{\w} }.
\end{equation*}

We construct the space $H(M/B)^3_{\w}$ in steps starting from 
\begin{equation*}
	H_0(M/B)^3_{\w} = M^3_{\psi}\times \sT^2_0.
\end{equation*}
Let $\cS_{\psi}(M) = \{ N_1, \ldots, N_{\ell} \}$ be a non-decreasing list and recall the notation $T(N),$ $S_{LM}(N),$ etc. from \eqref{eq:TripDiag}.
Inductively, for $1 \leq i \leq \ell,$ let
\begin{multline*}
	H_i(M/B)^3_{\w} = \Big[H_{i-1}(M/B)^3_{\w};
	T(N_i) \times \bhsT{11};
	(S_{LM}(N_i) \cup S_{LR}(N_i) \cup S_{MR}(N_i)) \times \bhsT{11}; \\
	S_{LM}(N_i) \times \bhsT{10};
	S_{MR}(N_i) \times \bhsT{01}; 
	T(N_i);
	(S_{LM}(N_i) \cup S_{LR}(N_i) \cup S_{MR}(N_i))\Big].
\end{multline*}
Finally, we need to blow-up the (interior) lifts of the partial diagonals. So let
\begin{equation*}
	\diag_{LM} = \pi_{LM}^{-1}(\diag_M), \quad
	\diag_{LR} = \pi_{LR}^{-1}(\diag_M), \quad
	\diag_{MR} = \pi_{MR}^{-1}(\diag_M), 
\end{equation*}
and let $\diag_{LMR}$ be their intersection and then define
\begin{multline*}
	H(M/B)^3_{\w}
	= \Big[H_{\ell}(M/B)^3_{\w};
	\diag_{LMR} \times \bhsT{11}; \\
	(\diag_{LM} \cup \diag_{LR} \cup \diag_{MR}) \times \bhsT{11}; 
	\diag_{LM}\times \bhsT{10};
	\diag_{MR} \times \bhsT{01}
	\Big].
\end{multline*}

Our notation for the (collective) boundary hypersurfaces of $H(M/B)^3_\w$ is as follows. First we have,
\begin{equation*}
\begin{gathered}
	M^3_{\psi} \times \bhsT{10} \leftrightarrow \bhsC{000,10}, \quad
	M^3_{\psi} \times \bhsT{01} \leftrightarrow \bhsC{000,01}, \quad
	M_{\psi}^3 \times \bhsT{11} \leftrightarrow \bhsC{000,11}, \\
	\diag_{LMR} \times \bhsT{11} \leftrightarrow \bhsC{ddd,11}, \quad
	\diag_{LM} \times \bhsT{11} \leftrightarrow \bhsC{dd0,11}, \quad
	\diag_{LR} \times \bhsT{11} \leftrightarrow \bhsC{d0d,11}, \\
	\diag_{MR} \times \bhsT{11} \leftrightarrow \bhsC{0dd,11}, \quad
	\diag_{LM} \times \bhsT{10} \leftrightarrow \bhsC{dd0,10}, \quad
	\diag_{MR} \times \bhsT{01} \leftrightarrow \bhsC{0dd,01},
\end{gathered}
\end{equation*}
and then, for each $N \in \cS_{\psi}(M),$
\begin{equation*}
\begin{gathered}
	\bhss{N} \times_{\psi} M^2_{\psi} \times \sT^2_0 
		\leftrightarrow \bhsC{100,00}(N), \quad
	M \times_{\psi} \bhss{N} \times_{\psi} M \times \sT^2_0 
		\leftrightarrow \bhsC{010,00}(N), \\
	M^2_{\psi} \times_{\psi} \bhss{N} \times \sT^2_0 
		\leftrightarrow \bhsC{001,00}(N), \quad
	T(N) \times \bhsT{11} \leftrightarrow \bhsC{\phi\phi\phi,11}(N), \\
	S_{LM}(N) \times \bhsT{11} \leftrightarrow \bhsC{\phi\phi0,11}(N), \quad
	S_{LR}(N) \times \bhsT{11} \leftrightarrow \bhsC{\phi0\phi,11}(N), \\
	S_{MR}(N) \times \bhsT{11} \leftrightarrow \bhsC{0\phi\phi,11}(N), \quad
	S_{LM}(N) \times \bhsT{10} \leftrightarrow \bhsC{\phi\phi0,10}(N), \\
	S_{MR}(N) \times \bhsT{01} \leftrightarrow \bhsC{0\phi\phi,01}(N) \quad
	T(N) \times \sT^2_0 \leftrightarrow \bhsC{\phi\phi\phi,00}(N), \\
	S_{LM}(N) \times \sT^2_0 \leftrightarrow \bhsC{\phi\phi0,00}(N), \quad
	S_{LR}(N) \times \sT^2_0 \leftrightarrow \bhsC{\phi0\phi,00}(N), \\
	S_{MR}(N) \times \sT^2_0 \leftrightarrow \bhsC{0\phi\phi,00}(N).
\end{gathered}
\end{equation*}

$ $\\
{\bf The exponent matrices.}
We specify the exponent matrix of the maps $\beta_{\cdot,\cdot,\cdot}$ by specifying the pull-back of the collective boundary hypersurfaces.
Thus $\beta_{LM,L}$ maps $\bhsC{0dd,01},$ $\bhsC{000,01}$ and, for each $N \in \cS_{\psi}(M),$ $\bhsC{001,00}(N),$ into the interior of $H(M/B)_\w;$ otherwise
\begin{equation*}
\begin{aligned}
	\beta_{LM,L}^*\bhsh{00,1}
	&= \{ \bhsC{000,10}, \bhsC{000,11},  
		\bhsC{d0d, 11}, \bhsC{0dd, 11} \} \cup 
		\bigcup_{N \in \cS_{\psi}(M)} 
			\{\bhsC{\phi0\phi,11}(N), \bhsC{0\phi\phi,11}(N) \},\\
	\beta_{LM,L}^*\bhsh{dd,1}
	&= \{ \bhsC{ddd, 11}, \bhsC{dd0, 11}, \bhsC{dd0,10} \}
\end{aligned}
\end{equation*}
and, for each $N \in \cS_{\psi}(M),$
\begin{equation*}
\begin{aligned}
	\beta_{LM,L}^*\bhsh{10,0}(N)
	&= \{ \bhsC{100,00}(N), \bhsC{\phi0\phi, 11}(N), 
		\bhsC{\phi0\phi, 00}(N) \} \\
	\beta_{LM,L}^*\bhsh{01,0}(N)
	&= \{ \bhsC{010,00}(N), \bhsC{0\phi\phi,11}(N), \bhsC{0\phi\phi,01}(N),
		\bhsC{0\phi\phi,00}(N) \} \\
	\beta_{LM,L}^*\bhsh{\phi\phi,1}(N)
	&= \{ \bhsC{\phi\phi\phi,11}(N), \bhsC{\phi\phi0,11}(N), 
		\bhsC{\phi\phi0, 10}(N) \}, \\
	\beta_{LM,L}^*\bhsh{\phi\phi,0}(N)
	&= \{ \bhsC{\phi\phi\phi,00}(N), \bhsC{\phi\phi0,00}(N) \},
\end{aligned}
\end{equation*}
where we note that $\bhsC{\phi0\phi,11}(N)$ and $\bhsC{0\phi\phi,11}(N)$ are repeated.

Similarly, the map $\beta_{LR,C}$ maps $\bhsC{000,10},$ $\bhsC{000,01},$ $\bhsC{dd0,10},$ $\bhsC{0dd,01}$ and, for each $N \in \cS_{\psi}(M),$ $\bhsC{010,00}(N),$ into the interior of $H(M/B)_\w;$ otherwise
\begin{equation*}
\begin{aligned}
	\beta_{LR,C}^*\bhsh{00,1}
	&= \{ \bhsC{000,11}, \bhsC{dd0,11}, \bhsC{0dd, 11} \}
	\cup \bigcup_{N \in \cS_{\psi}(M)} \{ \bhsC{\phi\phi0,11}(N), 
		\bhsC{0\phi\phi,11}(N) \} \\
	\beta_{LR,C}^*\bhsh{dd,1}
	&= \{ \bhsC{ddd, 11}, \bhsC{d0d, 11} \}
\end{aligned}
\end{equation*}
and, for each $N \in \cS_{\psi}(M),$
\begin{equation*}
\begin{aligned}
	\beta_{LR,C}^*\bhsh{10,0}(N)
	&= \{ \bhsC{100,00}(N), \bhsC{\phi\phi0,11}(N), \bhsC{\phi\phi0,10}(N),
		\bhsC{\phi\phi0,00}(N) \} \\
	\beta_{LR,C}^*\bhsh{01,0}(N)
	&= \{ \bhsC{001, 00}(N), \bhsC{0\phi\phi,11}(N), \bhsC{0\phi\phi,01}(N),
		\bhsC{0\phi\phi,00}(N) \} \\
	\beta_{LR,C}^*\bhsh{\phi\phi,1}(N)
	&= \{ \bhsC{\phi\phi\phi,11}(N), \bhsC{\phi0\phi,11}(N) \} \\
	\beta_{LR,C}^*\bhsh{\phi\phi,0}(N)
	&= \{ \bhsC{\phi\phi\phi,00}(N), \bhsC{\phi0\phi,00}(N) \}
\end{aligned}
\end{equation*}
where we note that $\bhsC{\phi\phi0,11}(N)$ and $\bhsC{0\phi\phi,11}(N)$ are repeated.

Finally, the map $\beta_{MR,R}$ maps  $\bhsC{000,10},$ $\bhsC{dd0,10}$
and, for each $N \in \cS_{\psi}(M),$ $\bhsC{100,00}(N),$
into the interior of $H(M/B)_\w;$ otherwise
\begin{equation*}
\begin{aligned}
	\beta_{MR,R}^*\bhsh{00,1}
	&= \{ \bhsC{000,01}, \bhsC{000,11},  \bhsC{dd0,11}, \bhsC{d0d,11} \}
	\cup \bigcup_{N \in \cS_{\psi}(M)} \{ \bhsC{\phi\phi0,11}(N), 
		\bhsC{\phi0\phi,11}(N) \} \\
	\beta_{MR,R}^*\bhsh{dd,1}
	&= \{ \bhsC{ddd,11}, \bhsC{0dd,11}, \bhsC{0dd, 01} \}
\end{aligned}
\end{equation*}
and, for each $N \in \cS_{\psi}(M),$
\begin{equation*}
\begin{aligned}
	\beta_{MR,R}^*\bhsh{10,0}(N)
	&= \{ \bhsC{010,00}(N), \bhsC{\phi\phi0,11}(N), \bhsC{\phi\phi0,10}(N),
		\bhsC{\phi\phi0,00}(N) \} \\
	\beta_{MR,R}^*\bhsh{01,0}(N)
	&= \{ \bhsC{001,00}(N), \bhsC{\phi0\phi,11}(N), \bhsC{\phi0\phi,00}(N) \} \\
	\beta_{MR,R}^*\bhsh{\phi\phi,1}(N)
	&= \{ \bhsC{\phi\phi\phi,11}(N), \bhsC{0\phi\phi,11}(N), 
		\bhsC{0\phi\phi,01}(N) \} \\
	\beta_{MR,R}^*\bhsh{\phi\phi,0}(N)
	&= \{ \bhsC{\phi\phi\phi,00}(N), \bhsC{0\phi\phi,00}(N) \}
\end{aligned}
\end{equation*}
where we note that $\bhsC{\phi\phi0,11}(N)$ and $\bhsC{\phi0\phi,11}(N)$ are repeated.

$ $\\
{\bf Composition.}
Now let us discuss the composition law.
As before we are interested in integral kernels that are sections of a weighted density bundle. Let us start by recalling the weight from \eqref{eq:HeatWeight}, namely
\begin{equation*}
\begin{gathered}
	\mathfrak h: \cM_1(H(M/B)_\w) \lra \bbR, \\
	\mathfrak h(H) =
	\begin{cases}
	-(\dim(N/B) + 3) & \Mif H \subseteq \bhsh{\phi\phi,1}(N) 
		\Mforsome N \in \cS_{\psi}(M) \\
	-(\dim(N/B) + 1) & \Mif H \subseteq \bhsh{\phi\phi,0}(N) 
		\Mforsome N \in \cS_{\psi}(M) \\
	-(\dim(M/B) + 2) & \Mif H = \bhsh{dd,1} \\
	\infty & \Mif H = \bhsh{00,1} \\
	0 & \Motherwise
	\end{cases}
\end{gathered}
\end{equation*}
and let $\mu_R = \beta_{(H), R}^*\mu(M/B).$ We will determine the behavior under composition for integral kernels of the form $\cK_A \rho^{\mathfrak h} \mu_R$ with $\cK_A \in \cA_{phg}^{\cE_A}(H(M/B)_\w).$ Ultimately we are interested in kernels that are merely conormal with bounds acting on sections of a vector bundle, but the corresponding composition result follows easily from this one.

\begin{proposition} \label{prop:WedgeHeatComp}
Let $A$ have kernel $\cK_A \rho^{\mathfrak h} \mu_R$ with $\cK_A \in \cA_{phg}^{\cE_A}(H(M/B)_\w)$ and $B$ have kernel $\cK_B \rho^{\mathfrak h} \mu_R$ with $\cK_B \in \cA_{phg}^{\cE_B}(H(M/B)_\w).$ 
If 
\begin{equation*}
\begin{gathered}
	\Re(\cE_A(\bhsh{dd,1})) > 0, \quad
	\Re(\cE_B(\bhsh{dd,1})) > 0, \Mand \\
	\Re(\cE_A(\bhsh{01,0}(N))) + \Re(\cE_B(\bhsh{10,0}(N)))+1 > 0 
		\Mforall N \in \cS_{\psi}(M),
\end{gathered}
\end{equation*}
then we may define their composition $C = A \circ B$ by the formula
\begin{equation*}
	\wt\cK_C \mu_R \beta_{(H)}^*(\tau \; d\tau)
	= (\beta_{LR,C})_*( 
	\beta_{LM,L}^*(\cK_A \rho^{\mathfrak h} \mu_R \; \beta_{(H)}^*(\tau d\tau))
	\cdot
	\beta_{MR,R}^*(\cK_B \rho^{\mathfrak h} \mu_R \; 
		\beta_{(H)}^*(\tau d\tau)) )
\end{equation*}
and we have $\wt\cK_C \in \rho^{\mathfrak h} \cA_{phg}^{\cE_C}(H(M/B)_\w)$ with
\begin{equation*}
	\cE_C(\bhsh{dd,1}) = 
	\cE_A(\bhsh{dd,1}) + \cE_B(\bhsh{dd,1}) 
\end{equation*}
and, for each $N \in \cS_{\psi}(M),$
\begin{align*}
	\cE_C(\bhsh{10,0}(N)) &=
	\cE_A(\bhsh{10,0}(N)) \bar\cup
	\big( \cE_A(\bhsh{\phi\phi,1}(N)) + \cE_B(\bhsh{10,0}(N) )\big) \\
	\cE_C(\bhsh{01,0}(N)) &=
	\cE_B(\bhs{01,0}(N)) \bar\cup
	\big(\cE_A(\bhsh{01,0}(N)) + \cE_B(\bhsh{\phi\phi,1}(N)) \big) \\
	\cE_C(\bhsh{\phi\phi,1}(N)) &=
	\cE_A(\bhsh{\phi\phi,1}(N)) + \cE_B(\bhsh{\phi\phi,1}(N)) \\
	\cE_C(\bhsh{\phi\phi,0}(N)) &=
	(\cE_A(\bhsh{\phi\phi,0}(N)) + \cE_B(\bhsh{\phi\phi,0}(N)) ) \\
	& \phantom{xxxxx} \bar\cup
	\big(\cE_A(\bhsh{10,0}(N)) + \cE_B(\bhsh{01,0}(N)) + \dim(N/B)+1\big)
\end{align*}
\end{proposition}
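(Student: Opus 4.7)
The plan is to follow Melrose's geometric microlocal approach, as used in \cite{Dai-Melrose, Melrose-Piazza:Even, Albin:RenInd}, applying the pull-back and push-forward theorems for polyhomogeneous conormal distributions to the triple wedge heat space $H(M/B)^3_\w$ constructed earlier in the appendix. The composition formula
\begin{equation*}
	\wt\cK_C \mu_R \beta_{(H)}^*(\tau \, d\tau)
	= (\beta_{LR,C})_*\big(
	\beta_{LM,L}^*(\cK_A \rho^{\mathfrak h} \mu_R  \beta_{(H)}^*(\tau d\tau))
	\cdot
	\beta_{MR,R}^*(\cK_B \rho^{\mathfrak h} \mu_R  \beta_{(H)}^*(\tau d\tau)) \big)
\end{equation*}
expresses $\cK_C$ in terms of the three maps $\beta_{LM,L}$, $\beta_{MR,R}$, $\beta_{LR,C}\colon H(M/B)^3_\w \to H(M/B)_\w$, for which the exponent matrices have been explicitly computed above.

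The first step would be to verify that these three maps are b-fibrations. This reduces, as in the edge case treated in Proposition \ref{thm:trip-space-maps}, to applying Lemma \ref{thm:lemma-p} iteratively as each p-submanifold is blown up in the construction of $H(M/B)^3_\w$; the ordering of the blow-ups (first the corners at $\bhsT{11}$, then the $\bhsT{10}$ and $\bhsT{01}$ faces, and finally the lifted diagonals) has been chosen precisely so that the interior lifts of the submanifolds being blown up are disjoint p-submanifolds at each stage and so that the blow-downs cover the blow-ups in the double space under each projection. Next I would rewrite both the input densities and the convolution measure $\beta^*_{(H)}(\tau d\tau)$ as b-densities, computing the multiweight shifts arising from the factors $\rho^{\mathfrak h}$ and from the Jacobian of $\mu = \rho_{\bhsh{00,1}}^{-1}\cdot(\text{boundary factors})\cdot\mu_b$, exactly as in the density computation in Appendix A.

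At this point the pull-back theorem gives index families $\beta_{LM,L}^{\sharp}\cE_A$ and $\beta_{MR,R}^{\sharp}\cE_B$ on $H(M/B)^3_\w$, and the convergence hypotheses translate into positivity of the index sets at the faces pushed forward to the interior of $H(M/B)_\w$ by $\beta_{LR,C}$. Specifically, $\Re(\cE_A(\bhsh{dd,1})) > 0$ and $\Re(\cE_B(\bhsh{dd,1})) > 0$ guarantee integrability at the time-diagonal corners (the faces $\bhsC{ddd,11}$, $\bhsC{dd0,10}$, $\bhsC{0dd,01}$), while $\Re(\cE_A(\bhsh{01,0}(N))) + \Re(\cE_B(\bhsh{10,0}(N)))+1 > 0$ ensures integrability at the spatial face $\bhsC{010,00}(N)$ that maps into the interior. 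Granted these, the push-forward theorem applies and the index family $\cE_C$ is obtained as the extended union, over the preimages of each collective boundary hypersurface of $H(M/B)_\w$ listed in our exponent-matrix tabulation, of the appropriate sums of index sets — with the characteristic shift by $\dim(N/B)+1$ at $\bhsh{\phi\phi,0}(N)$ coming from the two faces $\bhsC{100,00}(N)$ and $\bhsC{010,00}(N)$ meeting at the interior lift of $T(N)\times \sT_0^2$.

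The main obstacle will be the careful bookkeeping at the $\tau=0$ corner: at the intersections of the form $\bhsC{\phi\phi\phi,11}(N)$, $\bhsC{\phi\phi0,11}(N)$, $\bhsC{\phi0\phi,11}(N)$, and $\bhsC{0\phi\phi,11}(N)$, a single face of $H(M/B)^3_\w$ appears in the preimage of two different faces of $H(M/B)_\w$ under different maps (for instance $\bhsC{\phi\phi0,11}(N)$ is pulled back from $\bhsh{\phi\phi,1}(N)$ by $\beta_{LM,L}$ but from $\bhsh{00,1}$ by $\beta_{LR,C}$ and $\beta_{MR,R}$). One must verify that in the formula for $\cE_C(\bhsh{\phi\phi,1}(N))$ these `double' contributions — together with the infinite-order vanishing imposed at $\bhsh{00,1}$ — collapse correctly to give simply $\cE_A(\bhsh{\phi\phi,1}(N)) + \cE_B(\bhsh{\phi\phi,1}(N))$ rather than the formally larger extended union that a naive application of the push-forward theorem would suggest. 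Statement (2) for the multiweights then follows from the same argument applied to the conormal error terms as in the edge composition case, since the pull-back and push-forward theorems preserve partial polyhomogeneity with conormal remainders.
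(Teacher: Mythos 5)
Your plan largely tracks the paper's argument, but there is one real error in the setup that propagates: in your first step you assert that all three lifted maps, and in particular $\beta_{LR,C}$, are b-fibrations, and you propose to establish this by the same inductive application of Lemma B (p-submanifold blow-ups) used for the edge triple space. This is false. The exponent matrices you yourself cite show, for example, that $\bhsC{\phi\phi0,11}(N)$ appears both in $\beta_{LR,C}^*\bhsh{00,1}$ and in $\beta_{LR,C}^*\bhsh{10,0}(N)$, i.e.\ $\beta_{LR,C}$ maps that boundary hypersurface of $H(M/B)^3_\w$ into a corner of $H(M/B)_\w$. A b-fibration cannot do this, so Melrose's push-forward theorem does not apply directly to $\beta_{LR,C}$. (The paper says so explicitly.) If you tried to verify the b-fibration property you would find it fails, and the proof would stall.

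The correct repair is not bookkeeping done after the fact but the observation that makes the push-forward theorem usable at all: because $\mathfrak h(\bhsh{00,1}) = \infty$, the product
$\beta_{LM,L}^*(\rho^{\mathfrak h})\,\beta_{MR,R}^*(\rho^{\mathfrak h})$
vanishes to infinite order at exactly the faces of $H(M/B)^3_\w$ that map into corners of $H(M/B)_\w$ under $\beta_{LR,C}$ (namely $\bhsC{000,11}$, $\bhsC{dd0,11}$, $\bhsC{0dd,11}$ and the $\bhsC{\phi\phi0,11}(N)$, $\bhsC{0\phi\phi,11}(N)$). Those faces may therefore be blown down — or, equivalently, contribute the empty index set — so the push-forward factors through a genuine b-fibration. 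You gesture at this in your final paragraph when you note that the ``double'' contributions must collapse using the infinite-order vanishing at $\bhsh{00,1}$, but you present it as a consistency check on a formula obtained from an already-applicable theorem; in fact the vanishing is logically prior, because without it the push-forward of a polyhomogeneous density along $\beta_{LR,C}$ need not be polyhomogeneous at all. Once this is in place, the remaining density and multiweight calculations you outline (converting to b-densities, tracking the $\rho^{\mathfrak h}$ and Jacobian factors, reading off $\cE_C$ and the shift $\dim(N/B)+1$ from the extended union over the surviving faces) match the paper's proof.
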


\begin{proof}
Let us write, during the proof,
\begin{equation*}
\begin{gathered}
	\mathfrak h: \cM_1(H(M/B)_\w) \lra \bbR, \\
	\mathfrak h(H) =
	\begin{cases}
	-(\dim(N/B) + a) & \Mif H \subseteq \bhsh{\phi\phi,1}(N) 
		\Mforsome N \in \cS_{\psi}(M) \\
	-(\dim(N/B) + a') & \Mif H \subseteq \bhsh{\phi\phi,0}(N) 
		\Mforsome N \in \cS_{\psi}(M) \\
	-(\dim(M/B) + b) & \Mif H = \bhsh{dd,1} \\
	\infty & \Mif H = \bhsh{00,1} \\
	0 & \Motherwise
	\end{cases}
\end{gathered}
\end{equation*}
to motivate the choice ($a=3,$ $a'=1,$ $b=2$) made above.

Note that $\beta_{LR,C}$ is not a b-fibration so that push-forward along it does not preserve polyhomogeneous functions. However, the weight $\mathfrak h$ is such that the problem faces can be blown-down which is why we will end up with a polyhomogeneous function.
Indeed, 
the product $\beta_{LM,L}^*(\rho^{\mathfrak h})\beta_{MR,R}^*(\rho^{\mathfrak h})$ vanishes to infinite order at every face in
\begin{equation*}
\begin{multlined}
	\beta_{LM,L}^*\bhsh{00,1} \cup \beta_{MR,R}^*\bhsh{00,1}
	= \{ \bhsC{000,10}, \bhsC{000,01}, \bhsC{000,11},  
		\bhsC{dd0,11}, \bhsC{d0d, 11}, \bhsC{0dd, 11} \} \\ \cup 
		\bigcup_{N \in \cS_{\psi}(M)} \{\bhsC{\phi\phi0,11}(N), 
		\bhsC{\phi0\phi,11}(N), \bhsC{0\phi\phi,11}(N) \},
\end{multlined}
\end{equation*}
so we see that the push-forward along $\beta_{LR,C}$ will be polyhomogeneous and will vanish at $\bhs{00,1}$ to infinite order.

Thus let us write $\wt \cK_C = \cK_C \rho^{\mathfrak h}$ for some $\cK_C$ polyhomogeneous, which 
after multiplying both sides of the formula for $C$ by $\mu_L = \beta_{(H), L}^*\mu(M/B),$ satisfies 
\begin{multline*}
	\cK_C \rho^{\mathfrak h} \beta_{(H)}^*(\tau\; 
		\mu(M^2_{\psi}/B\times \bbR^+) ) \\
	= (\beta_{LR,C})_*( 
	\beta_{LM,L}^*(\cK_A \rho^{\mathfrak h}  \beta_{(H)}^*(\tau)) 
	\cdot
	\beta_{MR,R}^*(\cK_B \rho^{\mathfrak h} \beta_{(H)}^*(\tau))
	\cdot \beta_{(C)}^*(\mu(M^3_{\psi}/B\times (\bbR^+)^2) ) ).
\end{multline*}
We need to work out the density weight factors. Start by noting that
\begin{equation*}
	\beta_{(H)}^*\tau 
	=\rho_{\bhsh{00,1}}\rho_{\bhsh{dd,1}}
	\prod_{N \in \cS_{\psi}(M)}  \rho_{\bhsh{\phi\phi,1}(N)}
\end{equation*}
and then, ignoring the faces where we have seen infinite order decay,
\begin{multline*}
	\beta_{LR,C}^*(\rho^{\mathfrak h}\beta_{(H)}^*\tau)^{-1}
	\beta_{LM,L}^*(\rho^{\mathfrak h}\beta_{(H)}^*\tau)
	\beta_{MR,R}^*(\rho^{\mathfrak h}\beta_{(H)}^*\tau) \\
	=
	(\rho_{\bhsC{ddd,11}}\rho_{\bhsC{dd0,10}}
		\rho_{\bhsC{0dd,01}})^{-\dim(M/B)-b+1}
	\prod_{N \in \cS_{\psi}(M)} (\rho_{\bhsC{\phi\phi\phi,11}(N)}
		\rho_{\bhsC{\phi\phi0,10}(N)}
		\rho_{\bhsC{0\phi\phi,01}(N)})^{-\dim(N/B)-a+1} \\
	\prod_{N \in \cS_{\psi}(M)} (\rho_{\bhsC{\phi\phi\phi,00}(N)}
		\rho_{\bhsC{\phi\phi0,00}(N)}
		\rho_{\bhsC{0\phi\phi,00}(N)}
		\rho_{\bhsC{\phi0\phi,00}(N)}^{-1})^{-\dim(N/B)-a'}
\end{multline*}

Next, the lifts of the densities (continuing to ignore faces where we have infinite decay) are
\begin{multline*}
	\beta_{(H)}^*\mu(M^2_{\psi}/B\times \bbR^+) 
	= \Big[ \rho_{\bhsh{dd,1}}^{\dim(M/B)}
		\prod_{N \in \cS_{\psi}(M)} \rho_{\bhsh{\phi\phi,1}(N)}^{\dim(N/B)+2}
		\rho_{\bhsh{\phi\phi,0}(N)}^{\dim(N/B)+1} \Big] 
		\mu(H(M/B)_\w) \\
	=\rho_{(H)}^{\mathfrak j_1} \; \mu_b(H(M/B)_{\w}/B)
\end{multline*}
\begin{multline*}
	\beta_{(C)}^*\mu(M^3_{\psi}/B \times (\bbR^+)^2) \\
	= 
	\Big[\rho_{\bhsC{ddd,11}}^{2\dim (M/B)+1} 
	(\rho_{\bhsC{dd0,10}}\rho_{\bhsC{0dd,01}})^{\dim(M/B)} 
	\prod_{N \in \cS_{\psi}(M)}
	(\rho_{\bhsC{\phi\phi\phi,11}(N)}^2\rho_{\bhsC{\phi\phi0,10}(N)}\rho_{\bhsC{0\phi\phi,01}(N)})^{\dim (N/B)+2} \\
	(\rho_{\bhsC{\phi\phi\phi,00}(N)}^2\rho_{\bhsC{\phi\phi0,00}(N)}\rho_{\bhsC{0\phi\phi,00}(N)}
	\rho_{\bhsC{\phi0\phi,00}(N)})^{\dim (N/B)+1}
	 \Big] \mu(H(M/B)^3_\w) \\
	 = \rho_{(C)}^{\mathfrak j_2} \mu_b(H(M/B)^3_{\w}/B)
\end{multline*}
where $\rho_{(H)}$ and $\rho_{(C)}$ are, respectively, total boundary defining functions for $H(M/B)_\w$ and $H(M/B)^3_\w.$
From the exponent matrices of $\beta_{LR,C}$ we have
\begin{multline*}
	(\beta_{LR,C}^*\rho_{(H)}^{\mathfrak j_1})^{-1} \rho_{(C)}^{\mathfrak j_2}
	= 
	(\rho_{\bhsC{ddd,11}}\rho_{\bhsC{dd0,10}}\rho_{\bhsC{0dd,01}})^{
		\dim (M/B)+1} \\
	\prod_{N \in \cS_{\psi}(M)}
	(\rho_{\bhsC{\phi\phi\phi,11}(N)}\rho_{\bhsC{\phi\phi0,10}(N)}
		\rho_{\bhsC{0\phi\phi,01}(N)})^{\dim(N/B)+2} \\
	(\rho_{\bhsC{\phi\phi\phi,00}(N)}\rho_{\bhsC{\phi\phi0,00}(N)}
		\rho_{\bhsC{0\phi\phi,00}(N)})^{\dim(N/B)+1}
	\rho_{\bhsC{010,00}(N)}
\end{multline*}

So altogether
\begin{equation*}
	\cK_C \mu_b(H(M/B)^3_{\w}/B)
	= (\beta_{LR,C})_*( 
	\beta_{LM,L}^*(\cK_A) \cdot \beta_{MR,R}^*(\cK_B) \rho^{\mathfrak j_3} \mu_b(H(M/B)^3_{\w}/B) ).
\end{equation*}
where, ignoring faces with infinite decay, $\rho_{(C)}^{\mathfrak j_3}$ is equal to
\begin{multline*}
	(\rho_{\bhsC{ddd,11}}\rho_{\bhsC{dd0,10}}\rho_{\bhsC{0dd,01}})^{-b+2} 
	\prod_{N \in \cS_{\psi}(M)}
	(\rho_{\bhsC{\phi\phi\phi,11}(N)}\rho_{\bhsC{\phi\phi0,10}(N)}
		\rho_{\bhsC{0\phi\phi,01}(N)})^{-a+3}\\
	(\rho_{\bhsC{\phi\phi\phi,00}(N)}\rho_{\bhsC{\phi\phi0,00}(N)}
		\rho_{\bhsC{0\phi\phi,00}(N)})^{-a'+1}
	\rho_{\bhsC{\phi0\phi,00}(N)}^{a'+\dim(N/B)}
	\rho_{\bhsC{010,00}(N)}.
\end{multline*}
Now we can apply the pull-back and push-forward theorems to get the result.

\end{proof}

\section{Composition of b, wedge heat kernels}\label{sec:bwHeatComp}

In section \ref{sec:BWInd}, we prove a families index theorem on manifolds with iterated fibration structures endowed with b-wedge metrics. The locally trivial family is denoted
\begin{equation*}
	X' \fib M' \xlra{\psi'} B'
\end{equation*}
in order to distinguish it from the fiber bundle used in the bulk of the text. There is a minimal $N_0' \in \cS(M')$ such that $\dim(N_0'/B')=0,$ and the metric is of b-type near $\bhs{N_0'}$ and of wedge type at all other $N' \in \cS(M').$

In this section we establish a composition result for b-wedge heat operators.

{\bf The b, wedge heat composition space.}
Recall that the b,wedge heat space is given by
\begin{multline*}
	H(M'/B')_{b,\w} =
	\Big[ M'\times_{\psi'} M' \times \bbR^+_{\tau}; \bhs{N_0'}
        \times_{\phi_{N_0'}} \bhs{N_0'} \times \bbR^+_{\tau}; \dots \bhs{N_\ell'} \times_{\phi_{N_\ell'}} \bhs{N_\ell'} \times \bbR^+_{\tau}; \\	 
        \bhs{N_1'} \times_{\phi_{N_1'}} \bhs{N_1'} \times \{ 0\}; \ldots ; 
	\bhs{N_{\ell}'} \times_{\phi_{N_\ell'}} \bhs{N_\ell'} \times
        \{ 0\} \Big],
\end{multline*}
where $\{ N_0', N_1'. \ldots, N_{\ell}'\}$ is a non-decreasing list of $\cS(M').$
Thus this space is constructed by treating $N_0'$ as in the construction of the b-heat space \cite[Chapter 7]{tapsit}, and treating the other $N'$ as in the construction of the wedge heat space in \S\ref{sec:Heat}. We will follow the same pattern below.

We use the space $\sT^2_0$ defined in Appendix \ref{sec:HeatComp} as
\begin{equation*}
	\sT^2_0 = [\bbR^+_s \times \bbR^+_{\wt s}; \{ (0,0) \}]
\end{equation*}
together with the notation for its boundary hypersurfaces $\bhsT{10},$ $\bhsT{01},$ $\bhsT{11}.$
We define the b, wedge composition space using the notation from \eqref{eq:TripDiag} starting from 
\begin{equation*}
	H_0(M'/B')^3_{b,\w} = 
	[(M')^3_{\psi'}\times \sT^2_0;
	T(N_0'); 
	(S_{LM}(N'_0) \cup S_{LR}(N'_0) \cup S_{MR}(N'_0))].
\end{equation*}
Inductively, for $1 \leq i \leq \ell,$ let
\begin{multline*}
	H_i(M'/B')^3_{b,\w} = \Big[H_{i-1}(M'/B')^3_{b,\w};
	T(N_i') \times \bhsT{11};
	(S_{LM}(N_i') \cup S_{LR}(N_i') \cup S_{MR}(N_i')) \times \bhsT{11}; \\
	S_{LM}(N_i') \times \bhsT{10};
	S_{MR}(N_i') \times \bhsT{01}; 
	T(N_i);
	(S_{LM}(N_i') \cup S_{LR}(N_i') \cup S_{MR}(N_i'))\Big].
\end{multline*}
Finally, we need to blow-up the (interior) lifts of the partial diagonals. So let
\begin{equation*}
	\diag_{LM} = \pi_{LM}^{-1}(\diag_M), \quad
	\diag_{LR} = \pi_{LR}^{-1}(\diag_M), \quad
	\diag_{MR} = \pi_{MR}^{-1}(\diag_M), 
\end{equation*}
and let $\diag_{LMR}$ be their intersection and then define
\begin{multline*}
	H(M'/B')^3_{b,\w}
	= \Big[H_{\ell}(M'/B')^3_{b,\w};
	\diag_{LMR} \times \bhsT{11}; \\
	(\diag_{LM} \cup \diag_{LR} \cup \diag_{MR}) \times \bhsT{11}; 
	\diag_{LM}\times \bhsT{10};
	\diag_{MR} \times \bhsT{01}
	\Big].
\end{multline*}

As anticipated, $N_0'$ gives rise to the same blow-ups as in the composition space for the b-calculus \cite{Albin:RenInd} and the other $N'$ give rise to the same blow-ups as in the wedge heat composition space in Appendix \ref{sec:HeatComp}.

Our notation for the (collective) boundary hypersurfaces of $H(M'/B')^3_{b,\w}$ is as follows. First we have,
\begin{equation*}
\begin{gathered}
	(M')^3_{\psi'} \times \bhsT{10} \leftrightarrow \bhsC{000,10}, \quad
	(M')^3_{\psi'} \times \bhsT{01} \leftrightarrow \bhsC{000,01}, \quad
	(M')^3_{\psi'} \times \bhsT{11} \leftrightarrow \bhsC{000,11}, \\
	\diag_{LMR} \times \bhsT{11} \leftrightarrow \bhsC{ddd,11}, \quad
	\diag_{LM} \times \bhsT{11} \leftrightarrow \bhsC{dd0,11}, \quad
	\diag_{LR} \times \bhsT{11} \leftrightarrow \bhsC{d0d,11}, \\
	\diag_{MR} \times \bhsT{11} \leftrightarrow \bhsC{0dd,11}, \quad
	\diag_{LM} \times \bhsT{10} \leftrightarrow \bhsC{dd0,10}, \quad
	\diag_{MR} \times \bhsT{01} \leftrightarrow \bhsC{0dd,01},
\end{gathered}
\end{equation*}
and then, for each $N' \in \cS(M'),$
\begin{equation*}
\begin{gathered}
	\bhss{N'} \times_{\psi} M^2_{\psi} \times \sT^2_0 
		\leftrightarrow \bhsC{100,00}(N'), \quad
	M \times_{\psi} \bhss{N'} \times_{\psi} M \times \sT^2_0 
		\leftrightarrow \bhsC{010,00}(N'), \\
	M^2_{\psi} \times_{\psi} \bhss{N'} \times \sT^2_0 
		\leftrightarrow \bhsC{001,00}(N'), \quad
	T(N') \times \sT^2_0 \leftrightarrow \bhsC{\phi\phi\phi,00}(N'), \\
	S_{LM}(N') \times \sT^2_0 \leftrightarrow \bhsC{\phi\phi0,00}(N'), \quad
	S_{LR}(N') \times \sT^2_0 \leftrightarrow \bhsC{\phi0\phi,00}(N'), \\
	S_{MR}(N') \times \sT^2_0 \leftrightarrow \bhsC{0\phi\phi,00}(N'),
\end{gathered}
\end{equation*}
and, for each $N' \in \cS(M')\setminus\{N'_0\},$
\begin{equation*}
\begin{gathered}
	T(N') \times \bhsT{11} \leftrightarrow \bhsC{\phi\phi\phi,11}(N'), \quad
	S_{LM}(N') \times \bhsT{11} \leftrightarrow \bhsC{\phi\phi0,11}(N'), \\
	S_{LR}(N') \times \bhsT{11} \leftrightarrow \bhsC{\phi0\phi,11}(N'), \quad
	S_{MR}(N') \times \bhsT{11} \leftrightarrow \bhsC{0\phi\phi,11}(N'), \\
	S_{LM}(N') \times \bhsT{10} \leftrightarrow \bhsC{\phi\phi0,10}(N'), \quad
	S_{MR}(N') \times \bhsT{01} \leftrightarrow \bhsC{0\phi\phi,01}(N').
\end{gathered}
\end{equation*}

$ $\\
{\bf The exponent matrices.}
As in Appendix \ref{sec:HeatComp}, we have b-maps
\begin{equation*}
	\xymatrix{ 
	& H(M'/B')^3_{b,\w} \ar[ld]^{\beta_{LM,L}} \ar[d]^{\beta_{LR,C}} \ar[rd]^{\beta_{MR,R}} & \\
	H(M'/B')_{b,\w}  & H(M'/B')_{b,\w} & H(M'/B')_{b,\w} }
\end{equation*}
and we now specify their behavior with respect to the boundary hypersurfaces.

First, $\beta_{LM,L}$ maps $\bhsC{0dd,01},$ $\bhsC{000,01}$ and, for each $N' \in \cS(M'),$ $\bhsC{001,00}(N'),$ into the interior of $H(M'/B')_{b,\w};$ otherwise
\begin{equation*}
\begin{aligned}
	\beta_{LM,L}^*\bhsh{00,1}
	&= \{ \bhsC{000,10}, \bhsC{000,11},  
		\bhsC{d0d, 11}, \bhsC{0dd, 11} \} \cup 
		\bigcup_{N' \in \cS(M')\setminus\{N'_0\}} 
		\{\bhsC{\phi0\phi,11}(N'), \bhsC{0\phi\phi,11}(N') \},\\
	\beta_{LM,L}^*\bhsh{dd,1}
	&= \{ \bhsC{ddd, 11}, \bhsC{dd0, 10}, \bhsC{dd0,11} \},
\end{aligned}
\end{equation*}
for $N'_0,$
\begin{equation*}
\begin{aligned}
	\beta_{LM,L}^*\bhsh{10,0}(N'_0)
	&= \{ \bhsC{100,00}(N'_0), 
		\bhsC{\phi0\phi, 00}(N'_0) \} \\
	\beta_{LM,L}^*\bhsh{01,0}(N'_0)
	&= \{ \bhsC{010,00}(N'_0), 
		\bhsC{0\phi\phi,00}(N'_0) \} \\
	\beta_{LM,L}^*\bhsh{\phi\phi,0}(N'_0)
	&= \{ \bhsC{\phi\phi\phi,00}(N'_0), \bhsC{\phi\phi0,00}(N'_0) \},
\end{aligned}
\end{equation*}
and, for each $N' \in \cS(M')\setminus\{N_0'\},$
\begin{equation*}
\begin{aligned}
	\beta_{LM,L}^*\bhsh{10,0}(N')
	&= \{ \bhsC{100,00}(N'), \bhsC{\phi0\phi, 11}(N'), 
		\bhsC{\phi0\phi, 00}(N') \} \\
	\beta_{LM,L}^*\bhsh{01,0}(N')
	&= \{ \bhsC{010,00}(N'), \bhsC{0\phi\phi,11}(N'), \bhsC{0\phi\phi,01}(N'),
		\bhsC{0\phi\phi,00}(N') \} \\
	\beta_{LM,L}^*\bhsh{\phi\phi,1}(N')
	&= \{ \bhsC{\phi\phi\phi,11}(N'), \bhsC{\phi\phi0,11}(N'), 
		\bhsC{\phi\phi0, 10}(N') \}, \\
	\beta_{LM,L}^*\bhsh{\phi\phi,0}(N')
	&= \{ \bhsC{\phi\phi\phi,00}(N'), \bhsC{\phi\phi0,00}(N') \}.
\end{aligned}
\end{equation*}
We note that $\bhsC{\phi0\phi,11}(N')$ and $\bhsC{0\phi\phi,11}(N')$ are repeated, for $N' \in \cS(M')\setminus\{N'_0\}.$

Similarly, the map $\beta_{LR,C}$ maps $\bhsC{000,10},$ $\bhsC{000,01},$ $\bhsC{dd0,10},$ $\bhsC{0dd,01}$ and, for each $N' \in \cS(M'),$ $\bhsC{010,00}(N'),$ into the interior of $H(M'/B')_{b,\w};$ otherwise
\begin{equation*}
\begin{aligned}
	\beta_{LR,C}^*\bhsh{00,1}
	&= \{ \bhsC{000,11}, \bhsC{dd0,11}, \bhsC{0dd, 11} \}
	\cup \bigcup_{N' \in \cS(M')\setminus\{N'_0\}} \{ \bhsC{\phi\phi0,11}(N'), \bhsC{0\phi\phi,11}(N') \} \\
	\beta_{LR,C}^*\bhsh{dd,1}
	&= \{ \bhsC{ddd, 11}, \bhsC{d0d, 11} \}
\end{aligned}
\end{equation*}
for $N_0',$
\begin{equation*}
\begin{aligned}
	\beta_{LR,C}^*\bhsh{10,0}(N'_0)
	&= \{ \bhsC{100,00}(N'_0),
		\bhsC{\phi\phi0,00}(N'_0) \} \\
	\beta_{LR,C}^*\bhsh{01,0}(N'_0)
	&= \{ \bhsC{001, 00}(N'_0), 
		\bhsC{0\phi\phi,00}(N'_0) \} \\
	\beta_{LR,C}^*\bhsh{\phi\phi,0}(N'_0)
	&= \{ \bhsC{\phi\phi\phi,00}(N'_0), \bhsC{\phi0\phi,00}(N'_0) \}
\end{aligned}
\end{equation*}
and, for each $N' \in \cS(M')\setminus\{N_0'\},$
\begin{equation*}
\begin{aligned}
	\beta_{LR,C}^*\bhsh{10,0}(N')
	&= \{ \bhsC{100,00}(N'), \bhsC{\phi\phi0,11}(N'), \bhsC{\phi\phi0,10}(N'),
		\bhsC{\phi\phi0,00}(N') \} \\
	\beta_{LR,C}^*\bhsh{01,0}(N')
	&= \{ \bhsC{001, 00}(N'), \bhsC{0\phi\phi,11}(N'), \bhsC{0\phi\phi,01}(N'),
		\bhsC{0\phi\phi,00}(N') \} \\
	\beta_{LR,C}^*\bhsh{\phi\phi,1}(N')
	&= \{ \bhsC{\phi\phi\phi,11}(N'), \bhsC{\phi0\phi,11}(N') \} \\
	\beta_{LR,C}^*\bhsh{\phi\phi,0}(N')
	&= \{ \bhsC{\phi\phi\phi,00}(N'), \bhsC{\phi0\phi,00}(N') \}.
\end{aligned}
\end{equation*}
We note that $\bhsC{\phi\phi0,11}(N')$ and $\bhsC{0\phi\phi,11}(N')$ are repeated for $N' \in \cS(M')\setminus\{N_0'\}.$

Finally, the map $\beta_{MR,R}$ maps  $\bhsC{000,10},$ $\bhsC{dd0,10}$
and, for each $N' \in \cS(M'),$ $\bhsC{100,00}(N'),$
into the interior of $H(M'/B')_{b,\w};$ otherwise
\begin{equation*}
\begin{aligned}
	\beta_{MR,R}^*\bhsh{00,1}
	&= \{ \bhsC{000,01}, \bhsC{000,11},  \bhsC{dd0,11}, \bhsC{d0d,11} \}
	\cup \bigcup_{N' \in \cS(M')\setminus\{N'_0\}} \{ \bhsC{\phi\phi0,11}(N'), \bhsC{\phi0\phi,11}(N') \} \\
	\beta_{MR,R}^*\bhsh{dd,1}
	&= \{ \bhsC{ddd,11}, \bhsC{0dd,11}, \bhsC{0dd, 01} \}
\end{aligned}
\end{equation*}
for $N_0',$
\begin{equation*}
\begin{aligned}
	\beta_{MR,R}^*\bhsh{10,0}(N'_0)
	&= \{ \bhsC{010,00}(N'_0), 
		\bhsC{\phi\phi0,00}(N'_0) \} \\
	\beta_{MR,R}^*\bhsh{01,0}(N'_0)
	&= \{ \bhsC{001,00}(N'_0), \bhsC{\phi0\phi,00}(N'_0) \} \\
	\beta_{MR,R}^*\bhsh{\phi\phi,0}(N'_0)
	&= \{ \bhsC{\phi\phi\phi,00}(N'_0), \bhsC{0\phi\phi,00}(N'_0) \}.
\end{aligned}
\end{equation*}
and, for each $N' \in \cS(M')\setminus\{N_0'\},$
\begin{equation*}
\begin{aligned}
	\beta_{MR,R}^*\bhsh{10,0}(N')
	&= \{ \bhsC{010,00}(N'), \bhsC{\phi\phi0,11}(N'), \bhsC{\phi\phi0,10}(N'),
		\bhsC{\phi\phi0,00}(N') \} \\
	\beta_{MR,R}^*\bhsh{01,0}(N')
	&= \{ \bhsC{001,00}(N'), \bhsC{\phi0\phi,11}(N'), \bhsC{\phi0\phi,00}(N') \} \\
	\beta_{MR,R}^*\bhsh{\phi\phi,1}(N')
	&= \{ \bhsC{\phi\phi\phi,11}(N'), \bhsC{0\phi\phi,11}(N'), 
		\bhsC{0\phi\phi,01}(N') \} \\
	\beta_{MR,R}^*\bhsh{\phi\phi,0}(N')
	&= \{ \bhsC{\phi\phi\phi,00}(N'), \bhsC{0\phi\phi,00}(N') \}.
\end{aligned}
\end{equation*}
We note that $\bhsC{\phi\phi0,11}(N')$ and $\bhsC{\phi0\phi,11}(N')$ are repeated, for $N' \in \cS(M')\setminus\{N_0'\}.$

$ $\\
{\bf Composition.}
Now let us discuss the composition law.
As before we are interested in integral kernels that are sections of a weighted density bundle.
Let us start by recalling the weight from \S\ref{sec:BWInd}, namely
\begin{equation*}
\begin{gathered}
	\mathfrak h: \cM_1(H(M'/B')_{b,\w}) \lra \bbR, \\
	\mathfrak h(H) =
	\begin{cases}
	-(\dim(N'/B') + 3) & \Mif H \subseteq \bhsh{\phi\phi,1}(N') 
		\Mforsome N' \in \cS(M')\setminus\{N'_0\} \\
	-(\dim(N'/B') + 1) & \Mif H \subseteq \bhsh{\phi\phi,0}(N') 
		\Mforsome N' \in \cS(M') \\
	-(\dim(M'/B') + 2) & \Mif H = \bhsh{dd,1} \\
	\infty & \Mif H = \bhsh{00,1} \\
	0 & \Motherwise
	\end{cases}
\end{gathered}
\end{equation*}
and let $\mu_R = \beta_{(H), R}^*\mu(M'/B').$ We will determine the behavior under composition for integral kernels of the form $\cK_A \rho^{\mathfrak h} \mu_R$ with $\cK_A \in \cA_{phg}^{\cE_A}(H(M'/B')_{b,\w}).$ Ultimately we are interested in kernels that are merely conormal with bounds acting on sections of a vector bundle, but the corresponding composition result follows easily from this one.

\begin{proposition} \label{prop:BWedgeHeatComp}
Let $A$ have kernel $\cK_A \rho^{\mathfrak h} \mu_R$ with $\cK_A \in \cA_{\phg}^{\cE_A}(H(M'/B')_{b,\w})$ and $B$ have kernel $\cK_B \rho^{\mathfrak h} \mu_R$ with $\cK_B \in \cA_{phg}^{\cE_B}(H(M'/B')_{b,\w}).$ 
If 
\begin{equation*}
\begin{gathered}
	\Re(\cE_A(\bhsh{dd,1})) > 0, \quad
	\Re(\cE_B(\bhsh{dd,1})) > 0, \Mand \\
	\Re(\cE_A(\bhsh{01,0}(N'))) + \Re(\cE_B(\bhsh{10,0}(N')))+1 > 0 \Mforall N' \in \cS(M'),
\end{gathered}
\end{equation*}
then we may define their composition $C = A \circ B$ by the formula
\begin{equation*}
	\wt\cK_C \mu_R \beta_{(H)}^*(\tau \; d\tau)
	= (\beta_{LR,C})_*( 
	\beta_{LM,L}^*(\cK_A \rho^{\mathfrak h} \mu_R \; \beta_{(H)}^*(\tau d\tau))
	\cdot
	\beta_{MR,R}^*(\cK_B \rho^{\mathfrak h} \mu_R \; \beta_{(H)}^*(\tau d\tau)) )
\end{equation*}
and we have $\wt\cK_C \in \rho^{\mathfrak h} \cA_{phg}^{\cE_C}(H(M'/B')_{b,\w})$ with
\begin{equation*}
	\cE_C(\bhsh{dd,1}) = 
	\cE_A(\bhsh{dd,1}) + \cE_B(\bhsh{dd,1}) 
\end{equation*}
and, for $N'_0,$
\begin{align*}
	\cE_C(\bhsh{10,0}(N'_0)) &=
	\cE_A(\bhsh{10,0}(N'_0)) \bar\cup
	\cE_B(\bhsh{10,0}(N'_0) ) \\
	\cE_C(\bhsh{01,0}(N'_0)) &=
	\cE_B(\bhs{01,0}(N'_0)) \bar\cup
	\cE_A(\bhsh{01,0}(N'_0))\\
	\cE_C(\bhsh{\phi\phi,0}(N'_0)) &=
	(\cE_A(\bhsh{\phi\phi,0}(N'_0)) + \cE_B(\bhsh{\phi\phi,0}(N'_0)) ) \\
	& \phantom{xxxxx} \bar\cup
	\big(\cE_A(\bhsh{10,0}(N'_0)) + \cE_B(\bhsh{01,0}(N'_0)) + \dim(N'_0/B)+1\big)
\end{align*}
and, for each $N' \in \cS(M')\setminus\{N_0'\},$
\begin{align*}
	\cE_C(\bhsh{10,0}(N')) &=
	\cE_A(\bhsh{10,0}(N')) \bar\cup
	\big( \cE_A(\bhsh{\phi\phi,1}(N')) + \cE_B(\bhsh{10,0}(N') )\big) \\
	\cE_C(\bhsh{01,0}(N')) &=
	\cE_B(\bhs{01,0}(N')) \bar\cup
	\big(\cE_A(\bhsh{01,0}(N')) + \cE_B(\bhsh{\phi\phi,1}(N')) \big) \\
	\cE_C(\bhsh{\phi\phi,1}(N')) &=
	\cE_A(\bhsh{\phi\phi,1}(N')) + \cE_B(\bhsh{\phi\phi,1}(N')) \\
	\cE_C(\bhsh{\phi\phi,0}(N')) &=
	(\cE_A(\bhsh{\phi\phi,0}(N')) + \cE_B(\bhsh{\phi\phi,0}(N')) ) \\
	& \phantom{xxxxx} \bar\cup
	\big(\cE_A(\bhsh{10,0}(N')) + \cE_B(\bhsh{01,0}(N')) + \dim(N'/B')+1\big).
\end{align*}
\end{proposition}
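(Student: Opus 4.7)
The strategy is to adapt the proof of Proposition \ref{prop:WedgeHeatComp} (given in Appendix \ref{sec:HeatComp}) to the b,wedge setting, using the composition space $H(M'/B')^3_{b,\w}$ and the three b-maps $\beta_{LM,L},\beta_{LR,C},\beta_{MR,R}$ whose exponent matrices have already been enumerated above. The composition is defined by
\begin{equation*}
        \wt\cK_C\,\mu_R\,\beta_{(H)}^*(\tau\,d\tau)
        = (\beta_{LR,C})_*\bigl(
        \beta_{LM,L}^*(\cK_A\rho^{\mathfrak h}\mu_R\,\beta_{(H)}^*(\tau d\tau))\cdot
        \beta_{MR,R}^*(\cK_B\rho^{\mathfrak h}\mu_R\,\beta_{(H)}^*(\tau d\tau))\bigr),
\end{equation*}
and the plan is to apply Melrose's pull-back and push-forward theorems to this expression, exactly as in the wedge case. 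The infinite vanishing of $\cK_A\rho^{\mathfrak h}$ and $\cK_B\rho^{\mathfrak h}$ at $\bhsh{00,1}$ ensures that at every boundary hypersurface of $H(M'/B')^3_{b,\w}$ in $\beta_{LM,L}^*\bhsh{00,1}\cup\beta_{MR,R}^*\bhsh{00,1}$ the product vanishes to infinite order, which both guarantees $\beta_{LR,C}$-integrability (even though this map is not globally a b-fibration, the obstruction lies only on these faces) and produces the infinite vanishing at $\bhsh{00,1}$ of the output kernel.

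First I would carry out the density computations: the pullbacks of $\mu(M'\times_{\psi'}M'\times\bbR^+)$ and $\mu((M')^3_{\psi'}\times(\bbR^+)^2)$ to $H(M'/B')_{b,\w}$ and $H(M'/B')^3_{b,\w}$, respectively, pick up precisely the factors dictated by the codimensions of each blown-up submanifold. Passing to b-densities by multiplying by total boundary defining functions, and combining with the pull-backs of $\rho^{\mathfrak h}\beta_{(H)}^*\tau$ under each of the three maps, one reduces the composition formula to the form
\begin{equation*}
        \cK_C\,\mu_b(H(M'/B')^3_{b,\w}/B')
        =(\beta_{LR,C})_*\bigl(\beta_{LM,L}^*(\cK_A)\cdot\beta_{MR,R}^*(\cK_B)\,\rho_{(C)}^{\mathfrak j}\mu_b\bigr)
\end{equation*}
for some explicit multiweight $\mathfrak j$ on $H(M'/B')^3_{b,\w}$ (the analogue of $\mathfrak j_3$ in Proposition \ref{prop:WedgeHeatComp}). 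The key observation is that near $N_0'$ the absence of a time-zero front face (i.e., the absence of $\bhsC{\phi\phi\phi,11}(N_0')$ etc.) means the bookkeeping at $N_0'$ reduces to the standard b-heat composition \cite[Chapter 7]{tapsit}, while at each $N'\neq N_0'$ the bookkeeping is identical to that in Appendix \ref{sec:HeatComp}.

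Second, I would apply the pull-back theorem to each factor $\beta_{LM,L}^*\cK_A$ and $\beta_{MR,R}^*\cK_B$ (using the enumerated exponent matrices), multiply the index sets face by face, and then apply the push-forward theorem along $\beta_{LR,C}$. Here I would exploit that for each boundary hypersurface of $H(M'/B')_{b,\w}$ the preimage consists of the finite collection of faces listed in the exponent matrix, and the extended unions appearing in the statement arise precisely from the pairs of these faces that map with the same exponent. For the crucial $\bhsh{\phi\phi,0}(N')$ faces, the extended union involves the contribution from $\bhsC{\phi0\phi,00}(N')$ (which gets killed by $\beta_{LR,C}$) whose index set lifts via the product of $\cE_A(\bhsh{10,0}(N'))$ and $\cE_B(\bhsh{01,0}(N'))$ shifted by $\dim(N'/B')+1$, accounting for the shift in the statement.

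The main obstacle is the careful bookkeeping of the density weights at the corners where a wedge-type stratum meets the b-type stratum $N_0'$, together with verifying that $\beta_{LR,C}$-pushforward preserves polyhomogeneity despite not being a b-fibration. The latter point is resolved by the infinite-order vanishing at $\bhsh{00,1}$-related faces noted above; concretely, restricting attention to a neighborhood of the complement of these faces reduces $\beta_{LR,C}$ to a genuine b-fibration onto its image in $H(M'/B')_{b,\w}$, so that the push-forward theorem applies without modification. The calculation of $\mathfrak j$ itself and the verification that the stated index sets emerge is a direct but lengthy extension of the computation at the end of Proposition \ref{prop:WedgeHeatComp}, with the simplifications at $N_0'$ reflecting the absence of time-zero front faces associated to that stratum.
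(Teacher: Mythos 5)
Your proposal is correct and follows essentially the same route the paper takes: the paper's own proof of this proposition is a single line deferring to Proposition \ref{prop:WedgeHeatComp}, and you carry out exactly that adaptation -- pull back $\cK_A$ and $\cK_B$ along $\beta_{LM,L}$ and $\beta_{MR,R}$ using the enumerated exponent matrices, convert to b-densities, and push forward along $\beta_{LR,C}$, with the infinite-order vanishing at the faces in $\beta_{LM,L}^*\bhsh{00,1}\cup\beta_{MR,R}^*\bhsh{00,1}$ salvaging polyhomogeneity despite $\beta_{LR,C}$ failing to be a b-fibration. One small slip: $\bhsC{\phi0\phi,00}(N')$ is not ``killed'' by $\beta_{LR,C}$ (mapped into the interior) -- it maps onto $\bhsh{\phi\phi,0}(N')$, which is precisely why its pulled-back index set $\cE_A(\bhsh{10,0}(N'))+\cE_B(\bhsh{01,0}(N'))$, together with the density shift $\dim(N'/B')+1$, enters the extended union there; the face genuinely killed is $\bhsC{010,00}(N')$, whose index set is what the integrability hypothesis $\Re(\cE_A(\bhsh{01,0}(N')))+\Re(\cE_B(\bhsh{10,0}(N')))+1>0$ controls.
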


\begin{proof}
The proof is essentially the same as that of Proposition \ref{prop:WedgeHeatComp}, using the exponent matrices computed above.
\end{proof}
  
\newpage

\printnomenclature

%

\newcommand{\etalchar}[1]{$^{#1}$}
\def\cprime{$'$}

\end{document}